\theoremstyle{plain}
\newtheorem{thm}{Theorem}
\newtheorem{prop}{Proposition}[section]
\newtheorem{conv}[prop]{Convention}
\newtheorem{lem}[prop]{Lemma}
\newtheorem{cor}[prop]{Corollary}
\newtheorem{assum}[prop]{Assumption}
\newtheorem{defi}[prop]{Definition}
\newtheorem{rmk}[prop]{Remark}
\newtheorem{example}[prop]{Example}
\newcommand {\R} {\mathbb{R}} 
 \newcommand {\N} {\mathbb{N}}
\newcommand {\p} {\partial}
\newcommand {\dv} {\partial_{\varphi}}
\newcommand {\va} {\varphi}
\newcommand {\D} {\Delta}
\newcommand {\supp} {\text{supp}}
\newcommand {\diam} {\text{diam}}
\newcommand{\LL}{\tilde{L}^2}
\newcommand{\LLw}{\tilde{L}^2_{\bar{\omega}}}
\DeclareMathOperator {\dist} {dist}
\DeclareMathOperator {\Ree} {Re}
\DeclareMathOperator {\Imm} {Im}
\DeclareMathOperator {\inte} {int}
\DeclareMathOperator {\spa} {span}
\newcommand{\thmtwo}{2 in \cite{KRSI}\xspace}
\newcommand{\rmkeigen}{16 in \cite{KRSI}\xspace}
\begin{document} 
\title{Higher Regularity for the fractional thin obstacle problem}

\author{Herbert Koch}
\author{Angkana R\"uland }
\author{Wenhui Shi}

\address{
Mathematisches Institut, Universit\"at Bonn, Endenicher Allee 60, 53115 Bonn, Germany }
\email{koch@math.uni-bonn.de}

\address{
Mathematical Institute of the University of Oxford, Andrew Wiles Building, Radcliffe Observatory Quarter, Woodstock Road, OX2 6GG Oxford, United Kingdom }
\email{ruland@maths.ox.ac.uk}

\address{
Mathematisches Institut, Universit\"at Bonn, Endenicher Allee 64, 53115 Bonn, Germany  }
\email{wenhui.shi@hcm.uni-bonn.de}

\begin{abstract}
In this article we investigate the higher regularity properties of the regular free boundary in the fractional thin obstacle problem. Relying on a Hodograph-Legendre transform, we show that for smooth or analytic obstacles the regular free boundary is smooth or analytic, respectively. This leads to the analysis of a fully nonlinear, degenerate (sub)elliptic operator which we identify as a (fully nonlinear) perturbation of the fractional Baouendi-Grushin Laplacian. Using its intrinsic geometry and adapted function spaces, we invoke the analytic implicit function theorem to deduce analyticity of the regular free boundary.
\end{abstract}

\subjclass[2010]{Primary 35R35}

\keywords{Variable coefficient fractional Signorini problem, variable coefficient fractional thin obstacle problem, thin free boundary, Hodograph-Legendre transform}

\thanks{
H.K. acknowledges support by the DFG through SFB 1060.
A.R. acknowledges a Junior Research Fellowship at Christ Church.
W.S. is supported by the Hausdorff Center of Mathematics.}

\maketitle
\tableofcontents

\section{Introduction}
In this article we study higher regularity properties of the regular free boundary associated with the ``fractional thin obstacle problem''. More precisely, given an \emph{obstacle} $\phi:B'_1\rightarrow \R$ and assuming that $s\in(0,1)$, we consider local minimizers of the functional
\begin{align*}
J(\tilde{w})=\int\limits_{B_1^+}\left(\frac{1}{2}|\nabla \tilde{w}|^2+\tilde{w}\tilde{f}\right) x_{n+1}^{1-2s}  dx,
\end{align*}
in the convex, constrained set 
$$\mathcal{K}:=\{\tilde{w}\in H^1(B_1^+, x_{n+1}^{1-2s} dx): \ \tilde{w} \geq \phi \mbox{ in } B_1' \}.$$ 
Here $B_1^+:=\{x\in \R^{n+1}: |x|\leq 1, x_{n+1}\geq 0\}$ denotes the upper half-ball and $B_1':=B_1^+\cap\{x_{n+1}=0\}$ is the co-dimension one ball on the boundary of $\R^{n+1}_+$. If the obstacle $\phi$ and inhomogeneity $\tilde{f}$ are assumed to be in a suitable class, classical arguments involving variational inequalities ensure the existence of local minimizers.\\
The relation of a minimizer with the co-dimension one (hence ``thin'') set, on which it is constrained to lie above the obstacle, gives rise to three sets which are of importance in the sequel: The \emph{contact set} $\Lambda_{\tilde{w}}:=\{x\in B_1': \tilde{w}=\phi\}$, in which the obstacle is attained by the minimizer, the \emph{non-coincidence set} $\Omega_{\tilde{w}}:=\{x\in B_1': \tilde{w}>\phi\}$, on which the minimizer is strictly larger than the obstacle, and the \emph{free boundary} $\Gamma_{\tilde{w}}:= \partial \Omega_{\tilde{w}}\cap B_1'$, which separates the previous two sets. \\
Carrying out variations of the functional $J$ around minimizers, yields that a minimizer of $J$ in the class $\mathcal{K}$ solves a Signorini problem for the degenerate elliptic operator $\nabla \cdot x_{n+1}^{1-2s}\nabla$:
\begin{equation}
\label{eq:thin_00}
\begin{split}
\nabla \cdot x_{n+1}^{1-2s}\nabla \tilde{w}=x_{n+1}^{1-2s}\tilde{f}&\text{ in } B_1^+,\\
\tilde{w}\geq \phi &\text{ on } B'_1,\\
 \lim _{x_{n+1}\rightarrow 0_+} x_{n+1}^{1-2s}\p_{n+1}\tilde{w}\leq 0 &\text{ on } B'_1,\\
\lim_{x_{n+1}\rightarrow 0_+}x_{n+1}^{1-2s}\p_{n+1}\tilde{w}=0 &\text{ on }  B'_1\cap \{u>\phi\}.
\end{split}
\end{equation}
Relying on previous work on the the fractional thin obstacle problem (in particular on \cite{CSS, Si}) and on regularity assumptions for the inhomogeneity $\tilde{f}$, these equations will be understood in a pointwise sense in the sequel. In particular this holds for the \emph{complementary} (or \emph{Signorini}) boundary conditions on $B'_1$.\\

In investigating the higher regularity properties of solutions to the fractional thin obstacle problem, we build on the seminal work on the obstacle problem for the fractional Laplacian by Caffarelli, Silvestre and Salsa \cite{CSS}. As explained in Section \ref{sec:back} there is a close connection between the above Signorini problem \eqref{eq:thin_00} and the obstacle problem for the fractional Laplacian (c.f. \cite{CaS, CSS} and Section~\ref{sec:back}). Due to this reason we refer to the problem (\ref{eq:thin_00}) as the ``fractional thin obstacle problem''. This close relationship also allows us to exploit the results from \cite{CSS} in our context. \\
Let us briefly recall the, to us, most relevant results from \cite{CSS} (for a more detailed summary we refer to Section~\ref{sec:back}). Firstly, optimal regularity of the solutions is established: Assuming that $\tilde{f}\in C^{0,1}(B_1^+)$ and that $\phi\in C^{2,1}(B'_1)$, and assuming that the free boundary $\Gamma_{\tilde{w}}$ is compactly contained in $B'_{1/2}$ (which permits us to extend the local problem for the fractional Laplacian \eqref{eq:thin_00} into a global problem, c.f. Section \ref{sec:back}), the solution $\tilde{w}$ to \eqref{eq:thin_00} has the optimal regularity (up to the boundary $B_1'$)
\begin{equation}
\label{eq:opt_reg}
\begin{split}
&\p_i \tilde{w}\in C^{0,s}(B_{1/2}^+) \mbox{ for } i\in \{1,\dots, n\},\\
&x_{n+1}^{1-2s}\p_{n+1}\tilde{w} \in C^{0,1-s}(B_{1/2}^+).
\end{split}
\end{equation} 
The optimality of this can be seen by noting that the function 
\begin{align}
\label{eq:w1s}
w_{1,s}(x):= \frac{1}{s^2-1} \left(\sqrt{x_n^2 + x_{n+1}^2}+x_n\right)^s\left(s\sqrt{x_n^2 + x_{n+1}^2}- x_n\right), 
\end{align}
which will play the role of a model solution for us, satisfies \eqref{eq:thin_00} for $\tilde{f}=\phi=0$.\\
Furthermore, the free boundary $\Gamma_w$ decomposes as
\begin{align}
\label{eq:Gamma_reg}
\Gamma_{\tilde{w}} = \Gamma_{1+s}(\tilde{w})\cup \bigcup_{\kappa\geq 2}\Gamma_{\kappa}(\tilde{w}),
\end{align}
where $\Gamma_{\kappa}(\tilde{w}):=\left\{x_0 \in \Gamma_{\tilde{w}}: \frac{\Phi_{u,x_0}(0_+)-n-2-2s}{2}=\kappa\right\}$ and $\Phi_{\tilde{w},x_0}(r)$ denotes a truncated frequency function associated with the function $\tilde{w}(x)-\phi(x)-\frac{\Delta'\phi(x_0)+\tilde{f}(x_0)}{2(2-2s)}x_{n+1}^2$ at the point $x_0\in \Gamma_{\tilde{w}}$ (c.f. Section \ref{sec:back} for more details). The set $\Gamma_{1+s}(\tilde{w})$, which is denoted as the \emph{regular free boundary}, is an open subset of $\Gamma_{\tilde{w}}$ and it is locally a $C^{1,\alpha}$ graph for some $\alpha>0$. 

\subsection{Main result}
In this article we seek to derive an improved understanding of the higher regularity properties of the regular free boundary $\Gamma_{s+1}(\tilde{w})$. Similar as in \cite{CSS} we first reduce the setting to the zero obstacle problem by considering the equation for $w=\tilde{w}-\phi$. This function then solves the Signorini problem 
\begin{equation}
\label{eq:fracLa}
\begin{split}
\nabla \cdot x_{n+1}^{1-2s} \nabla w &= x_{n+1}^{1-2s} \tilde{f} \mbox{ in } B_1^+,\\
w &\geq 0 \text{ on } B'_1,\\
 \lim\limits_{x_{n+1}\rightarrow 0_+} x_{n+1}^{1-2s}\p_{n+1} w&\leq 0 \mbox{ on } B_1',\\
\lim\limits_{x_{n+1}\rightarrow 0_+} x_{n+1}^{1-2s}\p_{n+1} w&=0 \mbox{ on } B_1'\cap\{w>0\}.
\end{split}
\end{equation}
Considering obstacles and inhomogeneities of suitably high regularity, we may assume that the resulting inhomogeneity $\tilde{f}$ is at least $C^{3,1}(B_1^+)$ regular.\\
In this set-up our main result asserts the smoothness and even analyticity of the regular free boundary.

\begin{thm}
\label{thm:ana1}
Let $w:B_1^+ \rightarrow \R$  be a solution to (\ref{eq:fracLa}) with inhomogeneity $\tilde{f}$. Assume that $\p_iw\in C^{0,s}_{loc}(B_1^+)$ for $i\in \{1,\dots, n\}$ and $x_{n+1}^{1-2s}\p_{n+1}w\in C^{0,1-s}_{loc}(B_1^+)$. 
Then, if $\tilde{f}$ is smooth, the regular free boundary $\Gamma_{s+1}(w)$ is locally smooth. If $\tilde{f}$ is real analytic, the regular free boundary $\Gamma_{s+1}(w)$ is locally real analytic.
\end{thm}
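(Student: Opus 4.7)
The strategy is to upgrade the $C^{1,\alpha}$ regularity of $\Gamma_{s+1}(w)$ (provided by \cite{CSS}) to smoothness (resp.\ real analyticity) by recasting \eqref{eq:fracLa} as a fully nonlinear PDE on a \emph{fixed} domain via a Hodograph--Legendre transform, and then applying the smooth (resp.\ analytic) implicit function theorem in Banach spaces adapted to the resulting degenerate linearization.

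\emph{Hodograph--Legendre reduction.} Fix $x_0\in\Gamma_{s+1}(w)$; after translation and a rotation in $\{x_{n+1}=0\}$ we may assume $x_0=0$ and that the blow-up of $w$ at $0$ equals the model solution $w_{1,s}$ of \eqref{eq:w1s}. Writing $r=\sqrt{x_n^2+x_{n+1}^2}$, one computes $\partial_n w_{1,s}=(r+x_n)^s$, so $(\partial_n w_{1,s})^{1/s}=r+x_n$ vanishes \emph{linearly} at the contact set. Motivated by this, I would introduce the partial Hodograph
\begin{equation*}
y''=x'',\qquad y_n=(\partial_n w(x))^{1/s},\qquad y_{n+1}=x_{n+1},
\end{equation*}
which maps the non-coincidence region diffeomorphically onto a half-space and sends $\Gamma_{s+1}(w)$ to $\{y_n=y_{n+1}=0\}$, followed by a partial Legendre transform in $x_n$ producing a potential $v(y)$. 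The Hodograph--Legendre pair converts \eqref{eq:fracLa} into a fully nonlinear equation $F(D^2 v,Dv,v,y,\tilde{f})=0$ on the fixed half-space, together with complementarity conditions on $\{y_{n+1}=0\}$ inherited from Signorini. Regularity of the graph parametrizing $\Gamma_{s+1}(w)$ is then equivalent to regularity of $v$ up to the corner $\{y_n=y_{n+1}=0\}$.

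\emph{Linear theory and implicit function theorem.} A direct computation identifies the Fr\'echet derivative of $F$ at the Legendre transform $v_{1,s}$ of $w_{1,s}$ as (a perturbation of) the fractional Baouendi--Grushin Laplacian $\mathcal{G}_s$, acting on $(y_n,y_{n+1})$ and coupled with $\Delta_{y''}$ tangentially, all against the Muckenhoupt weight $y_{n+1}^{1-2s}$. To invert this operator without loss of regularity I would build a scale of H\"older-type spaces $X^{k,\alpha}$ intrinsic to the Carnot--Carath\'eodory metric of $\mathcal{G}_s$ and to the $A_2$-weight, and prove Schauder estimates making $\mathcal{G}_s\colon X^{k,\alpha}\to X^{k-2,\alpha}$ an isomorphism modulo a finite-dimensional space of homogeneous solutions; for analyticity one additionally needs quantitative control of all iterated Grushin-derivatives of $v$. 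Once these estimates are available, the smoothness (resp.\ analyticity) of $F$ in its arguments together with the smooth (resp.\ analytic) implicit function theorem in Banach spaces produces a $C^\infty$ (resp.\ real analytic) $v$, and inverting the Hodograph--Legendre transformation delivers the claimed regularity of $\Gamma_{s+1}(w)$.

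The main obstacle I anticipate is the linear theory for $\mathcal{G}_s$: this operator is subelliptic, degenerates along the entire free boundary in a non-transverse manner, and has to be combined with the weighted conormal $A_2$-boundary behaviour on $\{y_{n+1}=0\}$ encoded in the extension formulation. Designing function spaces that simultaneously respect the Grushin geometry, the Muckenhoupt weight, and the Signorini complementarity, that are stable under the nonlinear operator $F$, and that support the uniform-in-derivative estimates required by the real-analytic implicit function theorem, is the technical heart of the argument; once this is achieved, the implicit-function step proceeds along classical lines.
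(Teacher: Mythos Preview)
Your overall strategy---Hodograph--Legendre to fix the free boundary, identify the linearization as a fractional Baouendi--Grushin operator, build adapted function spaces, and apply the analytic implicit function theorem---is exactly the paper's. But the specific Hodograph transform you write down differs from the paper's in a way that matters.

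You set $y_n=(\partial_n w)^{1/s}$ and keep $y_{n+1}=x_{n+1}$. For the model solution this gives $y_n=r+x_n$, $y_{n+1}=x_{n+1}$, which collapses the entire contact set $\{x_{n+1}=0,\,x_n\le 0\}$ to the edge $\{y_n=y_{n+1}=0\}$; the map is not injective on the closed half-ball, and the Dirichlet information on $\Lambda_w$ has nowhere to live. The paper instead transforms \emph{both} normal coordinates, taking $y_n^{2s}=\partial_n w$ and $y_{n+1}^{2(1-s)}=-c_s\,x_{n+1}^{1-2s}\partial_{n+1}w$. On the model this is the square-root map $y_n+iy_{n+1}=(x_n+ix_{n+1})^{1/2}$, which \emph{opens} the slit half-space into the quarter space $Q_+=\{y_n\ge 0,\,y_{n+1}\ge 0\}$: the contact set becomes the face $\{y_n=0\}$ (Dirichlet), the non-coincidence set becomes $\{y_{n+1}=0\}$ (Neumann), and the free boundary is the edge $P=\{y_n=y_{n+1}=0\}$. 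This is what produces the clean linearization $\Delta_{G,s}$ with weight $(y_ny_{n+1})^{1-2s}$ rather than just $y_{n+1}^{1-2s}$, and it is why the function spaces must encode asymptotics at both faces as well as at $P$.

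A second, smaller difference: you propose a scale $X^{k,\alpha}$ and a bootstrap. The paper avoids this by constructing a single pair $(X_{\alpha,\epsilon},Y_{\alpha,\epsilon})$ on which the nonlinear operator is analytic and the linearization is invertible, and then applies the analytic implicit function theorem once to the family $a\mapsto v\circ\Phi_a$, where $\Phi_a$ is an $(n{-}1)$-parameter family of diffeomorphisms acting as tangential translations near $P$. Analytic dependence on $a$ immediately yields analyticity of the free-boundary graph $y''\mapsto -\frac{1}{2s}y_n^{1-2s}\partial_n v|_{(y'',0,0)}$ in one stroke, with no iteration.
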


Here the assumption on the smoothness of the inhomogeneity is due to the desire of avoiding technicalities as far as possible. We however emphasize that the situation of smooth inhomogeneities is not the only case in which our arguments hold (c.f. Remark \ref{rmk:inhom} in Section \ref{sec:IFT}). Also, the assumption on the regularity of the solution $w$ is not a major restriction. For example, it covers the problem studied in \cite{CSS}.

\subsection{Strategy of the proof} In order to infer the result of Theorem \ref{thm:ana1}, we rely on a partial Hodograph-Legendre transform, a precise analysis of the resulting fully nonlinear, degenerate (sub)elliptic equation, and the implicit function theorem. We discuss these ingredients in greater detail in the sequel.\\ 

\emph{Definition of the Hodograph-Legendre transform.} Seeking to fix and straighten the free boundary, we carry out a (partial) Hodograph-Legendre transform \cite{KN77} of the problem at hand. In this context, the choice of the dependent and independent variables requires certain care: In contrast to the case $s=\frac{1}{2}$, which corresponds to the classical thin obstacle problem, the choice
\begin{align*}
y'' = x'', \ y_n = \p_n w, \ y_{n+1} = x_{n+1}^{1-2s}\p_{n+1}w,
\end{align*}
of which one would hope that it suffices to fix the free boundary, is not ideal. Indeed, considering the model solution $w_{1,s}$ from \eqref{eq:w1s} yields
\begin{align*}
\p_nw_{1,s}(x)&=\left(\sqrt{x_n^2 + x_{n+1}^2}+x_n\right)^s, \\
 x_{n+1}^{1-2s}\p_{n+1}w_{1,s}(x)&=\frac{s}{s-1}\left(\sqrt{x_n^2+x_{n+1}^2}-x_n\right)^{1-s}. 
\end{align*}
This indicates that
\begin{align}
\label{eq:LH}
y'' = x'', \ y_n^{2s} = \p_n w, \ y_{n+1}^{2(1-s)} = -c_s x_{n+1}^{1-2s}\p_{n+1}w,
\end{align}
for some $c_s>0$ provides a better choice of dependent and independent variables. Simplifying, we see that this change of coordinates then corresponds to the square root mapping
\begin{align*}
y''=x'', \ y_n = \Ree(x_n + ix_{n+1})^{1/2}, \ y_{n+1} = \Imm(x_n + i x_{n+1})^{1/2},
\end{align*}
which was already used in the analysis of \cite{KPS14}. The Hodograph-Legendre transformation hence maps the upper half-plane into the upper quarter space $Q_+:=\{y\in \R^{n+1}: y_n \geq 0, y_{n+1}\geq 0\}$ and maps the free boundary into the co-dimension two hyperplane $P:=\{y\in \R^{n+1}:y_n=y_{n+1}=0\}$. \\
Indeed, this heuristic argument for using (\ref{eq:LH}) is made rigorous by a careful analysis of solutions to (\ref{eq:fracLa}), for which we prove a leading order asymptotic expansion around the free boundary in terms of the model solution $w_{1,s}$ (c.f. Proposition \ref{prop:asymp1}).
As in \cite{KPS14} this analysis plays a central role, since general solutions to (\ref{eq:fracLa}) are not regular enough to prove the invertibility of the Hodograph-Legendre transform (\ref{eq:LH}) by means of the classical implicit function theorem. Instead, we use the asymptotics at the free boundary combined with elliptic estimates in annuli around it to deduce the invertibility of the transformation (c.f. Proposition \ref{prop:invert}).\\

\emph{Fractional fully nonlinear subelliptic equation.} As in \cite{KPS14} a second main step consists of analyzing the transformed equation. Defining the Legendre function as 
\begin{align*}
v(y):= w(x)- x_n y_n^{2s} + \frac{1}{2(1-s)} x_{n+1}^{2s} y_{n+1}^{2(1-s)},
\end{align*}
where $w$ is a solution to (\ref{eq:fracLa}), we note that the free boundary is parametrized as
\begin{align*}
x_n = - \frac{1}{2s}y_n^{1-2s}\p_n v(y)|_{y=(y'',0,0)}.
\end{align*}
Hence, seeking to deduce regularity of the regular free boundary, we study the regularity of the Legendre function $v$.
However, while the Hodograph-Legendre transform allows us to fix the free boundary, it comes at the expense of transforming our linear equation (\ref{eq:fracLa}) into a fully nonlinear, degenerate (sub)elliptic Monge-Amp\`ere type equation. Yet, as in the case of the thin obstacle problem, it is possible to deduce a certain structure for this equation and to view it as a perturbation of a \emph{fractional Baouendi-Grushin Laplacian} $\Delta_{G,s}=\sum\limits_{i=1}^{n+1}Y_i\omega(y)Y_i$, where $Y_i$, $i\in\{1,\dots,n+1\}$, denote the classical Baouendi-Grushin vector fields (c.f. Definition \ref{defi:BGgeo}) and where the weight $\omega(y)=(y_ny_{n+1})^{1-2s}$ for $s\in (0,1)$ belongs to Muckenhoupt class $A_2$.\\
To avoid a bootstrap argument in proving the higher (partial) regularity result, we apply the implicit function theorem as in \cite{KRSIII} (relying on the observation that the subelliptic structure is translation invariant in the tangential variables $y''$). Here the definition of suitable function spaces (such that conditions of the Banach implicit function theorem are satisfied) plays a pivotal role. These function spaces can be viewed as weighted generalizations of the generalized H\"older spaces from \cite{KRSIII}. Compared with \cite{KRSIII} the fractional character of the equation poses additional difficulties in constructing the spaces. The correct choice of the weights is of central importance (c.f. next point below). \\

\emph{Analyticity of the functional, function spaces.} Compared to the situation $s=\frac{1}{2}$, we encounter a further complication related to the additional ``fractional weight'' in our fully nonlinear operator: Due to our choice of dependent and independent coordinates in (\ref{eq:LH}), the fully nonlinear equation for the Legendre function $v$ involves \emph{non-integer} powers of (derivatives) of $v$. Seeking to prove analyticity of the Legendre function by means of the analytic implicit function theorem as in \cite{KRSIII}, therefore requires a careful choice of the function spaces to ensure that the resulting operator still yields an \emph{analytic} mapping from the domain into the image space.\\ To this end, we introduce \emph{weighted} versions of the generalized Hölder spaces from \cite{KRSIII}. We recall that the generalized Hölder spaces in \cite{KRSIII} were constructed in order to mimic the asymptotic expansion of our Legendre functions at the straightened boundary $P=\{y\in\R^{n+1}:y_n=y_{n+1}=0\}$ and were motivated by Campanato type norms \cite{Ca64}. 
In the setting of the fractional Baoeundi-Grushin operator, spaces which only reflect the asymptotic behavior at $P$ do not suffice: Due to the presence of the Muckenhoupt weight $(y_ny_{n+1})^{1-2s}$, our spaces also have to capture the asymptotic behavior at the planes $\{y_n=0\}$ and $\{y_{n+1}=0\}$ where the weights degenerate. By interpolating between the asymptotics at $P$ and the asymptotics at $\{y_n=0\}\cup \{y_{n+1}=0\}$, we construct weighted H\"older spaces with respect to the intrinsic Baouendi-Grushin geometry which are adapted to our problem. We show that with these choices the nonlinear operator is an analytic map from its domain into the image space. Moreover, by deducing ``Schauder type'' apriori estimates for the fractional Baoeundi-Grushin operator in our generalized H\"older spaces, we prove that the linearization of the nonlinear operator at $v$ is invertible. Thus, the analytic implicit function theorem can be applied in our spaces, which then yields our main result. 

\subsection{Context and literature}
In this section we relate the fractional thin obstacle problem to the obstacle problem for the fractional Laplacian and provide some background on the literature on these problems.

\subsubsection{Relation to the fractional obstacle problem}
\label{sec:back}
Let us consider the \emph{obstacle problem for the fractional Laplacian} $(-\Delta)^s$, $s\in (0,1)$: Given a function $\va:\R^n \rightarrow \R$ with rapid decay at infinity, one seeks a function $u$ with $\lim_{|x|\rightarrow \infty}u(x)=0$ which satisfies
\begin{align}
\label{eq:obst_frac}
\min\{(-\Delta)^s u(x), u(x)-\va(x)\}=0, \quad x\in \R^n.
\end{align}
Here $(-\Delta)^s$ is the fractional Laplacian, which for $s\in(0,1)$ can be defined as an integral operator
$$(-\Delta)^su(x):=c_{n,s}\ p.v.\int_{\R^n}\frac{u(x)-u(y)}{|x-y|^{n+2s}}dy,$$
and $c_{n,s}$ denotes a universal constant depending on $n,s$. In \cite{Si} Silvestre considered the existence and regularity of the solution to the obstacle problem \eqref{eq:obst_frac}. For $\phi \in C^{2}(\R^n)$ he proved that there exists a solution $u$ which is $C^{1,\beta}(\R^n)$ regular for all $\beta \in (0,s)$ and $(-\Delta )^s u \in C^{0,\gamma}$ for all $\gamma\in (0,1-s)$.\\

The relationship between the obstacle problem for the fractional Laplacian \eqref{eq:obst_frac} and the Signorini problem (\ref{eq:fracLa}) is established by the Dirichlet-to-Neumann map for the degenerate elliptic operator $L_s:=\nabla \cdot x_{n+1}^{1-2s}\nabla $. More precisely, given $u\in C_c(\R^n)$, we extend it to the upper half space $\R^{n+1}_+=\R^n\times \R_+$ by solving the Dirichlet problem
\begin{align*}
L_s \tilde{w}(x)=0 \text{ in } \R^{n+1}_+,\quad 
\tilde{w}(x',0)=u(x') \text{ on } \R^n\times\{0\}.
\end{align*} 
Then $\tilde{w}$ satisfies
\begin{align*}
\lim_{x_{n+1}\rightarrow 0_+} c_{n,s} x_{n+1}^{1-2s}\p_{n+1} \tilde{w}(x',x_{n+1})=-(-\Delta)^s u(x'),
\end{align*}
where $c_{n,s}>0$ is an only dimension and $s$ dependent constant (c.f. \cite{CaS}). Using this characterization, the obstacle problem for the fractional Laplacian $(-\Delta)^s$ can be reformulated as a Signorini problem for the degenerate elliptic operator $L_s$:
\begin{equation*}
\begin{split}
L_s \tilde{w} =0 &\text{ in } \R^{n+1}_+,\\
\tilde{w} \geq \va, \ \lim\limits_{x_{n+1}\rightarrow 0_+} x_{n+1}^{1-2s}\p_{n+1}\tilde{w}\leq 0, \ (\tilde{w}-\va)(\lim\limits_{x_{n+1}\rightarrow 0_+} x_{n+1}^{1-2s}\p_{n+1}\tilde{w})=0 &\text{ on } \R^{n}\times\{0\}.
\end{split}
\end{equation*}
Localizing the above problem by considering $w:=\tilde{w}\eta$, where $\eta$ is a radial cut-off function which is equal to one in $B_1^+$, we obtain the problem \eqref{eq:thin_00} with obstacle $\phi=\va \eta$.\\

Conversely, assume that $\tilde{w}\in L^{\infty}(B_1^+)$ is a solution to \eqref{eq:thin_00} with sufficiently regular obstacle $\phi:B_1'\rightarrow \R$ and sufficiently regular inhomogeneity $\tilde{f}$. Following the argument of Lemma 4.1. in \cite{CSS}, we can transform the local problem (\ref{eq:thin_00}) into a global problem of the form (\ref{eq:obst_frac}). 
Let us explain this reduction: As in Lemma~4.1 of \cite{CSS}, we consider the function $\tilde{w}-\phi$ and extend it globally by defining $\overline{w}:=(\tilde{w}-\phi)\eta$, where $\eta$ denotes a radial cut-off function which is supported in $B_{3/4}^+$ and which is equal to one in $B_{1/2}^+$.  
Then, $\overline{w}$ satisfies $L_s\overline{w}=x_{n+1}^{1-2s}\tilde{g}$ for a compactly supported function $\tilde{g}$ (which is computed in terms of $\tilde{f}, \eta, \phi$) with unchanged Neumann data (which is a consequence of the radial dependence of $\eta$). In particular, the inhomogeneity is non-trivial in general. To remedy this and to reduce the situation to that of the Caffarelli-Silvestre extension, we consider an auxiliary function $w$ which solves the equation $L_s w=x_{n+1}^{1-2s}\tilde{g}$ in $\R^{n+1}_+$ with $w=0$ on $\R^n\times\{0\}$. Then the function $\overline{w}-w$ satisfies
\begin{align*}
L_s (\overline{w}-w) &= 0 \mbox{ in } \R^{n+1}_+,\\
\overline{w}-w &= \eta(\tilde{w}-\phi) \mbox{ on } \R^{n}\times\{0\},\\
\lim\limits_{x_{n+1}\rightarrow 0_+} x_{n+1}^{1-2s} \p_{n+1} (\overline{w}-w) &\leq -\lim\limits_{x_{n+1}\rightarrow 0_+} x_{n+1}^{1-2s} \p_{n+1} w \mbox{ on } \R^{n}\times\{0\},\\
\lim\limits_{x_{n+1}\rightarrow 0_+} x_{n+1}^{1-2s} \p_{n+1} (\overline{w}-w) &=- \lim\limits_{x_{n+1}\rightarrow 0_+} x_{n+1}^{1-2s} \p_{n+1} w \mbox{ on } (\R^{n}\times\{0\})\cap\{\tilde{w}-\phi>0\}.
\end{align*}
Thus, the function
 $\tilde{u}(x'):=(\overline{w}-w)(x',0)$ solves the following problem in $\R^n$: 
 \begin{equation}\label{eq:nonlocal_reg}
 \begin{split}
 \tilde{u}\geq 0, \quad (-\Delta)^{s}\tilde{u}&\geq \psi \text{ in }\R^n,\\
 (-\Delta)^{s}\tilde{u}&=\psi\text{ in } \R^n\cap\{\tilde{u}(x')>0\},
 \end{split}
 \end{equation}
Here $\psi(x'):=c_{n,s}\lim_{x_{n+1}\rightarrow 0_+}x_{n+1}^{1-2s}\p_{n+1}w$. 
Setting $\va:=(-\D)^{-s} \psi$ and $u(x'):=\tilde{u}(x')-\va(x')$ then turns this into an obstacle problem (\ref{eq:obst_frac}) for the fractional Laplacian:
\begin{align*}
\min\{(-\D)^s u(x'), u(x')+\va(x')\}\geq 0, \quad  x'\in \R^{n}.
\end{align*}
In this (slightly restricted) sense the fractional thin obstacle problem (\ref{eq:thin_00}) and the thin obstacle problem for the fractional Laplacian (\ref{eq:obst_frac}) can be regarded as equivalent.\\

Motivated by the available regularity results for the obstacle problem for the fractional Laplacian (c.f. \cite{Si}, \cite{CaS}) and the described (slightly restricted) equivalence of the local and nonlocal problems (\ref{eq:thin_00}) and (\ref{eq:obst_frac}), it can be expected that solutions to (\ref{eq:thin_00}) enjoy analogous optimal regularity results as the ones described in (\ref{eq:opt_reg}). Indeed, using the (generalized) frequency function, the characterizations of global homogeneous solutions in two-dimensions (and a reduction to this following the argument of Remark \rmkeigen) and regularity estimates as in \cite{U87} allows us to prove this optimal regularity result by purely local means. As in the sequel we are however mainly interested in \emph{higher} regularity properties, we do not further elaborate on the details of this point, but will instead always assume some initial regularity (c.f. assumption (A2) in Section \ref{sec:setup}).

\subsubsection{Almgren frequency function and blow-ups} 
\label{sec:blowup}
In analyzing solutions to the fractional thin obstacle problem, a key tool in \cite{CSS} consists of a truncated frequency function: Assuming that
$w$ is a solution to \eqref{eq:fracLa} with $w(0)=0$, we reflect $w$ evenly about $x_{n+1}$ and set $\tilde{w}(x):=w(x)-\frac{\tilde{f}(0)}{2(2-2s)}x_{n+1}^2$. Setting
\begin{align*}
F_{w,0}(r):=\int_{\p B_r} |\tilde{w}(x)|^2 |x_{n+1}|^{1-2s} d\sigma,
\end{align*}
then allows us to define the modified frequency function at $0$
\begin{align*}
r\mapsto \Phi_{w,0}(r):=(r+C_0r^2)\frac{d}{dr}\log \max (F_w(r), r^{n+(1-2s)+4}).
\end{align*}
Its relevance stems from the fact that it is a monotone quantity.
Moreover, it satisfies the following dichotomy (Lemma 6.1 in \cite{CSS}):
$$\text{ Either } \Phi_{w,0}(0_+)=n+(1-2s)+2(1+s) \text{ or } \Phi_{w,0}(0_+)\geq n+(1-2s)+4.$$
In particular, this yields the decomposition into the regular free boundary $\Gamma_{1+s}(w)$ and the remaining free boundary (c.f. (\ref{eq:Gamma_reg})). Furthermore, for each $x_0\in \Gamma_{1+s}(w)$, there exists a blow-up sequence $w_{r_j,x_0}(x)=w(x_0+r_jx)/(r^{-(n+1-2s)}F_{w,x_0}(r_j))^{1/2}$, such that
\begin{align*}
w_{r_j,x_0}&\rightarrow w_{x_0} \text{ uniformly  in }B_{1/2}^+,\\
\nabla'w_{r_j,x_0}&\rightarrow \nabla' w_{x_0} \text{ uniformly  in }B_{1/2}^+,\\
x_{n+1}^{1-2s}\p_{n+1}w_{r_j,x_0}&\rightarrow x_{n+1}^{1-2s}\p_{n+1}w_{x_0} \text{ uniformly  in }B_{1/2}^+.
\end{align*}
Here $w_{x_0}(x):=c_{n,s}w_{1+s}(Qx)$, $Q\in SO(n+1)$ is a rotation matrix which might depend on the choice of the converging subsequence, $c_{n,s}$ is a normalization constant and $w_{1,s}$ is the function from (\ref{eq:w1s}). This in particular exemplifies the role of $w_{1,s}$ as a model solution: It is unique blow-up profile at the regular free boundary. It has a flat free boundary (c.f. Proposition 6.3 in \cite{CSS}). With this at hand, regularity of the regular free boundary was shown in \cite{CSS} by means of the comparison principle and a boundary Harnack inequality.\\

Building on the these results in \cite{CSS}, our main statement, Theorem \ref{thm:ana1}, translates into the analyticity (smoothness) of the regular free boundary of the obstacle problem for the fractional Laplacian:
\begin{thm}
\label{thm:ana2}
Let $u:\R^{n} \rightarrow \R$ be a solution of the obstacle problem for the fractional Laplacian (\ref{eq:obst_frac}) with obstacle $\va: \R^{n}\rightarrow \R$. Then if $\va$ is smooth, the regular free boundary $\Gamma_{1+s}(u)$ is locally smooth. If moreover $\va$ is real analytic, the regular free boundary $\Gamma_{1+s}(u)$ is locally real analytic.
\end{thm}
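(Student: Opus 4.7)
The plan is to deduce Theorem \ref{thm:ana2} as a direct consequence of Theorem \ref{thm:ana1} via the Caffarelli--Silvestre extension discussed in Section \ref{sec:back}. Given a solution $u$ of (\ref{eq:obst_frac}) with obstacle $\va$, I would first consider its $L_s$-harmonic extension $\tilde{w}$ to $\R^{n+1}_+$, i.e. $L_s\tilde{w}=0$ in $\R^{n+1}_+$ and $\tilde{w}(\cdot, 0)=u$ on $\R^n\times\{0\}$. Combining the obstacle inequalities $u\geq\va$, $(-\Delta)^s u\geq 0$, the vanishing $(-\Delta)^s u=0$ on $\{u>\va\}$ with the Dirichlet-to-Neumann identification $\lim_{x_{n+1}\to 0_+} c_{n,s}\,x_{n+1}^{1-2s}\p_{n+1}\tilde{w}=-(-\Delta)^s u$ shows that $\tilde{w}$ solves the Signorini problem (\ref{eq:thin_00}) on $\R^{n+1}_+$ with obstacle $\va$ and zero inhomogeneity.

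Next I would match the zero-obstacle formulation of Theorem \ref{thm:ana1}: extend $\va$ trivially by $\tilde{\va}(x):=\va(x')$ and set $w:=\tilde{w}-\tilde{\va}$. A direct computation using that $\tilde{\va}$ is independent of $x_{n+1}$ yields $L_s\tilde{\va}=x_{n+1}^{1-2s}\Delta'\va$, so $w$ satisfies (\ref{eq:fracLa}) on $\R^{n+1}_+$ with inhomogeneity $\tilde{f}=-\Delta'\va$; crucially, smoothness (respectively real analyticity) of $\va$ is inherited verbatim by $\tilde{f}$, and the complementary boundary conditions $w\geq 0$, $\lim x_{n+1}^{1-2s}\p_{n+1}w\leq 0$, $\lim x_{n+1}^{1-2s}\p_{n+1}w=0$ on $\{w>0\}=\{u>\va\}$ are preserved since $\tilde{\va}$ is $x_{n+1}$-independent. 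Fixing a point $x_0'\in \Gamma_{1+s}(u)$, I would then consider the rescaled function $w_\rho(x):=w(x_0'+\rho x)$ for small $\rho>0$. A standard scaling computation gives $L_sw_\rho(x)=\rho^2\,x_{n+1}^{1-2s}\tilde{f}(x_0'+\rho x)$, so $w_\rho$ solves (\ref{eq:fracLa}) on $B_1^+$ with a smooth (respectively analytic) inhomogeneity. The initial regularity hypotheses $\p_iw_\rho\in C^{0,s}_{loc}(B_1^+)$ and $x_{n+1}^{1-2s}\p_{n+1}w_\rho\in C^{0,1-s}_{loc}(B_1^+)$ follow from the optimal regularity (\ref{eq:opt_reg}) for the extension $\tilde{w}$ from \cite{CSS} combined with the smoothness of $\tilde{\va}$. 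Theorem \ref{thm:ana1} then yields local smoothness (resp.~analyticity) of $\Gamma_{1+s}(w_\rho)$ near the origin, which upon undoing the rescaling gives the corresponding regularity of $\Gamma_{1+s}(w)$ near $x_0'$.

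The main step that requires a separate verification, and which I expect to be the only subtle point, is the identification of the regular strata $\Gamma_{1+s}(w)=\Gamma_{1+s}(u)$ as subsets of $\R^n$. The regular free boundary of $u$ is defined via the truncated Almgren frequency function associated with the Caffarelli--Silvestre extension $\tilde{w}$, as recalled in Section \ref{sec:blowup}. Since $\tilde{\va}$ is smooth and independent of $x_{n+1}$, subtracting it from $\tilde{w}$ perturbs this frequency function only by lower-order contributions and preserves both the $(1+s)$-homogeneous blow-up profile $c_{n,s}w_{1,s}(Q\,\cdot\,)$ and the frequency dichotomy $\Phi_{w,x_0}(0_+)\in\{n+(1-2s)+2(1+s)\}\cup[n+(1-2s)+4,\infty)$. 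This identification is a routine consequence of the blow-up analysis already employed in \cite{CSS}, after which Theorem \ref{thm:ana2} is an immediate corollary of Theorem \ref{thm:ana1}.
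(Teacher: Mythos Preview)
Your proposal is correct and matches the paper's approach: the paper does not give a separate proof of Theorem~\ref{thm:ana2} but presents it as the translation of Theorem~\ref{thm:ana1} to the nonlocal setting via the Caffarelli--Silvestre extension and the reduction to zero obstacle described in Section~\ref{sec:back}. Your direct route---extending $u$, subtracting the $x_{n+1}$-independent obstacle, and invoking the optimal regularity and frequency dichotomy from \cite{CSS}---is exactly the intended argument.
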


\subsubsection{Literature}
After the seminal articles of Silvestre \cite{Si} and of Caffarelli, Salsa and Silvestre \cite{CSS}, the thin obstacle problem has been studied by various authors with different focuses: For instance, Barrios, Figalli and Ros-Oton \cite{BFRO} study the regularity of the free boundary in the obstacle problem for the fractional Laplacian under the assumption that the obstacle $\va$ satisfies $\Delta\va\leq 0$ near the contact region. Petrosyan and Pop \cite{PP15} investigate the effects of the presence of drift terms on the the optimal regularity of solutions to the fractional obstacle problem. A further analysis of the free boundary regularity in this situation including drifts has been carried out in \cite{GPPSVG15}. Recently, fully nonlinear versions of the fractional obstacle have been addressed by Caffarelli, Ros-Oton and Serra \cite{CROS16}. \\

In spite of these activities to the best of our knowledge the higher regularity of the regular free boundary has not yet been addressed in the case of the fractional thin obstacle problem with general $s\in(0,1)$, but has up to now been restricted to the case $s=1/2$: In the case that $s=1/2$ the analyticity of the regular free boundary was proved by Koch, Petrosyan and Shi in \cite{KPS14} by relying on the Legendre-Hodograph transform. Simultaneously, but building on higher order boundary Harnack estimates, De Silva and Savin \cite{DSS14} showed the $C^{\infty}$ smoothness of the free boundary. Finally, in \cite{KRSIII} the higher regularity properties of the regular free boundary are studied depending on the (potentially low regularity) of the present variable coefficient metrics and inhomogeneities.

\subsection{Organization of the article}
The remainder of the article is organized as follows: After briefly summarizing and explaining our main assumptions and notations in Section \ref{sec:pre}, we deduce the asymptotic behavior of solutions (around the regular free boundary) in Section \ref{sec:asymp}. Here we argue in two steps and first construct barrier functions, prove a comparison result and a boundary Harnack inequality. Then we apply these tools to infer a leading order asymptotic expansion of solutions around the free boundary (Proposition \ref{prop:asymp1}) and a priori regularity estimates around the free boundary (Proposition \ref{prop:asymp2}). Building on these, in Section \ref{sec:HL} we then introduce the Hodograph-Legendre transform, show its invertibility (c.f. Proposition \ref{prop:invert}) and deduce the fully nonlinear equation which is satisfied by the Legendre function (c.f. Proposition \ref{prop:eqnonlin}). In Section \ref{sec:asymp1}, we translate the asymptotic behavior which was deduced in Section \ref{sec:asymp} in the original variables into the Legendre variables  (c.f. Propositions \ref{prop:asymp_v1}, \ref{prop:asymp_v2}). Motivated by the structure of the model solution in Legendre variables (c.f. Example \ref{ex:model}), we then define a suitable intrinsic geometry adapted to the nonlinear operator in Section \ref{sec:function_spaces}. Based on this, we introduce the function spaces which we are using to describe the mapping properties of the nonlinear equation and its linearization (c.f. Definition \ref{defi:XY}). With the aid of the new geometry we in particular conclude that the Legendre function lies in these function spaces (c.f. Corollary \ref{cor:reg_v}). Relying on this observation, in Section \ref{sec:mapping_prop} we discuss the mapping properties of the nonlinear and linearized operators (c.f. Propositions \ref{prop:analyticity}, \ref{prop:perturb}), which in Section \ref{sec:IFT} is used to invoke the implicit function theorem and to prove Theorem \ref{thm:ana1}.\\
Finally in two appendices, we discuss various auxiliary results. Here the appendices are structured such that in Appendix A, c.f. Section \ref{sec:AppA}, various regularity results are collected and proved which might be of independent interest (c.f. Propositions \ref{prop:Dirichlet}, \ref{prop:Neumann}, \ref{prop:approx_main} and the eigenfunction characterization in Section \ref{sec:Eigenf}). This in particular includes the explicit computation of the higher order eigenfunctions to the fractional Laplacian in the (flat) slit domain with mixed Dirichlet-Neumann data (c.f. Lemma \ref{lem:2Deigen} and Proposition \ref{prop:nD_mDN} in Section \ref{sec:Eigenf}). In Appendix B (c.f. Section \ref{sec:AppB}), we provide the proofs of various results which are used in the main body of the text (e.g. Propositions \ref{prop:decomp}, \ref{prop:Banach} and \ref{prop:map}), but which we decided to prove later, in order to clarify the structure of our main argument in Sections \ref{sec:asymp}-\ref{sec:mapping_prop}.

\section{Preliminaries}
\label{sec:pre}

\subsection{Set-up}
\label{sec:setup}
In this paper, we will study the higher regularity of the free boundary around regular free boundary points. We start with the following observation:

\begin{prop}
\label{prop:inhomo_0}
Let $\tilde{w}$ be a solution of \eqref{eq:fracLa} with $\tilde{f}\in C^{3,1}(B_1^+)$. Then, 
\begin{align*}
\bar{w}(x):=\tilde{w}(x)-\frac{1}{2(2-2s)}\tilde{f}(x',0)x_{n+1}^2-\frac{1}{3(3-2s)}\p_{n+1}\tilde{f}(x',0)x_{n+1}^3
\end{align*}
is a solution to the Signorini problem
\begin{equation*}
\begin{split}
\nabla \cdot x_{n+1}^{1-2s} \nabla \bar{w} = x_{n+1}^{3-2s} f &\mbox{ in } B_1^+,\\
\bar{w} \geq 0,\
 \lim\limits_{x_{n+1}\rightarrow 0_+} x_{n+1}^{1-2s}\p_{n+1}\bar{w}\leq 0,\ 
\tilde{w} \lim\limits_{x_{n+1}\rightarrow 0_+} x_{n+1}^{1-2s}\p_{n+1}\bar{w}=0 &\mbox{ on } B_1',
\end{split}
\end{equation*}
where $f(x)\in C^{0,1}(B_1^+)$ and 
\begin{align*}
f(x)&:=\left(\tilde{f}(x)-\tilde{f}(x',0)-\p_{n+1}\tilde{f}(x',0)x_{n+1}\right)x_{n+1}^{-2}\\
&-\frac{1}{2(2-2s)}\Delta' \tilde{f}(x',0)-\frac{1}{3(3-2s)}\Delta'\p_{n+1}f(x',0)x_{n+1}.
\end{align*}
In particular, the free boundary of $w$ remains unchanged, i.e. $\Gamma_{\bar{w}}=\Gamma_{\tilde{w}}.$
\end{prop}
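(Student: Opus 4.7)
The proof is essentially a direct calculation, organized around the elementary identity
\begin{equation*}
\nabla\cdot x_{n+1}^{1-2s}\nabla\bigl(g(x')\,x_{n+1}^k\bigr)
=k(k-2s)\,g(x')\,x_{n+1}^{k-1-2s}+\D' g(x')\,x_{n+1}^{k+1-2s},\qquad k\ge 2.
\end{equation*}
This identity dictates the coefficients $\tfrac{1}{2(2-2s)}$ and $\tfrac{1}{3(3-2s)}$ appearing in the correction: with $h_1(x):=\tfrac{1}{2(2-2s)}\tilde f(x',0)\,x_{n+1}^2$ and $h_2(x):=\tfrac{1}{3(3-2s)}\p_{n+1}\tilde f(x',0)\,x_{n+1}^3$, the identity yields
\begin{align*}
\nabla\cdot x_{n+1}^{1-2s}\nabla h_1&= \tilde f(x',0)\,x_{n+1}^{1-2s}+\tfrac{1}{2(2-2s)}\D'\tilde f(x',0)\,x_{n+1}^{3-2s},\\
\nabla\cdot x_{n+1}^{1-2s}\nabla h_2&= \p_{n+1}\tilde f(x',0)\,x_{n+1}^{2-2s}+\tfrac{1}{3(3-2s)}\D'\p_{n+1}\tilde f(x',0)\,x_{n+1}^{4-2s}.
\end{align*}

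Subtracting these two expressions from the equation $\nabla\cdot x_{n+1}^{1-2s}\nabla\tilde w=x_{n+1}^{1-2s}\tilde f$ gives, after factoring out $x_{n+1}^{3-2s}$,
\begin{equation*}
\nabla\cdot x_{n+1}^{1-2s}\nabla\bar w
=x_{n+1}^{3-2s}\!\left[\frac{\tilde f(x)-\tilde f(x',0)-\p_{n+1}\tilde f(x',0)\,x_{n+1}}{x_{n+1}^2}-\tfrac{\D'\tilde f(x',0)}{2(2-2s)}-\tfrac{\D'\p_{n+1}\tilde f(x',0)}{3(3-2s)}x_{n+1}\right],
\end{equation*}
which is exactly the inhomogeneity $x_{n+1}^{3-2s} f$ with $f$ as stated. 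The hypothesis $\tilde f\in C^{3,1}(B_1^+)$ is used precisely here: a second-order Taylor expansion of $\tilde f$ in the $x_{n+1}$-variable gives $\tilde f(x)-\tilde f(x',0)-\p_{n+1}\tilde f(x',0)\,x_{n+1}=\tfrac12\p_{n+1}^2\tilde f(x',0)\,x_{n+1}^2+O(x_{n+1}^3)$ with $C^{0,1}$ remainder, so the bracketed expression indeed defines a $C^{0,1}(B_1^+)$ function.

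It remains to check that the Signorini structure is preserved on $B_1'$. Since $h_1$ and $h_2$ both vanish at $x_{n+1}=0$, we have $\bar w(x',0)=\tilde w(x',0)$, so the obstacle condition $\bar w\ge 0$, the coincidence set, and hence the free boundary are unchanged. For the Neumann condition one computes $x_{n+1}^{1-2s}\p_{n+1}h_1=\tfrac{1}{2-2s}\tilde f(x',0)\,x_{n+1}^{2-2s}$ and $x_{n+1}^{1-2s}\p_{n+1}h_2=\tfrac{1}{3-2s}\p_{n+1}\tilde f(x',0)\,x_{n+1}^{3-2s}$, both of which vanish as $x_{n+1}\to 0_+$ (here $s\in(0,1)$ ensures $2-2s>0$). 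Consequently $\lim_{x_{n+1}\to 0_+}x_{n+1}^{1-2s}\p_{n+1}\bar w=\lim_{x_{n+1}\to 0_+}x_{n+1}^{1-2s}\p_{n+1}\tilde w$ pointwise on $B_1'$, so the Signorini complementary conditions transfer verbatim from $\tilde w$ to $\bar w$. There is no real obstacle in the proof; the only conceptual content is the choice of subtraction, which is forced by the displayed identity so as to peel off the two leading $x_{n+1}$-Taylor modes of the inhomogeneity.
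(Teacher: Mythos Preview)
Your proof is correct and follows exactly the approach the paper indicates: the paper's own proof consists of the single sentence ``The statement follows from a direct computation and a Taylor expansion of $\tilde{f}$ at $\{x_{n+1}=0\}$,'' and you have supplied precisely those details. The key identity you display, the verification of the boundary conditions via the vanishing of $x_{n+1}^{1-2s}\p_{n+1}h_i$ as $x_{n+1}\to 0_+$, and the observation that $\bar w|_{B_1'}=\tilde w|_{B_1'}$ (hence $\Gamma_{\bar w}=\Gamma_{\tilde w}$) are all correct and complete.
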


\begin{proof}
The statement follows from a direct computation and a Taylor expansion of $\tilde{f}$ at $\{x_{n+1}=0\}$. 
\end{proof}

Compared to the problem (\ref{eq:fracLa}), the change from $\tilde{w}$ to $\bar{w}$ provides additional decay of the order $x_{n+1}^2$ for the inhomogeneity. This has the advantage that we can treat the cases $s\in(0,1/2]$ and $s\in(1/2,1)$ simultaneously in our analysis (c.f. Remark \ref{rmk:inhom}). In particular, we can work with the same function spaces (c.f. Section \ref{sec:function_spaces}) in both cases. As in this article we are primarily interested in smooth or analytic inhomogeneities, the loss in the derivatives which is involved in this reformulation does not pose any restrictions onto our framework. Since we are primarily interested in the regularity of the free boundary, and since $\Gamma_{\bar{w}}=\Gamma_{\tilde{w}}$, in the sequel we mainly consider \eqref{eq:fracLa_2} instead of \eqref{eq:fracLa}.\\

We recall that our equation enjoys the following scaling and multiplication symmetries:
\begin{lem}[Scaling and multiplication symmetries]
\label{lem:sym}
Let $w:B_1^+ \rightarrow \R$ be a solution to (\ref{eq:fracLa}) and consider constants $c>0$, $\lambda>0$ and a point $x_0\in B_{1}'$. Then in $B_{r}^+$ with $r\in(0,\lambda^{-1}(1-|x_0|))$ the function
\begin{align*}
x\mapsto w_{c,\lambda,x_0}(x):=c w(x_0 + \lambda x),
\end{align*}
is a solution of
\begin{align*}
\nabla \cdot x_{n+1}^{1-2s} \nabla  w_{c,\lambda,x_0} = x_{n+1}^{1-2s} f_{c,\lambda, x_0},
\end{align*}
with Signorini boundary conditions. Here $f_{c,\lambda,x_0}(x):= c \lambda^2 f(x_0 + \lambda x)$.
\end{lem}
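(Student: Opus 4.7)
The lemma is a verification that both the degenerate elliptic equation and the Signorini complementary conditions are invariant under the prescribed scaling and multiplication symmetries. Its proof reduces to a careful chain rule computation, with the only substantive observation being that $x_0\in B_1'$ implies $(x_0+\lambda x)_{n+1}=\lambda x_{n+1}$, so that the Muckenhoupt weight $x_{n+1}^{1-2s}$ transforms cleanly into a constant multiple of the evaluated weight.

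The plan for the PDE part is as follows. First I would compute $\p_i w_{c,\lambda,x_0}(x)=c\lambda(\p_i w)(x_0+\lambda x)$ for $i\in\{1,\dots,n+1\}$. Using $(x_0+\lambda x)_{n+1}=\lambda x_{n+1}$, I would rewrite the weight as $x_{n+1}^{1-2s}=\lambda^{2s-1}\bigl((x_0+\lambda x)_{n+1}\bigr)^{1-2s}$, so that $x_{n+1}^{1-2s}\p_i w_{c,\lambda,x_0}(x)=c\lambda^{2s}\bigl[y_{n+1}^{1-2s}\p_i w(y)\bigr]_{y=x_0+\lambda x}$. Taking a further derivative pulls out another factor of $\lambda$ via the chain rule, giving $\nabla\cdot(x_{n+1}^{1-2s}\nabla w_{c,\lambda,x_0})(x)=c\lambda^{2s+1}\bigl[\nabla_y\cdot y_{n+1}^{1-2s}\nabla_y w\bigr](x_0+\lambda x)$. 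Substituting the equation satisfied by $w$ on the right and converting $y_{n+1}^{1-2s}$ back to $\lambda^{1-2s}x_{n+1}^{1-2s}$ then yields exactly $c\lambda^{2}\,x_{n+1}^{1-2s}\tilde f(x_0+\lambda x)$, as claimed.

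For the Signorini conditions on $B_r'$ I would argue directly from the transformation rule together with the positivity $c,\lambda>0$: (i) $w_{c,\lambda,x_0}\geq 0$ on $B_r'$ follows immediately from $w\geq 0$ on $B_1'$; (ii) the limit $\lim_{x_{n+1}\to 0_+} x_{n+1}^{1-2s}\p_{n+1}w_{c,\lambda,x_0}(x)$ equals $c\lambda^{2s}$ times the corresponding limit for $w$ at $x_0+\lambda x$, and is hence $\leq 0$; (iii) on $\{w_{c,\lambda,x_0}>0\}\cap B_r'$ the translated point $x_0+\lambda x$ lies in $\{w>0\}\cap B_1'$, so the limit vanishes there. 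The restriction $r<\lambda^{-1}(1-|x_0|)$ merely ensures $x_0+\lambda x\in B_1^+$ so that all pointwise statements make sense.

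There is no real obstacle; the argument is a one-line computation once the symmetry of the weight is noticed. The only point that requires care is the bookkeeping of the powers of $\lambda$, which combine as $\lambda^{2s-1}\cdot\lambda\cdot\lambda\cdot\lambda^{1-2s}=\lambda^{2}$ to produce the prefactor in $f_{c,\lambda,x_0}$.
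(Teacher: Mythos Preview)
Your proposal is correct and follows exactly the approach the paper intends: the paper's proof reads in its entirety ``This follows from a simple computation,'' and what you have written is precisely that computation carried out in full detail, including the key observation that $(x_0+\lambda x)_{n+1}=\lambda x_{n+1}$ makes the weight transform homogeneously.
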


\begin{proof}
This follows from a simple computation.
\end{proof}
 
Relying on these properties, throughout the paper we will assume that: 
\begin{itemize}
\item[(A1)] $w \in C^{2}_{loc}(B_1^+\cap\{x_{n+1}>0\})$ is a solution of the Signorini problem
\begin{equation}
\label{eq:fracLa_2}
\begin{split}
\nabla \cdot x_{n+1}^{1-2s} \nabla w &= x_{n+1}^{3-2s} f \mbox{ in } B_1^+,\\
w &\geq 0 \text{ on } B'_1,\\
 \lim\limits_{x_{n+1}\rightarrow 0_+} x_{n+1}^{1-2s}\p_{n+1}w&\leq 0 \mbox{ on } B_1',\\
\lim\limits_{x_{n+1}\rightarrow 0_+} x_{n+1}^{1-2s}\p_{n+1}w&=0 \mbox{ on } B_1'\cap\{\tilde{w}>0\},
\end{split}
\end{equation}
with $f:B_1^+ \rightarrow \R$ satisfying assumption (A4). The boundary conditions are attained in a pointwise sense (c.f. (A2)).
\item[(A2)] $w$ is sufficiently close to the blow-up limit $w_{1,s}$ in the sense that 
\begin{align*}
\| \nabla' w- \nabla' w_{1,s}\|_{C^0(B_1^+)}+\| x_{n+1}^{1-2s}\p_{n+1} w- x_{n+1}^{1-2s} \p_{n+1} w_{1,s}\|_{C^0(B_1^+)} \leq \epsilon_0 ,
\end{align*}
for some small $\epsilon_0>0$. Here $\nabla' $ denotes the gradient with respect to the tangential directions only and $w_{1,s}$ is defined in \eqref{eq:w1s}.
\item[(A3)] The free boundary $\Gamma_w$ in $B'_1$ only consists of regular free boundary points and is a $C^{1,\alpha}$ graph for some $\alpha\in (0,1)$, i.e. $$\Gamma_w \cap B'_1=\{(x'',g(x''),0): g\in C^{1,\alpha}(B''_1)\}.$$ Moreover, we assume that $g(0)=|\nabla''g(0)|=0$. 
\item[(A4)] The inhomogeneity $f$ is $C^{0,1}(B_1^+)$ regular and it satisfies $\|\tilde{f}\|_{C^{0,1}(B_1^+)}\leq \mu_0$ for a small, positive constant $\mu_0$. 
\end{itemize}

Let us comment on these assumptions. By Proposition~\ref{prop:inhomo_0}, assumption (A1) does not pose any restrictions, as we are interested in the regularity of the free boundary in the presence of smooth inhomogeneities $\tilde{f}$.
If the conditions of the equivalence of the local problem (\ref{eq:fracLa_2}) and the nonlocal problem (\ref{eq:obst_frac}) are satisfied (c.f. Section \ref{sec:back}), the assumptions (A2)-(A3) are consequences of the regularity results for $w$ and the regular free boundary from \cite{CSS}: Since our result is local, we can always assume these by using the scaling and multiplication symmetries from Lemma \ref{lem:sym} combined with the identification of the blow-up limits of solutions $w$ of (\ref{eq:fracLa}) at the regular free boundary (c.f. Section \ref{sec:blowup}). Finally, a further application of Lemma \ref{lem:sym} with a suitable rescaling allows us to always assume the smallness condition for $\tilde{f}$ from (A4).  \\

\begin{rmk}[Optimal regularity]
\label{rmk:opt}
We stress that we do not assume that $w$ has the optimal regularity $\p_i w\in C^{0,s}(B_1^+)$, $i\in \{1,\dots, n\}$ and $x_{n+1}^{1-2s}\p_{n+1}w\in C^{0,1-s}(B_1^+)$. We will see later (c.f. Proposition~\ref{prop:asymp1}) that this optimal regularity is a consequence of our assumptions (A2)-(A4). 
\end{rmk}

\begin{rmk}
We remark that by the boundary Harnack inequality (Theorem 7.7 in \cite{CSS}) we have that $\p_j  g(x'') = - \frac{\p_j w}{\p_n w}\big|_{(x'',g(x''),0)}$, $j\in\{1,\dots, n-1\}$ (where the right hand side is understood as a Hölder continuous extension up to the boundary). Therefore, in this situation we can always assume that $[\nabla'' g]_{C^{0,\alpha}(B_{1/2}^+)}$ is sufficiently small by choosing the constant $\epsilon_0$ from (A3) sufficiently small (by noting that the size of the Hölder norm is controlled by $\| \nabla' w- \nabla' w_{1,s}\|_{C^0(B_1^+)}$, c.f. for instance the proof of Theorem \thmtwo). 
\end{rmk}

\begin{rmk}
Sometimes we extend the solution $w$ and the inhomogeneity $\tilde{f}$ evenly about $x_{n+1}$. Here we use that by the complementary boundary conditions it holds that $\lim\limits_{x_{n+1}\rightarrow 0_+} x_{n+1}^{1-2s}\p_{n+1}w=0$ on $B'_1\setminus \Lambda_w$. Thus, after the extension, $w$ solves
\begin{align*}
\nabla \cdot |x_{n+1}|^{1-2s}\nabla w&= |x_{n+1}|^{3-2s} \tilde{f}\text{ in } B_1\setminus \Lambda_w,\\
w&=0 \text{ on } \Lambda_w.
\end{align*}
With a slight abuse of notation, we still use the symbol $L_s$ to refer to the evenly reflected fractional Laplacian, i.e. $L_s=\nabla \cdot |x_{n+1}|^{1-2s}\nabla$. 
\end{rmk}

\subsection{Notation}
\label{sec:not}
In the sequel we use the following notations: 
\begin{itemize}
\item $\R^{n+1}_+:=\{(x'',x_n,x_{n+1})\in \R^{n+1}: x_{n+1}\geq 0\}$,\\ $\R^n\times\{0\}:=\{(x'',x_n,x_{n+1})\in \R^{n+1}: x_{n+1}=0\}$.
\item Euclidean balls: 
\begin{align*}
B_r(x_0)&:=\{x\in \R^{n+1}: |x-x_0|\leq r\},\\
B^+_r(x_0)&:=B_r(x_0)\cap \R^{n+1}_+,\\
B'_r(x_0)&:=B_r(x_0)\cap (\R^n\times \{0\}).
\end{align*}
If $x_0$ is the origin, we also write $B_r$, $B^+_r$ and $B'_r$ for simplicity.
\item We use $C'_{\eta}(e_n)$ to denote the cone in $\R^{n}\times \{0\}$ with axis $e_n$ and opening angle $\eta$.
\item For $s\in (0,1)$, $L_s:=\nabla \cdot x_{n+1}^{1-2s}\nabla$ is the degenerate elliptic operator associated with the fractional Laplacian $(-\Delta)^s$. To abbreviate the associated weight function we introduce $\bar \omega(x):=x_{n+1}^{1-2s}$.
\item Weighted $L^2$ space: For a measurable set $\Omega\subset \R^{n+1}$, $L^2_{\bar \omega}(\Omega)=L^2(\Omega, x_{n+1}^{1-2s}dx)$ is the Banach space of measurable functions $u:\Omega\rightarrow \R$ such that 
\begin{align*}
\|u\|_{L^2_{\bar \omega}(\Omega)}:=\left( \int_{\Omega}|u(x)|^2 \bar \omega(x)dx\right)^{\frac{1}{2}}<\infty.
\end{align*}
Weighted Sobolev space: 
$H^1_{\bar \omega}(\Omega)=H^1(\Omega, x_{n+1}^{1-2s}dx)$ is the Banach space of functions $u\in L^2_{\bar\omega}(\Omega)$ whose distributional derivatives exist and $|\nabla u|\in L^2_{\bar \omega}(\Omega)$. We define the norm
$$\|u\|_{H^1_{\bar \omega}(\Omega)}:=\|u\|_{L^2_{\bar\omega}(\Omega)}+\|\nabla u\|_{L^2_{\bar\omega}(\Omega)}.$$
\item Given $u\in L^2_{\bar \omega}(\Omega)$, we denote the $L^2_{\bar\omega}$ average by 
\begin{align*}
\|u\|_{\LLw(\Omega)}:=\left(\frac{1}{\bar \omega(\Omega)}\int_{\Omega}|u(x)|^2\bar \omega(x)dx\right)^{\frac{1}{2}}, \quad \bar\omega(\Omega):=\int_{\Omega}\bar\omega(x)dx.
\end{align*}
\item Let $w$ be a solution to the thin obstacle problem (associated to $L_s$) in $B_1^+$. Then 
\begin{align*}
\Lambda_w&:=\{x\in B'_1: w(x)=0\} \quad (\text{contact set}),\\
\Omega_w&:=\{x\in B'_1: w(x)>0\}\quad (\text{positivity set}),\\
\Gamma_w&:=\p_{B'_1}\Lambda_w \quad (\text{free boundary}).
\end{align*}
\item Model solution: 
$$w_{1,s}(x):=\frac{1}{s^2-1}\left(\sqrt{x_n^2+x_{n+1}^2}+x_n\right)^s\left(-x_n+s\sqrt{x_n^2+x_{n+1}^2}\right)$$
is a model solution to the free boundary problem with flat free boundary $\Gamma_{w_{1,s}}=\{x_n=x_{n+1}=0\}$. 
We let 
\begin{align*}
w_{0,s}(x)=w_{0,s}(x_n,x_{n+1}):=\left(\sqrt{x_n^2+x_{n+1}^2}+x_n\right)^s.
\end{align*}
Note that for some, only $s$ dependent constant $c_s$
\begin{align*}
\p_n w_{1,s}(x)&= c_s w_{0,s}(x),\\ 
x_{n+1}^{1-2s}\p_{n+1}w_{1,s}(x)&=c_s \frac{s}{s-1}w_{0,1-s}(-x_n,x_{n+1}).
\end{align*}
\item We use $\epsilon_0>0$ to quantify the closeness of $w$ and the model solution $w_{1,s}$ in the $C^1(B_1^+)$ norm (c.f. assumption (A3)).
\end{itemize}
We usually use $x$ to denote the original coordinates and $y$ to denote the coordinates after the partial hodograph-Legendre transformation. In the following we collect the notation which we use after the change of coordinates:
\begin{itemize}
\item Quarter space:
$$Q_+:=\{(y'',y_n,y_{n+1})\in \R^{n+1}: y_n\geq 0, y_{n+1}\geq 0\}.$$
Edge of the quarter space:
$$P:=\{(y'',y_n,y_{n+1})\in \R^{n+1}:y_n=y_{n+1}=0\}.$$
\item Baouendi-Grushin metric $d_G(x,y)$ (c.f. Definition~\ref{defi:BGgeo}).
\item Baouendi-Grushin balls: 
\begin{align*}
\mathcal{B}_R(y_0)&:=\{y\in \R^{n+1}: d_G(y,y_0)\leq R\},\\
\mathcal{B}_R^+(y_0)&:=\mathcal{B}_R(y_0)\cap Q_+.
\end{align*}
If $y_0$ is the origin, we write $\mathcal{B}_R$ and $\mathcal{B}_R^+$ for simplicity.
\item Fractional Baouendi-Grushin operator: For $s\in (0,1)$
\begin{align*}
\D_{G,s}&:= (y_n y_{n+1})^{1-2s}(y_n^2 + y_{n+1}^2)\D''  + \p_n (y_n y_{n+1})^{1-2s}\p_n  \\
& \quad + \p_{n+1} (y_n y_{n+1})^{1-2s}\p_{n+1},
\end{align*}
where $\Delta''=\sum_{i=1}^{n-1}\p_{ii}$. \\
The associated weight function is also abbreviated as $\omega(y):=(y_ny_{n+1})^{1-2s}$.
\item Similarly, as above, we define the weighted Banach spaces $L^2_\omega(\Omega)=L^2(\Omega, \omega(y)dy)$ and $H^1_\omega(\Omega)=H^1(\Omega, \omega(y)dy)$. Given $u\in L^2_\omega(\Omega)$, we use $\|u\|_{\tilde{L}^2_\omega(\Omega)}$ to denote the $L^2_\omega$ average of $u$.
\item Function spaces: We use the global function spaces $X_{\alpha,\epsilon}, Y_{\alpha,\epsilon}$ and their local analogues $X_{\alpha,\epsilon}(\mathcal{B}_R^+), Y_{\alpha,\epsilon}(\mathcal{B}_R^+)$ (c.f. Definitions \ref{defi:XY}, \ref{defi:XYloc}).
\item  Given $u\in X_{\alpha,\epsilon}(\mathcal{B}_1^+)$ or $u\in X_{\alpha,\epsilon}$, we denote the $r-$neighborhood of $u$ in the corresponding Banach space by
$$\mathcal{U}_{r}(u):=\{v\in X_{\alpha,\epsilon}(\mathcal{B}_1^+):\|v-u\|_{X_{\alpha,\epsilon}(\mathcal{B}_1^+)}<r\}, \ 0<r<\infty .$$ 
\item Model solution $w_{1,s}$ in the Grushin coordinates: $$v_0(y) = - \frac{s}{2(1+s)}y_n^{2s+2} + y_n^{2s} y_{n+1}^2.$$
\item $F$ is the nonlinear function in \eqref{eq:nonlinear}. $L_{v}$ denotes the linearized operator of $F$ at $v$.
\end{itemize}

We also rely on the following convention:
\begin{itemize}
\item We denote the derivative with respect to the $x''$ (or $y''$) components of $x$ (or $y$) by $\nabla''$.
\item We use the Landau symbol $f(x)=O_s(g(x))$ as $x\rightarrow 0$ to denote that $\lim\limits_{x\rightarrow 0}\frac{f(x)}{g(x)} = C_s$, where the constant $C_s$ is allowed to depend on $s$.
\item Without specific notice a constant $C$ is assumed to be universal, i.e. it is assumed to only depend on the dimension $n$.
\end{itemize}

\section{Asymptotics}
\label{sec:asymp}

In this section we derive a leading order asymptotic expansion for solutions of the fractional thin obstacle problem around the regular free boundary (c.f. Proposition \ref{prop:asymp1}). Moreover, we prove regularity results for solutions to the fractional thin obstacle problem (c.f. Proposition \ref{prop:asymp2}). To achieve these objectives, we construct upper and lower barrier functions (c.f. Lemmas \ref{lem:barrier}, \ref{lem:upper_barrier}), which allow us to prove a non-degeneracy result on solutions by means of the comparison principle (c.f. Proposition \ref{prop:comparison}). Then a suitable boundary Harnack inequality (c.f. Proposition \ref{prop:boundHarn}) yields the desired asymptotic expansion around the free boundary.\\

This section is divided into two parts: In the first part (Section \ref{sec:tech_tools}), we provide the necessary technical tools (e.g. the construction of barrier functions, comparison results, a boundary Harnack inequality), which are then applied to the setting of the thin obstacle problem in the second part of the section (Section \ref{sec:application}). We use similar ideas as in \cite{KRSI}, where these technical tools are developed for the variable coefficient thin obstacle problem.

\subsection{Barrier functions, comparison results and the boundary Harnack inequality}
\label{sec:tech_tools}

We recall and provide some necessary tools of dealing with the fractional thin obstacle problem. As the results of this section are also of interest in a more general framework, we use the following conventions in this part of the section. 

\begin{assum}
\label{assum:tools}
In the sequel, we consider the slit domain $B_1 \setminus \Lambda$, where 
\begin{align*}
\Lambda:=\{(x',0): x_n\leq g(x'')\}, 
\end{align*}
for some $C^{1,\alpha}$ function $g$. Moreover, we define its boundary as
\begin{align*}
\Gamma:=\{(x',0):x_n=g(x'')\}.
\end{align*}
For convenience and normalization purposes, we assume that $g(0)=|\nabla'' g(0)|=0$.
\end{assum}
We also recall that $L_{s}:= \nabla\cdot |x_{n+1}|^{1-2s}\nabla$.\\

These assumptions are clearly motivated by the application of the following results to the fractional thin obstacle problem. 
In providing the tools which will later be applied to solutions of the fractional thin obstacle problem, we begin with the construction of a lower barrier function.

\begin{lem}[Lower barrier function]
\label{lem:barrier}
Let $s\in (0,1)$, $\alpha\in (0,1)$, $\tau\in \left(0,\min\{\frac{\alpha}{s}, \frac{1-s}{s}\} \right)$ and let $B_1\setminus \Lambda$ be as in Assumption \ref{assum:tools}. Then, if $[\nabla'' g]_{\dot{C}^{0,\alpha}}$ is sufficiently small depending on $n,s,\tau$, there exists a function $h\in C^{0,s}(B_1)$, $h(x)>0$ in $B_1\setminus \Lambda$ and $h(x)=0$ on $\Lambda$, such that: 
\begin{itemize}
\item[(i)]  $h$ is a subsolution to $L_s$ which satisfies 
\begin{align*}
L_s h(x)&\geq C_{n,s}\tau x_{n+1}^{1-2s} \dist(x,\Gamma)^{-2+s+s\tau} \mbox{ in }B_1 \setminus \Lambda.
\end{align*}
\item[(ii)] $h$ satisfies the non-degeneracy condition: 
$$h(x)\geq  c_n \dist(x,\Gamma)^{s} \left(\frac{\dist(x,\Lambda)}{\dist(x,\Gamma)}\right)^{2s}\text{ for }x\in B_{1}.$$
\item[(iii)] $h$ has the following leading order asymptotic expansion at $x_0\in \Gamma\cap B_{1/2}$: 
\begin{align*}
h(x)&=\left(\sqrt{((x-x_0)\cdot \nu_{x_0})^2+ x_{n+1}^2}+(x-x_0)\cdot \nu_{x_0}\right)^s \\
&+ [\nabla'' g]_{\dot{C}^{0,\alpha}}O_s\left((\sqrt{((x-x_0)\cdot \nu_{x_0})^2+ x_{n+1}^2}+(x-x_0)\cdot \nu_{x_0})^s|x-x_0|^{\alpha}\right).
\end{align*}
Here $\Gamma\ni x_0\mapsto \nu_{x_0}=\frac{(-\nabla'' g(x_0),1,0)}{\sqrt{1+|\nabla'' g(x_0)|^2}}$ is the in-plane, outer unit normal of $\Lambda$ at $x_0$. The symbol $\nabla''$ denotes the gradient with respect to the $x''$ components of $x=(x'',x_n,x_{n+1})$.
\end{itemize}
\end{lem}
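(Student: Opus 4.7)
My plan is to construct the barrier as an explicit perturbation of the flat model $w_{0,s}$, straightened so that the curved contact set is aligned with a coordinate half-plane, and corrected so that $L_{s} h$ is strictly positive with the right homogeneity. Define $\psi(x) := x_n - g(x'')$, so that $\Lambda = \{\psi \leq 0,\, x_{n+1} = 0\}$ and $\Gamma = \{\psi = 0,\, x_{n+1} = 0\}$, and I propose the ansatz
\begin{equation*}
h(x) \,:=\, c_{n,s,\tau}\,\bigl(\psi(x)^2 + x_{n+1}^2\bigr)^{s\tau/2}\, w_{0,s}\bigl(\psi(x), x_{n+1}\bigr),
\end{equation*}
where the factor $(\psi^2 + x_{n+1}^2)^{s\tau/2}$ is the minimal $\tau$-correction that converts the $L_{s}$-harmonic $w_{0,s}$ into a strict subsolution with the desired $\dist(x,\Gamma)^{s(1+\tau)-2}$ lower bound. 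The constraint $\tau < (1-s)/s$ ensures that the exponent $s(1+\tau)$ stays below $1$; the constraint $\tau < \alpha/s$ will be used in the asymptotic expansion.

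The proof of (i) proceeds in two stages. In the flat baseline $g \equiv 0$ the function $h$ depends only on $(x_n,x_{n+1})$ and separates in polar coordinates $x_n = r\cos\theta$, $x_{n+1} = r\sin\theta$ as $h = c_{n,s,\tau}\, r^{s(1+\tau)}\,(2\cos^2(\theta/2))^{s}$. Using that $\phi(\theta) = (2\cos^2(\theta/2))^{s}$ is the $L_{s}$-harmonic angular mode at homogeneity $s$, the one-dimensional identity $\phi'' + (1-2s)\cot\theta\,\phi' + \sigma(\sigma+1-2s)\phi$ reduces, at $\sigma = s(1+\tau)$, to $c_{n,s}\, s\tau(1+s\tau)\cos^{2s}(\theta/2)$, so that
\begin{equation*}
L_{s} h \,=\, c_{n,s}\, s\tau(1+s\tau)\, x_{n+1}^{1-2s}\, r^{s(1+\tau)-2}\cos^{2s}(\theta/2) \,\geq\, C_{n,s}\,\tau\, x_{n+1}^{1-2s}\,\dist(x,\Gamma)^{-2+s+s\tau},
\end{equation*}
since $\cos^{2s}(\theta/2)$ is uniformly bounded below on any compact subset of $(0,\pi)$ and, near $\theta = \pi$, the singular factor $r^{s(1+\tau)-2}$ controls everything. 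For general $g$ the substitution $x_n \mapsto \psi(x)$ in $L_{s}$ produces the flat expression plus correction terms involving $\nabla'' g$ and $\nabla''{}^2 g$. These are dominated pointwise by $[\nabla''g]_{\dot{C}^{0,\alpha}}$ times the flat main term and can be absorbed in half of it under the stated smallness hypothesis on $[\nabla''g]_{\dot{C}^{0,\alpha}}$.

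Property (ii) follows from direct estimation in the two regimes $\dist(x,\Lambda)\sim\dist(x,\Gamma)$ and $\dist(x,\Lambda)\ll\dist(x,\Gamma)$, using in the second that $\sqrt{\psi^2+x_{n+1}^2}+\psi \sim x_{n+1}^2/(2|\psi|) \sim \dist(x,\Lambda)^2/\dist(x,\Gamma)$ and combining with a uniform lower bound for $(\psi^2+x_{n+1}^2)^{s\tau/2}$ on bounded subsets. Property (iii) is obtained by Taylor-expanding $g$ at $x_0''$: since $g(x_0'') = x_{0,n}$ and $g\in C^{1,\alpha}$, one has
\begin{equation*}
\psi(x) = \sqrt{1+|\nabla''g(x_0)|^2}\,(x-x_0)\cdot\nu_{x_0} + O\bigl([\nabla''g]_{\dot{C}^{0,\alpha}}\,|x-x_0|^{1+\alpha}\bigr),
\end{equation*}
together with $\sqrt{1+|\nabla''g(x_0)|^2} = 1 + O\bigl([\nabla''g]_{\dot{C}^{0,\alpha}}^{2}|x_0''|^{2\alpha}\bigr)$. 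Substituting into the explicit form of $h$ and Taylor-expanding $w_{0,s}$ in its first argument yields the claimed leading term and remainder; the auxiliary factor $(\psi^2+x_{n+1}^2)^{s\tau/2}$ contributes a multiplicative perturbation of order $|x-x_0|^{s\tau}$, which, because $s\tau<\alpha$, fits into the $|x-x_0|^{\alpha}$ error.

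The main technical difficulty is in making the subsolution computation rigorous under the change $x_n \mapsto \psi(x)$: the operator $L_{s}$ produces cross-derivative terms coupling the tangential $x''$-directions with the $(x_n,x_{n+1})$-plane, and these interact non-trivially with both the weight $x_{n+1}^{1-2s}$ and the singular homogeneity factor $r^{s(1+\tau)-2}$. Bookkeeping which error terms have the same degree of homogeneity as the positive $\tau$-term — and thereby quantifying precisely how small $[\nabla''g]_{\dot{C}^{0,\alpha}}$ must be in terms of $n,s,\tau$ so that they can be absorbed — is the heart of the argument.
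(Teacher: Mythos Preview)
Your ansatz $h=c_{n,s,\tau}\,(\psi^2+x_{n+1}^2)^{s\tau/2}\,w_{0,s}(\psi,x_{n+1})$ fails both (i) and (iii), and the errors are not of the bookkeeping kind you flag at the end.

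For (i), in the flat case you correctly compute
\[
L_s h \;=\; c\,s\tau(1+s\tau)\,x_{n+1}^{1-2s}\,r^{-2+s(1+\tau)}\,\phi(\theta),\qquad \phi(\theta)=2^s\cos^{2s}(\theta/2),
\]
but $\phi(\theta)\to 0$ as $\theta\to\pi$, i.e.\ as $x$ approaches the interior of $\Lambda$. The target lower bound has \emph{exactly} the factor $x_{n+1}^{1-2s}\dist(x,\Gamma)^{-2+s+s\tau}$ on the right, so the ``singular factor $r^{s(1+\tau)-2}$'' cannot ``control everything'': it cancels, and what remains is $\phi(\theta)$, which is not bounded below on $B_1\setminus\Lambda$. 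The paper avoids this by convexifying differently: it uses $w_{0,s}^{1+\tau}$ rather than $r^{s\tau}w_{0,s}$. Since $L_s w_{0,s}^{1+\tau}=\tau(1+\tau)\,x_{n+1}^{1-2s}\,|\nabla w_{0,s}|^{2}\,w_{0,s}^{\tau-1}=c_s\tau\,x_{n+1}^{1-2s}\,r^{-1}\,w_{0,s}^{1-1/s+\tau}$ and the exponent $1-1/s+\tau\le 0$ (this is where $\tau\le(1-s)/s$ enters), the bound $w_{0,s}\le(2r)^{s}$ gets \emph{reversed} and one obtains $L_s w_{0,s}^{1+\tau}\ge c_s\tau\,x_{n+1}^{1-2s}r^{-2+s+s\tau}$ uniformly, including near $\theta=\pi$.

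For (iii), the factor $(\psi^2+x_{n+1}^2)^{s\tau/2}\sim|x-x_0|^{s\tau}$ is not ``a multiplicative perturbation'' of $1$ near $x_0\in\Gamma$: it vanishes there. Hence your $h$ is of order $|x-x_0|^{s(1+\tau)}$, strictly smaller than the required leading term $w_{0,s}((x-x_0)\cdot\nu_{x_0},x_{n+1})\sim|x-x_0|^{s}$, and (iii) cannot hold. The paper therefore takes $h=h_0+h_\tau$, where $h_0$ (built from $w_{0,s}$ itself) supplies the leading asymptotics and $h_\tau$ (built from $w_{0,s}^{1+\tau}$) supplies the strict subsolution property; the condition $\tau<\alpha/s$ is used so that the patching errors of $h_0$, of size $[\nabla''g]_{\dot C^{0,\alpha}}r^{-2+s+\alpha}$, are dominated by the positive $\tau r^{-2+s+s\tau}$ term.

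A further issue with your global straightening: since $g$ is only $C^{1,\alpha}$, the ``correction terms involving $\nabla''{}^2g$'' you invoke do not exist pointwise. The paper sidesteps this by a Whitney decomposition of $B_1\setminus\Gamma$: on each cube one uses the affine model $w_k(x)=w_{0,s}((x-x_k)\cdot\nu_k,x_{n+1})$ with the frozen normal $\nu_k$, and the patching errors are controlled using only $|\nu_k-\nu_\ell|\lesssim[\nabla''g]_{\dot C^{0,\alpha}}r_\ell^{\alpha}$, i.e.\ first-order information on $g$.
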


\begin{proof}
We construct the desired barrier function by patching together suitably rotated profile functions. These profile functions are given by the derivative of the model solution to the fractional thin obstacle problem. By a slight convexification, it is possible to control the error terms that arise from the patching procedure.\\
 
Let 
$$w_{0,s}(x)= \left( \sqrt{x_n^2+x_{n+1}^{2}}+x_n\right)^s.$$
For $\tau\in (0,\frac{1-s}{s}]$, a direct computation shows that
\begin{equation}
\label{eq:barrier}
\begin{split}
L_s w_{0,s}^{1+\tau}
&=\tau (1+\tau)|x_{n+1}|^{1-2s}|\nabla w_{0,s}|^2 w_{0,s}^{\tau-1}\\
& = \tau(1+\tau)|x_{n+1}|^{1-2s} 2 s^2 (x_n^2+x_{n+1}^2)^{-\frac{1}{2}}w_{0,s}^{1-\frac{1}{s}+\tau}\\
& \geq s^2 \tau(1+\tau)|x_{n+1}|^{1-2s} (x_n^2+x_{n+1}^2)^{\frac{1}{2}(s+s\tau-2)} .
\end{split}
\end{equation}
Here we used that $1-\frac{1}{s}+\tau \leq 0$ by definition of $\tau$ and $w_{0,s}(x)\leq 2^s (x_n^2+x_{n+1}^2)^{s/2}$.\\

Let $\{Q_j\}_{j}$ be a Whitney decomposition of $B_1\setminus \Gamma$ and let $\{\eta_j\}_j$ be a partition of unity associated to $\{Q_j\}$ such that $\eta_k$ satisfies $\p_{n+1}\eta_k=0$ on $\{x_{n+1}=0\}$. Let $\hat x_j$ be the center of the Whitney cube $Q_j$ and let $x_j\in \Gamma$ realize the distance of $\hat x_j$ to $\Gamma$. Let $r_j= \diam(Q_j)$, which (by definition of a Whitney decomposition) is equivalent to $\dist(Q_j,\Gamma)$. Let $\nu_j$ be the (in-plane) outer unit normal to $\Lambda$ at $x_j$ and set
$$ w_k(x) := w_{0,s}((x-x_k)\cdot \nu_k, x_{n+1}).$$
Furthermore, define
$$h_\tau(x):=\sum_j \eta_k(x)( w_k(x))^{1+\tau}. $$
Then
\begin{align*}
L_s h_\tau&=\sum_k (L_s \eta_k ) w_k^{1+\tau} + 2(1+\tau)\sum_k |x_{n+1}|^{1-2s}(\nabla \eta_k\cdot \nabla w_k) w_k^{\tau}\\
&\quad + \sum_k \eta_k (L_s w_k^{1+\tau}).
\end{align*}
We estimate the above three sums separately. Firstly, by using $\sum_k\eta_k=1$, we observe that $\sum_k L_s \eta_k=0$. Thus, for any $x\in Q_\ell$ with $\ell$ fixed,
\begin{align*}
\sum_k (L_s\eta_k(x))w_k^{1+\tau}(x)=\sum_k (L_s \eta_k)(w_k^{1+\tau}(x)-w_\ell^{1+\tau}(x)).
\end{align*}
By the assumption that $\p_{n+1} \eta_k = 0$ on $\{x_{n+1}=0\}$ and by the regularity of $\eta_k$, we further conclude that $|\p_{n+1} \eta_k(x)| \leq C |x_{n+1}|$. Combining this with the fact that 
$$|\nu_k-\nu_\ell|\leq C [\nabla'' g]_{\dot{C}^{0,\alpha}}r_\ell^{\alpha}, \quad Q_k\subset \mathcal{N}(Q_\ell),$$
yields
\begin{align*}
&\sum_k (L_s \eta_k) w_k^{1+\tau}\leq C_{n,s}[\nabla''  g]_{\dot{C}^{0,\alpha}} |x_{n+1}|^{1-2s} r_\ell^{-2+s(1+\tau)+\alpha} .
\end{align*}
Similarly, the second sum can be estimated by
\begin{align*}
&2(1+\tau)\sum_k |x_{n+1}|^{1-2s}(\nabla \eta_k\cdot \nabla w_k) w_k^{\tau}\leq C_{n,s}[\nabla''  g]_{\dot{C}^{0,\alpha}}
|x_{n+1}|^{1-2s} r_\ell^{-2+s(1+\tau)+\alpha} .
\end{align*}
Using \eqref{eq:barrier}, the last sum can be bounded from below by
\begin{align*}
&\sum_k \eta_k (x)(L_s w_k^{1+\tau}(x))\geq Cs^2\tau(1+\tau)|x_{n+1}|^{1-2s} r_\ell^{-2+s(1+\tau)} ,
\end{align*}
for $ x\in Q_\ell$.
Combining all these observations, leads to
\begin{align}\label{eq:htau}
&L_s h_\tau(x) \geq C_{s,n} \tau |x_{n+1}|^{1-2s} r_\ell^{-2+s(1+\tau)} ,
\end{align}
for a fixed $\tau\in (0,\frac{1-s}{s}]$ and $s\in (0,1)$, if $[\nabla'' g]_{\dot{C}^{0,\alpha}}$ is sufficiently small depending on $\tau, s$ and $n$. 
Thus, setting
\begin{align*}
h(x)=h_0(x)+h_\tau(x)=\sum_k \eta_k (w_k(x)+ w_k(x)^{1+\tau}),
\end{align*}
for fixed $\tau\in (0,\min\{\frac{\alpha}{s},\frac{1-s}{s}\})$, yields a function which satisfies $h(x)\geq 0$. Moreover, by similar considerations as above (with $\tau=0$) we have
\begin{align}\label{eq:h0}
L_s h_0(x)\leq C_{n,s}[\nabla'' g]_{\dot{C}^{0,\alpha}}|x_{n+1}|^{1-2s} r_\ell^{-2+s+\alpha}.
\end{align}
Combining this with \eqref{eq:htau} gives
\begin{align*}
L_s h(x)\geq C_{n,s}\tau x_{n+1}^{1-2s} r_\ell^{-2+s(1+\tau)} .
\end{align*} 
Here we have used that $\tau < \alpha/s$ (as there is no gain of the form $\tau s$ in the lower bounds for patching errors originating from $h_0$) and we have chosen $[\nabla'' g]_{\dot{C}^{0,\alpha}}$ sufficiently small depending on $n,\alpha,s$.
Finally, $h(x)$ satisfies the non-degeneracy condition 
\begin{align*}
h(x)\geq c_n \dist(x,\Gamma)^{s}  \left(\frac{\dist(x,\Lambda)}{\dist(x,\Gamma)}\right)^{2s}. 
\end{align*}
This concludes the proof.
\end{proof}

Using a similar proof, we can also construct an upper barrier function:

\begin{lem}[Upper barrier function]
\label{lem:upper_barrier}
Let $s\in (0,1)$, $\alpha\in (0,1)$, $\tau\in \left(0,\min\{\frac{\alpha}{s}, \frac{1-s}{s}\} \right)$ and let $B_1\setminus \Lambda$ be as in Assumption \ref{assum:tools}. Then, if $[\nabla'' g]_{\dot{C}^{0,\alpha}}$ is sufficiently small depending on $n,s,\tau$, there exists a function $\hat h\in C^{0,s}(B_1)$, $\hat h(x)>0$ in $B_1\setminus \Lambda$ and $\hat h(x)=0$ on $\Lambda$, such that: 
\begin{itemize}
\item[(i)]  $\hat h$ is a supersolution to $L_s$ with
\begin{align*}
L_s \hat h(x)&\leq -C_{n,s}\tau x_{n+1}^{1-2s} \dist(x,\Gamma)^{-2+s+s\tau} \mbox{ in } B_1 \setminus \Lambda.
\end{align*}
\item[(ii)] $\hat h$ satisfies 
$$0\leq \hat h(x)\leq \dist(x,\Gamma)^{s} \left(\frac{\dist(x,\Lambda)}{\dist(x,\Gamma)}\right)^{2s}\text{ for }x\in B_{1}.$$
\end{itemize}
\end{lem}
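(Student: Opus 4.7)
The plan is to mirror the construction in Lemma \ref{lem:barrier}, flipping signs in the auxiliary "convexification" term so that the patched profile becomes a supersolution. Keeping the notation of the previous proof, I would retain the same Whitney decomposition $\{Q_j\}$, partition of unity $\{\eta_k\}$ (with $\p_{n+1}\eta_k=0$ on $\{x_{n+1}=0\}$), and rotated model profiles $w_k(x)=w_{0,s}((x-x_k)\cdot \nu_k, x_{n+1})$. Define $h_0=\sum_k \eta_k w_k$ and $h_\tau=\sum_k \eta_k w_k^{1+\tau}$ as before, and then set
\begin{align*}
\hat h(x) := M h_0(x) - h_\tau(x) = \sum_k \eta_k(x)\bigl( M w_k(x) - w_k(x)^{1+\tau}\bigr),
\end{align*}
where $M>0$ will be chosen sufficiently large but finite (depending only on $s,\tau$) to secure positivity.

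For the supersolution estimate (i), I would expand $L_s \hat h = M\,L_s h_0 - L_s h_\tau$. The key point is that \eqref{eq:h0} and \eqref{eq:htau}, which were the two main estimates in the proof of Lemma \ref{lem:barrier}, control exactly these two pieces: the patching errors in $L_s h_0$ are bounded by $C[\nabla'' g]_{\dot C^{0,\alpha}} x_{n+1}^{1-2s} r_\ell^{-2+s+\alpha}$, while $L_s h_\tau \geq C_{n,s}\tau\, x_{n+1}^{1-2s} r_\ell^{-2+s(1+\tau)}$. Using $\tau<\alpha/s$ and $r_\ell\leq 1$, one has $r_\ell^{-2+s+\alpha}\leq r_\ell^{-2+s+s\tau}$, so absorbing the patching error into the dominant (now \emph{negative}) term by choosing $[\nabla'' g]_{\dot C^{0,\alpha}}$ small enough (depending on $M,\tau,s,n$) yields
\begin{align*}
L_s \hat h(x) \leq -\tfrac{1}{2} C_{n,s}\tau \, x_{n+1}^{1-2s} r_\ell^{-2+s+s\tau},
\end{align*}
which is (i). For positivity, write $\hat h = \sum_k \eta_k w_k (M - w_k^{\tau})$; since $w_k \leq 2^s$ in $B_1$, choosing $M \geq 2^{s\tau}$ gives $\hat h \geq 0$, with $\hat h = 0$ on $\Lambda$ by the vanishing of each $w_k$ there. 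The bound (ii) then follows from $\hat h \leq M h_0 \leq C_{n,s} M \sum_k \eta_k w_k$ and the fact that $w_{0,s}$ (hence each $w_k$ restricted to its Whitney cube) is comparable to $\dist(x,\Gamma)^s (\dist(x,\Lambda)/\dist(x,\Gamma))^{2s}$, possibly after rescaling $\hat h$ by an overall $s,n$-dependent constant to normalize the prefactor to one.

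The only delicate issue is the simultaneous balancing of $M$: it must be large enough to enforce $\hat h \geq 0$, yet small enough that the patching-error bound $MC_{n,s}[\nabla'' g]_{\dot C^{0,\alpha}}$ can be dominated by $\tfrac{1}{2} C_{n,s}\tau$. Since $M$ depends only on $s,\tau$ (it is universal once $\tau$ is fixed), this is achieved by choosing the smallness threshold for $[\nabla'' g]_{\dot C^{0,\alpha}}$ after $\tau$ (and hence $M$) is fixed. Apart from this bookkeeping, every estimate in the argument is an exact sign-reversed replica of the corresponding estimate in the proof of Lemma \ref{lem:barrier}, so no new analytic ingredients are required.
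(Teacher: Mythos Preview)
Your proposal is correct and follows essentially the same approach as the paper, which simply sets $\hat h = h_0 - h_\tau$ and states that the claims follow analogously to Lemma~\ref{lem:barrier}. Your inclusion of the constant $M$ is a harmless variant that makes the positivity step more explicit, but otherwise the argument is identical.
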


\begin{proof}
Let $\hat h(x)=h_0(x)-h_\tau(x)$, where $h_0$ and $h_\tau$ are the same functions as in the proof of Lemma~\ref{lem:barrier}. The claims of the lemma follow analogously as in the proof of Lemma~\ref{lem:barrier}.
\end{proof}

With the lower barrier function at hand, we can proceed to the following comparison principle.

\begin{prop}[Comparison principle]
\label{prop:comparison}
Let $s\in(0,1)$ and let $B_1\setminus \Lambda$ be as in Assumption \ref{assum:tools}. Suppose that $u\in C(B_1)\cap H^{1}(B_1,|x_{n+1}|^{1-2s}dx)$ solves
\begin{align*}
L_s u=|x_{n+1}|^{1-2s}f \text{ in } B_1\setminus \Lambda, \quad u=0 \text{ on } \Lambda,
\end{align*}
where for some $s_0> 0$ and $\delta_0>0$ the function $f$ satisfies,
\begin{align*}
\left\| \dist(\cdot,\Gamma)^{2-s-s_0} f\right\|_{L^\infty(B_1\setminus \Lambda)}\leq \delta_0.
\end{align*}
Moreover, suppose that $u$ satisfies the following non-degeneracy conditions
\begin{align*}
u(x)&\geq 1 \text{ on } B_1\cap \left\{|x_{n+1}|\geq \ell=\sqrt{\frac{1-s}{2(n+1)}}\right\},\\
u(x)&\geq -2^{-8} \text{ on } B'_1\times (-\ell,\ell).
\end{align*}
Then, if $\delta_0=\delta_0(n,s,s_0,\alpha)$ is sufficiently small, there exists  a constant $c_n>0$ such that
\begin{align*}
u(x)\geq c_n\dist(x,\Gamma)^s\left(\frac{\dist(x,\Lambda)}{\dist(x,\Gamma)}\right)^{2s},\quad x\in B'_{1/2}\times (-\ell,\ell).
\end{align*}
\end{prop}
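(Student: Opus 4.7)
The strategy is to compare $u$ with a small multiple of the lower barrier $h$ from Lemma~\ref{lem:barrier} via the minimum principle for $L_s$ on the slit domain. Since the conclusion of the proposition is precisely the non-degeneracy estimate in Lemma~\ref{lem:barrier}~(ii), it is enough to find a constant $\epsilon=\epsilon(n,s,s_0,\alpha)>0$ such that $u\geq \epsilon h$ in $B_{1/2}'\times(-\ell,\ell)$. Fix $\tau\in(0,\min\{\alpha/s,(1-s)/s,s_0/s\})$ in the barrier construction. Combining the hypothesis $|f|\leq\delta_0\dist(\cdot,\Gamma)^{-2+s+s_0}$ with the subsolution bound $L_s h\geq C_{n,s}\tau|x_{n+1}|^{1-2s}\dist(\cdot,\Gamma)^{-2+s+s\tau}$ from Lemma~\ref{lem:barrier}~(i), I deduce $L_s(u-\epsilon h)\leq 0$ in $B_1\setminus\Lambda$ whenever $\delta_0\leq \epsilon C_{n,s}\tau$ (using $s\tau\leq s_0$ and $\dist(x,\Gamma)\leq 1$). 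Thus $v:=u-\epsilon h$ is a supersolution of $L_s$.

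\textbf{Boundary comparison.} To absorb the mild negativity of $u$ in the thin slab I pass to the shifted supersolution $\tilde v:=v+2^{-7}$ and apply the minimum principle on the slit cylinder $Z:=(B_{3/4}'\times(-\ell,\ell))\setminus\Lambda$. On $\Lambda\cap\overline Z$ one has $\tilde v=2^{-7}>0$; on the top and bottom caps $\{|x_{n+1}|=\ell\}\cap\overline Z$ the hypothesis $u\geq 1$ together with the universal bound $\|h\|_{L^\infty(B_1)}\leq C$ (immediate from the construction in Lemma~\ref{lem:barrier}) give $\tilde v\geq 1-\epsilon C+2^{-7}\geq 0$ for $\epsilon\leq 1/C$; and on the lateral face $\{|x'|=3/4\}\cap\overline Z$ the bound $u\geq-2^{-8}$ gives $\tilde v\geq 2^{-8}-\epsilon C\geq 0$ provided $\epsilon C\leq 2^{-8}$. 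The minimum principle for the (continuous) supersolution $\tilde v$ on the slit domain $Z$ then yields $u\geq \epsilon h-2^{-7}$ in $Z$.

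\textbf{Removal of the shift and main obstacle.} The bound $u\geq \epsilon h-2^{-7}$ becomes uninformative near $\Gamma$, and the final step is to remove the additive constant via rescaling. For $x_0\in\Gamma\cap B_{1/2}'$ and $r\in(0,1/2)$ the rescaled function $u_r(y):=r^{-s}u(x_0+ry)$ satisfies $L_s u_r=|y_{n+1}|^{1-2s}f_r$ with $f_r(y)=r^{2-s}f(x_0+ry)$, so
$$\|\dist(\cdot,\Gamma_r)^{2-s-s_0}f_r\|_{L^\infty}\leq r^{s_0}\delta_0,$$
i.e.\ the smallness of the inhomogeneity is preserved and in fact improved by a factor $r^{s_0}$; similarly the $C^{1,\alpha}$ semi-norm of the rescaled graph $g_r$ shrinks by a factor $r^\alpha$. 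After a Harnack-type argument at scale $r$ that upgrades the weak positivity bound from the previous step to a proper non-degeneracy on the rescaled caps and thin slab, the comparison in the first two steps may be iterated on a dyadic sequence of rescaled problems; translating back yields $u(x)\geq c_n\dist(x,\Gamma)^s(\dist(x,\Lambda)/\dist(x,\Gamma))^{2s}$ on $B_{1/2}'\times(-\ell,\ell)$, as claimed. The main technical obstacle is precisely this iteration: propagating both non-degeneracy assumptions through each rescaling requires a boundary Harnack argument that upgrades the thin-slab bound $u\geq -2^{-8}$ to the genuine positive lower bound needed to restart the comparison at the next scale, while keeping track of the quantitative dependence of all constants on $n,s,s_0,\alpha$ so that $\delta_0$ can be chosen uniformly.
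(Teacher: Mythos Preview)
Your proof is incomplete, and you have correctly identified the gap yourself: after the first comparison you only obtain $u\geq \epsilon h-2^{-7}$, and the additive constant $-2^{-7}$ dominates $\epsilon h$ near $\Gamma$. Your proposed rescaling does not close this gap. Under $u_r(y)=r^{-s}u(x_0+ry)$ the constant rescales to $-2^{-7}r^{-s}$, which blows up as $r\to 0$; equivalently, to restart the argument you would need $u\geq r^s$ on the rescaled caps $|x_{n+1}|=r\ell$, but from $u\geq \epsilon h-2^{-7}$ and $h\sim r^s$ there you only get $u\gtrsim \epsilon r^s-2^{-7}$, which is negative for small $r$. No amount of dyadic iteration will recover the cap hypothesis, and the vague appeal to a ``boundary Harnack argument'' cannot supply it either, since a boundary Harnack inequality of the type in Proposition~\ref{prop:boundHarn} already \emph{presupposes} the non-degeneracy you are trying to prove.

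The paper avoids this circularity by a single, sharper comparison: for each target point $x_0\in B'_{1/2}\times(-\ell,\ell)$ it introduces the $L_s$-harmonic polynomial
\[
P(x)=|x'-x_0'|^2-\frac{n+1}{2-2s}\,x_{n+1}^2,
\]
and applies the minimum principle to $\bar u:=u+P-2^{-8}h$ on $(B_1'\times(-\ell,\ell))\setminus\Lambda$. The point is that $P$ contributes positivity on the lateral boundary $\partial B_1'\times(-\ell,\ell)$ (where $|x'-x_0'|\geq 1/2$) to absorb the $-2^{-8}$ from $u$, costs only $-1/4$ on the caps (by the choice of $\ell$), and---crucially---vanishes at $x_0$. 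Hence $\bar u(x_0)\geq 0$ reads directly as $u(x_0)\geq 2^{-8}h(x_0)$, with no shift to remove and no iteration needed. The missing idea in your argument is precisely this $x_0$-centered harmonic corrector.
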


\begin{proof}
The proof follows from the construction of a suitable comparison function (which relies on our barrier function from Lemma \ref{lem:barrier}) and the comparison principle. \\
For $x_0\in B'_{1/2}\times (-\ell,\ell)$,
let $$P(x)=|x'-x'_0|^2-\frac{n+1}{2-2s}x_{n+1}^2.$$ Note that $L_s P(x)=0$. Let $h(x)$ be the barrier function constructed in Lemma~\ref{lem:barrier} with $\tau=\tau(s,s_0, \alpha)$ satisfying the condition $\tau\in \left(0,\min\{\frac{\alpha}{s},\frac{1-s}{s}\} \right)$ from Lemma \ref{lem:barrier} and chosen such that $s\tau \leq s_0$ (e.g. it would be possible to set $\tau:= \frac{1}{2}\min\{\frac{\alpha}{s},\frac{1-s}{s}, \frac{s_0}{s}\}$). We define our comparison function to be
\begin{align*}
\bar u(x)=u(x)+P(x)-2^{-8}h(x).
\end{align*}
Using the non-degeneracy conditions from the assumptions of the proposition, it follows that
\begin{align*}
\bar u&\geq \frac{1}{2} \text{ on } \{|x_{n+1}|\geq \ell\},\\
\bar u &\geq 0 \text{ on } \p B'_1\times (-\ell,\ell),\\
\bar u&\geq 0 \text{ on } \Lambda.
\end{align*}
Moreover, for $\delta_0=\delta_0(n,s,s_0, \alpha)>0$ sufficiently small,
\begin{align*}
L_s \bar u&= |x_{n+1}|^{1-2s}f - 2^{-8}L_s h\\
&\leq \delta_0 |x_{n+1}|^{1-2s}\dist(x,\Gamma)^{-2+s+s_0}   - 2^{-8}C_{n,s}\tau |x_{n+1}|^{1-2s}\dist(x,\Gamma)^{-2+s+s\tau} \\
&\leq 0 \text{ in } B_1\setminus \Lambda.
\end{align*}
Thus, by the comparison principle, $\bar u(x_0)\geq 0$. This implies that
\begin{align*}
u(x_0)\geq 2^{-8}h(x_0)\geq 2^{-8}c_n\dist(x,\Gamma)^s\left(\frac{\dist(x,\Lambda)}{\dist(x,\Gamma)}\right)^{2s}.
\end{align*}
Since $x_0$ is an arbitrary point in $B'_{1/2}\times (-\ell,\ell)$, we infer the desired lower bound for $u$.
\end{proof}

Combining the comparison principle from Proposition \ref{prop:comparison} with the resulting non-degeneracy property gives the following boundary Harnack inequality.  

\begin{prop}[Boundary Harnack]
\label{prop:boundHarn}
Let $\Lambda, \Gamma$ be as in Assumption \ref{assum:tools}.
Suppose that $u_1,u_2\in C(B_1)\cap H^{1}(B_1,|x_{n+1}|^{1-2s}dx)$ are positive in $B_1\setminus \Lambda$, even in the $x_{n+1}$-variable and that they solve
\begin{align*}
L_s u_1 &= |x_{n+1}|^{1-2s} f_1 \mbox{ in }  B_{1}\setminus \Lambda,\quad  u_1=0 \text{ on } \Lambda,\\
L_s u_2&=|x_{n+1}|^{1-2s} f_2 \mbox{ in }  B_{1}\setminus \Lambda, \quad  u_2=0 \text{ on } \Lambda,
\end{align*}
where the inhomogeneities $f_i$, $i=1,2$, satisfy the following bound: For some $s_0>0$ 
\begin{align*}
\left\| \dist(\cdot,\Gamma)^{2-s-s_0} f_i \right\|_{L^\infty(B_1\setminus \Lambda)}\leq \delta_0.
\end{align*}
Then, if $[\nabla'' g]_{\dot{C}^{0,\alpha}}$ and $\delta_0$ are sufficiently small depending on $n,s, s_0$ and $\alpha$, there exists a constant $C_0=C_0(n,s)>0$, such that 
\begin{align}
\label{eq:bdryH}
C_0\frac{u_2(\frac{1}{2}e_{n+1})}{u_1(\frac{1}{2} e_{n+1})}\leq \frac{u_2(x)}{u_1(x)} \leq C_0^{-1}\frac{u_2(\frac{1}{2}e_{n+1})}{u_1(\frac{1}{2} e_{n+1})} \text{ in } B_{1/2}\setminus \Lambda.
\end{align}
Moreover, $u_2/u_1$ extends to a $C^{0,\beta}$ function in $B_{1/4}$ for some $\beta\in(0,1)$.
More precisely, there exist constants $\beta=\beta(n,s,s_0)\in (0,1)$ and $C=C(n,s)>0$, 
such that for all $x_0\in \Lambda\cap B'_{1/4}$ 
\begin{align}\label{eq:bdryH2}
\left|\frac{u_2}{u_1}(x) - \frac{u_2}{u_1}(x_0) \right| \leq C \frac{u_2(\frac{1}{2}e_{n+1})}{u_1(\frac{1}{2}e_{n+1})} |x-x_0|^{\beta},\quad x \in B_{1/4}(x_0).
\end{align}
\end{prop}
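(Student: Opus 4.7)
The plan is to prove \eqref{eq:bdryH} by sandwiching each $u_i$ between fixed multiples of the barrier $h$ from Lemma~\ref{lem:barrier}, and then to upgrade this two-sided bound to \eqref{eq:bdryH2} via a dyadic oscillation-decay iteration in the spirit of Caffarelli-Salsa-Silvestre. Throughout, the even reflection of $u_i$ is used so that $L_s u_i = |x_{n+1}|^{1-2s} f_i$ in $B_1 \setminus \Lambda$ with $u_i = 0$ on $\Lambda$.

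\textbf{Two-sided comparison.} After normalizing $u_i(\tfrac{1}{2}e_{n+1}) = 1$, the interior Harnack inequality for the $A_2$-weighted operator $L_s$ (combined with the smallness of $\delta_0$) produces $u_i \geq c_0 > 0$ on $B_{3/4} \cap \{|x_{n+1}| \geq \ell\}$, which verifies the non-degeneracy hypothesis of Proposition~\ref{prop:comparison} and hence yields $u_i \geq c\, h$ on $B'_{1/2} \times (-\ell, \ell)$. For the matching upper bound, I would construct a dual comparison function from the supersolution $\hat h$ of Lemma~\ref{lem:upper_barrier}: using an $L_s$-harmonic quadratic $P$ of the form $|x'|^2 - c_s x_{n+1}^2$ and setting $V := A\hat h + BP + D$, one chooses $A, B, D$ large enough that $V \geq u_i$ on $\partial(B'_1 \times (-\ell, \ell)) \cup \Lambda$, while simultaneously $L_s V \leq L_s u_i$ pointwise (using $s\tau \leq s_0$ so that the negative part of $L_s \hat h$ absorbs the inhomogeneity). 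The comparison principle applied in $B'_1 \times (-\ell, \ell) \setminus \Lambda$ then yields $u_i \leq C\, h$ on $B'_{1/2} \times (-\ell, \ell)$. Dividing the bounds for $i=1,2$ produces \eqref{eq:bdryH}.

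\textbf{Oscillation decay.} For \eqref{eq:bdryH2}, fix $x_0 \in \Lambda \cap B'_{1/4}$ and introduce $M_r := \sup_{B_r(x_0) \setminus \Lambda}(u_2/u_1)$, $m_r := \inf(u_2/u_1)$, $\omega(r) := M_r - m_r$. The auxiliary functions $w := u_2 - m_r u_1$ and $w' := M_r u_1 - u_2$ are non-negative $L_s$-solutions vanishing on $\Lambda$; at the reference point $\xi := x_0 + \tfrac{r}{2} e_{n+1}$ one has $w(\xi) + w'(\xi) = \omega(r) u_1(\xi)$, so without loss of generality $w(\xi) \geq \tfrac{1}{2}\omega(r) u_1(\xi) \gtrsim \omega(r) r^s$. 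Rescaling $B_r(x_0)$ to $B_1$ and applying the two-sided comparison of the first step to the normalized functions $w/w(\xi)$ and $u_1/u_1(\xi)$ gives a universal constant $K$ with
\begin{equation*}
\sup_{B_{r/2}(x_0)} \frac{w}{u_1} \leq K \inf_{B_{r/2}(x_0)} \frac{w}{u_1},
\end{equation*}
whence $\omega(r/2) \leq (1 - K^{-1})\omega(r)$. Iterating along $r_k = 2^{-k}$ produces \eqref{eq:bdryH2} with $\beta = -\log_2(1 - K^{-1})$.

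\textbf{Main obstacle.} The delicate technical point is ensuring that the rescaled inhomogeneity bounds for $w/w(\xi)$ continue to satisfy the smallness hypothesis of Proposition~\ref{prop:comparison} at every scale. A direct scaling computation shows that the rescaled bound acquires a factor $r^{s+s_0}/w(\xi) \sim r^{s_0}/\omega(r)$, which is harmless provided $\omega(r) \gtrsim r^{s_0}$. This is handled by the standard dichotomy: either $\omega(r) \geq c\, r^{s_0}$ at every dyadic scale, in which case the iteration proceeds with a universal rate, or $\omega(r) < c\, r^{s_0}$ at some scale, in which case the Hölder bound with exponent $s_0$ is already immediate. Taking $\beta := \min(s_0, -\log_2(1 - K^{-1}))$ closes the argument; smallness of $[\nabla'' g]_{\dot C^{0,\alpha}}$ is preserved under the dyadic rescaling since this seminorm scales as $r^\alpha$.
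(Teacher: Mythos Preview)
Your proposal is correct and follows essentially the same approach as the paper. Step 1 in the paper matches your two-sided comparison (interior Harnack to reach the non-degeneracy hypothesis, then the lower barrier via Proposition~\ref{prop:comparison} and the upper barrier via Lemma~\ref{lem:upper_barrier}); Step 2 in the paper is exactly your oscillation-decay iteration, with the paper phrasing the dichotomy as an inductive lower bound $b_k-a_k\geq C\mu_2^k$ with $\mu_2\sim 2^{-s_0}$, which is the same constraint as your $\omega(r)\gtrsim r^{s_0}$, and the resulting H\"older exponent is likewise $\beta\leq\min\{s_0,|\log_2(1-C_0/2)|\}$.
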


\begin{proof}[Proof of Proposition~\ref{prop:boundHarn}]
\emph{Step 1: Proof of (\ref{eq:bdryH}).}
The inequality \eqref{eq:bdryH} is a consequence of the comparison principle: Without loss of generality we assume that $u_{1}(e_{n+1})= u_{2}(e_{n+1})=1$. By the Harnack inequality, for any $B_{r}(\bar x_0)\Subset B_1\setminus \Lambda$, there exists a constant $C=C(n,s)>0$ such that
\begin{align*}
\sup_{B_{r/2}(\bar x_0)} u_i \leq C \inf\limits_{B_{r/2}(\bar x_0)} u_i + C r^2 \sup\limits_{B_r(\bar x_0)}|f_i|, \quad i=1,2.
\end{align*}
Hence, if $\delta_0=\delta_0(n,s,s_0)$ is sufficiently small, there exist constants $\tilde{c}, \tilde{C}>0$ depending on $n,s$ such that $\tilde{c}\leq u_i(x)\leq \tilde{C}$ in $\{x\in B_{3/4}: |x_{n+1}|\geq \ell=\sqrt{\frac{1-s}{2(n+1)}}\}$  (note that $|f_i(x)|\leq \delta_0\ell^{s+s_0-2}$ if $|x_{n+1}|\geq \ell$). Thus, by a comparison argument, using the upper/lower barrier function (c.f. Lemma \ref{lem:upper_barrier} and Proposition~\ref{prop:comparison}), we have that for all $x\in B_{1/2}$
\begin{align*}
u_1(x) &\leq \tilde{C} \dist(x,\Gamma)^s\left(\frac{\dist(x,\Lambda)}{\dist(x,\Gamma)}\right)^{2s},\\
u_2(x) & \geq \tilde{c} \dist(x,\Gamma)^s\left(\frac{\dist(x,\Lambda)}{\dist(x,\Gamma)}\right)^{2s},
\end{align*}
if $\delta_0=\delta_0(n,s,s_0,\alpha)$ is sufficiently small. Here the constants $\tilde{C}$ and $\tilde{c}$ might be different from the equally denoted ones from above. They however also only depend on $n,s$.
Hence, there exists a constant $C_0=C_0(n,s)\in (0,1)$ with
\begin{align*}
u_2(x)-C_0 u_1(x) \geq 0.
\end{align*}
As the roles of $u_1,u_2$ can be reversed, this results in (\ref{eq:bdryH}). \\

\emph{Step 2: Proof of the Hölder continuity.}
The proof of the Hölder continuity of the quotient follows from a scaling argument. Since the proof is very similar to the one given in Lemma 3.24 in \cite{KRSI}, we only give a short outline here: Without loss of generality we assume that $x_0=0$. As in \cite{KRSI} we prove that there exist sequences of constants $\{a_k\}_{k\in\N},\{b_k\}_{k\in\N}$ such that
\begin{itemize}
\item[(i)] it holds $C_0\leq a_k \leq 1 \leq b_k \leq C_0^{-1}$ and $b_k-a_k \leq C \mu_1^k$ with $\mu_1 \in (0,1)$,
\item[(ii)] $b_k-a_k \geq C \mu_2^k$ with $\mu_2 \in (0,\mu_1]$ and $C>1$ being an absolute constant,
\item[(iii)] $a_k u_1 \leq u_2 \leq b_k u_1$ in $B_{2^{-k}}$.
\end{itemize}
The sequences are constructed inductively by a scaling argument and an application of Step 1: We set
\begin{align*}
\tilde{w}_1(x):= \frac{u_2(2^{-k}x)-a_k u_1(2^{-k}x)}{b_k-a_k}, \ \tilde{w}_2(x):= \frac{b_k u_1(2^{-k}x)-u_2(2^{-k}x)}{b_k-a_k}.
\end{align*}
As these functions are a convex combination of $u_2(2^{-k}x)$, we may without loss of generality assume that 
\begin{align*}
\tilde{w}_1\left(\frac{e_{n+1}}{2}\right) \geq \frac{1}{2}u_1\left( \frac{2^{-k} e_{n+1}}{2}\right).
\end{align*}
We rescale this and define
\begin{align*}
w_1(x) := \frac{\tilde{w}_1(x)}{u_1\left( \frac{2^{-k}e_{n+1}}{2} \right)}, \ \bar{u}(x):= \frac{u_1(2^{-k}x)}{u_1\left( \frac{2^{-k}e_{n+1}}{2} \right)}.
\end{align*}
This in particular implies that $2 w_1(e_{n+1}/2)\geq 1$. In order to prove the existence of the sequence $a_k, b_k$ with the desired properties (i)-(iii), we seek to apply Step 1 to the functions $w_1, \bar{u}$. To this end, we have to check the size assumption on the respectively associated inhomogeneities and the non-degeneracy condition. We only provide the proof for the scaling argument: We have
\begin{align*}
L_s w_1 = \frac{2^{-2k}}{(b_k-a_k) u_1\left( \frac{2^{-k} e_{n+1}}{2} \right)} x_{n+1}^{1-2s} f_1|_{2^{-k}x}=: x_{n+1}^{1-2s}\tilde{f}_1(x).
\end{align*}
Setting $\Gamma_{2^{-k}}:= \{x\in B_1: 2^{-k}x\in \Gamma\}$ and recalling that $\dist(x,\Gamma_{2^{-k}})= 2^{k}\dist(2^{-k}x,\Gamma)$, thus yields
\begin{align*}
&\|\dist(\cdot,\Gamma_{2^{-k}})^{2-s-s_0}\tilde{f}_1\|_{L^{\infty}(B_1)} \\
&= \frac{2^{-2k}}{(b_k-a_k)u_1\left( \frac{2^{-k}e_{n+1}}{2} \right)}2^{(2-s-s_0)k}\|\dist(\cdot,\Gamma)^{2-s-s_0}f_1 \|_{L^{\infty}(B_{2^{-k}})} \\
&\leq \frac{2^{(-s-s_0)k}}{C \mu_2^k 2^{-ks}} \delta_0 \leq C^{-1} 2^{-s_0 k} \mu_2^{-k} \delta_0.
\end{align*} 
Thus, if $\mu_2 \geq C^{-1}4^{-1} 2^{-s_0}$, Step 1 is applicable and it results in
\begin{align*}
\frac{2 w_1}{\bar{u}} \geq C_0.
\end{align*}
Spelling this out, leads to
\begin{align*}
(a_k + \frac{C_0}{2}(b_k-a_k))u_1 \leq u_2 \leq b_k u_1 \mbox{ in } B_{2^{-k-1}}.
\end{align*}
Setting $a_{k+1}:=a_k + \frac{C_0}{2}(b_k-a_k)$ and $b_{k+1}:=b_k$ therefore implies that $b_{k+1}-a_{k+1} = \left( 1- \frac{C_0}{2}\right)(b_k-a_k)$, which inductively yields $b_{k+1}-a_{k+1}=\bar{C}\left( 1- \frac{C_0}{2}\right)^k$, where $\bar C=C_0^{-1}-C_0$. This leads to a Hölder exponent $\beta$ which is less than or equal to $\min\{s_0,1, |\log_2\left( 1- \frac{C_0}{2}\right)|\}$ and a constant $C$ in \eqref{eq:bdryH2} with $C\lesssim C_0^{-1}-C_0$.
\end{proof}

\subsection{Application to the fractional thin obstacle problem}
\label{sec:application}
With the results of Section \ref{sec:tech_tools} at hand, we now turn to the fractional thin obstacle problem. In this context and in the whole of the remaining text, we assume that the conditions (A1)-(A4) from Section \ref{sec:setup} hold.
In particular, we note that $\Lambda_w$ and $\Gamma_w$ satisfy the requirements of Assumption \ref{assum:tools}. Thus, applying the results from the previous section allows us to infer a leading order asymptotic expansion (c.f. Proposition \ref{prop:asymp1}) and regularity results (c.f. Proposition \ref{prop:asymp2}).\\

We begin with a particularly relevant consequence of the boundary Harnack inequality and derive the following leading order asymptotic expansion of solutions to \eqref{eq:fracLa_2} with inhomogeneities which satisfy the condition (A4). We remark that by Proposition~\ref{prop:inhomo_0}, this asymptotic expansion transfers to the solution of \eqref{eq:fracLa}.

\begin{prop}
\label{prop:asymp1}
Let $w:B_1^+ \rightarrow \R$ be a solution to \eqref{eq:fracLa_2} and assume that the conditions (A1)-(A4) hold. Then, if $\|f\|_{C^{0,1}(B_1^+)}$ and $[\nabla'' g]_{\dot{C}^{0,\alpha}}$ are sufficiently small depending on $n,s,\alpha$, there exist a constant $\beta \in (0,1-s)$, a function $c:\Gamma_w \rightarrow \R$ with $\Gamma_w \ni x_0\mapsto c(x_0)$ being $C^{0,\beta}$ regular, such that at each $x_0\in \Gamma_w$
\begin{itemize}
\item[(i)]
\begin{align*}
w(x)&= c(x_0)w_{1,s}(Q_{x_0}(x),x_{n+1})+O_s\left(|x-x_0|^{1+s+ \beta}\right), 
\end{align*}
where $x\in B_{1/4}(x_0)$, $Q_{x_0}(x)=(x-x_0)\cdot \nu_{x_0}$ is an affine transformation at $x_0$, and $\nu_{x_0}$ is the in-plane outer unit normal of $\Lambda_w$ at $x_0$.
\item[(ii)] For $i\in\{1,\dots, n\}$, $x\in B_{1/4}(x_0)$ and $Q_{x_0}(x)$ as in (i),
\begin{align*}
\partial_i w(x) &= c(x_0)(e_i\cdot \nu_{x_0})w_{0,s}(Q_{x_0}(x),x_{n+1})\\
& \quad +O_s\left(w_{0,s}(Q_{x_0}(x),x_{n+1})|x-x_0|^\beta\right).
\end{align*}
\item[(iii)] For $x\in B_{1/4}(x_0)$ and $Q_{x_0}(x)$ as in (i),
\begin{align*}
x_{n+1}^{1-2s}\partial_{n+1}w(x)&= c(x_0)\frac{s}{s-1}w_{0,1-s}(-Q_{x_0}(x),x_{n+1})\\ 
& \quad +O_s\left(w_{0,1-s}(-Q_{x_0}(x),x_{n+1})|x-x_0|^{\beta}\right).
\end{align*}
\end{itemize}
Here the notation $O_s$ in (ii) means that there exists positive $C=C(n,s)$ universal, such that 
\begin{align*}
|\p_iw(x)-c(x_0)(e_i\cdot \nu_{x_0})w_{0,s}(Q_{x_0}(x),x_{n+1})|\leq C(n,s)|x-x_0|^\beta w_{0,s}(Q_{x_0}(x),x_{n+1})
\end{align*}
holds for any $x_0\in \Gamma_w$ and $x\in B_{1/4}(x_0)$. The same applies to the use of $O_s$ in (i) and (iii).
\end{prop}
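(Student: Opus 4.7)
The plan is to apply the boundary Harnack inequality, Proposition~\ref{prop:boundHarn}, to suitable derivatives of $w$ paired with the barrier functions of Lemmas~\ref{lem:barrier},~\ref{lem:upper_barrier}, then to translate the Hölder continuity of the resulting ratios into the desired asymptotics via the leading order expansions of the barriers. The two exponents $s$ and $1-s$ that appear in (ii) and (iii) correspond to the two natural classes of quantities: tangential derivatives of $w$ (which satisfy a $|x_{n+1}|^{1-2s}$-weighted equation and vanish on $\Lambda_w$) and the weighted normal trace $x_{n+1}^{1-2s}\partial_{n+1}w$ (which satisfies a conjugate $|x_{n+1}|^{2s-1}$-weighted equation and, by the complementary Signorini conditions, vanishes on $\Omega_w$ with the roles of contact and non-coincidence sets swapped).

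\textbf{Expansion of tangential derivatives.} After even reflection in $x_{n+1}$, the derivative $\partial_n w$ solves $L_s(\partial_n w)=|x_{n+1}|^{3-2s}\partial_n f$ in $B_1\setminus\Lambda_w$ with $\partial_n w=0$ on $\Lambda_w$, and is positive in $B_1\setminus\Lambda_w$ by (A2) combined with the strong maximum principle. Writing the inhomogeneity as $|x_{n+1}|^{1-2s}\cdot x_{n+1}^2\partial_n f$ and using $x_{n+1}^2\leq\dist(\cdot,\Gamma_w)^2$ together with (A4), the smallness hypothesis on $\|\dist(\cdot,\Gamma_w)^{2-s-s_0}f_1\|_{L^\infty}$ of Proposition~\ref{prop:boundHarn} is satisfied with, say, $s_0=s$; the barrier $h$ of Lemma~\ref{lem:barrier} meets the same hypothesis by part (i) and the corresponding matching upper bound. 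Applying Proposition~\ref{prop:boundHarn} to $u_1=h$ and $u_2=\partial_n w$, the ratio $\partial_n w/h$ is $C^{0,\beta}$ up to $\Gamma_w$ for some $\beta\in(0,1-s)$ (the exponent $\beta$ is chosen to be smaller than $1-s$, which is possible in view of the proof of Proposition~\ref{prop:boundHarn}). Combining this with the asymptotic expansion $h(x)=w_{0,s}(Q_{x_0}(x),x_{n+1})+O_s(w_{0,s}(Q_{x_0},x_{n+1})|x-x_0|^\alpha)$ of Lemma~\ref{lem:barrier}(iii) produces (ii) for $i=n$ with $c(x_0):=\lim_{x\to x_0}\partial_n w/h$. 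The general direction $i\in\{1,\dots,n\}$ follows by the same argument applied to $\partial_i w$, the factor $e_i\cdot\nu_{x_0}$ being dictated by the identity $\partial_i w_{1,s}(Q_{x_0}(x),x_{n+1})=(e_i\cdot\nu_{x_0})w_{0,s}(Q_{x_0},x_{n+1})$.

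\textbf{Conjugate argument for (iii).} The quantity $g:=-x_{n+1}^{1-2s}\partial_{n+1}w$, extended oddly in $x_{n+1}$, solves the conjugate degenerate elliptic equation $\nabla\cdot|x_{n+1}|^{2s-1}\nabla g=|x_{n+1}|^{2s-1}\bar f$ with a manageable inhomogeneity $\bar f$, and by the Signorini conditions satisfies $g=0$ on $\Omega_w$ and $g\geq 0$ on $\Lambda_w$. I apply the $(1-s)$-analogue of Proposition~\ref{prop:boundHarn} (obtained by an identical proof with $s$ and $\Lambda_w$ replaced by $1-s$ and $\Omega_w$) to $g$ and the $(1-s)$-barrier $\hat h$ of Lemma~\ref{lem:upper_barrier}, constructed with $s$ replaced by $1-s$ and having leading expansion $w_{0,1-s}(-Q_{x_0},x_{n+1})$. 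This yields the asymptotic expansion (iii); the factor $s/(s-1)$ and the coincidence of the coefficient with the one from (ii) are forced by the explicit identities $\partial_n w_{1,s}=c_s w_{0,s}$ and $x_{n+1}^{1-2s}\partial_{n+1}w_{1,s}=c_s\frac{s}{s-1}w_{0,1-s}(-x_n,x_{n+1})$ for the model solution.

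\textbf{Hölder continuity of $c(x_0)$ and expansion of $w$.} The $C^{0,\beta}$ regularity of $x_0\mapsto c(x_0)$ is obtained by applying the quantitative Hölder estimate (\ref{eq:bdryH2}) in Proposition~\ref{prop:boundHarn} at two nearby free boundary points $x_0,x_1\in\Gamma_w$, using that the corresponding barriers $h_{x_0},h_{x_1}$ differ only by $O(|x_0-x_1|^\alpha)\cdot w_{0,s}$ by Lemma~\ref{lem:barrier}(iii). Part (i) finally follows from (ii) and (iii) by integrating along the segment from $x_0$ to $x$: since $w(x_0)=0$, one has $w(x)=\int_0^1\nabla w(x_0+t(x-x_0))\cdot(x-x_0)\,dt$, and inserting the pointwise expansions of $\partial_i w$, $i=1,\dots,n$, and of $\partial_{n+1}w=x_{n+1}^{2s-1}(x_{n+1}^{1-2s}\partial_{n+1}w)$ recovers the principal term $c(x_0)w_{1,s}(Q_{x_0}(x),x_{n+1})$ (which itself admits the integral representation by the homogeneity of $w_{1,s}$), with a remainder of size $|x-x_0|\cdot|x-x_0|^\beta\cdot|x-x_0|^s\sim|x-x_0|^{1+s+\beta}$. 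The main obstacle is the careful verification of the smallness hypotheses of Proposition~\ref{prop:boundHarn} for both $\partial_n w$ and $h$, where the additional decay factor $x_{n+1}^{3-2s}$ in the inhomogeneity in (A1) is essential in order to absorb the right hand side into the $\dist(\cdot,\Gamma_w)^{2-s-s_0}$-weighted smallness, together with the construction of the conjugate $(1-s)$-boundary Harnack inequality required in the previous step.
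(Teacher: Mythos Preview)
Your overall strategy matches the paper's, and the treatment of (iii) via the conjugate equation and of (i) via path integration is exactly what the paper does. There is, however, one genuine gap in your argument for (ii).

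You write that ``the general direction $i\in\{1,\dots,n\}$ follows by the same argument applied to $\partial_i w$''. But Proposition~\ref{prop:boundHarn} requires both comparison functions to be \emph{positive} in $B_1\setminus\Lambda_w$, and for $i\in\{1,\dots,n-1\}$ there is no reason for $\partial_i w$ to have a sign: for the model solution one has $\partial_i w_{1,s}\equiv 0$ when $i<n$, so under (A2) the derivative $\partial_i w$ is merely small and may well change sign. The paper fixes this by applying the boundary Harnack inequality only to $\partial_e w$ with $e$ in a cone $\mathcal{C}'_\eta(e_n)$ around $e_n$ --- where positivity \emph{is} ensured by (A2) and Proposition~\ref{prop:comparison} --- and then writing an arbitrary tangential direction $\tau$ as $\tau=c_1 e_n+c_2 e$ with $e\in\mathcal{C}'_\eta(e_n)$, so that $\partial_\tau w=c_1\partial_n w+c_2\partial_e w$ inherits the asymptotic expansion by linearity. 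With this cone--decomposition step inserted, your proof is complete and coincides with the paper's.

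A minor remark on (iii): the paper compares $\bar w=|x_{n+1}|^{1-2s}\partial_{n+1}w$ with the $h_0$--type barrier built from $w_{0,1-s}$ (i.e.\ the analogue of Lemma~\ref{lem:barrier} with $s\mapsto 1-s$ and $\Lambda_w\mapsto\overline{\Omega_w}$), not with the upper barrier $\hat h$ of Lemma~\ref{lem:upper_barrier}; your reference to $\hat h$ appears to be a slip of notation rather than a mathematical issue.
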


\begin{rmk}
Proposition~\ref{prop:inhomo_0} implies that the asymptotic expansion from Proposition~\ref{prop:asymp1} also yields an analogous asymptotic expansion  around the regular free boundary of a solution $w$ to \eqref{eq:fracLa} with inhomogeneity $\tilde{f}\in C^{3,1}(B_1^+)$ (i.e. without the modifications of Proposition \ref{prop:inhomo_0}). We also note that the asymptotic expansion for tangential derivatives $\p_i w$, $i\in \{1,\dots, n\}$, only requires that $\tilde{f}\in C^{0,1}(B_1)$. The higher regularity assumption on $\tilde{f}$ is only needed for the asymptotic expansion of $\p_{n+1}w$ (and hence of $w$).
\end{rmk}

\begin{proof}
We first show property (ii). This follows by applying the boundary Harnack inequality (Proposition~\ref{prop:boundHarn}) to $\p_e w$ and $h_0$, where $e\in \mathcal{C}'_\eta(e_n)$ and $h_0(x)$ is the barrier function from Lemma~\ref{lem:barrier} with $\tau=0$. Here $\mathcal{C}'_\eta(e_n)$ denotes a cone with opening angle $\eta$ in $\R^{n}\times\{0\}$.  More precisely, by the proof of Lemma~\ref{lem:barrier} (c.f. \eqref{eq:h0}) we have that on the one hand
\begin{align*}
L_s h_0=|x_{n+1}|^{1-2s}k(x) \text{ in } B_1\setminus\Lambda_w,
\end{align*}
where the function $k$ satisfies
\begin{align*}
\left\| \dist(\cdot,\Gamma_w)^{2-s-\alpha} k\right\|_{L^\infty(B_1\setminus \Lambda_w)}\leq C_{n,s} [\nabla'' g]_{\dot{C}^{0,\alpha}}.
\end{align*}
Moreover, $h_0$ satisfies the non-degeneracy conditions and the asymptotics stated in Lemma~\ref{lem:barrier} (ii), (iii). Thus, $h_0$ satisfies the assumptions of Proposition~\ref{prop:boundHarn}. On the other hand, by the assumption (A2) and Proposition~\ref{prop:comparison}, if $\epsilon_0$ is sufficiently small, then $\p_ew$ with $e\in \mathcal{C}'_\eta(e_n)$ is positive in $B_1 \setminus \Lambda_w$. Moreover, it solves $L_s (\p_ew)=|x_{n+1}|^{1-2s}\p_ef$, where by (A4) $\|\p_ef\|_{L^\infty}\leq \mu_0$. Thus, by Proposition~\ref{prop:boundHarn}, if $\mu_0=\mu_0(n,s,\alpha)$ and $[\nabla'' g]_{\dot{C}^{0,\alpha}}$ are sufficiently small,
the quotient $\p_e w/h_0$ is $C^{0,\beta}$ regular up to $\Lambda_w$ for some $\beta=\beta(n,s)$. More precisely, there exists a $C^{0,\beta}$ function $b_e:\Lambda_w\rightarrow \R$ and a constant $C=C(n,s)$ such that for each $x_0\in \Lambda_w$,
\begin{align*}
\left|\frac{\p_ew(x)}{h_0(x)}-b_e(x_0)\right|\leq C |x-x_0|^{\beta}.
\end{align*}
Multiplying by $h_0$ on both sides of the estimate, we obtain that
\begin{align*}
\p_ew(x)=b_e(x_0)h_0(x)+O_s(h_0(x)|x-x_0|^{\beta}).
\end{align*}
Using the asymptotics of $h_0$ (c.f. (iii) in Lemma~\ref{lem:barrier}) and restricting $b$ to $\Gamma_w$, we therefore deduce that around $x_0\in \Gamma_{w}$
\begin{align*}
\p_ew(x)=b_e(x_0)w_{0,s}(Q_{x_0}(x),x_{n+1})+O_s(w_{0,s}(Q_{x_0}(x),x_{n+1})|x-x_0|^\beta).
\end{align*}
For $\tau\notin \mathcal{C}'_\eta(e_n)$, we express $\tau=c_1e_n+c_2e$ for some $e \in \mathcal{C}'_\eta(e_n)$ and write $\p_\tau w=c_1\p_nw+c_2\p_e w$.\\
In order to obtain the leading order asymptotic expansion for the (weighted) normal derivative $x_{n+1}^{1-2s}\p_{n+1}w$, we note that it satisfies the conjugate equation with respect to $w$ (c.f. \cite{CaS}): More precisely, let $\bar{w}(x):=|x_{n+1}|^{1-2s}\p_{n+1}w $. We reflect $w$ and $f$ oddly about $x_{n+1}$ (thus $\bar w$ is even about $x_{n+1}$). From the equation we have $|x_{n+1}|^{2s-1}\p_{n+1}\bar w=0$ on $\Lambda_w$. Thus, $\bar w$ solves 
\begin{align*}
L_s \bar{w} = \p_{n+1} \bar{f} \mbox{ in } B_{1}\setminus \overline{\Omega_w},\quad
\bar{w}=0 \mbox{ on } \overline{\Omega_w},
\end{align*}
where 
\begin{align*}
\bar{f}(x):=\left\{  
\begin{array}{ll}
x_{n+1}^2f(x, x_{n+1}) &\mbox{ for } x_{n+1}\geq 0,\\
-x_{n+1}^2f(x, -x_{n+1}) & \mbox{ for } x_{n+1}<0.
\end{array}
\right.
\end{align*} 
We note that the inhomogeneity $\p_{n+1}\bar f$ is of the form $|x_{n+1}|h(x)$ with $h(x)\in L^\infty$. Moreover, by assumption (A4) the smallness condition for $f$ implies a smallness condition for $h$. As a consequence, we may apply the comparison result of Proposition \ref{prop:comparison} to $\bar{w}$ with $s$ being replaced by $1-s$.  This concludes the proof on the asymptotic expansion for $\bar{w}$.\\
To obtain the asymptotic expansion of $w$ at $x_0\in \Gamma_w$ which is claimed in (i), we use an argument relying on path integration as in Corollary 4.8 in \cite{KRSI}. We obtain that $w(x)=c(x_0)w_{1,s}(Q_{x_0}(x),x_{n+1})+O_s(|x-x_0|^{1+s+\beta})$, where $c(x_0)=\frac{s-1}{s}b_{n+1}(x_0)$. Thus, $c(x_0)>0$ for any $x_0\in \Gamma_w$ and $c\in C^{0,\beta}(\Gamma_w)$. \\
In the end, we express $b_e(x_0)$ in terms of $c(x_0)$ and $\nu_{x_0}$, and infer that $b_e(x_0)=c(x_0)(e\cdot \nu_{x_0})$ for any $e\in S^n\cap \{e_{n+1}=0\}$. This completes the proof of property (i).
\end{proof}

For convenience of notation we introduce the following convention:
\begin{conv}
Let $\alpha\in(0,1)$ be the exponent from assumption (A3) and let $\beta\in(0,1)$ be the Hölder exponent from Proposition \ref{prop:asymp1}. Then in the sequel, we assume that $\beta=\alpha$. 
\end{conv}

We stress that this does not cause severe restrictions on our set-up, as we did not specify the explicit form of the exponent $\alpha$ (therefore we can always reduce its size appropriately) and as the regularity of the free boundary is also inferred from a boundary Harnack inequality (c.f. Theorem 7.7 in \cite{CSS}).

\begin{conv}\label{conv:c0}
Without loss of generality we assume that $c(0)=1$, i.e. the blow-up of $\p_nw $ at $0$ is $w_{0,s}$. Moreover, without loss of generality we will assume that $\frac{1}{2}\leq c(x_0)\leq \frac{3}{2} \text{ for any } x_0\in \Gamma_w.$ This can be achieved by a scaling $w(\lambda_0x)/\lambda_0^{1+s}$ for a sufficiently small $\lambda_0=\lambda_0(n,s)$. 
\end{conv}

Invoking interior regularity estimates for the fractional Laplacian in non-tangential regions (c.f. Propositions \ref{prop:Dirichlet} and \ref{prop:Neumann}), we also obtain the leading order asymptotic expansion for higher derivatives of $w$. To this end, we compare the derivatives of $w$ with their blow-up at a given free boundary point $x_0$. More precisely for $x_0\in \Gamma_w$ and $\lambda \in (0,1/2)$, we consider 
\begin{align}\label{eq:scale_w}
w_{x_0,\lambda}(x):=\frac{w(x_0+\lambda x)}{\lambda^{1+s}}. 
\end{align}
We denote the associated blow-up of $w_{x_0,\lambda}$ by $w_{x_0}(x):= \lim\limits_{\lambda \rightarrow 0} w_{x_0,\lambda}(x)$ and note that by the asymptotics from Propostion \ref{prop:asymp1}
\begin{align}\label{eq:blowup_w}
w_{x_0}(x)=c(x_0)w_{1,s}(x\cdot \nu_{x_0},x_{n+1}).
\end{align}
Using interior estimates in non-tangential regions around each $x_0\in \Gamma_w$, we obtain the following (higher order) asymptotic expansion:

\begin{prop}
\label{prop:asymp2}
Let $w:B_1^+ \rightarrow \R$ be a solution to \eqref{eq:fracLa_2} and assume that the conditions (A1)-(A4) hold. Let $\alpha$ be the constant from Proposition \ref{prop:asymp1}. Then in a non-tangential cone $\mathcal{N}_0=\{x:|x|\leq \frac{1}{2}\sqrt{x_n^2+x_{n+1}^2}\}$ we have:
\begin{itemize}
\item[(i)] In $A_-:=\mathcal{N}_0\cap \{x_n<x_{n+1}, 1/4 \leq |x|\leq 2\}$, there exists a constant $C_s>0$ such that for any $\gamma\in(0,1)$ and $\lambda\in (0,1/4)$
\begin{align*}
&\quad \|x_{n+1}^{-2s}(w_{x_0,\lambda}-w_{x_0})\|_{C^{0,\gamma}(A_-)}+\sum_{i=1}^{n}\|x_{n+1}^{-2s}\p_i( w_{x_0,\lambda}- w_{x_0})\|_{C^{0,\gamma}(A_-)}\\
&+\|x_{n+1}^{1-2s}\p_{n+1}(w_{x_0,\lambda}-w_{x_0})\|_{C^{0,\gamma}(A_-)}
+\sum_{i,j=1}^{n+1}\left\| \p_ix_{n+1}^{1-2s}\p_j( w_{x_0,\lambda}- w_{x_0})\right\|_{C^{0,\gamma}(A_-)}\\
&\leq C_s \lambda^{\alpha}.
\end{align*}
\item[(ii)] In $A_+:=\mathcal{N}_0\cap \{x_n>-x_{n+1}, 1/4\leq |x|\leq 2\}$ there exists a constant $C_s>0$ such that for any $\gamma\in(0,1)$ and $\lambda\in (0,1/4)$
\begin{align*}
\|w_{x_0,\lambda}-w_{x_0}\|_{C^{2,\gamma}(A_+)}&\leq C_s \lambda^{\alpha}.
\end{align*}
\end{itemize}
\end{prop}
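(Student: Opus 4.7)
The plan is to combine the base pointwise bound from Proposition~\ref{prop:asymp1} with interior regularity estimates for the degenerate operator $L_s$ in non-tangential annular regions, namely Propositions~\ref{prop:Dirichlet} and~\ref{prop:Neumann} from Appendix~A. Rescaling the leading order asymptotic expansion of Proposition~\ref{prop:asymp1} and exploiting the $(1+s)$-homogeneity of the model solution $w_{1,s}$, one obtains as a first step
\[
|w_{x_0,\lambda}(y)-w_{x_0}(y)|\le C_s\,\lambda^{\alpha}|y|^{1+s+\alpha}, \qquad y\in B_{1/(4\lambda)},
\]
which gives the base bound $\|w_{x_0,\lambda}-w_{x_0}\|_{L^\infty(A_+\cup A_-)}\le C_s\lambda^{\alpha}$. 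A direct change of variables shows moreover that $L_s w_{x_0,\lambda}(x)=\lambda^{3-s}x_{n+1}^{3-2s}f(x_0+\lambda x)$ while $L_s w_{x_0}=0$, so the inhomogeneity of $\tilde w_\lambda := w_{x_0,\lambda}-w_{x_0}$ is of order $\lambda^{3-s}\ll\lambda^{\alpha}$.

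On $A_+$, the trace $\{x_{n+1}=0\}\cap A_+$ lies in the non-coincidence set of both $w_{x_0,\lambda}$ and $w_{x_0}$ once $\lambda$ is sufficiently small: the scaled free boundary $\lambda^{-1}(\Gamma_w-x_0)$ converges, uniformly on compact sets, to the flat set $\{y\cdot\nu_{x_0}=0,\,y_{n+1}=0\}$ by the $C^{1,\alpha}$-regularity from assumption (A3), and $A_+$ has a positive distance from this flat set. Thus $\tilde w_\lambda$ satisfies $L_s\tilde w_\lambda=x_{n+1}^{3-2s}\lambda^{3-s}f(x_0+\lambda\cdot)$ on a slight enlargement of $A_+$ together with the Neumann condition $\lim_{x_{n+1}\to 0_+}x_{n+1}^{1-2s}\partial_{n+1}\tilde w_\lambda=0$. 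Invoking Proposition~\ref{prop:Neumann} with the base $L^\infty$ bound and the smallness of the inhomogeneity then yields $\|\tilde w_\lambda\|_{C^{2,\gamma}(A_+)}\le C_s\lambda^{\alpha}$, which is (ii).

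On $A_-$, after even reflection in $x_{n+1}$, the same $C^{1,\alpha}$-regularity of $\Gamma_w$ guarantees that $A_-$ is uniformly bounded away from the (scaled) non-coincidence portion of $\{x_{n+1}=0\}$. Both $w_{x_0,\lambda}$ and $w_{x_0}$ therefore vanish on the contact portion of this trace, and $\tilde w_\lambda$ solves the Dirichlet-type problem $L_s\tilde w_\lambda=|x_{n+1}|^{3-2s}\lambda^{3-s}f(x_0+\lambda\cdot)$ on a slight enlargement of $A_-$ with homogeneous Dirichlet data on the contact side. Proposition~\ref{prop:Dirichlet} is precisely formulated in the intrinsic weighted Hölder norms $\|x_{n+1}^{-2s}\,\cdot\,\|_{C^{0,\gamma}}$, $\|x_{n+1}^{1-2s}\partial_{n+1}\,\cdot\,\|_{C^{0,\gamma}}$ and $\|\partial_i x_{n+1}^{1-2s}\partial_j\,\cdot\,\|_{C^{0,\gamma}}$, and transfers the $O(\lambda^{\alpha})$ $L^\infty$-control from Step~1 to all of the quantities appearing in~(i).

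The main obstacle is that the scaled free boundary $\lambda^{-1}(\Gamma_w-x_0)$ does not coincide with the flat free boundary of $w_{x_0}$, so $w_{x_0,\lambda}$ and $w_{x_0}$ satisfy the Signorini conditions on slightly different loci; the difference is not literally a solution of a single clean Dirichlet or Neumann problem. Assumption (A3) together with the $C^{0,\alpha}$-Hölder regularity of $x_0\mapsto\nu_{x_0}$ bounds the Hausdorff distance between the two loci by $O(\lambda^{\alpha})$ on every compact, and the strict non-tangentiality encoded in the definitions of $A_+$ and $A_-$ ensures that, for $\lambda$ below a fixed threshold, both loci stay outside a fixed slight enlargement of $A_\pm$. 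This reduces the situation to a common open neighborhood of $A_\pm$ on which Propositions~\ref{prop:Dirichlet} and~\ref{prop:Neumann} apply in their standard form, completing the argument.
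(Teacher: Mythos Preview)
Your proposal is correct and follows essentially the same route as the paper: obtain the base $L^\infty$ bound $|w_{x_0,\lambda}-w_{x_0}|\le C_s\lambda^\alpha$ from Proposition~\ref{prop:asymp1}, observe that the rescaled free boundary does not meet the annular region $\{1/4\le|x|\le 2\}\cap\mathcal{N}_0$, and then apply the Dirichlet and Neumann Schauder estimates of Propositions~\ref{prop:Dirichlet} and~\ref{prop:Neumann} on $A_-$ and $A_+$ respectively. Two minor remarks: first, the paper phrases the free-boundary avoidance directly in terms of the smallness of $[\nabla'' g]_{\dot C^{0,\alpha}}$ (already part of the standing assumptions), which gives the conclusion for \emph{all} $\lambda\in(0,1/2)$ rather than for $\lambda$ below a threshold; second, your inhomogeneity scaling $\lambda^{3-s}$ is in fact the correct exponent (the paper records $\lambda^{1-s}$, which appears to be a typo but is immaterial to the argument).
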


\begin{proof}
We note that since the regular free boundary in $B_1$ is a $C^{1,\alpha}$ graph, $\Gamma_w=\{x: x_n=g(x'')\}$ with $g(0)=|\nabla'' g(0)|=0$, we have
\begin{align}\label{eq:flat}
Q_{0}(x)=x_n, \quad |Q_{x_0}(x)-Q_{0}(x)|\leq C[\nabla'' g]_{\dot{C}^{0,\alpha}} |x|^{1+\alpha}.
\end{align}
Thus, for $[\nabla'' g]_{\dot{C}^{0,\alpha}}$ sufficiently small, $\Gamma_{w_{x_0,\lambda}}\cap \{x\in \mathcal{N}_0: 1/4\leq |x|\leq 2\}$ is empty for every $\lambda\in (0,1/2)$. Hence,
\begin{align*}
L_s(w_{x_0,\lambda}-w_{x_0})&=x_{n+1}^{1-2s}f_{x_0,\lambda} \text{ in } A_+\cup A_-,\\
w_{x_0,\lambda}-w_{x_0}&=0 \text{ on }B'_1\cap A_-, \\
\lim_{x_{n+1}\rightarrow 0_+} x_{n+1}^{1-2s}\p_{n+1}(w_{x_0,\lambda}-w_{x_0})&=0 \text{ on }B'_1\cap A_+,
\end{align*}
where 
$f_{x_0,\lambda}(x)=\lambda^{1-s} x_{n+1}^2f(x_0+\lambda x)$
with $[f_{x_0,\lambda}]_{\dot{C}^{0,1}(B_1^+)}=\lambda^{2-s}[f]_{\dot{C}^{0,1}(B_\lambda(x_0))}$.
Moreover, by property (i) of Proposition~\ref{prop:asymp1} $$|w_{x_0,\lambda}-w_{x_0}|\leq C_s \lambda^\alpha.$$  Then (i)-(ii) follow immediately from the up to the boundary a priori estimates for the operator $L_s=\nabla\cdot x_{n+1}^{1-2s}\nabla $ with Dirichlet and Neumann boundary conditions, respectively (c.f. Propositions \ref{prop:Dirichlet} and \ref{prop:Neumann}). 
\end{proof}

\section{Hodograph-Legendre Transformation}
\label{sec:HL}
Relying on the asymptotic expansion from the previous section, in this section we carry out a Legendre-Hodograph transformation to fix and flatten the regular free boundary (c.f. Proposition \ref{prop:invert}). While this fixes the free boundary, it comes at the expense of transforming the fractional Laplacian into a fully nonlinear, degenerate elliptic, fractional Baouendi-Grushin type operator (c.f. Proposition \ref{prop:eqnonlin} and Example \ref{ex:model}). As in the previous section, we assume throughout the section that the conditions (A1)-(A4) are valid.\\

We consider a partial Hodograph transformation which is adapted to the fractional thin obstacle problem:
\begin{equation}
\label{eq:Legendre}
\begin{split}
T: B_1^+&\rightarrow Q_+:=\{y\in \R^{n+1}: y_n\geq 0, \ y_{n+1}\geq 0\},\\
x&\mapsto T(x)=:y, \\
y''&=x'', \ 
(y_n)^{2s}=\p_nw(x),\ 
(y_{n+1})^{2(1-s)}=-\frac{1-s}{s}x_{n+1}^{1-2s}\p_{n+1}w(x).
\end{split}
\end{equation}
Here $w:B_1^+ \rightarrow \R$ is a solution to the thin obstacle problem (\ref{eq:fracLa_2}) satisfying the assumptions (A1)-(A4).
We note that by the asymptotic expansions in (ii) and (iii) of Proposition \ref{prop:asymp1}, the Hodograph transform has the following mapping properties
\begin{align*}
&T(\inte(B_1^+)) \subset \inte(Q_+), \ T(\Lambda_w) \subset \{y_n=0, y_{n+1}\geq 0\}, \\
&T(\inte(\Omega_w)) \subset \{y_{n+1}=0, y_{n}> 0\}, \ T(\Gamma_w) \subset \{y_n=0, y_{n+1}=0\}.
\end{align*} 
Associated with the transformation $T$, we define the Legendre function, $v$, associated with $w$
\begin{equation}\label{eq:Leg0}
 v(y):=w(x)-x_ny_n^{2s}+\frac{1}{2(1-s)}x_{n+1}^{2s} y_{n+1}^{2(1-s)},
\end{equation}
where $y=T(x)$. With this definition, the function $v$ satisfies the following dual conditions
\begin{equation}
\label{eq:inverse}
\begin{split}
\partial_{i} v(y)&=\partial_i w(x),\  i\in\{1,\dots, n-1\},\\
y_n^{1-2s} \partial_n v(y)&= -(2s)x_n,\\
y_{n+1}^{2s-1}\partial_{n+1} v(y)&=x_{n+1}^{2s}.
\end{split}
\end{equation}
In particular, the free boundary is parametrized by 
\begin{align*}
x_n=-\frac{1}{2s}y_n^{1-2s}\p_{n}v(y)\big|_{y=(y'',0,0)}.
\end{align*}
Thus the study of the free boundary reduces to the analysis of the regularity properties of $v$ and its (weighted) derivatives (c.f. Sections \ref{sec:asymp1}-\ref{sec:IFT}).

\subsection{Invertibility}
In this section we show that the Hodograph-Legendre transform which was defined in (\ref{eq:Legendre}) and (\ref{eq:inverse}) maps $B_{\delta_0}^+$ invertibly onto its image (c.f. Proposition \ref{prop:invert}) if the radius $\delta_0 = \delta_0(s)$ is chosen small enough. 
To this end, we observe that, if $T$ is the Hodograph-Legendre transform with respect to a solution $w$ of the thin obstacle problem \eqref{eq:fracLa_2}, the regularity of $w$ (c.f. Remark~\ref{rmk:opt}) and the asymptotic expansion of $w$ (c.f. Proposition \ref{prop:asymp1})) immediately imply that $T$ is continuous up to $B'_1$. Moreover, by using the asympototics of $\p_n w$ and $x_{n+1}^{1-2s}\p_{n+1}w$ from Proposition~\ref{prop:asymp1}, we also obtain the asymptotics of $y=T(x)$ in a neighborhood of $x_0\in \Gamma_w$:
\begin{equation}
\label{eq:asym_L}
\begin{split}
y'' &=x'',\\
y_n&=c(x_0)^{1/(2s)}(e_n\cdot \nu_{x_0})^{1/(2s)} w_{0,1/2}(Q_{x_0}(x),x_{n+1})\\
& \quad +O_s\left(w_{0,1/2}(Q_{x_0}(x),x_{n+1})|x-x_0|^{\alpha}\right),\\
y_{n+1}&= c(x_0)^{1/2(1-s)} w_{0,1/2}(-Q_{x_0}(x),x_{n+1})\\
&\quad + O_s\left(w_{0,1/2}(-Q_{x_0}(x),x_{n+1})|x-x_0|^{\alpha}\right).
\end{split}
\end{equation}
Here $Q_{x_0}(x)=(x-x_0)\cdot\nu_{x_0}$. We note that in deducing (\ref{eq:asym_L}) we have implicitly used that $c(x_0)\geq \frac{1}{2}, (e_n\cdot \nu_{x_0})\geq \delta $ for some $\delta>0$ (in order to expand the corresponding roots). These lower bounds are consequences of the assumptions (A2)-(A4). 

Using the asymptotic expansions from (\ref{eq:asym_L}), we obtain improved regularity properties for the Hodograph transform:

\begin{prop}
\label{prop:J}
Assume that $w:B_1^+ \rightarrow \R$ is a solution of (\ref{eq:fracLa_2}). Let $w_{x_0,\lambda}$ and $w_{x_0}$ be the associated rescalings and the blow-up limit from \eqref{eq:scale_w} and \eqref{eq:blowup_w}. Denote by $T^{w_{x_0,\lambda}}$ and $T^{w_{x_0}}$ their respective Hodograph transformations and use $A_+, A_-$ to denote the non-tangential sets from Proposition \ref{prop:asymp2}. Then $T^{w_{x_0,\lambda}}\in C^1(A_+\cup A_-)$ for any $\lambda\in (0,1/2)$. Moreover:
\begin{itemize}
\item[(i)] For any $\gamma\in (0,1)$ and $\lambda\in (0,1/4)$,
\begin{equation}\label{eq:J2}
\quad \|DT^{w_{x_0,\lambda}}-DT^{w_{x_0}}\|_{C^{0,\gamma}(A_+\cup A_-)}\leq C \lambda^{\alpha}.
\end{equation}
\item[(ii)] There exist constants $c_s, C_s>0$ and $\lambda_0=\lambda_0(s)>0$, such that for any $\lambda\in(0,\lambda_0)$ and $x_0\in \Gamma_w$ it holds
\begin{equation}\label{eq:J22}
c_s\leq \|\det(DT^{w_{x_0,\lambda}})\|_{C^{0}(A_+\cup A_-)}\leq C_s.
\end{equation}
\end{itemize}
\end{prop}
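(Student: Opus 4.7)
The plan is to exploit the explicit dependence of $T$ on the derivatives $\partial_n w$ and $x_{n+1}^{1-2s}\partial_{n+1}w$, together with the quantitative estimates of Proposition \ref{prop:asymp2}. Since $y''=x''$, only the $n$-th and $(n+1)$-th rows of the Jacobian are nontrivial. Using the leading-order asymptotics of Proposition \ref{prop:asymp1}, the corresponding transform associated with the blow-up $w_{x_0}$ is given explicitly by
\begin{align*}
y_n^{w_{x_0}}(x) &= [c(x_0)c_s\,\nu_{x_0,n}]^{1/(2s)}\,w_{0,1/2}(Q_{x_0}(x),x_{n+1}),\\
y_{n+1}^{w_{x_0}}(x) &= [c(x_0)c_s]^{1/(2(1-s))}\,w_{0,1/2}(-Q_{x_0}(x),x_{n+1}),
\end{align*}
with $Q_{x_0}(x)=(x-x_0)\cdot\nu_{x_0}$, and I will compare $T^{w_{x_0,\lambda}}$ with this expression on $A_+$ and $A_-$ separately, since different regularity tools apply.

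On $A_+$, the classical estimate $\|w_{x_0,\lambda}-w_{x_0}\|_{C^{2,\gamma}(A_+)}\leq C\lambda^\alpha$ from Proposition \ref{prop:asymp2}(ii) applies directly. A short inspection of $w_{0,s}$ shows that $\partial_n w_{x_0}\geq c>0$ on $A_+$ and that, after factoring out the natural weight $x_{n+1}^{2(1-s)}$, $-\tfrac{1-s}{s}x_{n+1}^{1-2s}\partial_{n+1}w_{x_0}=x_{n+1}^{2(1-s)}\,h(x)$ with $h$ smooth and bounded below on $A_+$. For $\lambda\leq\lambda_0(s)$ small, the same holds for $w_{x_0,\lambda}$, and composing with the smooth powers $(\cdot)^{1/(2s)}$ and $(\cdot)^{1/(2(1-s))}$ near a bounded-below positive value yields $T^{w_{x_0,\lambda}}\in C^{1,\gamma}(A_+)$ together with the required $C^{0,\gamma}$-estimate on $DT^{w_{x_0,\lambda}}-DT^{w_{x_0}}$. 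On $A_-$ the weighted estimates of Proposition \ref{prop:asymp2}(i) replace the classical $C^{2,\gamma}$-bound. The key idea is to extract the natural weights before taking roots, writing
\[
y_n^{w_{x_0,\lambda}}(x)=x_{n+1}\,\bigl(x_{n+1}^{-2s}\partial_n w_{x_0,\lambda}(x)\bigr)^{1/(2s)},
\]
and analogously for $y_{n+1}^{w_{x_0,\lambda}}$; the bracketed quantity is bounded above and below on $A_-$ by the asymptotics of Proposition \ref{prop:asymp1}, and the first two terms of Proposition \ref{prop:asymp2}(i) give its $C^{0,\gamma}$-closeness to the blow-up analog at rate $\lambda^\alpha$. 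Differentiating and then invoking the last term of Proposition \ref{prop:asymp2}(i), which controls $\partial_i x_{n+1}^{1-2s}\partial_j(w_{x_0,\lambda}-w_{x_0})$, concludes the proof of (i).

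For (ii), the block-triangular structure of $DT$ reduces the determinant to a $2\times 2$ minor, $\det DT=\nu_{x_0,n}[\partial_Q y_n\,\partial_{x_{n+1}}y_{n+1}-\partial_{x_{n+1}}y_n\,\partial_Q y_{n+1}]$, where the derivatives are those of $y_n,y_{n+1}$ viewed as functions of $Q_{x_0}$ and $x_{n+1}$. Substituting the explicit formulas above and using the identity $w_{0,1/2}(Q,x_{n+1})\,w_{0,1/2}(-Q,x_{n+1})=|x_{n+1}|$, a short computation yields
\[
\det DT^{w_{x_0}}(x)=\frac{\nu_{x_0,n}\,[c(x_0)c_s]^{\frac{1}{2s}+\frac{1}{2(1-s)}}}{2\sqrt{Q_{x_0}(x)^2+x_{n+1}^2}},
\]
which on $A_+\cup A_-$ is bounded above and below by positive constants depending only on $n,s$, since $\sqrt{Q_{x_0}^2+x_{n+1}^2}\sim 1$ there, $\nu_{x_0,n}\geq 1/\sqrt{2}$ by the smallness of $\nabla''g$, and $c(x_0)\in[1/2,3/2]$ by Convention \ref{conv:c0}. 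Applying (i) with $\gamma=0$ and shrinking $\lambda_0$ if necessary then gives (ii). The main obstacle is the $A_-$ case of (i): the vanishing of $\partial_n w$ on $\Lambda_w$ prevents a direct Lipschitz bound for the power $(\cdot)^{1/(2s)}$, so one must carefully combine the weighted factorization with the somewhat delicate mixed-derivative control furnished by the last term of Proposition \ref{prop:asymp2}(i).
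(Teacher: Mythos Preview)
Your proposal is correct and follows essentially the same route as the paper: both arguments split into $A_+$ and $A_-$, extract the natural weights so that the quantities whose fractional powers define $y_n,y_{n+1}$ become bounded away from zero (the paper rewrites $(y_n/x_{n+1})^{1-2s}=(x_{n+1}^{-2s}\partial_n w)^{(1-2s)/(2s)}$ on $A_-$ etc., exactly your factorization), and then invoke Proposition~\ref{prop:asymp2}; for (ii) both compute $\det DT^{w_{x_0}}$ explicitly and perturb via (i). One minor slip: your displayed determinant drops the factor $\nu_{x_0,n}^{1/(2s)}$ coming from the prefactor in $y_n^{w_{x_0}}$, but since $\nu_{x_0,n}$ is uniformly bounded above and below this does not affect the conclusion.
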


\begin{proof}
Using the definition in \eqref{eq:Legendre}, the Jacobian matrix $(DT^w)_{ij}$ can be computed to read
\begin{equation}\label{eq:asymp_J}
\begin{split}
\partial_{i} T^w_j&=\delta_{ij}, \quad i,j\in \{1,\dots, n-1\},\\
\partial_{n} T^w_i & = 0, \quad \partial_{n+1} T^w_{i}=0,\\ 
\partial_i T^w_n&=\frac{1}{2s}y_n^{1-2s}\p_{ni}w, \quad
\partial_i T^w_{n+1}=-\frac{1}{2s}y_{n+1}^{2s-1}\p_i(x_{n+1}^{1-2s}\p_{n+1}w) ,\\
(D T^w)_{i,j=n,n+1}&=
\begin{pmatrix}
\frac{1}{2s}y_n^{1-2s}\p_{nn}w & \frac{1}{2s}y_n^{1-2s}\p_{n,n+1}w\\
-\frac{1}{2s}y_{n+1}^{2s-1}\p_n(x_{n+1}^{1-2s}\p_{n+1}w) & -\frac{1}{2s}y_{n+1}^{2s-1}\p_{n+1}(x_{n+1}^{1-2s}\p_{n+1}w)
\end{pmatrix}.
\end{split}
\end{equation}
Here $y= T^{w}(x)=(T^w_1(x),\dots,T^w_{n+1}(x))$. 
We note that by the asymptotic expansions from \eqref{eq:asym_L}, there exists a constant $c\in (0,1)$ such that
\begin{equation}\label{eq:x_y}
\begin{split}
&1-c\leq \left|\frac{y_n(x)}{x_{n+1}}\right|,\  |y_{n+1}(x)|\leq 1+c,\  \text{ on } A_-,\\
&1-c\leq \left|\frac{y_{n+1}(x)}{x_{n+1}}\right|,\  |y_{n}(x)|\leq 1+c,\  \text{ on } A_+.
\end{split}
\end{equation}
Thus, using the asymptotics in (\ref{eq:asym_L}) in combination with the regularity of $\nu_{x_0}$, we infer that
\begin{align*}
\|T^{w_{x_0}}-T^{w_{x_0,\lambda}}\|_{L^{\infty}} \leq C \lambda^{\alpha}.
\end{align*}

We proceed with the H\"older estimates. Here we seek to reduce the estimates to already known bounds on the function $w$ (c.f. the estimates from Proposition \ref{prop:asymp2}). Hence, we have to control the terms in (\ref{eq:asymp_J}) which involve expressions in $y$ by comparable expressions in $x$.\\
We begin by rewriting the expressions with $y$ in terms of controlled expressions in $x$:
\begin{align*}
\left(\frac{y_n(x)}{x_{n+1}}\right)^{1-2s}&=(x_{n+1}^{-2s}\p_nw)^{\frac{1-2s}{2s}},\\
y_{n+1}(x)^{2s-1}&=\left(-\frac{1-s}{s}x_{n+1}^{1-2s}\p_{n+1}w\right)^{\frac{2s-1}{2(1-s)}}, \text{ if } x\in A_-,
\end{align*}
and 
\begin{align*}
y_n(x)^{1-2s}&=(\p_nw)^{\frac{1-2s}{2s}},\\
\left(\frac{y_{n+1}(x)}{x_{n+1}}\right)^{2s-1}&=\left(-\frac{1-s}{s}x_{n+1}^{-1}\p_{n+1}w\right)^{\frac{2s-1}{2(1-s)}},\quad \text{if } x\in A_+.
\end{align*}
By \eqref{eq:x_y} and Proposition~\ref{prop:asymp2}, if $[\nabla'' g]_{\dot{C}^{0,\alpha}}$ is sufficiently small,  we have
\begin{align*}
\|(x_{n+1}^{-2s}\p_nw_{x_0,\lambda})^{\frac{1-2s}{2s}}-(x_{n+1}^{-2s}\p_nw_{x_0})^{\frac{1-2s}{2s}}\|_{C^{0,\gamma}(A_-)}\leq C_s \lambda^{\alpha}.
\end{align*}
Here we used that $x_{n+1}^{-2s}\p_nw_{x_0}$ is uniformly bounded away from zero in $A_-$ for each $\lambda\in (0,1/2)$. As a consequence, combining the above estimates with the estimates in Proposition \ref{prop:asymp2} (which controls $x_{n+1}^{1-2s}\p_{in}(w_{x_0,\lambda}-w_{x_0})$), we infer a bound for $\|\p_{i}T^{w_{x_0,\lambda}}_n- \p_{i} T^{w_{x_0}}_n\|_{C^{0,\gamma}(A_-)}$ with $i\in\{1,\dots,n-1\}$. \\
Similarly,
\begin{align*}
&\quad \left\|\left(-\frac{1-s}{s}x_{n+1}^{1-2s}\p_{n+1}w_{x_0,\lambda}\right)^{\frac{1-2s}{2(1-s)}}-\left(-\frac{1-s}{s}x_{n+1}^{1-2s}\p_{n+1}w_{x_0}\right)^{\frac{1-2s}{2(1-s)}}\right\|_{C^{0,\gamma}(A_-)}\\
&+\left\|\left(-\frac{1-s}{s}x_{n+1}^{-1}\p_{n+1}w_{x_0,\lambda}\right)^{\frac{2s-1}{2(1-s)}}-\left(-\frac{1-s}{s}x_{n+1}^{-1}\p_{n+1}w_{x_0}\right)^{\frac{2s-1}{2(1-s)}}\right\|_{C^{0,\gamma}(A_+)}\\
&+\left\|(\p_nw_{x_0,\lambda})^{\frac{1-2s}{2s}}-(\p_nw_{x_0})^{\frac{1-2s}{2s}}\right\|_{C^{0,\gamma}(A_+)}\leq C_s \lambda^{\alpha}.
\end{align*}
Invoking Proposition \ref{prop:asymp2} once more, then yields the Hölder bounds.\\
Finally, we note that by \eqref{eq:asym_L}
\begin{align}\label{eq:T0}
T^{w_0}(x)=\left(x'',w_{0,1/2}(x_n,x_{n+1}), w_{0,1/2}(-x_n,x_{n+1})\right),
\end{align}
which in the $x_n,x_{n+1}$ variables is the square root mapping. Moreover, 
\begin{align*}
&T^{w_{x_0}}(x)\\
=&\left(x'', c(x_0)^{\frac{1}{2s}}(e_n\cdot \nu_{x_0})^{\frac{1}{2s}}w_{0,1/2}(x \cdot \nu_{x_0}, x_{n+1}), c(x_0)^{\frac{1}{2(1-s)}}w_{0,1/2}(-x\cdot \nu_{x_0},x_{n+1})\right).
\end{align*}
Computing the explicit expression for $\det(DT^{w_{x_0}})$ and using Convention~\ref{conv:c0}, we conclude that there exist constants $c_s, C_s>0$ which only depend on $s$, such that 
\begin{align}
\label{eq:J}
c_s \leq |\det(DT^{w_{x_0}})|\leq C_s, \quad \forall x_0\in \Gamma_w\cap B_1,
\end{align}
if $[\nabla'' g]_{\dot{C}^{0,\alpha}}$ is sufficiently small.
Combining this with the first estimate from (\ref{eq:J2}) yields the desired result.
\end{proof}

As a consequence of the previous proposition the Hodograph-Legendre transformation satisfies the same properties as the one in \cite{KRSIII}. Arguing along the lines of Proposition 3.8 of \cite{KRSIII}, we therefore obtain the invertibility of the Hodograph-Legendre transform: 

\begin{prop}
\label{prop:invert}
Let $w:B_1^+ \rightarrow \R$ be a solution to \eqref{eq:fracLa_2} and assume that the conditions (A1)-(A4) hold.
If $[\nabla'' g]_{\dot{C}^{0,\alpha}}$ is sufficiently small, then there exists a radius $\delta_0=\delta_0(s)>0$ such that $T:=T^{w}$ is a homeomorphism from $B_{\delta_0}^+$ to $T(B_{\delta_0}^+)\subset\{y\in \R^{n+1}: y_n \geq 0, y_{n+1}\geq 0\}$. Moreover, away from $\Gamma_w$, $T$ is a $C^1$-diffeomorphism. 
\end{prop}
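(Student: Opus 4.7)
The natural approach is to split the argument into an interior part (away from the free boundary) and a boundary part (in a neighborhood of $\Gamma_w$), and then patch the two. Away from $\Gamma_w$, the solution $w$ is smooth by classical interior regularity for $L_s$ off the fixed set, and the Jacobian formula \eqref{eq:asymp_J} together with the lower bound \eqref{eq:J22} of Proposition~\ref{prop:J} shows that $\det DT$ is bounded away from zero on every non-tangential annulus. Hence the classical inverse function theorem produces local $C^1$-inverses on such annuli. To upgrade local injectivity to injectivity on an entire interior neighborhood of each non-tangential cone, one observes that the image of $T$ splits naturally: $T(\{x_n>0, x_{n+1}\text{ small}\})$ lies in $\{y_{n+1}\text{ small}\}$ and $T(\{x_n<0, x_{n+1}\text{ small}\})$ lies in $\{y_n\text{ small}\}$, so no two ``sides'' can collide in the image.

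The serious difficulty is near $\Gamma_w$, where $DT$ itself degenerates (like a square root mapping) so that no direct appeal to the inverse function theorem is possible. Here the plan is to run the rescaling/blow-up argument as in Proposition~3.8 of \cite{KRSIII}. Fix $x_0\in\Gamma_w$, consider the rescaled solutions $w_{x_0,\lambda}$ from \eqref{eq:scale_w}, and the associated Hodograph maps $T^{w_{x_0,\lambda}}$. By Proposition~\ref{prop:J} we have
\begin{equation*}
\|T^{w_{x_0,\lambda}}-T^{w_{x_0}}\|_{C^{1,\gamma}(A_+\cup A_-)}\leq C\lambda^{\alpha},
\end{equation*}
and by \eqref{eq:T0} the blow-up limit $T^{w_{x_0}}$ is, up to an affine twist of the $(x_n,x_{n+1})$-plane determined by $\nu_{x_0}$ and $c(x_0)$, the square-root mapping, which is a homeomorphism from the upper half-plane to the quarter space. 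In particular, $T^{w_{x_0}}$ is injective on $A_+\cup A_-$ with uniform quantitative constants. The strong $C^{1,\gamma}$ convergence on the non-tangential region then forces $T^{w_{x_0,\lambda}}$ to be injective on $A_+\cup A_-$ for all sufficiently small $\lambda$, uniformly in $x_0\in\Gamma_w$ (once the H\"older norm $[\nabla''g]_{\dot C^{0,\alpha}}$ and $\lambda$ are small).

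Rescaling back, this yields injectivity of $T$ on every dyadic annulus $\{2^{-k-1}<|x-x_0|<2^{-k}\}$ (restricted to the non-tangential cone around $x_0$) for $k$ large enough, with constants independent of $x_0\in\Gamma_w\cap B_{\delta_0}'$. Summing over dyadic scales and using the fact that the asymptotics \eqref{eq:asym_L} pin down which piece of the quarter space receives each side of the slit, one obtains injectivity on a full non-tangential pointed neighborhood of each $x_0$. A standard compactness/covering argument combined with the interior step above then yields injectivity of $T$ on some $B_{\delta_0}^+$ with $\delta_0=\delta_0(s)>0$. Continuity of $T$ up to $\Gamma_w$ is already available from Proposition~\ref{prop:asymp1}, and the bicontinuity needed for homeomorphy follows from a Brouwer-type argument: $T$ is an injective continuous map from the compact set $\overline{B_{\delta_0}^+}$ into $Q_+$, hence a homeomorphism onto its image. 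The $C^1$-diffeomorphism claim away from $\Gamma_w$ is then immediate from Step 1.

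The main obstacle in this scheme is the boundary step: the map $T$ is genuinely non-smooth on $\Gamma_w$, so injectivity cannot be read off pointwise from $\det DT$. The only leverage is the asymptotic expansion of Proposition~\ref{prop:asymp1}, which tells us that to leading order $T$ looks like an affinely twisted square root, and the $C^{1,\gamma}$ control on non-tangential annuli from Proposition~\ref{prop:J}, which is strong enough to transfer the injectivity of the blow-up limit to the actual map at every small scale. Making this transfer quantitative and uniform in $x_0\in\Gamma_w$ is the delicate point.
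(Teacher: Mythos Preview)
Your overall strategy---rescale around free boundary points, use $C^1$-closeness to the blow-up map on non-tangential annuli to get annular injectivity, then assemble---matches the paper's. But two points deserve comment.

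First, the paper does not work with $T$ directly near $\Gamma_w$. Instead it composes with the squaring map $\psi(z)=(z'',\tfrac12(z_n^2-z_{n+1}^2),z_nz_{n+1})$ and studies $T_1:=\psi\circ T$. The blow-up limit $T_1^{w_0}$ is then a \emph{non-degenerate linear map} (essentially the identity in the $(x_n,x_{n+1})$ variables), rather than the degenerate square-root map you work with. This is not merely cosmetic: it gives the clean expansion $T_1(x)=x+E_{x_0}(x)$ with $|E_{x_0}(x)|\le C[\nabla''g]_{\dot C^{0,\alpha}}|x_0|^\alpha$, which is what drives the global step.

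Second---and this is the real gap---your passage from annular injectivity to global injectivity is not a covering argument, and ``summing over dyadic scales'' does not do it. Injectivity on each annulus does not preclude two points in \emph{different} annuli (or different cones) from colliding in the image. The paper's Step~2 handles this by contradiction: suppose $T_1(x_1)=T_1(x_2)$ with $x_1''=x_2''$ (a reduction to fixed tangential slice that you also omit). Writing $T_1(x)=x+E_{x_0}(x)$ and subtracting gives $x_2-x_0=(x_1-x_0)+E_{x_0}(x_1)-E_{x_0}(x_2)$, which forces $\tfrac12\le|x_1-x_0|/|x_2-x_0|\le 2$. Hence $x_1,x_2$ lie in a \emph{common} annulus $A_{r,2r}(x_0)$, contradicting the annular injectivity from Step~1. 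Your sketch gestures at the asymptotics \eqref{eq:asym_L} but does not supply this comparability of scales, and without it the argument is incomplete. Once global injectivity of $T_1$ is established, invariance of domain gives the homeomorphism property for $T_1$ and hence for $T$; the $C^1$-diffeomorphism away from $\Gamma_w$ then follows from \eqref{eq:J22} as you say.
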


\begin{proof}
We only give an outline of the proof. As in \cite{KPS14} and \cite{KRSIII} we reduce the invertibility discussion to an analysis on annuli at which the mapping $T$ is $C^1$.\\

\emph{Step 1.} Let $\psi:\R^n\rightarrow \R^n$, $\psi(z)=(z'',\frac{1}{2}(z_n^2-z_{n+1}^2), z_nz_{n+1})$ and let $T_1=\psi\circ T$. 
We observe that by \eqref{eq:T0} the map $T_1^{w_{0}}$ is a non-degenerate linear map. By the definition of $T_1$,  by Proposition~\ref{prop:J} (i) and by a triangle inequality, we obtain that
\begin{align*}
\|T_1^{w_{x_0,\lambda}}-T_1^{w_{0}}\|_{C^1(A_{1/2,1})}\leq C_s\left([\nabla'' g]_{\dot{C}^{0,\alpha}}+\lambda^{\alpha}\right),
\end{align*}
where $A_{1/2,1}=\{x: |x''|\leq 1, \frac{1}{2}\leq \sqrt{x_n^2+x_{n+1}^2}\leq 1, x_{n+1}\geq 0\}$. 
This implies that if $[\nabla g]_{\dot{C}^{0,\alpha}}$ and $\delta_0$ are sufficiently small depending on $s$, then $T_1^{w_{x_0,\lambda}}$ is injective in $A_{1/2,1}$ for $\lambda\in (0,\delta_0)$. By scaling we conclude that $T_1$ is injective in $A_{r/2,r}(x_0)$ for each $r\in (0,\delta_0)$. Here $A_{r/2,r}(x_0)=x_0+r A_{1/2,1}$.\\

\emph{Step 2.} Next we show that $T_1$ is injective from $B_{\delta_0}^+$ to $T_1(B_{\delta_0}^+)$.
As in \cite{KPS14} and \cite{KRSIII} it suffices to prove the injectivity for fixed $x''$. To this end, we assume that injectivity were wrong, i.e. there existed two points $x_1, x_2\in B_1^+$ with $x_1''=x_2''=x''_0$ but $x_1 \neq x_2$ such that $T_1(x_1)= T_1(x_2)$. As $\Gamma_w$ is a graph, $T_1$ is injective on it. Furthermore, the non-degeneracy conditions from Proposition \ref{prop:asymp1} further imply that we may assume that neither $x_1$ nor $x_2$ are free boundary points.\\ 
By the asymptotics of $y_n(x)$, $y_{n+1}(x)$ we observe that
\begin{align*}
T_1(x)=x + E_{x_0}(x),\quad x\in B_{1/2}(x_0),
\end{align*}
where $x_0=(x''_0,g(x''_0),0)$ and $|E_{x_0}(x)|\leq C[\nabla'' g]_{\dot{C}^{0,\alpha}}|x_0|^{\alpha}.$ Using this in combination with the assumption that $T_1(x_1)=T_1(x_2)$ thus leads to
$$
x_2-x_0 = x_1 - x_0 + E_{x_0}(x_1)-E_{x_0}(x_2).
$$
By the estimate for $E_{x_0}$ this however implies
$$
\frac{1}{2} \leq \frac{|x_1-x_0|}{|x_2-x_0|} \leq 2.
$$
Hence, the points $x_1,x_2$ lie in $A_{r,2r}(x_0)$ for $r=\max\{|x_1-x_0|,|x_2-x_0|\}$. This is a contradiction to the injectivity result from Step 1.\\

\emph{Step 3.} As a consequence of the invariance of domain theorem, the previous result implies that $T_1$ is a homeomorphism from $\inte(B_{\delta_0}^+)$ to $T_1(\inte(B_{\delta_0}^+))$. By a discussion of the respective boundary points, it further extends to a homeomorphism from $B_{\delta_0}^+$ to $T_1(B_{\delta_0}^+)$. Hence, also $T$ is a homeomorphism from $B_{\delta_0}^+$ to $T(B_{\delta_0}^+)$. The fact that $T$ is a diffeomorphism follows from Proposition~\ref{prop:J} (ii) and the second equation in \eqref{eq:J2}.
\end{proof}

\subsection{Nonlinear PDE} 
In this section we compute the equation which is satisfied by the Legendre function associated with a solution $w$ of the thin obstacle problem \eqref{eq:fracLa_2} such that the assumptions (A1)-(A4) are satisfied.
As our main result of this section we show that the Legendre function $v$ solves a Monge-Amp\`ere type PDE. A first example (c.f. Example \ref{ex:model}) indicates that this equation should be interpreted as a perturbation of a fractional Baouendi-Grushin equation.

\begin{prop}
\label{prop:eqnonlin}
Let $w:B_1^+ \rightarrow \R$ be a solution to \eqref{eq:fracLa_2} and assume that the conditions (A1)-(A4) hold. Let $y=T^w(x)$ and let $v$ be a Legendre function associated with $w$ (c.f.\eqref{eq:Leg0}). Then in $T^w(B_{\delta_0}^+)$ (where $\delta_0=\delta_0(s)$ is the same constant as in Proposition~\ref{prop:invert}) the function $v$ satisfies the fully nonlinear equation
\begin{equation} 
\label{eq:nonlinear}
\begin{split}
&F(D^2v, Dv,y)= \\
&\sum_{i=1}^{n-1}x_{n+1}(y)^{2-4s}\det\begin{pmatrix}
\p_{ii}v & \p_{in}v & \p_{i,n+1}v\\
\frac{1}{2s}\p_i(y_n^{1-2s}\p_{n}v) & -\frac{1}{2s}\p_n(y_n^{1-2s}\p_nv) & -\frac{1}{2s}\p_{n+1}(y_n^{1-2s}\p_nv)\\
-\frac{1}{2s}\p_i(y_{n+1}^{2s-1}\p_{n+1}v)& \frac{1}{2s}\p_n(y_{n+1}^{2s-1}\p_{n+1}v)& \frac{1}{2s}\p_{n+1}(y_{n+1}^{2s-1}\p_{n+1}v)
\end{pmatrix}\\
&+\p_n(y_n^{1-2s}y_{n+1}^{1-2s}\p_{n}v)+ x_{n+1}(y)^{2-4s}\p_{n+1}(y_n^{2s-1}y_{n+1}^{2s-1}\p_{n+1}v)\\
&-x_{n+1}(y)^{3-2s}J(v)f(y'',x_n(y),x_{n+1}(y))=0,
\end{split}
\end{equation}
with mixed Dirichlet-Neumann boundary condition 
\begin{align*}
v(y)=0 \text{ on } \{y_n=0\}, \quad \lim\limits_{y_{n+1}\rightarrow 0_+} y_{n}^{1-2s}y_{n+1}^{1-2s}\p_{n+1}v(y)=0 \text{ on }\{y_{n+1}=0\}.
\end{align*}
Here
\begin{align*}
x_n(y)&=-\frac{1}{2s}y_n^{1-2s}\p_nv(y),\\
x_{n+1}(y)&=(y_{n+1}^{2s-1}\p_{n+1}v)^{\frac{1}{2s}},\\
J(v)&=\det\begin{pmatrix}
-\frac{1}{2s}\p_n(y_n^{1-2s}\p_nv)& -\frac{1}{2s}\p_{n+1}(y_n^{1-2s}\p_{n}v)\\
\frac{1}{2s} x_{n+1}^{1-2s}\p_{n}(y_{n+1}^{2s-1}\p_{n+1}v) & \frac{1}{2s} x_{n+1}^{1-2s} \p_{n+1}(y_{n+1}^{2s-1}\p_{n+1}v)
\end{pmatrix}.
\end{align*}
\end{prop}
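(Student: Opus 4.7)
The plan is to derive \eqref{eq:nonlinear} by a systematic chain-rule translation of each term in $L_s w = x_{n+1}^{3-2s}f$ into an expression in $v$ and then clearing the Jacobian $J(v)$. Since $T$ is a $C^1$-diffeomorphism away from $\Gamma_w$ by Proposition \ref{prop:invert}, all chain-rule manipulations are valid pointwise on $T(B_{\delta_0}^+\setminus\Gamma_w)$, and the equation then extends by continuity up to the degenerate set $P$.

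I first compute $DT^{-1}$ directly from the dual relations \eqref{eq:inverse}: differentiating $x_n = -\tfrac{1}{2s}y_n^{1-2s}\partial_n v$ and $x_{n+1}^{2s} = y_{n+1}^{2s-1}\partial_{n+1}v$ in $y$ and combining with $x_i = y_i$ for $i\leq n-1$ produces the block form
\[
DT^{-1} = \begin{pmatrix} I_{n-1} & 0 \\ \ast & J_+ \end{pmatrix},
\]
where the $2\times 2$ block $J_+$ agrees, up to rescaling its bottom row by the factor $x_{n+1}^{1-2s}$, with the matrix whose determinant is $J(v)$. Hence $(DT)|_{\{n,n+1\}\times\{n,n+1\}} = J_+^{-1}$ and $\det J_+ = J(v)$. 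Next, I would rewrite each second derivative of $w$ entering $L_sw$ in terms of $v$. Using the identities $\partial_n w = y_n^{2s}$ and $x_{n+1}^{1-2s}\partial_{n+1}w = -\tfrac{s}{1-s}y_{n+1}^{2(1-s)}$ built into the definition of $T$, differentiating with respect to $x_n$ and $x_{n+1}$ and inserting the entries of $J_+^{-1}$ yields
\[
\partial_{nn}w = \frac{y_n^{2s-1}x_{n+1}^{1-2s}}{J(v)}\partial_{n+1}(y_{n+1}^{2s-1}\partial_{n+1}v), \quad \partial_{n+1}(x_{n+1}^{1-2s}\partial_{n+1}w) = \frac{y_{n+1}^{1-2s}}{J(v)}\partial_n(y_n^{1-2s}\partial_n v),
\]
which after multiplication by $J(v)$ become the two divergence-form terms $\partial_n(y_n^{1-2s}y_{n+1}^{1-2s}\partial_n v)$ and $x_{n+1}^{2-4s}\partial_{n+1}(y_n^{2s-1}y_{n+1}^{2s-1}\partial_{n+1}v)$ in \eqref{eq:nonlinear}. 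For $i\leq n-1$, differentiating each of the three dual identities in \eqref{eq:inverse} with respect to $x_i$ produces a $3\times 3$ linear system in the unknowns $(\partial_{ii}w,\partial_i T_n,\partial_i T_{n+1})$ whose coefficient matrix has the entries listed in the statement; eliminating $\partial_i T_n,\partial_i T_{n+1}$ by Cramer's rule and collecting terms gives $\partial_{ii}w$ as the stated $3\times 3$ determinant divided by $J(v)$, with an auxiliary factor $x_{n+1}^{1-2s}$ absorbed in rescaling the third row.

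Substituting these three representations into $L_sw = x_{n+1}^{1-2s}\sum_{i=1}^{n-1}\partial_{ii}w + x_{n+1}^{1-2s}\partial_{nn}w + \partial_{n+1}(x_{n+1}^{1-2s}\partial_{n+1}w) = x_{n+1}^{3-2s}f$ and multiplying through by $J(v)$ clears denominators and produces \eqref{eq:nonlinear}; the prefactors $x_{n+1}^{2-4s}$ come from combining the $x_{n+1}^{1-2s}$ weight of $L_s$ with the $x_{n+1}^{1-2s}$ that enters through the Cramer inversion of the lower-right block, and the inhomogeneity becomes $-x_{n+1}^{3-2s}J(v)f$. The boundary conditions are read off directly from the definition of the transform: on $\{y_n=0\}=T(\Lambda_w)$ one has $w = \partial_n w = x_{n+1} = 0$, so \eqref{eq:Leg0} forces $v=0$; on $\{y_{n+1}=0\}=T(\Omega_w\cap B'_1)$, the identity $\partial_{n+1}v = y_{n+1}^{1-2s}x_{n+1}^{2s}$ combined with the Signorini condition $x_{n+1}^{1-2s}\partial_{n+1}w\to 0$ and the asymptotic control on $x_{n+1}(y)$ from Proposition \ref{prop:asymp1} gives $\lim_{y_{n+1}\to 0_+} y_n^{1-2s}y_{n+1}^{1-2s}\partial_{n+1}v = 0$. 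The main technical obstacle is the $3\times 3$ Cramer bookkeeping: signs, weights of $x_{n+1}^{1-2s}$, and factors of $2s$ must be tracked with care so that the final expression matches \eqref{eq:nonlinear} exactly rather than some merely equivalent rearrangement. Everything else is essentially chain rule and algebraic manipulation.
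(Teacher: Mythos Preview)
Your proposal is correct and follows essentially the same route as the paper: compute $DT^{-1}$ from the dual relations \eqref{eq:inverse}, use the $2\times 2$ block inversion to express $\partial_{nn}w$ and $\partial_{n+1}(x_{n+1}^{1-2s}\partial_{n+1}w)$ in terms of $v$, handle the tangential second derivatives $\partial_{ii}w$ via the chain rule, substitute into $L_sw=x_{n+1}^{3-2s}f$, and multiply by $J(v)$. The only cosmetic difference is that for $\partial_{ii}w$ the paper first solves the $2\times 2$ system for $\partial y_n/\partial x_i,\partial y_{n+1}/\partial x_i$ and then substitutes into $\partial_{ii}w=\partial_{ii}v+\partial_{in}v\,\partial_i T_n+\partial_{i,n+1}v\,\partial_i T_{n+1}$, whereas you package the same three relations as a single $3\times 3$ Cramer system; the latter has the virtue of making the $3\times 3$ determinant in \eqref{eq:nonlinear} appear directly rather than after an unrecorded ``rearranging'' step.
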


\begin{rmk}[Boundary data]
\label{rmk:bound_data}
We point out that the boundary condition for $v$ is not uniquely determined. In particular, this holds for the Neumann condition. By \eqref{eq:inverse} we know that $\p_{n+1}v(y)=y_{n+1}^{1-2s}x_{n+1}^{2s}(y)$, which by \eqref{eq:inverse22} is of the magnitude $O(y_n^{2s}y_{n+1})$ as $y_{n+1}\rightarrow 0_+$. Thus, the Legendre function for instance also satisfies the condition $\lim\limits_{y_{n+1}\rightarrow 0_+} \p_{n+1} (y_n^{-2s}v)=0$.
In the formulation of Proposition \ref{prop:eqnonlin} we have chosen boundary conditions which ``fit'' to the linearized operator and come handy in proving approximation results in Sections \ref{sec:approx_poly}. However, in the definition of our function spaces $X_{\alpha,\epsilon}$ we will use the flexibility which we have at this point (c.f. Definition \ref{defi:XY}, Proposition \ref{prop:decomp} and its proof in Section \ref{sec:prove}).
\end{rmk}

\begin{rmk}[Relation to $s=1/2$]
We remark that for $s=1/2$ the above expression simplifies to the equation from \cite{KRSIII} with $a^{ij}=\delta^{ij}$.
\end{rmk}

\begin{rmk}[Divergence structure of leading terms]
We remark that it is possible to rewrite all terms of the form
\begin{align*}
x_{n+1}^{c}\p_{j}(y_{n+1}^{2s-1}\p_{n+1}v), \ j\in\{1,\dots,n+1\},
\end{align*}
into divergence form:
\begin{align*}
x_{n+1}^{c}\p_{j}(y_{n+1}^{2s-1}\p_{n+1}v) = \frac{4s^2+c}{4s^2} \p_j (x_{n+1}^c y_{n+1}^{2s-1}\p_{n+1}v).
\end{align*}
This demonstrates that we may view the leading order part of the equation either as a divergence or as a non-divergence form operator.
\end{rmk}

\begin{proof}
We compute the corresponding expressions for $w$ and its derivatives in terms of $v$. We first observe that the following 
relations hold between $D^2 w$ and $D^2 v$:
\begin{align*}
\p_{ii}w&=\p_{ii}v+(\p_{in}v, \p_{i,n+1}v)\cdot\begin{pmatrix}
\frac{\p y_n}{\p x_i}\\
\frac{\p y_{n+1}}{\p x_i}
\end{pmatrix},  \\
(x_{n+1})^{1-2s}\p_{nn}w&=(2s)(x_{n+1})^{1-2s}y_n^{2s-1}\frac{\p y_n}{\p x_n},\\
\p_{n+1}(x_{n+1}^{1-2s}\p_{n+1}w)&=(-2s)y_{n+1}^{1-2s}\frac{\p y_{n+1}}{\p x_{n+1}}.
\end{align*} 
Let 
\begin{align*}
J(v):=\det\begin{pmatrix}
\frac{\p x_{n}}{\p y_n} & \frac{\p x_{n}}{\p y_{n+1}}\\
\frac{\p x_{n+1}}{\p y_n} & \frac{\p x_{n+1}}{\p y_{n+1}}
\end{pmatrix}=\det\begin{pmatrix}
-\frac{1}{2s}\p_n(y_n^{1-2s}\p_nv)& -\frac{1}{2s}\p_{n+1}(y_n^{1-2s}\p_{n}v)\\
\frac{1}{2s} x_{n+1}^{1-2s}\p_{n}(y_{n+1}^{2s-1}\p_{n+1}v) & \frac{1}{2s} x_{n+1}^{1-2s} \p_{n+1}(y_{n+1}^{2s-1}\p_{n+1}v)
\end{pmatrix}.
\end{align*}
Using
\begin{align*}
\begin{pmatrix}
\frac{\p y_{n}}{\p x_n} & \frac{\p y_{n}}{\p x_{n+1}}\\
\frac{\p y_{n+1}}{\p x_n} & \frac{\p y_{n+1}}{\p x_{n+1}}
\end{pmatrix}=\frac{1}{J(v)}\begin{pmatrix}
\frac{\p x_{n+1}}{\p y_{n+1}} & -\frac{\p x_{n}}{\p y_{n+1}}\\
-\frac{\p x_{n+1}}{\p y_n} & \frac{\p x_{n}}{\p y_{n}}
\end{pmatrix},
\end{align*}
we write 
\begin{align*}
(x_{n+1})^{1-2s}\p_{nn}w&=(2s)(x_{n+1})^{1-2s}y_n^{2s-1}\frac{1}{J(v)}\frac{\p x_{n+1}}{\p y_{n+1}},\\
\p_{n+1}(x_{n+1}^{1-2s}\p_{n+1}w)&=(-2s)y_{n+1}^{1-2s}\frac{1}{J(v)}\frac{\p x_{n}}{\p y_{n}}.
\end{align*}
By virtue of \eqref{eq:inverse},
\begin{align*}
\frac{\p x_n}{\p y_n}&=-\frac{1}{2s}\p_n(y_n^{1-2s}\p_nv),\\
\frac{\p x_{n+1}}{\p y_{n+1}}&=\frac{1}{2s}x_{n+1}^{1-2s}\p_{n+1}(y_{n+1}^{2s-1}\p_{n+1}v).
\end{align*}
Thus, 
\begin{align*}
(x_{n+1})^{1-2s}\p_{nn}w&=\frac{1}{J(v)}y_n^{2s-1}x_{n+1}^{2-4s}\p_{n+1}(y_{n+1}^{2s-1}\p_{n+1}v),\\
\p_{n+1}(x_{n+1}^{1-2s}\p_{n+1}w)&=\frac{1}{J(v)}y_{n+1}^{1-2s}\p_n(y_n^{1-2s}\p_nv).
\end{align*}
In order to express $\p_{ii}w$ in terms of $v$, we use 
\begin{align*}
&\begin{pmatrix}
\frac{\p y_n}{\p x_i}\\
\frac{\p y_{n+1}}{\p x_i}
\end{pmatrix}=\frac{1}{J(v)}\begin{pmatrix}
\frac{\p x_{n+1}}{\p y_{n+1}} & -\frac{\p x_{n}}{\p y_{n+1}}\\
-\frac{\p x_{n+1}}{\p y_n} & \frac{\p x_{n}}{\p y_{n}}
\end{pmatrix}
\begin{pmatrix}
\frac{\p x_n}{\p y_i}\\
\frac{\p x_{n+1}}{\p y_i}
\end{pmatrix}\\
&=\frac{1}{J(v)}\begin{pmatrix}
\frac{1}{2s}x_{n+1}^{1-2s}\p_{n+1}(y_{n+1}^{2s-1}\p_{n+1}v) & \frac{1}{2s}\p_{n+1}(y_n^{1-2s}\p_{n}v)\\
-\frac{1}{2s}x_{n+1}^{1-2s}\p_{n}(y_{n+1}^{2s-1}\p_{n+1}v) & -\frac{1}{2s}\p_n(y_n^{1-2s}\p_nv)
\end{pmatrix}
\begin{pmatrix}
\frac{-1}{2s}y_n^{1-2s}\p_{in}v\\
\frac{1}{2s}x_{n+1}^{1-2s}y_{n+1}^{2s-1}\p_{i,n+1}v
\end{pmatrix}.
\end{align*}
Recalling the equation of $w$
\begin{equation*}
(x_{n+1})^{1-2s} \Delta' w + \partial_{n+1} \left( x_{n+1}^{1-2s} \partial_{n+1} w\right)=(x_{n+1})^{3-2s}f \text{ in } B_1^+,
\end{equation*}
the equation of $w$ is transformed into a nonlinear equation for $v$:
\begin{align*}
&x_{n+1}^{1-2s}\sum_{i=1}^{n-1}\p_{ii}v +  \frac{1}{J(v)}y_n^{2s-1}x_{n+1}^{2-4s}\p_{n+1}(y_{n+1}^{2s-1}\p_{n+1}v)+ \frac{1}{J(v)}y_{n+1}^{1-2s}\p_n(y_n^{1-2s}\p_nv)\\
&+\frac{x_{n+1}^{1-2s}}{J(v)}\sum_{i=1}^{n-1}
\begin{pmatrix}
\p_{in}v & \p_{i,n+1}v 
\end{pmatrix} \cdot\\
&
\begin{pmatrix}
\frac{1}{2s}x_{n+1}^{1-2s}\p_{n+1}(y_{n+1}^{2s-1}\p_{n+1}v) & \frac{1}{2s}\p_{n+1}(y_n^{1-2s}\p_{n}v)\\
-\frac{1}{2s}x_{n+1}^{1-2s}\p_{n}(y_{n+1}^{2s-1}\p_{n+1}v) & -\frac{1}{2s}\p_n(y_n^{1-2s}\p_nv)
\end{pmatrix}
\begin{pmatrix}
\frac{-1}{2s}y_n^{1-2s}\p_{in}v\\
\frac{1}{2s}x_{n+1}^{1-2s}y_{n+1}^{2s-1}\p_{i,n+1}v
\end{pmatrix}\\
& =(x_{n+1})^{3-2s}f(T^{-1}(y)).
\end{align*}
Multiplying by $J(v)$ on both sides and rearranging, yields \eqref{eq:nonlinear}.\\
In order to deduce the form of the boundary data, we note that 
by the mapping properties of $T^w$ and by Proposition~\ref{prop:invert} we have that $\{y_n=0\}\cap T^w(B_{1/2}^+)= T^w(\Lambda_w\cap B_{1/2}^+)$. 
Thus the statement on the Dirichlet data then follows from the definition of $v(y)$ in terms of $w(x)$. In order to infer the result on the Neumann data, we observe that by (\ref{eq:inverse}) and Proposition~\ref{prop:invert} $\{y_{n+1}=0\}\cap T^w(B_{1/2}^+)=  T^{w}(\bar{\Omega}_w\cap B_{1/2}^+)$. The result then follows by recalling that $x_{n+1}^{2s} = y_{n+1} ^{2s-1}\p_{n+1}v(y)$ and by consulting the asmptotics from (\ref{eq:asym_L}) (c.f. also the asymptotics of $x_n^{2s}$ in terms of $y$ from Lemma \ref{lem:reverse}):
$
(y_n y_{n+1})^{1-2s}\p_{n+1}v \sim (y_n y_{n+1})^{2-2s},
$
which vanishes as $y_{n+1} \rightarrow 0_+$.
\end{proof}

\subsection{Asymptotics of the Legendre Function}
\label{sec:asymp1}
In this section we derive the asymptotics of the Legendre function by exploiting the corresponding bounds for the solution to the fractional thin obstacle problem (c.f. Propositions \ref{prop:asymp1}, \ref{prop:J}). In order to achieve this, we first derive a relation between the rescalings of the Legendre and the respective original functions (Lemma \ref{lem:rescaling}). With this at hand, we deduce an asymptotic formula for $x$ in terms of $y$. This then allows to transfer the results from Propositions \ref{prop:asymp1} and \ref{prop:asymp2} to results on the Legendre function (c.f. Propositions \ref{prop:asymp_v1}, \ref{prop:asymp_v2}).\\

\begin{lem}
\label{lem:rescaling}
Let $v$ be the Legendre function associated to a solution $w$ of the fractional thin obstacle problem (\ref{eq:fracLa_2}) satisfying the assumptions (A1)-(A4). Let $y_0\in T^{w}(\Gamma_w\cap B'_{\delta_0})$, where $B^+_{\delta_0}$ is as in Proposition~\ref{prop:invert}. Then the function
\begin{align*}
v_{y_0,\lambda}(y)&:=\frac{v(y_0+\delta_\lambda (y))-(\frac{1}{2s}y_n^{1-2s}\p_nv)(y_0)(\delta_\lambda (y))_n^{2s}}{\lambda^{2+2s}},
\end{align*}
with $\delta_{\lambda}(y)=(\lambda^2 y'', \lambda y_n, \lambda y_{n+1})$ is the Legendre function of 
\begin{align*}
w_{x_0,\lambda^2}(x):=\frac{w(x_0+\lambda^2 x)}{\lambda^{2(1+s)}}, \quad x_0= (T^{w_{x_0,\lambda^2}})^{-1}(y_0)
\end{align*}
with the Hodograph transformation $y= T^{w_{x_0,\lambda^2}}(x)$.
In particular, $v_{y_0}(y)$, $y=T^{w_{x_0}}(x)$, is the Legendre function of $w_{x_0}$, where $v_{y_0}$ and $w_{x_0}$ denote the respective blow-ups of $v_{y_0,\lambda}$, $w_{x_0,\lambda^2}$ as $\lambda\rightarrow 0_+$.
\end{lem}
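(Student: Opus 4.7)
My plan is to verify the identity by direct computation. I will track how the partial Hodograph transform $T^w$ behaves under the simultaneous rescaling $x\mapsto x_0+\lambda^2 x$, $w\mapsto w_{x_0,\lambda^2}$, and then substitute the resulting relations into the definition of the Legendre function in \eqref{eq:Leg0}. The argument is purely algebraic, the only analytic input being the duality \eqref{eq:inverse} evaluated (as a limit) at the degenerate edge point $y_0$. The main obstacle will be bookkeeping: keeping track of the three distinct dilation factors $\lambda^2,\lambda,\lambda$ used for $y'',y_n,y_{n+1}$ together with the fractional exponents $2s$, $2(1-s)$ and the weight $x_{n+1}^{1-2s}$.

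From $w_{x_0,\lambda^2}(x)=\lambda^{-2(1+s)}w(x_0+\lambda^2 x)$ I will compute $\p_n w_{x_0,\lambda^2}(x)=\lambda^{-2s}\p_n w(x_0+\lambda^2 x)$, and, after accounting for the rescaling $x_{n+1}\mapsto\lambda^2 x_{n+1}$, the identity
\begin{align*}
(\lambda^2 x_{n+1})^{1-2s}\p_{n+1}w(x_0+\lambda^2 x)=\lambda^{2-2s}\,x_{n+1}^{1-2s}\p_{n+1}w_{x_0,\lambda^2}(x).
\end{align*}
Comparing with \eqref{eq:Legendre} applied to both $w$ and $w_{x_0,\lambda^2}$, and writing $y=T^{w_{x_0,\lambda^2}}(x)$ and $\tilde y=T^w(x_0+\lambda^2 x)$, I will read off $\tilde y''=x_0''+\lambda^2 y''$, $\tilde y_n=\lambda y_n$, $\tilde y_{n+1}=\lambda y_{n+1}$. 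Since $x_0\in\Gamma_w$ forces $T^w(x_0)=(x_0'',0,0)=:y_0$ (because $\p_n w$ vanishes on $\Lambda_w$ and $x_{0,n+1}=0$), this collapses exactly to $T^w(x_0+\lambda^2 x)=y_0+\delta_\lambda(y)$.

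Denoting by $V$ the Legendre function associated with $w_{x_0,\lambda^2}$ through \eqref{eq:Leg0}, plugging the relations above together with $w(x_0+\lambda^2 x)=\lambda^{2(1+s)}w_{x_0,\lambda^2}(x)$ into the definition of $v(y_0+\delta_\lambda(y))$ will give
\begin{align*}
v(y_0+\delta_\lambda(y))=\lambda^{2+2s}V(y)-x_{0,n}\,(\delta_\lambda(y))_n^{2s},
\end{align*}
where the three leading contributions combine precisely because $2(1+s)=2+2s=4s+2(1-s)$. Finally, the identity $y_n^{1-2s}\p_n v(y)=-2s\,x_n$ from \eqref{eq:inverse}, passed to the limit $y\to y_0$, yields $x_{0,n}=-\tfrac{1}{2s}(y_n^{1-2s}\p_n v)(y_0)$, so rearranging the previous display identifies $V\equiv v_{y_0,\lambda}$, which is the main claim. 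The final assertion that $v_{y_0}$ is the Legendre function of $w_{x_0}$ then follows by letting $\lambda\to 0$, invoking the convergence of blow-ups at regular free boundary points recalled in Section~\ref{sec:blowup} together with Propositions~\ref{prop:J} and \ref{prop:invert}.
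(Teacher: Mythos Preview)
Your proposal is correct and follows essentially the same route as the paper's proof: both arguments verify directly that the Hodograph transform commutes with the parabolic rescaling via $T^{w}(x_0+\lambda^2 x)=y_0+\delta_\lambda(T^{w_{x_0,\lambda^2}}(x))$, then substitute into the definition \eqref{eq:Leg0} of the Legendre function and use the duality $x_{0,n}=-\tfrac{1}{2s}(y_n^{1-2s}\p_n v)(y_0)$ to identify the extra term. One small slip in your justification: $\p_n w$ need not vanish on all of $\Lambda_w$, only at $x_0\in\Gamma_w$ (which is what you actually need, and which is ensured by the asymptotics of Proposition~\ref{prop:asymp1} or by the mapping property $T(\Gamma_w)\subset\{y_n=y_{n+1}=0\}$ recorded just after \eqref{eq:Legendre}).
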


\begin{proof}
Consider the Hodograph transformation associated with $w_{x_0,\lambda^2}$ and let $y(x):=T^{w_{x_0,\lambda^2}}(x)$ (c.f. \eqref{eq:Legendre}). Then,
\begin{equation*}
\label{eq:coord_a}
\begin{split}
 (\lambda y_n)^{2s}&=\p_nw(z)|_{z=x_0+\lambda^2 x}, \\
  (\lambda y_{n+1})^{2(1-s)}&=-\frac{1-s}{s}(\lambda^2 x_{n+1})^{1-2s}\p_{n+1}w(z)|_{z=x_0+\lambda^2 x}.
  \end{split}
\end{equation*}
Let $\tilde{v}_{y_0,\lambda}$ be the Legendre function of $w_{x_0,\lambda^2}$ associated with the Hodograph transformation $T^{w_{x_0,\lambda^2}}$. Then by the definition of the Legendre function in (\ref{eq:Leg0}),
 \begin{align*}
 \tilde{v}_{y_0,\lambda}(y)&= w_{x_0,\lambda^2}(x)-x_ny_n^{2s}+\frac{1}{2(1-s)}x_{n+1}^{2s}y_{n+1}^{2(1-s)}\\
 &=\frac{w(x_0+\lambda^2 x)}{\lambda^{2+2s}}-\frac{\lambda^2 x_n(\lambda  y_n)^{2s}}{\lambda^{2+2s}}+\frac{1}{2(1-s)}\frac{(\lambda^2 x_{n+1})^{2s}(\lambda y_{n+1})^{2(1-s)}}{\lambda^{2+2s}}\\
 &=\frac{w(x_0+\lambda^2x)}{\lambda^{2+2s}}-\frac{(x_0+\lambda^2x)_n(\lambda y_n)^{2s}}{\lambda^{2+2s}}+\frac{(x_0)_n(\lambda y_n)^{2s}}{\lambda^{2+2s}}\\
 &\quad  +\frac{1}{2(1-s)}\frac{((x_0+\lambda^2x)_{n+1})^{2s}(\lambda y_{n+1})^{2(1-s)}}{\lambda^{2+2s}}.
 \end{align*}
Since $v$ is the Legendre function of $w$ associated with the Hodograph transformation $T^w$, equations (\ref{eq:coord_a}) and \eqref{eq:inverse} yield
 \begin{equation}\label{eq:rescale1}
 \begin{split}
x_0+\lambda^2 x&=(z'',-\frac{1}{2s}z_n^{1-2s}\p_nv(z), (z_{n+1}^{2s-1}\p_{n+1}v(z))^{\frac{1}{2s}})\big |_{z=y_0+\delta_\lambda (y)}\\
 &=(T^w)^{-1}(y_0+\delta_\lambda (y)).
 \end{split}
 \end{equation}
Using this in the expression of $\tilde{v}_{y_0,\lambda}$ and invoking again the definition of $v$ in \eqref{eq:Leg0} we have
 \begin{align*}
 \tilde{v}_{y_0,\lambda}(y)=\frac{v(y_0+\delta_\lambda (y))}{\lambda^{2+2s}}+\frac{(x_0)_n(\lambda y_n)^{2s}}{\lambda^{2+2s}}.
 \end{align*}
Finally, recalling that 
$$(x_0)_n=g(x''_0)=-\frac{1}{2s}(y_n^{1-2s}\p_nv)(y_0),$$ 
we obtain $\tilde{v}_{y_0,\lambda}=v_{y_0,\lambda}$.
\end{proof}

Next, we reverse the asymptotic expansion of $T^{w}$ which yields the following explicit formulae. We recall that $g$ is the parametrization of the regular free boundary, i.e. $\Gamma_w\cap B'_1=\{(x'',x_n,0)\in B'_1: x_n=g(x'')\}$ with $g\in C^{1,\alpha}(B''_1)$.

\begin{lem}
\label{lem:reverse}
Suppose that $w:B_1^+ \rightarrow \R$ is a solution of \eqref{eq:fracLa_2} such that the assumptions (A1)-(A4) hold. Let $T^{w}$ be the associated Hodograph transform and let $y_0\in T^w(\Gamma_w\cap B_{\delta_0})$, $x_0=(T^{w})^{-1}(y_0)$. Then,
\begin{itemize}
\item[(i)] the following asymptotic expansions hold:
\begin{equation}
\label{eq:inverse22}
\begin{split}
x_n(y)&=g(y'')  + a_0(y'')y_n^2-a_1(y'')y_{n+1}^2+O_s\left((y_n^2+y_{n+1}^2)^{1+\alpha}\right),\\
x_{n+1}^{2s}(y)&=2a_1(y'')y_n^{2s}y_{n+1}^{2s}+O_s\left(y_n^{2s}y_{n+1}^{2s}(y_n^2+y_{n+1}^2)^{\alpha}\right).
\end{split}
\end{equation}
Here, 
\begin{align*}
a_0(y_0)=\frac{1}{2c(x_0)^{1/s}(e_n\cdot \nu_{x_0})^{(1+s)/s}}, \quad a_1(y_0)=\frac{1}{2c(x_0)^{1/(1-s)}(e_n\cdot \nu_{x_0})},
\end{align*}
are positive $C^{0,\alpha}$ functions which are uniformly bounded away from zero.
\item[(ii)] We have that
\begin{equation*}
\begin{split}
v(y)& =-g(y'')y_n^{2s}-\frac{s}{1+s}a_0(y'')y_n^{2s+2}+a_1(y'')y_n^{2s}y_{n+1}^2+ y_n^{2s}y_{n+1}^2 O_s\left((y_n^2+y_{n+1}^2)^{\alpha}\right),\\
v_{y_0}(y) &= - \nabla'' g(y_0'')\cdot(y''-y_0'') y_{n}^{2s} -\frac{s}{1+s}a_0(y''_0)y_n^{2s+2}+a_1(y''_0)y_n^{2s}y_{n+1}^2.
\end{split}
\end{equation*}
\end{itemize}
\end{lem}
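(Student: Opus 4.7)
The plan is to invert the asymptotic expansion \eqref{eq:asym_L} for the Hodograph map $T^w$ to express $x_n$ and $x_{n+1}$ in terms of $y$, and then to use the defining identity for the Legendre function together with the leading-order expansion of $w$ from Proposition \ref{prop:asymp1}(i) to read off (ii). Throughout, given $y$ near the free boundary, the ``reference'' point is $y_0 = (y'',0,0)$ with $x_0 = (T^w)^{-1}(y_0) = (y'', g(y''), 0)$, so that $x-x_0$ is purely in the $(x_n,x_{n+1})$-plane and $|x-x_0|$ is controlled by the size of $y_n^2+y_{n+1}^2$.

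For part (i), I first square the asymptotics \eqref{eq:asym_L}, writing them in the cleaner form
\[
y_n^2 = A\bigl(\sqrt{Q_{x_0}^2+x_{n+1}^2}+Q_{x_0}\bigr)(1+O_s(|x-x_0|^\alpha)),\quad y_{n+1}^2 = B\bigl(\sqrt{Q_{x_0}^2+x_{n+1}^2}-Q_{x_0}\bigr)(1+O_s(|x-x_0|^\alpha)),
\]
with $A := c(x_0)^{1/s}(e_n\cdot\nu_{x_0})^{1/s}$ and $B:=c(x_0)^{1/(1-s)}$. Both factors $A,B$ are bounded from above and below by assumptions (A2)--(A4) and Convention~\ref{conv:c0}, so I may add, subtract, and multiply these relations to obtain $Q_{x_0}(x) = \tfrac{1}{2A}y_n^2 - \tfrac{1}{2B}y_{n+1}^2 + \text{err}$ and $x_{n+1}^2 = (AB)^{-1}y_n^2 y_{n+1}^2 + \text{err}$. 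Since $x''=y''$ and $\nu_{x_0}=(-\nabla'' g(y''),1,0)/\sqrt{1+|\nabla'' g(y'')|^2}$, one has $Q_{x_0}(x) = (e_n\cdot\nu_{x_0})(x_n-g(y''))$, which together with the $C^{1,\alpha}$ regularity of $g$ yields the first formula with $a_0 = \frac{1}{2A(e_n\cdot\nu_{x_0})}$ and $a_1 = \frac{1}{2B(e_n\cdot\nu_{x_0})}$. Raising the product relation to the $s$-th power gives the expansion for $x_{n+1}^{2s}$. The Hölder regularity of $a_0, a_1$ then follows from that of $c$ (Proposition \ref{prop:asymp1}) and of $\nu_{x_0}$ (Assumption (A3)). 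Self-consistency of the error: once the leading order is established, $|x-x_0| \lesssim y_n^2+y_{n+1}^2$, which converts the $O_s(|x-x_0|^\alpha)$ factors into the stated powers of $(y_n^2+y_{n+1}^2)$.

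For part (ii), I substitute the expansions from (i) into the identity $v(y)=w(x)-x_n y_n^{2s}+\tfrac{1}{2(1-s)}x_{n+1}^{2s}y_{n+1}^{2(1-s)}$. The first two terms on the right give $-g(y'')y_n^{2s} - a_0 y_n^{2s+2} + a_1 y_n^{2s}y_{n+1}^2$ plus an error of the requested order. The third contributes $\frac{a_1}{1-s}y_n^{2s}y_{n+1}^{2}+\text{err}$. For $w(x)$, I use Proposition \ref{prop:asymp1}(i) and expand $w_{1,s}(Q_{x_0},x_{n+1})$ by noting that
\[
\sqrt{Q_{x_0}^2+x_{n+1}^2} = \tfrac{y_n^2}{2A}+\tfrac{y_{n+1}^2}{2B}+\text{err},\quad \sqrt{Q_{x_0}^2+x_{n+1}^2}+Q_{x_0} = \tfrac{y_n^2}{A}+\text{err},
\]
which yields $w(x)=\tfrac{a_0}{1+s}y_n^{2s+2}-\tfrac{a_1}{1-s}y_n^{2s}y_{n+1}^2+\text{err}$. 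The two $\tfrac{a_1}{1-s}$ contributions cancel and the $a_0$ terms combine to $-\tfrac{s}{1+s}a_0$, yielding the first formula in (ii). The expression for $v_{y_0}$ then follows from Lemma \ref{lem:rescaling}: the rescaling subtracts off the constant-in-$y''$ part $-g(y_0'')y_n^{2s}$, so the blow-up $\lambda\to 0_+$ retains precisely the first-order Taylor contribution $-\nabla''g(y_0'')\cdot(y''-y_0'')y_n^{2s}$ from $-g(y'')y_n^{2s}$ while freezing $a_0,a_1$ at $y_0''$ and annihilating higher-order errors.

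The main technical obstacle is the careful bookkeeping of the error terms: keeping the sharper form $y_n^{2s}y_{n+1}^{2s}(y_n^2+y_{n+1}^2)^\alpha$ rather than the cruder $(y_n^2+y_{n+1}^2)^{s+\alpha}$ requires that the prefactors $y_n,y_{n+1}$ propagate through the inversion of \eqref{eq:asym_L} and through the expansion of $w_{1,s}$. All other steps reduce to direct algebraic manipulation combined with the regularity statements already established in Section \ref{sec:asymp}.
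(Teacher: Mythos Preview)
Your argument for part (i) is the same as the paper's: square the asymptotics \eqref{eq:asym_L}, add/subtract/multiply, and solve for $Q_{x_0}$ and $x_{n+1}$; the identification of $a_0,a_1$ and their regularity follow as you say.

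For part (ii) your route differs from the paper's. The paper does \emph{not} substitute into the defining identity $v=w-x_ny_n^{2s}+\tfrac{1}{2(1-s)}x_{n+1}^{2s}y_{n+1}^{2(1-s)}$; instead it integrates the duality relations \eqref{eq:inverse}, namely $\partial_{n+1}v=y_{n+1}^{1-2s}x_{n+1}^{2s}$ and $\partial_n v=-2sy_n^{2s-1}x_n$, feeding in the asymptotics from (i) and using the Dirichlet condition $v|_{y_n=0}=0$ as the constant of integration. Your direct substitution is also correct at the level of the leading terms (your computation of $w_{1,s}$ in $y$-coordinates is fine and the cancellation of the $\tfrac{a_1}{1-s}$ terms does occur). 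The difference shows up in the error bookkeeping you flag at the end: integrating $\partial_{n+1}v$ first automatically produces a factor $y_{n+1}^2$ in the remainder, and then integrating $\partial_n v$ produces the $y_n^{2s}$ factor, so the paper's method yields the structured error with no extra work. By contrast, your substitution brings in the remainder $O_s(|x-x_0|^{1+s+\alpha})$ from Proposition~\ref{prop:asymp1}(i), which in $y$-coordinates is $O_s(r^{2+2s+2\alpha})$ and carries no manifest $y_n^{2s}$ prefactor; extracting that factor then requires an additional argument. Both approaches reach the formula for $v_{y_0}$ the same way, via Lemma~\ref{lem:rescaling} and $\lambda\to 0_+$.
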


\begin{proof}
We begin by deriving the asymptotics in (i).
Reversing the asymptotics (\ref{eq:asym_L}) of $y=T(x)$ around $x_0\in \Gamma_w$, we compute that in a neighborhood of $y_0=T(x_0)=(x''_0,0,0)$,
\begin{align*}
(x-x_0)\cdot \nu_{x_0}&=\frac{1}{2}\left(\frac{y_n^2}{c(x_0)^{1/s}(e_n\cdot \nu_{x_0})^{1/s}}-\frac{y_{n+1}^2}{c(x_0)^{1/(1-s)}}\right)+O_s\left((y_n^2+y_{n+1}^2)^{1+\alpha}\right),\\
x_{n+1}&=\frac{y_n y_{n+1}}{c(x_0)^{1/(2s)}c(x_0)^{1/2(1-s)}(e_n\cdot \nu_{x_0})^{1/(2s)}}+O_s\left(y_ny_{n+1}(y_n^2+y_{n+1}^2)^{\alpha}\right).
\end{align*}
Using that $\nu_{x_0}=\frac{(-\nabla'' g(x_0''),1, 0)^t}{\sqrt{1+|\nabla g(x''_0)|^2}},$ 
we furthermore obtain  
\begin{align*}
x_n&= g(y_0'')+\nabla'' g(y_0'')\cdot (y''-y_0'') \\
&+\frac{1}{2}\left(\frac{y_n^2}{c(x_0)^{1/s}(e_n\cdot \nu_{x_0})^{(1+s)/s}}-\frac{y_{n+1}^2}{c(x_0)^{1/(1-s)}(e_n\cdot \nu_{x_0})}\right)+O_s\left((y_n^2+y_{n+1}^2)^{1+\alpha}\right).
\end{align*}
By Convention~\ref{conv:c0} and by the assumption (A3) the functions $c(x_0)$ and $(e_n\cdot \nu_{x_0})$ are uniformly bounded away from zero and are $C^{0,\alpha}$ regular functions. Setting
\begin{equation*}
a_0(y_0)=\frac{1}{2c(x_0)^{1/s}(e_n\cdot \nu_{x_0})^{(1+s)/s}}, \quad a_1(y_0)=\frac{1}{2c(x_0)^{1/(1-s)}(e_n\cdot \nu_{x_0})},
\end{equation*}
we hence arrive at the desired asymptotics in \eqref{eq:inverse22}.
The formula for $v$ in (ii) follows by integrating the relations
\begin{align*}
\p_{n+1}v(y) = y_{n+1}^{1-2s}x_{n+1}^{2s}, \ \p_{n}v(y) = -2s y_{n}^{2s-1}x_n,
\end{align*}
in combination with the asymptotics from (\ref{eq:inverse}) and the mixed Dirichlet-Neumann boundary condition. Using the expression for $v_{y_0,\lambda}$ in Lemma~\ref{lem:rescaling} and passing to the limit we obtain the formula for $v_{y_0}$.
\end{proof}

With these auxiliary results at hand, we now address the asymptotic behavior of the Legendre transform (which can be regarded as a partial analogue of Proposition \ref{prop:asymp1}):

\begin{prop}\label{prop:inverse_T}
Let $\mathcal{C}_1^+:=\{y\in Q_+:|y''|<1, \frac{1}{4}<y_n^2+y_{n+1}^2<1\}$. There exists $\lambda_0=\lambda_0(s)>0$ such that for any $\lambda\in (0,\lambda_0)$ and any $x_0\in \Gamma_w\cap B_{\delta_0}^+$, $(T^{w_{x_0,\lambda^2}})^{-1} \in C^{1,\gamma}(\mathcal{C}_1^+)$. Moreover, for any $\gamma\in (0,1)$,
\begin{align*}
 \|(T^{w_{x_0,\lambda^2}})^{-1} - (T^{w_{x_0}})^{-1}\|_{L^\infty(B_1\cap Q_+)}&\leq C_s \lambda^{2 \alpha},\\
\|D(T^{w_{x_0,\lambda^2}})^{-1} - D(T^{w_{x_0}})^{-1}\|_{C^{0,\gamma}(\mathcal{C}_1^+)}&\leq C_s\lambda^{2\alpha}.
\end{align*}
\end{prop}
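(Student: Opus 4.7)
The plan is to transfer the estimates on the forward maps $T^{w_{x_0,\lambda^2}}$ provided by Proposition~\ref{prop:J} (applied with $\lambda$ replaced by $\lambda^2$) to estimates on their inverses, treating two regions separately: the non-tangential set $\mathcal{C}_1^+$ for the $C^{1,\gamma}$ bound, and the full box $B_1\cap Q_+$ (including the edge $P$) for the $L^\infty$ bound. A preliminary step is to note that Proposition~\ref{prop:invert}, rescaled, provides a homeomorphism $T^{w_{x_0,\lambda^2}}\colon B^+_{\delta_0/\lambda^2}\to T^{w_{x_0,\lambda^2}}(B^+_{\delta_0/\lambda^2})$; combined with the asymptotics of Lemma~\ref{lem:reverse} this shows that the image covers $B_1\cap Q_+$ once $\lambda$ is small, so all inverses in the statement are well-defined.

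For the $C^{1,\gamma}$ regularity and derivative estimate on $\mathcal{C}_1^+$: using the explicit blow-up \eqref{eq:T0} (which on the $x_n,x_{n+1}$-plane is essentially a square root map), I first verify that $(T^{w_{x_0}})^{-1}(\mathcal{C}_1^+)$ lies in a compact subset of the non-tangential region $A_+\cup A_-$ from Proposition~\ref{prop:asymp2}. The $C^{0,\gamma}$ closeness $\|T^{w_{x_0,\lambda^2}}-T^{w_{x_0}}\|_{C^{1}(A_+\cup A_-)}\leq C\lambda^{2\alpha}$ of Proposition~\ref{prop:J}(i) then guarantees that for small $\lambda$ the analogous inclusion $(T^{w_{x_0,\lambda^2}})^{-1}(\mathcal{C}_1^+)\subset A_+\cup A_-$ still holds, and the uniform lower bound $|\det DT^{w_{x_0,\lambda^2}}|\geq c_s$ from Proposition~\ref{prop:J}(ii) allows the classical inverse function theorem to conclude that $(T^{w_{x_0,\lambda^2}})^{-1}\in C^{1,\gamma}(\mathcal{C}_1^+)$ uniformly in $\lambda$. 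For the quantitative estimate I write $D(T^{w_{x_0,\lambda^2}})^{-1}=(DT^{w_{x_0,\lambda^2}})^{-1}\circ (T^{w_{x_0,\lambda^2}})^{-1}$, insert the intermediate expression $(DT^{w_{x_0,\lambda^2}})^{-1}\circ (T^{w_{x_0}})^{-1}$, and control the two resulting differences by $C\lambda^{2\alpha}$: the first uses smoothness (hence Lipschitzness) of matrix inversion on the set $\{|\det|\geq c_s\}$ together with the $C^{0,\gamma}$ closeness of the Jacobians; the second uses uniform Lipschitzness of the inverse maps together with the $L^\infty$ closeness of $(T^{w_{x_0,\lambda^2}})^{-1}$ to $(T^{w_{x_0}})^{-1}$ on $\mathcal{C}_1^+$, which is itself extracted from the identity $T^{w_{x_0,\lambda^2}}((T^{w_{x_0,\lambda^2}})^{-1}(y))=T^{w_{x_0}}((T^{w_{x_0}})^{-1}(y))$ and the Jacobian lower bound.

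For the $L^\infty$ bound on the whole of $B_1\cap Q_+$, the above argument applies only on annular regions bounded away from $P=\{y_n=y_{n+1}=0\}$, so near $P$ I invoke Lemma~\ref{lem:reverse} applied to both $w_{x_0,\lambda^2}$ and its blow-up $w_{x_0}$. For the former this yields an explicit expansion of $(T^{w_{x_0,\lambda^2}})^{-1}(y)$ in terms of a rescaled free boundary parametrization $g_\lambda$ and coefficients $a_{0,\lambda}, a_{1,\lambda}$ with error $O_s((y_n^2+y_{n+1}^2)^{1+\alpha})$; for the blow-up the expansion is exact. Assumption (A3) and Convention~\ref{conv:c0} ensure that $g$, $c$, $\nu_{x_0}$ are $C^{0,\alpha}$ regular, so the coefficients $g_\lambda, a_{i,\lambda}$ differ from their $\lambda\to 0$ limits by $O(\lambda^{2\alpha})$ uniformly on $B_1\cap Q_+$. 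Since $y_n^2+y_{n+1}^2\leq 1$ in this region, the $O_s$ error is uniformly $O(\lambda^{2\alpha})$, and combining with the previously established bound on $\mathcal{C}_1^+$ (and dyadic dilates thereof) gives the global $L^\infty$ estimate.

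The main obstacle is maintaining uniform-in-$\lambda$ control when composing with $(T^{w_{x_0,\lambda^2}})^{-1}$, which requires both a uniform lower bound on the Jacobian and uniform $C^1$ bounds on the inverse map; both ingredients come from Proposition~\ref{prop:J}, but the bookkeeping in propagating $C^{0,\gamma}$ closeness through the composition chain requires care. A secondary subtlety is that the quantitative expansion in Lemma~\ref{lem:reverse} has to be specialized to the rescaled problem $w_{x_0,\lambda^2}$, and the $\lambda$-dependence of its coefficients must be tracked via the Hölder regularity of $c(\cdot)$ and $\nu_\cdot$ on $\Gamma_w$, rather than invoking the bound as a black box.
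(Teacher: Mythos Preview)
Your proposal is correct and follows essentially the same approach as the paper: the $L^\infty$ estimate on $B_1\cap Q_+$ comes from the asymptotic expansion of $(T^w)^{-1}$ in Lemma~\ref{lem:reverse}(i) together with the $C^{0,\alpha}$ regularity of its coefficients, and the $C^{0,\gamma}$ derivative estimate on $\mathcal{C}_1^+$ comes from the relation $D(T^{-1})(y)=(DT(T^{-1}(y)))^{-1}$ combined with Proposition~\ref{prop:J}(i),(ii). The only difference is organizational: the paper establishes the global $L^\infty$ bound first (entirely from Lemma~\ref{lem:reverse}(i), without needing a separate annular argument) and then feeds it into the derivative estimate, whereas you proceed in the reverse order and obtain the $L^\infty$ closeness on $\mathcal{C}_1^+$ from the forward-map bounds before appealing to Lemma~\ref{lem:reverse} near $P$; this makes your argument slightly more elaborate than necessary but is not a genuine deviation.
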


\begin{proof}
The first inequality directly follows from Lemma~\ref{lem:reverse} (i) by exploiting the coefficient regularity.\\
The second inequality follows from the first inequality and Proposition~\ref{prop:J} (i), (ii). More precisely, one uses the relation $DT^{-1}(y)=(DT(T^{-1}(y)))^{-1}$. The constant $\lambda_0$ is determined in the same way as Proposition~\ref{prop:J} (ii).
\end{proof}

\begin{prop}
\label{prop:asymp_v1}
Let $v_{y_0,\lambda}$ and $v_{y_0}$ be as in Lemma \ref{lem:rescaling}. Let $\lambda_0$ and $\mathcal{C}_1^+$ be as in Proposition~\ref{prop:inverse_T}. Then, 
\begin{align*}
&\sum_{i=1}^{n-1}\|y_n^{-2s}\p_i(v_{y_0,\lambda}-v_{y_0})\|_{C^{0,\gamma}(\mathcal{C}^+_1)}
+\|y_n^{1-2s}\p_n(v_{y_0,\lambda}-v_{y_0})\|_{C^{0,\gamma}(\mathcal{C}^+_1)}\\
&+
\|y_n^{-2s}y_{n+1}^{-1}\p_{n+1}(v_{y_0,\lambda}-v_{y_0})\|_{C^{0,\gamma}(\mathcal{C}^+_1)}\leq C_s\lambda^{2\alpha},
\end{align*}
for any $\lambda \in (0,\lambda_0)$ and $\gamma\in (0,1)$.
\end{prop}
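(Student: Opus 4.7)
The strategy is to use the Legendre identities \eqref{eq:inverse} to re-express each weighted derivative of $v_{y_0,\lambda}-v_{y_0}$ in terms of either components of the inverse Hodograph transforms $T_\lambda^{-1}:=(T^{w_{x_0,\lambda^2}})^{-1}$ and $T_0^{-1}:=(T^{w_{x_0}})^{-1}$ (for the normal and transversal directions), or in terms of gradients of $w$ evaluated at these inverse transforms (for the tangential directions). Proposition \ref{prop:inverse_T} then supplies the main quantitative input on $T_\lambda^{-1}-T_0^{-1}$, while Proposition \ref{prop:asymp2} controls the difference $\nabla(w_{x_0,\lambda^2}-w_{x_0})$ in non-tangential cones. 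Finally, the weights $y_n^{-2s}$, $y_n^{1-2s}$, and $y_n^{-2s}y_{n+1}^{-1}$ arise naturally because they cancel the intrinsic scaling of $\partial_i v,\,\partial_n v,\,\partial_{n+1}v$ coming from Lemma \ref{lem:reverse}.

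For the normal derivative, the identity $y_n^{1-2s}\partial_n v=-2s(T^{-1})_n$ applied to both $v_{y_0,\lambda}$ (using Lemma \ref{lem:rescaling}, which identifies $v_{y_0,\lambda}$ as the Legendre function of $w_{x_0,\lambda^2}$) and $v_{y_0}$, gives after subtraction
\begin{equation*}
y_n^{1-2s}\partial_n(v_{y_0,\lambda}-v_{y_0})(y)=-2s\bigl[(T_\lambda^{-1}(y))_n-(T_0^{-1}(y))_n\bigr],
\end{equation*}
so Proposition \ref{prop:inverse_T} finishes this case. For the transversal derivative, $y_{n+1}^{2s-1}\partial_{n+1}v=(T^{-1})_{n+1}^{2s}$ yields
\begin{equation*}
y_n^{-2s}y_{n+1}^{-1}\partial_{n+1}(v_{y_0,\lambda}-v_{y_0})(y)=y_n^{-2s}y_{n+1}^{-2s}\bigl[(T_\lambda^{-1}(y))_{n+1}^{2s}-(T_0^{-1}(y))_{n+1}^{2s}\bigr].
\end{equation*}
Here Lemma \ref{lem:reverse}(i), applied in turn to $w_{x_0,\lambda^2}$ and to its blow-up $w_{x_0}$, allows us to factor $(T_\lambda^{-1}(y))_{n+1}^{2s}=y_n^{2s}y_{n+1}^{2s}B_\lambda(y)$ with $B_\lambda\in C^{0,\gamma}(\mathcal{C}_1^+)$ uniformly bounded and bounded away from zero, and similarly for $T_0^{-1}$. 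The weighted difference thus reduces to $B_\lambda(y)-B_0(y)$; its $C^{0,\gamma}$-smallness of order $\lambda^{2\alpha}$ follows from the $C^{0,\alpha}$-regularity of the coefficients $c(x_0)$, $\nu_{x_0}$ appearing in Lemma \ref{lem:reverse} combined with the $C^{1,\gamma}$-bound on $T_\lambda^{-1}-T_0^{-1}$ from Proposition \ref{prop:inverse_T}. For the tangential derivatives, $\partial_i v(y)=\partial_i w(T^{-1}(y))$ leads to the splitting
\begin{equation*}
\partial_i(v_{y_0,\lambda}-v_{y_0})(y)=\bigl[\partial_iw_{x_0,\lambda^2}-\partial_iw_{x_0}\bigr](T_\lambda^{-1}(y))+\bigl[\partial_iw_{x_0}(T_\lambda^{-1}(y))-\partial_iw_{x_0}(T_0^{-1}(y))\bigr],
\end{equation*}
where the first bracket is bounded by Proposition \ref{prop:asymp2} (the weight $y_n^{-2s}$ matches $x_{n+1}^{-2s}$ in $A_-$ via $y_n^2\sim x_{n+1}$ and is harmless in $A_+$), and the second bracket is bounded by the $C^1$-control of $\partial_iw_{x_0}\sim y_n^{2s}$ on $A_+\cup A_-$ combined with the Hölder bound on $T_\lambda^{-1}-T_0^{-1}$ from Proposition \ref{prop:inverse_T}.

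The principal technical obstacle is the fractional exponent $2s$ in the $\partial_{n+1}$ estimate: Proposition \ref{prop:inverse_T} only controls $(T_\lambda^{-1})_{n+1}-(T_0^{-1})_{n+1}$ itself, and since both vanish along $\{y_n=0\}\cup\{y_{n+1}=0\}\subset\partial\mathcal{C}_1^+$, neither a direct division by $y_n^{2s}y_{n+1}^{2s}$ nor a naive application of $|a^{2s}-b^{2s}|\lesssim|a-b|^{2s}$ preserves the Hölder seminorm. It is only the refined expansion of Lemma \ref{lem:reverse}, which gives a product structure $(T_\lambda^{-1})_{n+1}^{2s}=y_n^{2s}y_{n+1}^{2s}B_\lambda$ with $B_\lambda$ bounded strictly away from zero, that allows the weighted difference to be handled as an ordinary $C^{0,\gamma}$-difference of regular functions, thereby closing the argument.
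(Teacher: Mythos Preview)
Your overall strategy coincides with the paper's: rewrite each weighted derivative via the Legendre identities \eqref{eq:inverse}, and then feed in Proposition~\ref{prop:inverse_T} and Proposition~\ref{prop:asymp2}. The $\partial_n$ case is identical. Two points, however, deserve comment.

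\textbf{The transversal derivative.} You correctly isolate the difficulty---the fractional power $2s$ together with the vanishing of $(T_\lambda^{-1})_{n+1}$ on $\partial\mathcal{C}_1^+$---and propose to resolve it by factoring $(T_\lambda^{-1})_{n+1}^{2s}=y_n^{2s}y_{n+1}^{2s}B_\lambda$ via Lemma~\ref{lem:reverse}(i). This is fine for the $L^\infty$ bound (and the paper uses exactly this), but Lemma~\ref{lem:reverse}(i) is only a \emph{pointwise} expansion: the error term $O_s\bigl(y_n^{2s}y_{n+1}^{2s}(y_n^2+y_{n+1}^2)^\alpha\bigr)$ gives no H\"older control of $B_\lambda$ up to $\{y_n=0\}\cup\{y_{n+1}=0\}$, let alone $C^{0,\gamma}$ for arbitrary $\gamma\in(0,1)$. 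The $C^{1,\gamma}$ bound on $T_\lambda^{-1}-T_0^{-1}$ from Proposition~\ref{prop:inverse_T} does not fill this, since dividing by the degenerate weight $(y_ny_{n+1})^{2s}$ destroys the regularity. The paper closes this gap by splitting $\mathcal{C}_1^+$ into $T^{w_{x_0,\lambda^2}}(A_+)$ (where $y_n\sim 1$) and $T^{w_{x_0,\lambda^2}}(A_-)$ (where $y_{n+1}\sim 1$), rewriting the expression as $y_n^{-2s}(x_{n+1}/y_{n+1})^{2s}$ respectively $y_{n+1}^{-2s}(x_{n+1}/y_n)^{2s}$, and then invoking the $C^{0,\gamma}$ control of the nondegenerate ratio already established in the proof of Proposition~\ref{prop:J}. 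This splitting is the missing ingredient.

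\textbf{The tangential derivatives.} Your additive splitting works in principle but requires extra bookkeeping (including the correct relation $y_n\sim x_{n+1}$ in $A_-$, not $y_n^2\sim x_{n+1}$). The paper instead exploits the identity $y_n^{2s}=\partial_n w$ to write
\[
y_n^{-2s}\partial_i v_{y_0,\lambda}(y)=\frac{\partial_i w_{x_0,\lambda^2}}{\partial_n w_{x_0,\lambda^2}}\Big|_{x=(T^{w_{x_0,\lambda^2}})^{-1}(y)},
\]
so that the weight is absorbed into a quotient whose H\"older regularity comes directly from the boundary Harnack estimate (for $L^\infty$) and from Proposition~\ref{prop:asymp2} applied to numerator and denominator separately (for $C^{0,\gamma}$, again after the $A_+/A_-$ split). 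This is considerably cleaner than tracking $y_n^{-2s}\partial_i w_{x_0}$ through the composition with two different inverse transforms.
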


\begin{proof}
 We note that by \eqref{eq:inverse} 
\begin{equation}
\label{eq:est23}
\begin{split}
y_n^{-2s}\p_iv(y)&=\frac{\p_iw(x)}{\p_nw(x)}\big|_{x=(T^w)^{-1}(y)},\ i\in \{1,\dots, n-1\},\\
y_n^{1-2s}\p_n v(y)&=-(2s)x_n\big|_{x=(T^w)^{-1}(y)},\\
y_n^{-2s}y_{n+1}^{-1}\p_{n+1}v(y)&=y_n^{-2s}y_{n+1}^{-2s}x_{n+1}^{2s}\big|_{x=(T^w)^{-1}(y)}.
\end{split}
\end{equation}

\emph{Step 1: Estimates for $i\in \{1,\dots,n-1\}$}. Using \eqref{eq:est23} and the explicit expression for $v_{y_0}$ in Lemma~\ref{lem:reverse} (ii), we observe that
\begin{align*}
y_n^{-2s}\p_iv_{y_0,\lambda}(y)-y_n^{-2s}\p_iv_{y_0}(y)=\frac{\p_iw_{x_0,\lambda^2}(x)}{\p_nw_{x_0,\lambda^2}(x)}\big|_{x=(T^{w_{x_0,\lambda^2}})^{-1}(y)}-(-\p_ig(y_0)).
\end{align*}
By the boundary Harnack inequality, for any $y\in \mathcal{C}^+_1$
\begin{align*}
\quad \left|\frac{\p_iw_{x_0,\lambda^2}(x)}{\p_nw_{x_0,\lambda^2}(x)}\big|_{x=(T^{w_{x_0,\lambda^2}})^{-1}(y)}-(-\p_ig(y_0))\right|\leq C\lambda^{2\alpha}|(T^{w_{x_0,\lambda^2}})^{-1}(y)-y_0|^{\alpha}.
\end{align*}
This gives the $L^\infty $ bound. \\
To prove the H\"older bound, noting that $-\p_ig(y_0)$ is constant, we have
\begin{align*}
&\quad \left|y_n^{-2s}\p_i(v_{y_0,\lambda}-v_{y_0})(y_1)-y_n^{-2s}\p_i(v_{y_0,\lambda}-v_{y_0})(y_2)\right|\\
&= \left|\frac{\p_iw_{x_0,\lambda^2}(x)}{\p_nw_{x_0,\lambda^2}(x)}\big|_{x=(T^{w_{x_0,\lambda^2}})^{-1}(y_1)}-\frac{\p_iw_{x_0,\lambda^2}(x)}{\p_nw_{x_0,\lambda^2}(x)}\big|_{x=(T^{w_{x_0,\lambda^2}})^{-1}(y_2)}\right|.
\end{align*}
By Proposition~\ref{prop:asymp1} (i) and by \eqref{eq:v0}, there exists $\lambda_0=\lambda_0(s)$ such that $\mathcal{C}_1^+\subset T^{w_{x_0,\lambda^2}}(A_+\cup A_-)$ for any $0<\lambda\leq \lambda_0$. We will consider the H\"older estimate in $T^{w_{x_0,\lambda^2}}(A_+)$ and $T^{w_{x_0,\lambda^2}}(A_-)$ separately. 
First for $y_1,y_2\in \mathcal{C}_1^+\cap T^{w_{x_0,\lambda^2}}(A_-)$, we rewrite
\begin{align*}
\frac{\p_iw_{x_0,\lambda^2}(x)}{\p_nw_{x_0,\lambda^2}(x)}=\frac{x_{n+1}^{-2s}\p_iw_{x_0,\lambda^2}(x)}{x_{n+1}^{-2s}\p_nw_{x_0,\lambda^2}(x)}.
\end{align*} 
Using Proposition~\ref{prop:asymp2} and the fact that the denominator is uniformly bounded away from zero for $x\in A_-$ we have that for any $\gamma\in(0,1)$
\begin{align*}
\left|y_n^{-2s}\p_i(v_{y_0,\lambda}-v_{y_0})(y_1)-y_n^{-2s}\p_i(v_{y_0,\lambda}-v_{y_0})(y_2)\right|\\
\leq C_s\lambda^{2\alpha}|(T^{w_{x_0,\lambda^2}})^{-1}(y_1)-(T^{w_{x_0,\lambda^2}})^{-1}(y_2)|^{\gamma}.
\end{align*}
By Proposition~\ref{prop:inverse_T} we thus conclude that
\begin{align}\label{eq:holder}
\left|y_n^{-2s}\p_i(v_{y_0,\lambda}-v_{y_0})(y_1)-y_n^{-2s}\p_i(v_{y_0,\lambda}-v_{y_0})(y_2)\right|
\leq C_s\lambda^{2\alpha}|y_1-y_2|^{\gamma}.
\end{align}
For $y_1,y_2\in \mathcal{C}_1^+\cap T^{w_{x_0,\lambda^2}}(A_+)$, we directly estimate the quotient $\frac{\p_iw_{x_0,\lambda^2}(x)}{\p_nw_{x_0,\lambda^2}(x)}$. Arguing similarly as above, we conclude that \eqref{eq:holder} also holds in this case. Combining these bounds we deduce
\begin{align*}
\|y_n^{-2s}\p_i(v_{y_0,\lambda}-v_{y_0})\|_{C^{0,\gamma}(\mathcal{C}_1^+)}\leq C_s\lambda^{2\alpha}.
\end{align*}

\emph{Step 2: Estimates for the remaining two expressions.} The remaining two estimates are shown in a similar way. More precisely, we observe that
\begin{align*}
\left|y_n^{1-2s}\p_nv_{y_0,\lambda}(y)-y_n^{1-2s}\p_{n}v_{y_0}(y)\right|=(2s)\left|((T^{w_{x_0,\lambda^2}})^{-1}(y))_n-((T^{w_{x_0}})^{-1}(y))_n\right|.
\end{align*}
Using Proposition~\ref{prop:inverse_T}, we obtain the desired bound for $y_n^{1-2s}\p_nv$. \\
Next we consider the term $y_n^{-2s}y_{n+1}^{-1}\p_{n+1}v$. Using \eqref{eq:est23}, the asymptotics of $x_{n+1}^{2s}(y)$ from Lemma~\ref{lem:reverse} (i), and noting that $y_n^{-2s}y_{n+1}^{-1}\p_{n+1}v_{y_0}(y)=2a_1(y_0)$, we immediately infer that
\begin{align*}
\left\|y_n^{-2s}y_{n+1}^{-1}\p_{n+1}v_{y_0,\lambda}-y_n^{-2s}y_{n+1}^{-1}\p_{n+1}v_{y_0}\right\|_{L^\infty(\mathcal{C}_1^+)}\leq C_s\lambda^{2\alpha}.
\end{align*}
To show the H\"older continuity, we write
\begin{align*}
y_n^{-2s}y_{n+1}^{-1}\p_{n+1}v_{y_0,\lambda}(y)&=y_{n+1}^{-2s}\left(\frac{x_{n+1}}{y_n}\right)^{2s}\big|_{x=(T^{w_{x_0,\lambda^2}})^{-1}(y)} \text{ if } y\in T^{w_{x_0,\lambda^2}}(A_-),\\
y_n^{-2s}y_{n+1}^{-1}\p_{n+1}v_{y_0,\lambda}(y)&= y_n^{-2s}\left(\frac{x_{n+1}}{y_{n+1}}\right)^{2s} \big|_{x=(T^{w_{x_0,\lambda^2}})^{-1}(y)} \text{ if } y\in T^{w_{x_0,\lambda^2}}(A_+).
\end{align*}
By the proof of Proposition~\ref{prop:J}, we have $(x_{n+1}/y_n)^{2s}\in C^{0,\gamma}(A_-)$ and $(x_{n+1}/y_{n+1})^{2s}\in C^{0,\gamma}(A_+)$. Thus, arguing similarly as for (i) (where we use that in $T^{w_{x_0,\lambda^2}}(A_-)$, $y_{n+1}\sim 1$ and that in $T^{w_{x_0,\lambda^2}}(A_+)$, $y_{n}\sim 1$), we infer that for any $\gamma\in(0,1)$ and for all $y_1,y_2\in \mathcal{C}_1^+$
\begin{align*}
&\left|y_n^{-2s}y_{n+1}^{-1}\p_{n+1}v_{y_0,\lambda}(y_1)-y_n^{-2s}y_{n+1}^{-1}\p_{n+1}v_{y_0,\lambda}(y_2)\right|\\
&\leq C_s\lambda^{2\alpha}\left|(T^{w_{x_0,\lambda^2}})^{-1}(y_1)-(T^{w_{x_0,\lambda^2}})^{-1}(y_2)\right|^{\gamma}.
\end{align*}
Therefore, the estimate follows by invoking Proposition~\ref{prop:inverse_T}.
This concludes the proof.
\end{proof}

Similarly as in Section \ref{sec:asymp}, it is possible to extend these asymptotics to second order estimates:

\begin{prop}
\label{prop:asymp_v2}
Let $v_{y_0,\lambda}$, $v_{y_0}$, $\lambda_0$ and $\mathcal{C}_1^+$ be as in Proposition~\ref{prop:asymp_v1}. Then for any $\lambda\in (0,\lambda_0)$ and any $\gamma\in (0,1)$, $\p_iy_n^{1-2s}\p_jv_{y_0,\lambda}\in C^{0,\gamma}(\mathcal{C}_1^+)$ with $i,j\in\{1,\dots, n+1\}$. Moreover,
\begin{align*}
\sum_{i,j=1}^{n+1}\left\|\p_iy_n^{1-2s}\p_j\left(v_{y_0,\lambda}-v_{y_0}\right)\right\|_{C^{0,\gamma}(\mathcal{C}_1^+)}\leq C_s\lambda^{2\alpha}.
\end{align*}
\end{prop}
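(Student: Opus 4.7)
\medskip

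\noindent\textbf{Proof proposal for Proposition~\ref{prop:asymp_v2}.}
The plan is to imitate the structure of the proof of Proposition~\ref{prop:asymp_v1}, using the duality relations \eqref{eq:inverse} to transfer the weighted second-order bounds on $w_{x_0,\lambda^2}-w_{x_0}$ from Proposition~\ref{prop:asymp2}, together with the $C^{1,\gamma}$-bound on $(T^{w_{x_0,\lambda^2}})^{-1}-(T^{w_{x_0}})^{-1}$ from Proposition~\ref{prop:inverse_T}, into bounds on $\partial_i(y_n^{1-2s}\partial_j v_{y_0,\lambda})$. I will organize the argument by the value of $j$.

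First, the cases involving $\partial_n v$ and $\partial_{n+1}v$ are essentially already done. For $j=n$, the identity $y_n^{1-2s}\partial_n v(y)=-2s\,[T^w]^{-1}_n(y)$ gives $\partial_i(y_n^{1-2s}\partial_n v)=-2s\,\partial_i [T^w]^{-1}_n(y)$, and Proposition~\ref{prop:inverse_T} immediately yields the $C^{0,\gamma}(\mathcal{C}_1^+)$ bound with decay $\lambda^{2\alpha}$. For $j=n+1$, I would write $y_n^{1-2s}\partial_{n+1}v=(y_ny_{n+1})^{1-2s}\,x_{n+1}(y)^{2s}$ and then exploit the asymptotic $x_{n+1}^{2s}(y)=2a_1(y'')y_n^{2s}y_{n+1}^{2s}+y_n^{2s}y_{n+1}^{2s}\,O_s((y_n^2+y_{n+1}^2)^{\alpha})$ from Lemma~\ref{lem:reverse}(i) to rewrite this as $y_ny_{n+1}\,\Psi(y)$ for a bounded function $\Psi$. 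The (potentially singular) factor $x_{n+1}^{2s-1}$ appearing under the chain rule is absorbed by $(y_ny_{n+1})^{1-2s}$ via $x_{n+1}\sim y_ny_{n+1}$, and Proposition~\ref{prop:inverse_T} then controls $\partial_i\Psi$ in $C^{0,\gamma}(\mathcal{C}_1^+)$ with decay $\lambda^{2\alpha}$.

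The main work is the case $j\in\{1,\dots,n-1\}$. Here I would use $\partial_j v(y)=\partial_j w(T^{-1}(y))$ and compute
\begin{equation*}
\partial_i\bigl(y_n^{1-2s}\partial_j v_{y_0,\lambda}\bigr)(y)
=\partial_i(y_n^{1-2s})\,\partial_j w(T^{-1}y)+y_n^{1-2s}\sum_{k=1}^{n+1}\partial_k\partial_j w(T^{-1}y)\,\partial_i[T^{-1}]_k(y).
\end{equation*}
As in the proof of Proposition~\ref{prop:asymp_v1}, I would then split $\mathcal{C}_1^+$ into the images $T^{w_{x_0,\lambda^2}}(A_+)$ and $T^{w_{x_0,\lambda^2}}(A_-)$, using Proposition~\ref{prop:inverse_T} to see that this covers $\mathcal{C}_1^+$ for $\lambda\in(0,\lambda_0)$. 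On $T^{w_{x_0,\lambda^2}}(A_-)$ one has $y_n\sim x_{n+1}$ and $y_{n+1}\sim 1$, so that the weight $y_n^{1-2s}$ matches $x_{n+1}^{1-2s}$ up to bounded factors; the $C^{0,\gamma}$ differences can then be read off from Proposition~\ref{prop:asymp2}(i) (which controls precisely $\partial_i x_{n+1}^{1-2s}\partial_j(w_{x_0,\lambda^2}-w_{x_0})$) combined with the Hölder composition bounds on $T^{-1}$ from Proposition~\ref{prop:inverse_T}. On $T^{w_{x_0,\lambda^2}}(A_+)$, one instead has $y_n\sim 1$ so the weight is harmless, and the standard $C^{2,\gamma}$-bound of Proposition~\ref{prop:asymp2}(ii) applies. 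Summing the two contributions produces the desired $C_s\lambda^{2\alpha}$ bound.

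The main obstacle is the careful matching of weights: one must verify that in each of the regions $T(A_\pm)$ the combinations of $y_n^{1-2s}$, $y_{n+1}^{1-2s}$ and the possibly singular powers $x_{n+1}^{\pm(2s-1)}$ coming from the chain rule conspire, via the asymptotics in Lemma~\ref{lem:reverse}(i), to yield an expression whose $C^{0,\gamma}$ norm on $\mathcal{C}_1^+$ is controlled by the available bounds on $D^2_{\text{weighted}}(w_{x_0,\lambda^2}-w_{x_0})$ and $D(T^{w_{x_0,\lambda^2}}-T^{w_{x_0}})^{-1}$. Once this weight bookkeeping is performed, the estimate follows by summing the resulting contributions, in complete analogy with Proposition~\ref{prop:asymp_v1}.
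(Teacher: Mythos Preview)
Your approach is correct and essentially the same as the paper's. One organizational remark: for the $j=n+1$ case the paper avoids the ``weight bookkeeping'' you describe by writing all the relevant combinations directly as products of entries of $D(T^{-1})$ with powers of the quantity $\Psi:=y_n^{-2s}y_{n+1}^{-1}\partial_{n+1}v$ already controlled in Proposition~\ref{prop:asymp_v1}, e.g.\ $\partial_i(y_n^{1-2s}\partial_{n+1}v)=2s\,\Psi^{(2s-1)/(2s)}(DT^{-1})_{n+1,i}$; this makes the cancellation of the singular factor $x_{n+1}^{2s-1}$ against $(y_ny_{n+1})^{1-2s}$ explicit rather than asymptotic, and is slightly cleaner than tracking $\partial_i\Psi$ (which is itself unbounded on $\mathcal{C}_1^+$---only $y_ny_{n+1}\partial_i\Psi$ is controlled).
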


\begin{proof}
By \eqref{eq:inverse} a direct computation gives
\begin{align*}
\p_n(y_n^{1-2s}\p_n v)&=-(2s)(DT^{-1})_{n,n},\\
y_n^{1-2s}\p_{n+1}\p_{n}v&=-(2s)(DT^{-1})_{n,n+1},\\
y_n^{1-2s}\p_{n+1,n+1}v&=(2s)(y_{n+1}^{-1}y_n^{-2s}\p_{n+1}v)^{-\frac{2s-1}{2s}}(DT^{-1})_{n+1,n+1}\\
&\quad +(1-2s)y_n^{1-2s}y_{n+1}^{-1}\p_{n+1}v,\\
\p_{i}(y_n^{1-2s}\p_nv)&=(2s) (DT^{-1})_{n,i},\\
\p_i(y_n^{1-2s}\p_{n+1}v)
&=(2s)(y_n^{-2s}y_{n+1}^{-1}\p_{n+1}v)^{\frac{2s-1}{2s}}(DT^{-1})_{n+1,i}.
\end{align*}
Here $v$ stands representatively for $v_{y_0,\lambda}$ and $v_{y_0}$, and $T$ for $T^{w_{x_0,\lambda^2}}$, $T^{w_{x_0}}$, respectively ($x_0=(T^w)^{-1}(y_0)$). By invoking Proposition~\ref{prop:asymp_v1} and Proposition~\ref{prop:inverse_T}, this yields the estimates.\\
Finally, we compute that for $i,j\in \{1,\dots,n-1\}$,
\begin{align*}
y_n^{1-2s}\p_{ij}v&=y_n^{1-2s}\p_{ij}w(x)\big|_{x=T^{-1}(y)}\\
&+y_n^{1-2s}\p_{nj}w(x)(DT^{-1})_{n,j}+ y_n^{1-2s}\p_{n+1,j}w(x)(DT^{-1})_{n+1,j}\big|_{x=T^{-1}(y)}.
\end{align*}
We only show the estimate for $y_n^{1-2s}\p_{ij}w$ in detail, as the remainder of the proof is similar. Let
$$G(y):=y_n^{1-2s}\p_{ij}w(x)\big|_{x=(T^w)^{-1}(y)}.$$ 
If $y\in \mathcal{C}_1^+\cap T^w(A_+)$, we have $y_n\sim 1$. By Proposition~\ref{prop:asymp2} (ii) and Proposition~\ref{prop:inverse_T}, for $\lambda \in (0,\lambda_0)$, 
$$\|G_{\lambda}(y)-G_0(y)\|_{C^{0,\gamma}(T^{w_{x_0,\lambda^2}}(A_+)\cap \mathcal{C}_1^+)}\leq C_s\lambda^{2\alpha}.$$ 
Here $G_\lambda(y):=y_n^{1-2s}\p_{ij}w_{x_0,\lambda^2}(x),\ x=(T^{w_{x_0,\lambda^2}})^{-1}$ and and $G_0$ corresponds to $w_{x_0}$.\\
If $y\in \mathcal{C}_1^+\cap T^{w}(A_-)$, we write
\begin{align*}
G(y)&=\left(\frac{y_n}{x_{n+1}}\right)^{1-2s}(x_{n+1}^{1-2s}\p_{ij}w(x))\big|_{x=T^{-1}(y)}\\
&=(y_{n+1}^{1-2s}y_n^{-2s}\p_{n+1}v)^{\frac{2s-1}{2s}}(x_{n+1}^{1-2s}\p_{ij}w(x))\big|_{x=T^{-1}(y)}.
\end{align*}
By Proposition~\ref{prop:asymp_v1}, Proposition~\ref{prop:asymp2} and Proposition~\ref{prop:inverse_T} we obtain 
\begin{align*}
\|G_\lambda-G_0\|_{C^{0,\gamma}(T^{w_{x_0,\lambda^2}}(A_-)\cap \mathcal{C}_1^+)}\leq C\lambda^{2\alpha}.
\end{align*}
In the end, arguing similarly as in the proof of Proposition~\ref{prop:asymp_v1} (i), we obtain the desired estimate for $G_\lambda-G_0$ in $\mathcal{C}_1^+$.
This concludes the proof.
\end{proof}

Finally to conclude this section, we discuss the model solution $v_0$ which is defined as the blow-up of $v$ at $y=0$ and which (up to constants) is the Legendre function of the model solution $w_{1,s}$ from \eqref{eq:w1s}:

\begin{example}
\label{ex:model}
Recalling the assumptions that $g(0)=0=|\nabla''g(0)|$ yields that the Legendre function, $v_0$, of the blow-up $w_0$ at zero (c.f. (\ref{eq:scale_w})) is of a particularly simple form (compare this also to the more general expressions from Lemma \ref{lem:reverse} in Section \ref{sec:asymp1}). Up to rescaling it reads:
\begin{align}\label{eq:v0}
v_0(y)=-\frac{s}{(s+1)}y_n^{2s+2}+y_n^{2s}y_{n+1}^2.
\end{align}
Computing the expressions for $x$ and $J(v_0)$ in terms of $y$ (c.f. Proposition \ref{prop:eqnonlin}) in this particular case then yields
\begin{align*}
x_n(y)=\frac{1}{2}(y_n^2-y_{n+1}^2),\quad x_{n+1}(y)=y_ny_{n+1}, \quad 
J(v_0)= y_n^2+y_{n+1}^2. 
\end{align*}
Hence, in the case of vanishing inhomogeneity, $f=0$, up to a constant, the linearization of the nonlinear functional $F$ from (\ref{eq:nonlinear}) at $v_0$ is
\begin{align}
\D_{G,s} \tilde{v}& =(y_n y_{n+1})^{1-2s}(y_n^2+y_{n+1}^2)\p_{ii}\tilde{v} +\p_n((y_ny_{n+1})^{1-2s}\p_n\tilde{v})\label{eq:frac_1tional_Gru}\\
& \quad +\p_{n+1}((y_ny_{n+1})^{1-2s}\p_{n+1}\tilde{v}).\notag
\end{align}
This is a \emph{Baouendi-Grushin type fractional Laplacian}, which serves as a first motivation of the introduction of the Baouendi-Grushin geometry in the following section. 
\end{example} 

\section{Geometry and Function Spaces}
\label{sec:function_spaces}

Motivated by the linearization result from Example \ref{ex:model}, we introduce the geometry and function spaces in which we will be working in the sequel. More precisely, we consider the intrinsic geometry which is induced by the Baouendi-Grushin operator (c.f. Definition \ref{defi:BGgeo}). For the choice of our function spaces (c.f. Definition \ref{defi:XY}) we build on this. Guided by the explicit form of the model solution from Example \ref{ex:model} and the a priori estimates for the fractional Baouendi-Grushin operator (c.f. Sections \ref{sec:reg2}), we construct weighted spaces with the right asymptotic behavior at the straightened free boundary $P:=\{y_n=y_{n+1}=0\}$.\\

We begin by introducing the relevant geometric quantities in Definitions \ref{defi:BGgeo}.

\begin{defi}[Baouendi-Grushin geometry]
\label{defi:BGgeo}
Let 
\begin{align*}
Y_{1}&:= y_n \p_{y_1}, \ Y_{2}:= y_{n+1} \p_{y_1}, \ \dots, \  Y_{2n-3}:=y_n\p_{y_{n-1}},\ Y_{2n-2}:= y_{n+1}\p_{y_{n-1}}, \\
 Y_{2n-1}&:= \p_{y_n} , \ Y_{2n}:=\p_{y_{n+1}},
\end{align*}
denote the \emph{Baouendi-Grushin vector fields}. We consider the associated \emph{Baouendi-Grushin metric}
\begin{align*}
g_{y}(v,w) = (y_n^2 + y_{n+1}^2)^{-1} \left(\sum\limits_{i=1}^{n-1} w_i v_i \right) + v_n w_n + v_{n+1}w_{n+1},
\end{align*}
for all $y\in \R^{n+1}$ and $v,w\in \spa\{Y_i(y)\}_{i\in\{1,\dots,2n\}} $.
This induces a (Carnot-Caratheodory) distance on $\R^{n+1}$:
\begin{align*}
d_G(x,y)&:= \inf \left\{\int\limits_{a}^{b} \sqrt{g_{\gamma(t)}(\dot{\gamma}(t),\dot{\gamma}(t))}dt: \gamma(a)=x, \gamma(b)=y , \ a,b\in \R, \right.\\ 
& \quad \quad \quad \left. \dot{\gamma}(t) \in \spa\{Y_i(\gamma(t))\}_{i\in\{1,\dots,2n\}} \right\}.
\end{align*}
We denote the associated (closed) \emph{Baouendi-Grushin balls of radius $R$ and with center $y_0\in \R^{n+1}$} by $\mathcal{B}_R(y_0)$, i.e.
\begin{align*}
\mathcal{B}_R(y_0):=\{y\in \R^{n+1}: d_G(y,y_0)\leq R\}.
\end{align*}
Let 
$Q_+:=\{y\in\R^{n+1}:y_n\geq 0, y_{n+1}\geq 0\}$ denote the upper quarter space. 
We denote the intersection of balls with $Q_+$ by $\mathcal{B}^{+}_R(y_0)$.
\end{defi}

\begin{rmk}\label{rmk:quasi}
As in Remark 4.2 of \cite{KRSIII}, it is not hard to show that $d_G$ is equivalent to the following quasi-metric:
\begin{align*}
\tilde{d}_G(\hat{y},y):= |\hat{y}_n-y_n|+|\hat{y}_{n+1}-y_{n+1}| + \frac{|\hat{y}''-y''|}{|\hat{y}_n|+|\hat{y}_{n+1}|+|y_n|+|y_{n+1}|+|\hat{y}''-y''|^{1/2}}.
\end{align*}
Here $\hat{y}=(\hat{y}'',\hat{y}_n,\hat{y}_{n+1})$, $y=(y'',y_n,y_{n+1})$.
\end{rmk}

Using the previous notation, it is possible to define Hölder spaces with respect to the Baouendi-Grushin metric:

\begin{defi}[Hölder spaces]
\label{defi:hoelder}
Let $\Omega \subset \R^{n+1}$ and let $\alpha\in (0,1]$. Then we define
\begin{align*}
&[\cdot]_{\dot{C}^{0,\alpha}_{\ast}(\overline{\Omega})}: L^{\infty}(\bar{\Omega}) \rightarrow [0,\infty],\\
&[u]_{\dot{C}^{0,\alpha}_{\ast}(\overline{\Omega})}:= \sup\limits_{\hat{y},y\in \overline{\Omega}}\frac{|u(\hat{y})-u(y)|}{d_G(\hat{y},y)^{\alpha}}.
\end{align*}
Moreover, we set
\begin{align*}
&\|\cdot \|_{C^{0,\alpha}_{\ast}(\overline{\Omega})}: L^{\infty}(\bar{\Omega}) \rightarrow [0,\infty],\\
&\|u\|_{C^{0,\alpha}_{\ast}(\overline{\Omega})}:= \sup\limits_{\hat{y}\in \overline{\Omega}}|u(\hat{y})| + [u]_{\dot{C}^{0,\alpha}_{\ast}(\overline{\Omega})},
\end{align*}
and define
\begin{align*}
C^{0,\alpha}_{\ast}(\overline{\Omega}):=\{u \in L^{\infty}( \bar{\Omega} ):  \|u\|_{C^{0,\epsilon}_{\ast}(\overline{\Omega})}<\infty\}.
\end{align*}
\end{defi}

\begin{rmk}
As in the Euclidean case, the spaces $C^{0,\alpha}_\ast(\overline{\Omega})$ form Banach spaces.
\end{rmk}

In order to approximate functions with respect to the Baouendi-Grushin geometry, we rely on the notion of \emph{homogeneous} polynomials. These are polynomials whose tangential and normal degrees are counted differently. This is motivated by the different scaling behavior of the tangential and normal components of the operator from Example \ref{ex:model}.

\begin{defi}[Homogeneous polynomials]
\label{defi:homo_poly}
Let $k\in \N$ and let $\beta=(\beta_1,\dots, \beta_{n+1})$ with $\beta_i \in \N\cup\{0\}$ be a multi-index. We define
\begin{align*}
\mathcal{P}_k^{hom}:=\left\{p(y)=\sum_{|\beta|=k}b_\beta y^\beta: b_\beta\in \R, \ b_\beta=0 \text{ if } \sum\limits_{j=1}^{n-1}2\beta_i + \beta_n + \beta_{n+1}>k\right\}
\end{align*}
as the space of \emph{homogeneous polynomials with degree $k$}. Here the notion homogeneous refers to the scaling behavior
\begin{align*}
p_k(\delta_\lambda (y))=\lambda^k p_k(y), \quad p_k\in \mathcal{P}_k^{hom},
\end{align*}
where $\delta_\lambda(y)=(\lambda^2y'',\lambda y_n, \lambda y_{n+1})$ is the dilation associated with the Baouendi-Grushin vector fields. We define
\begin{align*}
\mathcal{P}_k:=\left\{p(y)= \sum\limits_{|\beta|\leq k} b_{\beta} y^{\beta}: \ b_{\beta}\in \R, \  b_{\beta}=0 \mbox{ if  } \sum\limits_{j=1}^{n-1}2\beta_i + \beta_n + \beta_{n+1}>k\right\},
\end{align*}
as the vector space of \emph{the homogeneous polynomials with degree less or equal to $k$}.
\end{defi}

Finally, as the last ingredient before defining our function spaces, we introduce the notion of \emph{differentiability at $P:=\{y_n=y_{n+1}=0\}$}:

\begin{defi}
\label{defi:CkP}
We say that a function $f:Q_+ \rightarrow \R$ is \emph{$C^{k,\alpha}_{\ast}$ at $P$} if for each point $y_0\in P$ there exists a homogeneous polynomial $P_{y_0, k} \in \mathcal{P}_k$ such that
\begin{align*}
|f(y)-P_{y_0,k}(y)| \leq C d_G(y,y_0)^{k+2\alpha} \mbox{ in } \mathcal{B}_1^+(y_0).
\end{align*}
We call the polynomial $P_{y_0,k}$ an \emph{approximating polynomial} for $f$ at $y_0$.
\end{defi}

\begin{rmk}
We emphasize that the previous notion does not imply the differentiability of $f$ at $P$ up to order $k$. If however $f$ is $k$-times classically differentiable at a point $y_0\in P$, then the approximating polynomial $P_{y_0,k}$ corresponds to the Taylor polynomial (of homogeneous degree less than or equal to $k$) of $f$ at $P$.
\end{rmk}

With the previous preparation, we can now define our main function spaces which are needed for the application of the implicit function theorem in Section~\ref{sec:IFT}. We are seeking Banach spaces $X$ and $Y$ such that 
\begin{itemize}
\item[(i)] the nonlinear functional $F$ in \eqref{eq:nonlinear} is smooth (or analytic) from $X$ to $Y$, if the inhomogeneity $f$ is smooth (or analytic),
\item[(ii)] the model solution $v_0$ is contained in $X$, and $\Delta_{G,s}=D_{v_0}F: X\rightarrow Y$ is invertible,
\item[(iii)] the Legendre function $v$ from Section~\ref{sec:HL} is in $X$. Morevoer,  the differential $D_vF$ is a perturbation of $\Delta_{G,s}: X\rightarrow Y$.
\end{itemize}

\begin{defi}[Function spaces]
\label{defi:XY}
Let $\alpha,\epsilon \in (0,1)$. We set
\begin{align*}
X_{\alpha,\epsilon} & := \{v\in L^{\infty}(Q_+) : \ v= 0 \mbox{ on } \{y_n=0\}, \ y_n^{-2s}v \in C^{2,\alpha}_{\ast} \mbox{ at } P,\\ 
& \quad \lim\limits_{y_{n+1} \rightarrow 0_+} \p_{n+1}(y_n^{-2s}  v(y) )= 0, \  \lim\limits_{y \rightarrow P} \p_n(y_n^{1-2s} \p_n v(y)) = 0,
 \ \supp(\D_{G,s} v)\subset \mathcal{B}_1^{+}, \\
& \quad  \|v\|_{X_{\alpha,\epsilon}}<\infty  \},\\
Y_{\alpha,\epsilon} & := \{f\in L^\infty(Q_+): y_{n+1}^{2s-1}f\in C^{1,\alpha}_{\ast} \mbox{ at } P, \ \lim\limits_{y \rightarrow P}y_{n+1}^{2s-1}f(y)=0=\lim\limits_{y \rightarrow P}\p_{n+1}(y_{n+1}^{2s-1} f(y)),\\
&\quad   \supp(f)\subset \mathcal{B}_1^{+},  \ \|f\|_{Y_{\alpha,\epsilon}}<\infty  \},
\end{align*}
where the associated norms are given by
\begin{align*}
\|v\|_{X_{\alpha,\epsilon}}&=\sup_{\bar y\in P}\left( \left\|d_G(\cdot, \bar y)^{-(2+2\alpha)}y_n^{-2s}(v- y_{n}^{2s}P^s_{\bar y,2})\right\|_{L^\infty(Q_+ )} \right.\\
& + \left[ d_G(\cdot, \bar y)^{-2\alpha+\epsilon}y_{n+1}^{-1}y_{n}^{-2s}\p_{n+1}(v- y_{n}^{2s}P^s_{\bar y,2})\right]_{\dot{C}^{0,\epsilon}_{\ast}(Q_+ )} \\
& + \left[d_G(\cdot, \bar y)^{-(2+2\alpha-\epsilon)}y_{n}^{1-2s}\p_{n}(v- y_{n}^{2s}P^s_{\bar y,2})\right]_{\dot{C}^{0,\epsilon}_{\ast}(Q_+ )} \\
& + \sum\limits_{i=1}^{n-1}\left[d_G(\cdot, \bar y)^{-(2\alpha-\epsilon)}y_n^{-2s}\p_{i}( v-y_{n}^{2s} P^s_{\bar y,2})\right]_{\dot{C}^{0,\epsilon}_{\ast}(Q_+ )} \\
&\left.+\sum\limits_{i,j=1}^{n+1}\left[ d_G(\cdot, \bar y)^{-(1+2\alpha-\epsilon)}  Y_iy_n^{1-2s}Y_j(v- y_{n}^{2s} P^s_{\bar y,2})\right]_{\dot{C}^{0,\epsilon}_\ast(Q_+)} \right),\\
\|f\|_{Y_{\alpha,\epsilon}}&=\sup_{\bar y\in P}\left[d_G(\cdot, \bar y)^{-(1+2\alpha-\epsilon)} y_{n+1}^{2s-1} (f- y_{n+1}^{1-2s} Q_{\bar y,1}^s)\right]_{\dot{C}^{0,\epsilon}_\ast(Q_+)} .
\end{align*}
The functions $P_{\bar{y},2}^s$ and $Q_{\bar{y},1}^s$ denote the respective (in the homogeneous sense) second and first order approximating polynomials (in the sense of Definition \ref{defi:CkP}) of $y_{n}^{-2s} v(y)$ and $y_{n+1}^{2s-1}f(y)$ at $\bar{y}:=(\bar{y}'',0,0)\in P$.
\end{defi}

Let us briefly comment on the main ideas leading to these definitions. The spaces are constructed so as to measure the deviation of functions from suitable approximations at the boundary of $Q_+$. In this sense they mimic the asymptotic expansions of the Legendre function $v$ (for the definition of the space $X_{\alpha,\epsilon}$) and of the function $\D_{G,s}v$ (for the space $Y_{\alpha,\epsilon}$), c.f. Section \ref{sec:asymp1}. The asymptotic behavior at the boundary of $Q_+$ is thus encoded by considering the difference of $v$ to $y_n^{2s}P^s_{\bar{y},2}$ and $y_{n+1}^{1-2s}Q_{\bar{y},1}^s$. These specific approximations are motivated by the structure of the ``eigenpolynomials'' to the fractional Baouendi-Grushin Laplacian and the associated elliptic regularity estimates (c.f. Sections \ref{sec:reg1}-\ref{sec:Eigenf}). The existence of such an approximation is ensured by the requirement that $y_n^{-2s} v \in C^{2,\alpha}_{\ast}$ at $P$. The choice of the norms rests on the availability of ``Schauder type'' elliptic estimates for the Grushin Laplacian with mixed Dirichlet-Neumann conditions with respect to these (c.f. Sections \ref{sec:reg1} and \ref{sec:reg2}). \\
The pointwise conditions imposed on functions in the space $X_{\alpha,\epsilon}$ are a mixture of boundary and normalization properties: We require Dirichlet conditions on $\{y_n=0\}$ and a ``strengthened'' form of Neumann conditions on $\{y_{n+1}=0\}$ (c.f. Remark \ref{rmk:bound_data}). The Neumann condition in particular rules out the presence of a linear term $y_{n+1}$ in the asymptotic expansion of $y_n^{-2s}v$ at $P$ and is hence adapted to the expansion of $y_n^{-2s}v$.
Finally, the remaining pointwise condition is a normalization which excludes the presence of the linear term $y_n$ in the approximating polynomial for $y_n^{-2s}v$ at $P$. \\
The requirement that $v\in L^{\infty}$ in combination with the compact support condition on $\D_{G,s}v$ entails that the space $X_{\alpha,\epsilon}$ is a Banach space.

\begin{rmk}
\label{rmk:norms}
Restrictions on the specific form of the weights (which are used in the norms) originate from the second order estimates (c.f. Propositions \ref{prop:Dirichlet}, \ref{prop:Neumann}), the compatibility with the linear and nonlinear operators, and the aim of proving analyticity of the nonlinear function (\ref{eq:nonlinear}) in the function spaces $X_{\alpha,\epsilon}, Y_{\alpha,\epsilon}$. We remark that this still leaves a non-negligible amount of freedom for instance in the exact choice of the weights for the lowest order contributions.
\end{rmk}

\begin{rmk}
\label{rmk:supp}
Due to the compact support of $f$ and due to the definition of $P_{\bar{y},1}^s$, we have that
\begin{align*}
&\left\|d_G(\cdot, \bar y)^{-(1+2\alpha)} y_{n+1}^{2s-1} (f-P_{\bar y,1}^s) \right\|_{L^\infty(Q_+)} \\
&\leq  C
\left[d_G(\cdot, \bar y)^{-(1+2\alpha-\epsilon)}y_{n+1}^{2s-1} (f-P_{\bar y,1}^s)\right]_{\dot{C}^{0,\epsilon}_\ast (Q_+)}. 
\end{align*}
\end{rmk}

As in \cite{KRSIII} these function spaces have local analoga:

\begin{defi}
\label{defi:XYloc}
Let $\alpha,\epsilon \in (0,1)$. We set
\begin{align*}
X_{\alpha,\epsilon}(\mathcal{B}^{+}_R) & 
 := \{v\in L^{\infty}(\mathcal{B}_R^+) : \ v= 0 \mbox{ on } \{y_n=0\}\cap \mathcal{B}_R^+, \ y_n^{-2s}v \in C^{2,\alpha}_{\ast} \mbox{ at } P\cap \mathcal{B}_R^+,\\ 
& \quad \lim\limits_{y_{n+1} \rightarrow 0_+} \p_{n+1}(y_n^{-2s} v(y)) =0\mbox{ on } \mathcal{B}_R^+, \ \lim\limits_{y \rightarrow P\cap \mathcal{B}_R^+} \p_n(y_n^{1-2s}\p_n v(y)) = 0, \\
& \quad   \|v\|_{X_{\alpha,\epsilon}(\mathcal{B}_R^+)}<\infty   \},\\
Y_{\alpha,\epsilon}(\mathcal{B}^{+}_R )& := \{f: Q_+ \rightarrow \R:  y_{n+1}^{2s-1}f\in C^{1,\alpha}_{\ast} \mbox{ at } P\cap \mathcal{B}_R^+,\\
& \quad \ \lim\limits_{y \rightarrow P \cap \mathcal{B}_R^+}y_{n+1}^{2s-1}f(y)=0=\lim\limits_{y \rightarrow P \cap \mathcal{B}_R^+}\p_{n+1}(y_{n+1}^{2s-1} f(y)), \ \|f\|_{Y_{\alpha,\epsilon}(\mathcal{B}_R^+)}<\infty  \},
\end{align*}
where the associated norms are defined as above but now contain the full $C^{0,\epsilon}_{\ast}$ norms, e.g. $$\|v\|_{X_{\alpha,\epsilon}(\mathcal{B}_R^+)}=\|v\|_{\dot{X}_{\alpha,\epsilon}(\mathcal{B}_R^+)}+\sup_{\bar y\in P\cap \mathcal{B}_R^+} |P^s_{\bar y,2}|. $$ 
Here $\|\cdot\|_{\dot{X}_{\alpha,\epsilon}(\mathcal{B}_R^+)}$ is the homogeneous part which is defined the same way as for the global spaces (c.f. Definition~\ref{defi:XY}) with $Q_+$ replaced by $\mathcal{B}_R^+$,  and $|P^s_{\bar y,2}|:=\sum_{|\beta|\leq 2}|b_\beta(\bar y)|$ for $P^s_{\bar y,2}(y)=\sum_{|\beta|\leq 2} b_\beta (\bar y) y^\beta$. 
\end{defi}

As in \cite{KRSIII}, the function spaces from Definition \ref{defi:XY} have a characterization in terms of decompositions in appropriate Hölder spaces: 

\begin{prop}
\label{prop:decomp}
Let $\alpha,\epsilon\in(0,1)$ with $\epsilon\leq \alpha$ and let $X_{\alpha,\epsilon}, Y_{\alpha,\epsilon}$ be the function spaces from Definition \ref{defi:XY}. Then,\\
\emph{(i)} $v\in X_{\alpha,\epsilon}$ if and only if there exist functions $a_0, a_1\in C^{0,\alpha}(\R^{n-1})$, $c_0\in C^{1,\alpha}(\R^{n-1})$ and $ C_0, C_k, C_{k\ell}\in C^{0,\epsilon}_\ast(Q_+)$ with $k,\ell \in \{1,\dots,n+1\}$ such that
\begin{itemize}
\item[(a)] for $i,j\in\{1,\dots,n-1\}$ the following decomposition holds:
\begin{align*}
v(y) & = c_0(y'')y_n^{2s} + a_0(y'')y_n^{2+2s} + a_1(y'')y_n^{2s}y_{n+1}^2 \\
& \quad + y_n^{2s}r^{2+2\alpha-\epsilon}C_{0}(y),\\
y_n^{1-2s}\p_nv(y)&=(2s)c_0(y'')+(2+2s)a_0(y'')y_n^2+(2s)a_1(y'')y_{n+1}^2\\
& \quad +r^{2+2\alpha-\epsilon}C_n(y),\\
\p_{n+1}v(y)&=2a_1(y'')y_n^{2s}y_{n+1}+ y_n^{2s}y_{n+1}r^{2\alpha-\epsilon}C_{n+1}(y),\\
\p_iv(y)&=\p_i c_0(y'')y_n^{2s}+y_n^{2s}r^{2\alpha-\epsilon}C_i(y),\\
\p_n(y_n^{1-2s}\p_nv)(y)&=2(2+2s)a_0(y'')y_n+r^{1+2\alpha-\epsilon}C_{nn}(y),\\
\p_{n+1}(y_n^{1-2s}\p_nv)(y)&=(4s)a_1(y'')y_{n+1}+r^{1+2\alpha-\epsilon}C_{n+1,n}(y),\\
y_n^{1-2s}\p_{n,n+1}v(y)&=(4s)a_1(y'')y_{n+1}+r^{1+2\alpha-\epsilon}C_{n,n+1}(y),\\
y_{n}^{1-2s}\p_{n+1,n+1}v(y)&=2a_1(y'')y_n + r^{1+2\alpha-\epsilon}C_{n+1,n+1}(y),\\
y_{n}^{1-2s}\p_{ij}v(y)&= r^{-1+2\alpha-\epsilon}C_{ij}(y),\\
y_n^{1-2s}\p_{in}v(y)&=(2s)\p_ic_0(y'')+r^{2\alpha-\epsilon}C_{in}(y).
\end{align*}
\item[(b)] The following estimate holds:
\begin{align*}
&[a_0]_{\dot{C}^{0,\alpha}}+[a_1]_{\dot{C}^{0,\alpha}} + [c_0]_{\dot{C}^{1,\alpha}}+
\sum\limits_{i=0}^{n+1}[C_i]_{\dot{C}^{0,\epsilon}_{\ast}}+ \sum\limits_{i,j=1}^{n+1}[C_{ij}]_{\dot{C}^{0,\epsilon}_{\ast}}\leq C \|v\|_{X_{\alpha,\epsilon}}.
\end{align*}
\item[(c)] The functions $C_0, C_{i}, C_{ij}$, $i\in\{1,\dots,n+1\}$ vanish on $P$ and $C_{n+1}$ vanishes on $\{y_n=0\}$.
\item[(d)] $\supp(\D_{G,s})v \in \mathcal{B}_1^+$.
\end{itemize}
\emph{(ii)} $f\in Y_{\alpha,\epsilon}$ if and only if there exist functions $f_0 \in C^{0,\alpha}(P)$, $f_1\in C^{0,\epsilon}_{\ast}(Q_+)$ such that
\begin{align*}
f(y) =y_n y_{n+1}^{1-2s}  f_0(y'')  + y_{n+1}^{1-2s} r^{1+2\alpha-\epsilon} f_1(y),
\end{align*}
with
\begin{align*}
[f_0]_{\dot{C}^{0,\alpha}} + [f_1]_{\dot{C}^{0,\epsilon}_{\ast}} \leq C \|f\|_{Y_{\alpha,\epsilon}},
\end{align*} 
$f_1(y)=0$ for $y\in P$ and $\supp(f_0), \supp(f_1)\subset \mathcal{B}_1''\times \R^{2}_+$.
\end{prop}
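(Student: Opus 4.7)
The plan is to prove both directions of each equivalence by extracting the approximating polynomial from the decomposition (for the ``if'' part) or conversely from the norm (for the ``only if'' part), together with a Campanato-type comparison between base points to obtain coefficient regularity. I would prove (i) in detail and treat (ii) as a strictly simpler variant of the same template.

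For the forward implication of (i), given the decomposition (a) I would recognise the approximating polynomial as
\begin{equation*}
P^s_{\bar y, 2}(y) := c_0(\bar y'') + \nabla'' c_0(\bar y'') \cdot (y''-\bar y'') + a_0(\bar y'') y_n^2 + a_1(\bar y'') y_{n+1}^2 ,
\end{equation*}
for each $\bar y = (\bar y'', 0, 0) \in P$. Writing $v - y_n^{2s} P^s_{\bar y, 2}$ as the difference between the decomposition for $v$ and this explicit polynomial, I would use the $C^{1,\alpha}$ regularity of $c_0$, the $C^{0,\alpha}$ regularity of $a_0, a_1$, and the equivalence $|y''-\bar y''|^{\alpha}\lesssim d_G(y,\bar y)^{2\alpha}$ supplied by the quasi-metric of Remark \ref{rmk:quasi}, to absorb each contribution into the required power of $d_G(\cdot, \bar y)$ times the quantity appearing on the left of (b). The individual H\"older seminorms in $\|v\|_{X_{\alpha,\epsilon}}$ are obtained analogously by differentiating the decomposition in (a): every term either pairs with a high power of $r$ (through $r^{2+2\alpha-\epsilon}$, $r^{1+2\alpha-\epsilon}$, etc.) or is cancelled exactly by the corresponding derivative of $y_n^{2s} P^s_{\bar y, 2}$. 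The three $C^{0,\epsilon}_{\ast}$-to-$d_G$-weighted estimates are then routine once one keeps track of which $C_i$ or $C_{ij}$ vanishes on $P$ or on $\{y_n=0\}$ (condition (c)), so that Taylor-type subtraction yields the extra power of $d_G(\cdot, \bar y)$.

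For the reverse implication of (i), the pointwise conditions built into $X_{\alpha,\epsilon}$ pin down the shape of $P^s_{\bar y, 2}$: the condition $\lim_{y_{n+1} \to 0_+}\partial_{n+1}(y_n^{-2s}v) = 0$ eliminates both the $y_{n+1}$ and the $y_n y_{n+1}$ monomial of $P^s_{\bar y, 2}$, while a short calculation shows that $\lim_{y \to P}\partial_n(y_n^{1-2s}\partial_n v) = (2s+1)\,\partial_n P^s_{\bar y, 2}|_P$, eliminating the $y_n$ monomial. Defining $c_0, a_0, a_1$ via the surviving coefficients, the $L^{\infty}$-part of the norm together with the Dirichlet condition on $\{y_n=0\}$ gives the pointwise estimate on $v- y_n^{2s} P^s_{\bar y,2}$; the H\"older parts give the derivative estimates. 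The regularity of the coefficients is then the classical Campanato step: for $\bar y_1, \bar y_2 \in P$ at Grushin distance $R = d_G(\bar y_1, \bar y_2)$, applying the $L^{\infty}$ bound simultaneously at both base points on the intermediate ball $\mathcal B_{CR}^+(\bar y_1)$ yields $\|P^s_{\bar y_1,2}-P^s_{\bar y_2,2}\|_{L^{\infty}(\mathcal B_{CR}^+(\bar y_1))} \leq C\|v\|_{X_{\alpha,\epsilon}} R^{2+2\alpha}$; equivalence of norms on the finite-dimensional space $\mathcal P_2$ then transfers this pointwise bound into the desired coefficient estimates, giving $C^{0,\alpha}$ for $a_0, a_1$ and $C^{1,\alpha}$ for $c_0$. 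Statement (ii) proceeds identically but with a first-order polynomial $P^s_{\bar y,1}$, where the two vanishing conditions at $P$ kill the constant and the $y_{n+1}$ term, so only the $y_n$-coefficient $f_0(\bar y'')$ survives, and its $C^{0,\alpha}$ regularity is produced by the same Campanato comparison.

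The main obstacle I expect is the Campanato-type coefficient extraction in the reverse direction: one must be careful in identifying the surviving coefficients of each $P^s_{\bar y,2}$ unambiguously, and in respecting the anisotropic Grushin scaling when transferring polynomial-comparison on an intermediate ball to $C^{0,\alpha}$ (respectively $C^{1,\alpha}$) bounds on the coefficients --- the tangential direction $y''$ contributes at exponent $\alpha$ because $|y''-\bar y''|^{\alpha} \sim d_G(y,\bar y)^{2\alpha}$, while the $y_n, y_{n+1}$ directions scale with exponent $2\alpha$. Keeping the bookkeeping of homogeneous degrees consistent throughout is the only genuinely subtle point; the rest is a systematic verification.
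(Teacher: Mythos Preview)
Your proposal is correct and follows essentially the same approach as the paper: identify the approximating polynomial from the decomposition (or vice versa via the pointwise/boundary conditions in $X_{\alpha,\epsilon}$), then run a Campanato-type triangle-inequality comparison between two base points in $P$ to extract the $C^{0,\alpha}$ and $C^{1,\alpha}$ coefficient regularity, with the anisotropic Grushin scaling doing the bookkeeping. The only organizational difference is that the paper carries out (ii) in detail and refers (i) back to it, whereas you do the reverse; the underlying argument is the same.
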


\begin{rmk}\label{rmk:equiv_local}
For the local spaces $X_{\alpha,\epsilon}(\mathcal{B}_R^+)$ and $Y_{\alpha,\epsilon}(\mathcal{B}_R^+)$, there are similar characterizations. One has the equivalence of the norms
\begin{align*}
\|a_0\|_{C^{0,\alpha}(\mathcal{B}_R^+\cap P)}+ \|a_1\|_{C^{0,\alpha}(\mathcal{B}_R^+\cap P)}+\|c_0\|_{C^{1,\alpha}(\mathcal{B}_R^+\cap P)}\\
+\sum_{i=1}^{n+1}\|C_i\|_{C^{0,\epsilon}_\ast(\mathcal{B}_R^+)}+\sum_{i,j=1}^{n+1}\|C_{ij}\|_{C^{0,\epsilon}_\ast(\mathcal{B}_R^+)}\sim \|v\|_{X_{\alpha,\epsilon}(\mathcal{B}_R^+)}.
\end{align*}
\end{rmk}

Using these definitions and characterizations, we note that for $v\in X_{\alpha,\epsilon}$, it follows that $\D_{G,s}v\in Y_{\alpha,\epsilon}$.  Here $\D_{G,s}$ is the fractional Baouendi-Grushin Laplacian defined in \eqref{eq:frac_1tional_Gru}. Moreover, the decomposition from Proposition \ref{prop:decomp} can be used to prove that $X_{\alpha,\epsilon},Y_{\alpha,\epsilon}$ form Banach spaces (c.f. Section \ref{sec:Banach}):

\begin{prop}
\label{prop:Banach}
Let $\alpha,\epsilon\in(0,1)$ with $\alpha\geq \epsilon$ and let $X_{\alpha,\epsilon}, Y_{\alpha,\epsilon}$ be as in Definition \ref{defi:XY}. Then, $X_{\alpha,\epsilon}, Y_{\alpha,\epsilon}$ are Banach spaces. 
\end{prop}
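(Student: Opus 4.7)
The plan is to identify $X_{\alpha,\epsilon}$ and $Y_{\alpha,\epsilon}$ with closed subspaces of products of classical (weighted) Hölder spaces via the decomposition characterization of Proposition \ref{prop:decomp}, thereby reducing completeness to the well-known completeness of these standard spaces. As a preliminary step I would verify that the stated expressions really are norms: positivity, homogeneity and the triangle inequality are immediate from the definitions. For non-degeneracy on $X_{\alpha,\epsilon}$, if $\|v\|_{X_{\alpha,\epsilon}}=0$ then taking $\bar y=0$ in the first term of the norm forces $v(y)=y_n^{2s}P^s_{0,2}(y)$ pointwise on $Q_+$; since $v\in L^\infty(Q_+)$ while $y_n^{2s}P^s_{0,2}(y)$ is unbounded unless $P^s_{0,2}\equiv 0$, we conclude $v\equiv 0$. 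The argument for $Y_{\alpha,\epsilon}$ is analogous, using the defining decomposition of $f$.

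For $Y_{\alpha,\epsilon}$, Proposition \ref{prop:decomp}(ii) exhibits a linear bijection $f\mapsto(f_0,f_1)$ onto the closed subspace of $C^{0,\alpha}(P)\times C^{0,\epsilon}_\ast(Q_+)$ cut out by the constraints $\supp f_0,\supp f_1\subset \mathcal{B}_1''\times \R^2_+$ and $f_1|_P=0$, together with a norm equivalence. Given a Cauchy sequence $\{f_k\}\subset Y_{\alpha,\epsilon}$, the induced sequences $\{f_0^k\},\{f_1^k\}$ are Cauchy in $C^{0,\alpha}(P)$ and $C^{0,\epsilon}_\ast(Q_+)$, hence converge to limits $f_0,f_1$ lying in these Banach spaces. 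The support constraints and the pointwise vanishing $f_1|_P=0$ are preserved under uniform convergence, and the function $f:=y_ny_{n+1}^{1-2s}f_0+y_{n+1}^{1-2s}r^{1+2\alpha-\epsilon}f_1$ is then the desired limit in $Y_{\alpha,\epsilon}$.

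The argument for $X_{\alpha,\epsilon}$ proceeds along exactly the same lines, using Proposition \ref{prop:decomp}(i) instead. A Cauchy sequence $\{v_k\}\subset X_{\alpha,\epsilon}$ produces Cauchy sequences of coefficient functions $a_0^k,a_1^k\in C^{0,\alpha}$, $c_0^k\in C^{1,\alpha}$, and of error functions $C_i^k,C_{ij}^k\in C^{0,\epsilon}_\ast(Q_+)$, each of which converges to a limit in its respective Banach space by the norm equivalence of Proposition \ref{prop:decomp}(i)(b) together with Remark \ref{rmk:equiv_local} applied on exhausting balls. I would then define the candidate limit $v$ via the first identity of Proposition \ref{prop:decomp}(i)(a), and verify the boundary and normalization conditions built into the definition of $X_{\alpha,\epsilon}$ (namely $v|_{\{y_n=0\}}=0$, $\lim_{y_{n+1}\to 0_+}\p_{n+1}(y_n^{-2s}v)=0$ and $\lim_{y\to P}\p_n(y_n^{1-2s}\p_n v)=0$), the vanishing of the $C_i,C_{ij}$ on $P$ (and of $C_{n+1}$ on $\{y_n=0\}$), and the support condition $\supp(\Delta_{G,s}v)\subset\mathcal{B}_1^+$. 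Each of these is stable under the uniform convergence implicit in the Hölder-type norm bounds.

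The main obstacle is the consistency verification: Proposition \ref{prop:decomp}(i)(a) contains many identities that simultaneously describe $v$ and its various weighted derivatives in terms of the same coefficient functions. For each $v_k$ these identities are compatible by hypothesis (they are derived from the same function), so taking uniform limits in each one yields the analogous identity for $v$ with the limiting coefficients. Consequently $v$ possesses the full decomposition of Proposition \ref{prop:decomp}(i), which places it in $X_{\alpha,\epsilon}$ and simultaneously gives $\|v_k-v\|_{X_{\alpha,\epsilon}}\to 0$ via the equivalence of norms. Combining the two arguments yields that $X_{\alpha,\epsilon}$ and $Y_{\alpha,\epsilon}$ are Banach spaces.
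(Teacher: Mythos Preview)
Your argument for $Y_{\alpha,\epsilon}$ is essentially the paper's: the compact support constraint on $f_0,f_1$ (together with $f_1|_P=0$) upgrades the homogeneous H\"older control coming from Proposition~\ref{prop:decomp}(ii) to full norm control, and then completeness follows from that of the classical spaces. This is fine.

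For $X_{\alpha,\epsilon}$ there is a real gap. The global norm $\|\cdot\|_{X_{\alpha,\epsilon}}$ is homogeneous, and Proposition~\ref{prop:decomp}(i)(b) only yields
\[
[a_0]_{\dot C^{0,\alpha}}+[a_1]_{\dot C^{0,\alpha}}+[c_0]_{\dot C^{1,\alpha}}+\sum_i[C_i]_{\dot C^{0,\epsilon}_\ast}+\sum_{i,j}[C_{ij}]_{\dot C^{0,\epsilon}_\ast}\le C\|v\|_{X_{\alpha,\epsilon}}.
\]
Unlike the $Y$-case, the coefficient functions $c_0,a_0,a_1$ live on all of $\R^{n-1}$ and are \emph{not} compactly supported, so their homogeneous H\"older seminorms do not control their $L^\infty$ norms. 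Concretely, nothing in your argument prevents the scenario $[a_0^k-a_0^l]_{\dot C^{0,\alpha}}\to 0$ while $a_0^k(0)$ diverges; the constraint $v_k\in L^\infty$ excludes such sequences from $X_{\alpha,\epsilon}$, but you need the norm to detect this, i.e.\ you need $\|v\|_{L^\infty}\le C\|v\|_{X_{\alpha,\epsilon}}$. Your appeal to Remark~\ref{rmk:equiv_local} ``on exhausting balls'' does not provide this: the local norm $\|\cdot\|_{X_{\alpha,\epsilon}(\mathcal B_R^+)}$ contains the additional term $\sup_{\bar y}|P^s_{\bar y,2}|$, which is precisely the quantity you are trying to control and which is \emph{not} dominated by the global homogeneous norm a priori.

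The paper closes this gap by a PDE argument rather than a pure function-space one: for $v\in X_{\alpha,\epsilon}$ one has $\Delta_{G,s}v\in Y_{\alpha,\epsilon}$ with $\|\Delta_{G,s}v\|_{Y_{\alpha,\epsilon}}\le\|v\|_{X_{\alpha,\epsilon}}$, and since $\supp(\Delta_{G,s}v)\subset\mathcal B_1^+$, the Schwartz kernel estimate of Lemma~\ref{lem:ker} gives $\|v\|_{L^\infty}\le C\|\Delta_{G,s}v\|_{Y_{\alpha,\epsilon}}\le C\|v\|_{X_{\alpha,\epsilon}}$. From this $L^\infty$ bound one then reads off $L^\infty$ control of $c_0,a_0,a_1$ (and, via the a priori estimates, of the $C_i,C_{ij}$), after which your completeness argument goes through. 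So the missing ingredient is exactly this operator-theoretic step; without it the passage from ``Cauchy in $X_{\alpha,\epsilon}$'' to ``coefficient functions Cauchy in full H\"older norms'' is unjustified.
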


Next we state the following a priori estimate, whose proof is provided in the Appendix (c.f. Proposition~\ref{prop:map2}):

\begin{prop}
\label{prop:map}
Suppose that $v\in X_{\alpha, \epsilon}$ for some $\alpha, \epsilon\in (0,1)$. Assume that $\Delta_{G,s}v=f$ for some $f\in Y_{\alpha, \epsilon}$, where $\D_{G,s}$ is the fractional Baouendi-Grushin Laplacian in \eqref{eq:frac_1tional_Gru}. Then, 
\begin{align*}
\|v\|_{X_{\alpha, \epsilon}(\mathcal{B}_{1/2}^+)}\leq C\left(\|f\|_{Y_{\alpha, \epsilon}(\mathcal{B}_1^+)}+\|v\|_{L^\infty(\mathcal{B}_1^+)}\right).
\end{align*}
\end{prop}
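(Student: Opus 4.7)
The plan is to establish the estimate by recognizing the $X_{\alpha,\epsilon}$-norm as a Campanato-type quantity and combining a pointwise ``polynomial approximation at $P$'' argument with standard interior Schauder theory for the $A_2$-weighted operator $\Delta_{G,s}$. By the characterization of Proposition \ref{prop:decomp} (and Remark \ref{rmk:equiv_local}), it suffices to produce, for each base point $\bar y \in P \cap \mathcal{B}_{1/2}^+$, an approximating polynomial $P^s_{\bar y, 2} \in \mathcal{P}_2$ with the prescribed structure (a constant and tangential linear term together with the quadratic pair $a_0(\bar y'')y_n^2 + a_1(\bar y'')y_{n+1}^2$) so that
\[
\bigl|y_n^{-2s}\bigl(v - y_n^{2s}P^s_{\bar y,2}\bigr)(y)\bigr| \leq C\bigl(\|f\|_{Y_{\alpha,\epsilon}} + \|v\|_{L^\infty}\bigr)\, d_G(y,\bar y)^{2+2\alpha}
\]
together with the analogous weighted bounds on the first Baouendi-Grushin derivatives and the second mixed derivatives $Y_i y_n^{1-2s} Y_j$. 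The $C^{0,\alpha}$ (resp.\ $C^{1,\alpha}$) regularity of the coefficients $a_0, a_1$ (resp.\ $c_0$) is then extracted from comparing the polynomial approximations at two nearby base points by a classical Campanato argument.

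The first step is to observe that, away from $P$, the operator $\Delta_{G,s}$ is uniformly elliptic with respect to the $A_2$-Muckenhoupt weight $\omega(y)=(y_n y_{n+1})^{1-2s}$, so that the results of Fabes-Kenig-Serapioni yield interior $C^{0,\epsilon}_{\ast}$ estimates in non-tangential Baouendi-Grushin annuli around each $\bar y \in P$. This controls the error terms $C_i, C_{ij}$ in the decomposition of Proposition \ref{prop:decomp} \emph{once} a good polynomial approximation at $\bar y$ is known, and it also provides the compactness needed for the blow-up step.

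The second, and central, step is the construction and decay rate of $P^s_{\bar y, 2}$. I would proceed by a dyadic rescaling/contradiction scheme at each $\bar y \in P$: setting $v_{\bar y, \rho}(y) := v(\bar y + \delta_\rho y)/\rho^{2+2s}$, I would iteratively subtract the orthogonal projection onto $y_n^{2s}\mathcal{P}_2$ satisfying the mixed Dirichlet-Neumann data, and then argue that either the next-scale energy shrinks by a fixed factor or a nontrivial blow-up limit $v_\infty$ exists. Such a limit would solve $\Delta_{G,s} v_\infty = 0$ in $Q_+$ with $v_\infty = 0$ on $\{y_n=0\}$ and the Neumann condition on $\{y_{n+1}=0\}$, be homogeneous of some degree $\kappa \in (2, 2+2\alpha]$ in the Baouendi-Grushin scaling, and be $L^2_\omega$-orthogonal to the constructed approximation. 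The eigenfunction analysis of Lemma \ref{lem:2Deigen} and Proposition \ref{prop:nD_mDN} (in the Appendix) classifies such homogeneous solutions with mixed Dirichlet-Neumann data and identifies the next admissible homogeneity above $2$ as $2+2s$. Choosing $\alpha < s$ (which is permitted by the setup, since $\alpha$ has not been specified and can always be reduced) then yields the desired contradiction and hence the decay rate $d_G(y,\bar y)^{2+2\alpha}$. The inhomogeneity $f \in Y_{\alpha,\epsilon}$ is absorbed since its structural decomposition (Proposition \ref{prop:decomp}(ii)) shows that it has precisely the leading behavior $y_n y_{n+1}^{1-2s} f_0(\bar y'')$ compatible with the quadratic Ansatz for $P^s_{\bar y, 2}$.

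The third step is to combine the pointwise approximation estimate with the interior Schauder bounds in non-tangential Baouendi-Grushin annuli $\mathcal{A}_\rho(\bar y) := \mathcal{B}^+_{2\rho}(\bar y)\setminus \mathcal{B}^+_\rho(\bar y)$. After subtracting $y_n^{2s}P^s_{\bar y,2}$, the remainder solves $\Delta_{G,s}(v - y_n^{2s} P^s_{\bar y,2}) = f - \Delta_{G,s}(y_n^{2s}P^s_{\bar y,2})$, and both the remainder and the new inhomogeneity are now of size $\rho^{2+2\alpha-\epsilon}$ on $\mathcal{A}_\rho(\bar y)$. A standard rescaled Schauder estimate then upgrades these $L^\infty$ bounds to $C^{0,\epsilon}_\ast$ bounds on the tangential derivatives and on $Y_i y_n^{1-2s} Y_j(v-y_n^{2s}P^s_{\bar y,2})$ with the weights prescribed in Definition \ref{defi:XY}. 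Summing over a Whitney covering of $\mathcal{B}^+_{1/2}\setminus P$ and using the coefficient regularity of $a_0,a_1,c_0$ obtained above recovers the full $X_{\alpha,\epsilon}$-norm.

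The main obstacle is the interplay between two different singular strata: the codimension-two edge $P$, where the polynomial expansion from the eigenfunction analysis lives, and the faces $\{y_n=0\}$ and $\{y_{n+1}=0\}$, where the weight $\omega$ degenerates. The spaces $X_{\alpha,\epsilon}, Y_{\alpha,\epsilon}$ are custom-designed to interpolate between these two scales, and the delicate point is verifying that the weight exponents appearing in each seminorm of Definition \ref{defi:XY} are simultaneously compatible with (i) the eigenvalue gap at $P$ provided by Proposition \ref{prop:nD_mDN}, and (ii) the interior Schauder estimates for the weighted operator on non-tangential annuli. In particular, ensuring that the approximation $y_n^{2s}P^s_{\bar y,2}$ satisfies the ``strengthened'' Neumann condition $\lim_{y_{n+1}\to 0_+}\partial_{n+1}(y_n^{-2s}\,\cdot\,)=0$, rather than just the natural Neumann condition for $\Delta_{G,s}$, is what forces the precise form $a_0 y_n^2 + a_1 y_{n+1}^2$ of the quadratic part and couples $a_0,a_1$ through the equation; this coupling must be tracked throughout the rescaling argument to avoid spurious growth.
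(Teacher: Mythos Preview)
Your overall strategy matches the paper's: approximate $v$ at each $\bar y\in P$ by $y_n^{2s}P^s_{\bar y,2}$ via a compactness/eigenfunction-gap argument, then upgrade to the full $X_{\alpha,\epsilon}$-norm by rescaled Schauder estimates on dyadic annuli. Two points need correction.

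First, a bookkeeping error in your Step~2. Under the Baouendi-Grushin scaling, the homogeneous solutions of $\Delta_{G,s}$ with the mixed Dirichlet--Neumann data have degrees $2s, 2s+2, 2s+4,\ldots$ (a degree-$(s+k)$ solution for $L_s$ becomes degree-$(2s+2k)$ for $\Delta_{G,s}$ under the square-root map). Your approximation $y_n^{2s}P^s_{\bar y,2}$ already captures degrees $2s$ and $2s+2$; the next available homogeneity is $2s+4$, not $2+2s$. The blow-up contradiction therefore only needs $2+2s+2\alpha<4+2s$, i.e.\ $\alpha<1$, and the restriction $\alpha<s$ is spurious. (Equivalently, in terms of $y_n^{-2s}v$, the admissible polynomial degrees are $0,2,4,\ldots$, so the gap above $2$ is $4$.)

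Second, and more substantially, your Step~3 appeals to Fabes--Kenig--Serapioni and ``standard rescaled Schauder'' on the annuli $\mathcal{A}_\rho(\bar y)$, but those annuli still meet the faces $\{y_n=0\}$ and $\{y_{n+1}=0\}$, where the weight $(y_ny_{n+1})^{1-2s}$ degenerates. FKS gives only $C^{0,\epsilon}$ regularity of the solution itself; it does not yield the weighted second-derivative control $\|\partial_i\, y_n^{1-2s}\partial_j(v-y_n^{2s}P^s_{\bar y,2})\|_{C^{0,\epsilon}}$ that the $X_{\alpha,\epsilon}$-norm demands. The paper handles this by splitting the rescaled annulus $\mathcal{C}_1^+$ into three regimes: a strip $C_D$ near $\{y_n=0\}$ where $y_{n+1}\sim 1$, so the operator is essentially $\nabla\cdot y_n^{1-2s}\nabla$ with Dirichlet data and Proposition~\ref{prop:Dirichlet} applies; a strip $C_N$ near $\{y_{n+1}=0\}$ where $y_n\sim 1$, so Proposition~\ref{prop:Neumann} (Neumann data) applies; and an interior region $C_E$ where the operator is genuinely uniformly elliptic. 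Those two boundary Schauder estimates for the one-weight operator $\nabla\cdot x_{n+1}^{1-2s}\nabla$ are themselves proved in Appendix~A by a separate eigenpolynomial-approximation argument and are not consequences of FKS. Without this three-region split and the dedicated Dirichlet/Neumann estimates, your outline does not control the weighted second derivatives up to the faces, which is precisely the ``interplay between the two singular strata'' you flag as the main obstacle.
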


Last but not the least, we show that the Legendre function $v$ associated with a solution $w$ to \eqref{eq:fracLa_2} satisfying the assumptions (A1)-(A4) is in $X_{\alpha,\epsilon}(\mathcal{B}_{\delta_0}^+)$ for some small $\delta_0>0$ which only depends on $s$.

\begin{cor}[Regularity of the Legendre function]
\label{cor:reg_v}
Let $v$ be the Legendre function associated with a solution $w$ of the thin obstacle problem \eqref{eq:fracLa_2}. Then there exists  a small radius $\delta_0>0$ depending on $s$, such that for any $\alpha$ as in Proposition \ref{prop:asymp1} and $0<\epsilon\leq \alpha$ we have $v\in X_{\alpha,\epsilon}(\mathcal{B}^+_{\delta_0})$.
\end{cor}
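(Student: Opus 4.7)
The plan is to verify that $v$ fits the decomposition characterization of $X_{\alpha,\epsilon}(\mathcal{B}_{\delta_0}^+)$ given in Proposition \ref{prop:decomp} and Remark \ref{rmk:equiv_local}, combining the explicit pointwise expansion of $v$ from Lemma \ref{lem:reverse}(ii) with the quantitative rescaled estimates of Propositions \ref{prop:asymp_v1}--\ref{prop:asymp_v2}. First, for each base point $\bar y=(\bar y'',0,0)\in P$ in a sufficiently small Grushin ball, I would read off an approximating polynomial by Taylor--expanding the coefficients in the expansion of Lemma \ref{lem:reverse}(ii) (using $g\in C^{1,\alpha}$ and $a_0,a_1\in C^{0,\alpha}$):
$$
P^s_{\bar y,2}(y)=-g(\bar y'')-\nabla''g(\bar y'')\cdot(y''-\bar y'')-\tfrac{s}{1+s}a_0(\bar y'')y_n^2+a_1(\bar y'')y_{n+1}^2,
$$
which lies in $\mathcal{P}_2$ (constant $0$-homogeneous, $y''$ $2$-homogeneous, $y_n^2,y_{n+1}^2$ $2$-homogeneous). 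Inserting these Taylor expansions into the formula for $v$ gives the pointwise bound $|y_n^{-2s}(v-y_n^{2s}P^s_{\bar y,2})|\lesssim d_G(\cdot,\bar y)^{2+2\alpha}$, using $|y''-\bar y''|\lesssim d_G^2$ and $y_n+y_{n+1}\lesssim d_G$ from Remark \ref{rmk:quasi}; this takes care of the $L^\infty$ part of the $X_{\alpha,\epsilon}$-norm and identifies the coefficients $a_0,a_1,c_0$ from Proposition \ref{prop:decomp}(a) as $a_0(y'')$, $a_1(y'')$, and $c_0(y'')=-g(y'')$.

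Next, for the Hölder seminorms I would use a dyadic scaling argument tied to Lemma \ref{lem:rescaling}. For $\rho\in(0,\delta_0)$ set $\lambda=\rho$, $y_0=\bar y$; then $v_{y_0,\lambda}-v_{y_0}$ is defined on the fixed annulus $\mathcal{C}_1^+$ and Propositions \ref{prop:asymp_v1}, \ref{prop:asymp_v2} give Hölder bounds of order $\rho^{2\alpha}$ on the weighted first-- and second--order quantities $y_n^{-2s}\partial_i(v_{y_0,\lambda}-v_{y_0})$, $y_n^{1-2s}\partial_n(v_{y_0,\lambda}-v_{y_0})$, $y_n^{-2s}y_{n+1}^{-1}\partial_{n+1}(v_{y_0,\lambda}-v_{y_0})$, and $\partial_i(y_n^{1-2s}\partial_j(v_{y_0,\lambda}-v_{y_0}))$. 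Unscaling back by the Grushin dilation $\delta_\lambda$ and using that $v_{y_0}$ is precisely the polynomial $y_n^{2s}[-\nabla''g(\bar y'')\cdot(y''-\bar y'')-\tfrac{s}{1+s}a_0(\bar y'')y_n^2+a_1(\bar y'')y_{n+1}^2]$ (so that its tangential derivatives and the $Y_iy_n^{1-2s}Y_j$ derivatives are exactly the lower-order coefficients in Proposition \ref{prop:decomp}(a)) converts the $\rho^{2\alpha}$ decay on each annulus into the required Campanato-type decay $d_G(\cdot,\bar y)^{2+2\alpha-\epsilon}$, $d_G^{2\alpha-\epsilon}$, $d_G^{1+2\alpha-\epsilon}$ in the seminorms of $X_{\alpha,\epsilon}$. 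Finally, the pointwise boundary/normalization conditions of Definition \ref{defi:XYloc} are immediate from Lemma \ref{lem:reverse}(ii): $v=0$ on $\{y_n=0\}$ by direct inspection, $\lim_{y_{n+1}\to 0_+}\partial_{n+1}(y_n^{-2s}v)=0$ because the $y_{n+1}$ expansion of $y_n^{-2s}v$ starts at $y_{n+1}^2$, and $\lim_{y\to P}\partial_n(y_n^{1-2s}\partial_n v)=0$ because $y_n^{1-2s}\partial_n v=-2sg(y'')+O(y_n^2)+O(y_{n+1}^2)$, so that $\partial_n$ yields a term vanishing on $P$.

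The main obstacle I anticipate is the passage from the annular, rescaled Hölder estimates of Propositions \ref{prop:asymp_v1}--\ref{prop:asymp_v2} to the global (Campanato-type) seminorms appearing in Definition \ref{defi:XY}: one must glue the per-scale $C^{0,\gamma}_\ast$ bounds on $\{d_G(\cdot,\bar y)\sim 2^{-k}\}$ together with the polynomial approximation error to recover the $C^{0,\epsilon}_\ast$ seminorm with the sharp weight $d_G^{-(2\alpha-\epsilon)}$, uniformly in $\bar y\in P\cap\mathcal{B}_{\delta_0}^+$. This requires the choice $\epsilon\le\alpha$ (to keep the weight exponents positive), the equivalence of Grushin distance with the quasi-metric of Remark \ref{rmk:quasi} to transfer estimates between the non-tangential sectors $A_+$ and $A_-$ of Proposition \ref{prop:asymp2}, and a comparison of $P^s_{\bar y,2}$ with $P^s_{\bar y',2}$ at nearby base points (which relies on the $C^{0,\alpha}$ regularity of $a_0,a_1$ and the $C^{1,\alpha}$ regularity of $g$). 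Once these unscaling and gluing estimates are in place, Remark \ref{rmk:equiv_local} directly gives $v\in X_{\alpha,\epsilon}(\mathcal{B}_{\delta_0}^+)$.
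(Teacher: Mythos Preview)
Your proposal is correct and follows essentially the same route as the paper: the paper's proof invokes precisely the local characterization of $X_{\alpha,\epsilon}$ from Proposition~\ref{prop:decomp}, the expansion of Lemma~\ref{lem:reverse}(ii), and Propositions~\ref{prop:asymp_v1}--\ref{prop:asymp_v2} together with the scaling $\lambda=\sqrt{y_n^2+y_{n+1}^2}$, noting that on $\mathcal{C}_1^+$ the Baouendi--Grushin and Euclidean metrics are comparable. Your write-up spells out in more detail the identification of $P^s_{\bar y,2}$, the unscaling from the annular estimates, and the gluing across scales, but the underlying argument is the same.
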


\begin{proof}
This follows immediately from the local version of the characterization of the space $X_{\alpha,\epsilon}$ in Proposition~\ref{prop:decomp}, from Lemma~\ref{lem:reverse} (ii), from Propositions \ref{prop:asymp_v1} and Proposition \ref{prop:asymp_v2} by scaling, i.e. by setting $\lambda = \sqrt{y_n^2 + y_{n+1}^2}$. Here we have used that in $\mathcal{C}_1^+$, where $\sqrt{y_n^2+y_{n+1}^2}\sim 1$, the Baouendi-Grushin metric is equivalent to the Euclidean metric.
\end{proof}

\section{Mapping Properties}
\label{sec:mapping_prop}
In this section we discuss the mapping properties of the nonlinear function $F$ from (\ref{eq:nonlinear}) (c.f. Section \ref{sec:nonlinear}) and of its linearization (c.f. Section \ref{sec:lin}). In particular, we prove that $F$ is analytic as a function from a subset of $X_{\alpha,\epsilon}$ to $Y_{\alpha,\epsilon}$ for a suitable choice of $\alpha,\epsilon$, if the inhomogeneity $f$ is also analytic (c.f. Proposition \ref{prop:analyticity}).
These mapping properties are necessary conditions for the application of the implicit function theorem in Section~\ref{sec:IFT}. They a posteriori justify the choice of our spaces from Section \ref{sec:function_spaces} and make the intuition from Example \ref{ex:model} rigorous. 

Throughout the section, we assume that the conditions (A1)-(A4) from Section \ref{sec:setup} hold.

\subsection{Mapping properties of the nonlinear functional}
\label{sec:nonlinear}

Let $F(D^2v,Dv,y)$ be the nonlinear functional from \eqref{eq:nonlinear}. 
For convenience, we abbreviate it as $F(v):=F(D^2v,Dv,y)$. Let 
$$v_0(y)= -\frac{s}{s+1} y_n^{2+2s} + y_n^{2s} y_{n+1}^2$$ 
be the model solution from \eqref{eq:v0}. We note that 
$$\|v_0\|_{X_{\alpha,\epsilon}(\mathcal{B}_1^+)}=1+\frac{s}{s+1},$$ 
and observe that $F$ is well-defined in a small $X_{\alpha,\epsilon}$-neighborhood of $v_0$. \\
In the sequel, given $u\in X_{\alpha,\epsilon}(\mathcal{B}_1^+)$ or $u\in X_{\alpha,\epsilon}$, we denote the $r-$neighborhood of $u$ in the corresponding Banach space by
$$\mathcal{U}_{r}(u):=\{v\in X_{\alpha,\epsilon}(\mathcal{B}_1^+):\|v-u\|_{X_{\alpha,\epsilon}(\mathcal{B}_1^+)}<r\}, \ 0<r<\infty .$$ 
We will show that there exists $r_0\in (0,\frac{1}{4}\|v_0\|_{X_{\alpha,\epsilon}}]$ such that the nonlinear functional $F$ defined in \eqref{eq:nonlinear} maps a neighborhood, $\mathcal{U}_{r_0}(v_0)$, of $v_0$ in $X_{\alpha,\epsilon}(\mathcal{B}_1^+)$ into $Y_{\alpha,\epsilon}(\mathcal{B}_{1}^+)$. Moreover, we discuss the analyticity (or smoothness) properties of $F$ as a mapping from $\mathcal{U}_{r_0}(v_0)$ into $Y_{\alpha,\epsilon}(\mathcal{B}_1^+)$. Here a major difficulty is that the functional $F$ contains the term $(y_{n+1}^{2s-1}\p_{n+1}v)^{\frac{1-2s}{s}}$, which, if $s\neq \frac{1}{2}$, is not analytic in standard Hölder spaces. This however is overcome by the use of the function spaces from Section \ref{sec:function_spaces}.

\begin{prop}
\label{prop:map_non}
Let $F$ be the nonlinear functional from Proposition \ref{prop:eqnonlin}. Assume that the inhomogeneity $f\in C^{0,1}(B_1^+)$. 
Let $\alpha \in (0,1)$ and $\epsilon\in(0, \alpha]$ and let  $X_{\alpha,\epsilon}(\mathcal{B}_1^+), Y_{\alpha,\epsilon}(\mathcal{B}_1^+)$ be the spaces from Definition \ref{defi:XY}.  
Then for any $r_0\in \left(0,\frac{1}{4}\|v_0\|_{X_{\alpha,\epsilon}}\right]$, 
\begin{align*} 
X_{\alpha,\epsilon}(\mathcal{B}_1^+) \supset \mathcal{U}_{r_0}(v_0) \ni v \mapsto F(v)\in Y_{\alpha,\epsilon}(\mathcal{B}_1^+).
\end{align*}
\end{prop}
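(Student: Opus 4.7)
The plan is to exploit the decomposition of $X_{\alpha,\epsilon}$ from Proposition \ref{prop:decomp} to reduce the analysis of $F(v)$ to bookkeeping about weights in $y_n,y_{n+1}$ combined with Hölder-regular coefficients. Writing $v\in\mathcal{U}_{r_0}(v_0)$ in its canonical form with coefficients $c_0,a_0,a_1,C_i,C_{ij}$, the key auxiliary quantity is
\begin{equation*}
A(v,y):=y_n^{-2s}y_{n+1}^{-1}\p_{n+1}v = 2a_1(y'')+r^{2\alpha-\epsilon}C_{n+1}(y),
\end{equation*}
which lies in $C^{0,\epsilon}_\ast$ and, because $A(v_0,\cdot)\equiv 2$ and $r_0\le\tfrac14\|v_0\|_{X_{\alpha,\epsilon}}$, is bounded below on $\mathcal{B}_1^+$ by a uniform positive constant. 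Consequently $x_{n+1}(y)=y_n y_{n+1}A(v,y)^{1/(2s)}$ and $x_n(y)=-\tfrac{1}{2s}\,y_n\,B(v,y)$ with $B(v,y):=y_n^{2s-1}\p_n v\in C^{0,\epsilon}_\ast$. These identities express the Hodograph coordinates as products of explicit weights and Hölder-regular, \emph{nondegenerate} factors in $v$.

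Given this representation, the only non-polynomial contributions to $F(v)$ come from the fractional powers $x_{n+1}^{2-4s}=(y_ny_{n+1})^{2-4s}A^{(1-2s)/s}$ and $x_{n+1}^{3-2s}=(y_ny_{n+1})^{3-2s}A^{(3-2s)/(2s)}$. Since $A$ sits in a small $C^{0,\epsilon}_\ast$-ball around the positive constant $2$, the power $t\mapsto t^p$ is analytic on a neighborhood of this ball in $C^{0,\epsilon}_\ast$; hence $A^p$ remains in $C^{0,\epsilon}_\ast$ for any real $p$, with norm controlled by $\|v-v_0\|_{X_{\alpha,\epsilon}}$. The same applies to the $2\times 2$ Jacobian $J(v)$, which after factoring out $(y_n y_{n+1})^{1-2s}$ reduces to a polynomial combination of $A, B$ and the Hölder coefficients of $v$.

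Next, I expand each piece of $F(v)$ according to Proposition \ref{prop:eqnonlin} and substitute the decomposition. For the principal terms $\p_n(y_n^{1-2s}y_{n+1}^{1-2s}\p_n v)+x_{n+1}^{2-4s}\p_{n+1}(y_n^{2s-1}y_{n+1}^{2s-1}\p_{n+1}v)$, inserting the decompositions of $\p_n(y_n^{1-2s}\p_n v)$ and $\p_{n+1}(y_n^{1-2s}\p_n v)$ and pulling out the common weight $y_{n+1}^{1-2s}$ yields a leading term linear in $y_n$ with coefficient $f_0(y'')\in C^{0,\alpha}$ (arising from $a_0$, $a_1$), plus a remainder in $y_{n+1}^{1-2s}r^{1+2\alpha-\epsilon}C^{0,\epsilon}_\ast$. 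The $n-1$ determinants multiplied by $x_{n+1}^{2-4s}$ vanish identically at $v=v_0$: two rows become proportional up to $v-v_0$ corrections, so these contributions are quadratic in the deviation and of the same weighted form. Finally $x_{n+1}^{3-2s}J(v)f$ carries the overall factor $y_{n+1}^{3-2s}=y_{n+1}^{1-2s}\cdot y_{n+1}^{2}$ and, since $f\in C^{0,1}$ and $x_n(y),x_{n+1}(y)$ are continuous on $\mathcal{B}_1^+$, contributes a term safely in $Y_{\alpha,\epsilon}(\mathcal{B}_1^+)$. Matching with the characterization Proposition \ref{prop:decomp}(ii) then gives $F(v)\in Y_{\alpha,\epsilon}(\mathcal{B}_1^+)$, and the vanishing on $P$ required by Definition \ref{defi:XY} follows since the $C_{ij}, C_{n+1}$ vanish there.

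The main obstacle is the analytic handling of the non-integer powers $A^{(1-2s)/s}$ and $A^{(3-2s)/(2s)}$: standard Hölder theory would fail if $A$ were merely Hölder continuous without a positive lower bound. It is precisely the design of the weighted norms in $X_{\alpha,\epsilon}$---which isolates the factor $y_n^{2s}y_{n+1}$ from $\p_{n+1}v$ before imposing $C^{0,\epsilon}_\ast$ control---that produces the uniform lower bound for $v\in\mathcal{U}_{r_0}(v_0)$. Once this step is in place, all remaining estimates reduce to multiplicative estimates in $C^{0,\epsilon}_\ast$ and direct term-by-term bookkeeping of weights; these same estimates will feed into the analyticity statement of Proposition \ref{prop:analyticity} in the next step.
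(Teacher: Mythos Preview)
Your proposal is correct and follows essentially the same route as the paper: both rely on the decomposition of Proposition \ref{prop:decomp}, isolate the fractional power $(y_{n+1}^{2s-1}\p_{n+1}v)^{(1-2s)/s}$ as the only genuinely nonstandard contribution, and use the uniform lower bound on $A=y_n^{-2s}y_{n+1}^{-1}\p_{n+1}v$ (coming from closeness to $v_0$) together with analyticity of $t\mapsto(1+t)^p$ to produce the required $C^{0,\alpha}$-leading-term-plus-$C^{0,\epsilon}_\ast$-remainder structure. The paper's write-up is terser on the determinant terms and slightly more explicit in showing that the inhomogeneity contributes a term of the form $(y_ny_{n+1})^{3-2s}r^2\,C^{0,\epsilon}_\ast = y_{n+1}^{1-2s}r^{7-2s}\,C^{0,\epsilon}_\ast$, but there is no substantive difference in approach.
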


\begin{proof}
The argument follows by inserting the characterization of the function spaces from Proposition \ref{prop:decomp} into the expression for $F$. We concentrate on dealing with the $(y_{n+1}^{2s-1}\p_{n+1}v)^{\frac{1-2s}{s}} $ contribution in the equation and the inhomogeneity, as these are the only non-standard terms.\\ 
We begin with the $(y_{n+1}^{2s-1}\p_{n+1}v)^{\frac{1-2s}{s}} $ contribution and observe that 
\begin{align*}
y_{n+1}^{2s-1}\p_{n+1}v_0(y)= 2y_n^{2s}y_{n+1}^{2s}.
\end{align*}
Thus, suppose that $v\in X_{\alpha,\epsilon}(\mathcal{B}_1^+)\cap \mathcal{U}_{r_0}(v_0)$ with $0<r_0\leq \frac{1}{4}\|v_0\|_{X_{\alpha,\epsilon}}$, and suppose (by Proposition~\ref{prop:decomp}) that $v$ has the decomposition
$$
y_{n+1}^{2s-1}\p_{n+1}v(y)=2a_1(y'')y_n^{2s}y_{n+1}^{2s}+C_{n+1}(y)y_n^{2s}y_{n+1}^{2s}r^{2\alpha-\epsilon}.
$$
Then Remark~\ref{rmk:equiv_local} yields that
$\frac{3}{4}\leq a_1(y'')\leq  \frac{5}{4}$ and $|C_{n+1}(y)|\leq \frac{1}{4}$ for any $y\in \mathcal{B}_1^+$. This implies that $|C_{n+1}(y)/a_1(y'')|\leq \frac{1}{3}$. Thus for any $v\in \mathcal{U}_{r_0}(v_0)$,
\begin{align*}
(y_{n+1}^{2s-1}\p_{n+1}v)^{\frac{1-2s}{s}} &= y_n^{2-4s} y_{n+1}^{2-4s} (a_1(y'') +C_{n+1}(y) r^{2\alpha - \epsilon})^{\frac{1-2s}{s}}\\
&= y_n^{2-4s} y_{n+1}^{2-4s}a_1(y'')^{\frac{1-2s}{s}} \left(1 +\frac{C_{n+1}(y)}{a_1(y'')} r^{2\alpha - \epsilon}\right)^{\frac{1-2s}{s}}\\
&=y_n^{2-4s} y_{n+1}^{2-4s} (\tilde{a}_1(y'') +\tilde{C}(y)r^{2\alpha-\epsilon}),
\end{align*}
where $\tilde{a}_1=a_1^{\frac{1-2s}{s}}\in C^{0,\alpha}(P\cap \mathcal{B}_1^+)$ and $\tilde{C}\in C^{0,\epsilon}_\ast(\mathcal{B}_1^+)$. Here we used the analyticity of the function $t\mapsto (1+t)^\frac{1-2s}{s}$ for $|t|<\frac{1}{2}$.
Using this and the asymptotics from Proposition \ref{prop:decomp}, we then obtain the desired mapping property.\\
In order to deal with the ``inhomogeneity'', i.e. with the term 
$$(y_{n+1}^{2s-1}\p_{n+1}v)^{\frac{3-2s}{2s}}J(v)f(y'',x_n(y),x_{n+1}(y)),$$ 
we make use of the asymptotics from Proposition~\ref{prop:decomp} and the choice of $\alpha $ and $\epsilon$, i.e. $\epsilon\leq \alpha$:
\begin{align}
\label{eq:inhomo_a}
(y_{n+1}^{2s-1}\p_{n+1}v)^{\frac{3-2s}{2s}}J(v)f(y'',x_n(y),x_{n+1}(y)) = (y_n y_{n+1})^{3-2s}r^2 C(y) ,
\end{align}
for some $C(y)\in C^{0,\epsilon}_\ast(\mathcal{B}_1^+)$. It can further be written in the form $y_{n+1}^{1-2s}r^{7-2s}\tilde{C}(y)$ with $\tilde{C}(y):= \left( \frac{y_n^{3-2s} y_{n+1}^2}{r^{5-2s}} \right) C(y) \in  C^{0,\epsilon}_{\ast}(\mathcal{B}_1^+)$. Here we have used the requirement that $s\in(0,1)$.
\end{proof}

Based on the previous observations and the form of our function spaces, we prove analyticity of $F$ as a mapping from an $X_{\alpha,\epsilon}(\mathcal{B}_1^+)$ neighborhood, $\mathcal{U}_{r_0}(v_0)$, of the model solution $v_0$ into $Y_{\alpha,\epsilon}(\mathcal{B}_1^+)$.
To simplify the notation, we set 
\begin{align*}
\tilde{F}(u,v,w):=\det\begin{pmatrix}
\p_{ii}u & \p_{in}u & \p_{i,n+1}u\\
\frac{1}{2s}\p_i(y_n^{1-2s}\p_{n}v) & -\frac{1}{2s}\p_n(y_n^{1-2s}\p_nv) & -\frac{1}{2s}\p_{n+1}(y_n^{1-2s}\p_nv)\\
-\frac{1}{2s}\p_i(y_{n+1}^{2s-1}\p_{n+1}w)& \frac{1}{2s}\p_n(y_{n+1}^{2s-1}\p_{n+1}w)& \frac{1}{2s}\p_{n+1}(y_{n+1}^{2s-1}\p_{n+1}w)
\end{pmatrix},
\end{align*}
and
\begin{align}
\label{eq:Ja_bi}
\tilde{J}(u,v):=\det\begin{pmatrix}
-\frac{1}{2s}\p_n(y_n^{1-2s}\p_nu)& -\frac{1}{2s}\p_{n+1}(y_n^{1-2s}\p_{n}u)\\
\frac{1}{2s} \p_{n}(y_{n+1}^{2s-1}\p_{n+1}v) & \frac{1}{2s} \p_{n+1}(y_{n+1}^{2s-1}\p_{n+1}v)
\end{pmatrix} = (y_{n+1}^{2s-1}\p_{n+1}v)^{\frac{2s-1}{2s}}J(v).
\end{align}
Then the nonlinear functional $F$ can be written as 
\begin{align*}
F(v)&=(y_{n+1}^{2s-1}\p_{n+1}v)^{\frac{1-2s}{s}}\tilde{F}(v,v,v)\\
&\quad +\p_n(y_n^{1-2s}y_{n+1}^{1-2s}\p_nv)+(y_{n+1}^{2s-1}\p_{n+1}v)^{\frac{1-2s}{s}}\p_{n+1}(y_n^{2s-1}y_{n+1}^{2s-1}\p_{n+1}v)\\
& \quad - (y_{n+1}^{2s-1}\p_{n+1}v)^{\frac{2-2s}{s}}\tilde{J}(v,v) f\left(y'',-\frac{1}{2s}y_n^{1-2s} \p_n v(y), (y_{n+1} ^{2s-1}\p_{n+1}v)^{\frac{1}{2s}}\right).
\end{align*}

\begin{prop}
\label{prop:analyticity}
Let $F$ be the nonlinear function from Proposition \ref{prop:eqnonlin} with inhomogeneity $f$. Denote by $v_0$ be the model solution from \eqref{eq:v0} and let $\epsilon,\alpha, r_0>0$ be the constants from Proposition~\ref{prop:map_non}.
Then, if $f$ is smooth in its arguments, the mapping
\begin{align*}
\mathcal{U}_{r_0}(v_0) \ni v \mapsto F(v) \in Y_{\alpha,\epsilon}(\mathcal{B}_1^+),
\end{align*}
is smooth. If $f$ is real analytic in its arguments, the above mapping is real analytic. 
\end{prop}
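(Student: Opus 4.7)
The plan is to decompose $F$ into elementary building blocks -- linear maps, products in suitable Banach algebras, non-integer powers of quantities that stay bounded away from zero on $\mathcal{U}_{r_0}(v_0)$, and composition with the external function $f$ -- each of which preserves real-analyticity (respectively smoothness).

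I would first use the characterization in Proposition~\ref{prop:decomp} together with the Banach algebra structure of $C^{0,\alpha}(P\cap\mathcal{B}_1^+)$ and $C^{0,\epsilon}_\ast(\mathcal{B}_1^+)$ to read off that the weighted entries of the matrices in $\tilde F(v,v,v)$ and $\tilde J(v,v)$, as well as the divergence terms $\p_n(y_n^{1-2s}y_{n+1}^{1-2s}\p_n v)$ and $\p_{n+1}(y_n^{2s-1}y_{n+1}^{2s-1}\p_{n+1}v)$, are continuous linear functions of $v$ into products of these algebras after the canonical boundary weights have been extracted. Their determinants, products and sums are then polynomial, hence real-analytic, in $v$; multiplying the result by the correct weight $y_{n+1}^{1-2s}r^{1+2\alpha-\epsilon}$ places the outcome in $Y_{\alpha,\epsilon}(\mathcal{B}_1^+)$ by Proposition~\ref{prop:decomp}(ii).

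The main obstacle is the non-integer powers $(y_{n+1}^{2s-1}\p_{n+1}v)^{(1-2s)/s}$, $(y_{n+1}^{2s-1}\p_{n+1}v)^{(2-2s)/s}$, and $(y_{n+1}^{2s-1}\p_{n+1}v)^{1/(2s)}$ (the latter entering $x_{n+1}(y)$). As in the proof of Proposition~\ref{prop:map_non}, for $v\in\mathcal{U}_{r_0}(v_0)$ the factorisation
\[
y_{n+1}^{2s-1}\p_{n+1}v \;=\; y_n^{2s}y_{n+1}^{2s}\bigl(2a_1(y'')+C_{n+1}(y)r^{2\alpha-\epsilon}\bigr)
\]
holds with $a_1(y'')\in[\tfrac{3}{4},\tfrac{5}{4}]$ and $\|C_{n+1}/(2a_1)\|_{C^{0,\epsilon}_\ast(\mathcal{B}_1^+)}$ uniformly below $1$. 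For any real exponent $\beta$, the binomial series
\[
\bigl(2a_1+C_{n+1}r^{2\alpha-\epsilon}\bigr)^\beta \;=\; (2a_1)^\beta \sum_{k\ge 0}\binom{\beta}{k}\left(\frac{C_{n+1}\,r^{2\alpha-\epsilon}}{2a_1}\right)^k
\]
then converges absolutely in the Banach algebra $C^{0,\epsilon}_\ast(\mathcal{B}_1^+)$; a further binomial expansion of $(2a_1)^\beta$ around $a_1\equiv 1$ handles the $C^{0,\alpha}(P)$ factor. This furnishes real-analyticity of the fractional power as a map of $v\in\mathcal{U}_{r_0}(v_0)$, and the precompensating monomial factor in $y_n$ and $y_{n+1}$ exactly matches the boundary weights coming from $\tilde F$, $\tilde J$, and the divergence terms.

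Finally, $x_n(y)=-\tfrac{1}{2s}y_n^{1-2s}\p_n v$ and $x_{n+1}(y)=(y_{n+1}^{2s-1}\p_{n+1}v)^{1/(2s)}$ are, by the previous step, real-analytic functions of $v$ with values in Banach algebras of functions that are small on $\mathcal{B}_1^+$ for $v$ close to $v_0$. If $f$ is real-analytic, Taylor-expanding it around $(y_0'',0,0)\in P$ and substituting these algebra-valued arguments produces an absolutely convergent series, which is the standard criterion for real-analyticity of a Banach-space-valued map; the extra weight $y_n^{3-2s}y_{n+1}^{3-2s}r^2$ singled out in \eqref{eq:inhomo_a} ensures the product lies in $Y_{\alpha,\epsilon}(\mathcal{B}_1^+)$. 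In the $C^\infty$ case, the same chain-rule bookkeeping, using $C^k$ Taylor expansions of $f$ with uniformly controlled remainders, yields bounded Fr\'echet derivatives of every order and hence smoothness.
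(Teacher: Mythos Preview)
Your overall strategy matches the paper's almost exactly: both treat the multilinear pieces $\tilde F$, $\tilde J$ and the divergence terms as polynomial (hence analytic) in $v$, handle the fractional powers $(y_{n+1}^{2s-1}\p_{n+1}v)^\beta$ via a binomial series using the factorisation and lower bound from Proposition~\ref{prop:map_non}, and treat the composition with $f$ separately. The paper phrases everything in terms of explicit Fr\'echet derivatives $D_v^k F(h^k)$ and convergence of the resulting Taylor series in $Y_{\alpha,\epsilon}$ (its Steps~1--4), while you lean on the Banach-algebra language; these are equivalent packagings of the same computation.

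There is, however, a concrete error in your last paragraph. You claim that $x_n(y)$ and $x_{n+1}(y)$ take values that are \emph{small} on $\mathcal{B}_1^+$ for $v$ near $v_0$, and that therefore a single Taylor expansion of $f$ about a point $(y_0'',0,0)\in P$ converges after substitution. This is false already at $v=v_0$: there $x_n=\tfrac12(y_n^2-y_{n+1}^2)$ and $x_{n+1}=y_n y_{n+1}$ range over a set of diameter comparable to $1$ on $\mathcal{B}_1^+$, and nothing in the hypotheses forces the radius of convergence of $f$ at $(y_0'',0,0)$ to be that large. What does work --- and what the paper does via the chain rule in its Step~4 --- is to compute $D_v^k\tilde{\mathcal F}(h^k)$ directly: the $k$-th derivative involves $\partial^\gamma f$ evaluated at $\bigl(y'',x_n(v,y),x_{n+1}(v,y)\bigr)$, and analyticity of $f$ on a neighbourhood of the compact image set gives uniform Cauchy bounds $|\partial^\gamma f|\le C\,\gamma!\,\rho^{-|\gamma|}$, from which convergence of the Taylor series in $v$ follows by the same mechanism as in your binomial-series step. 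Equivalently, you may expand $f$ about the \emph{base values} $\bigl(y'',x_n(v_0,y),x_{n+1}(v_0,y)\bigr)$ rather than about $P$, using compactness to obtain a uniform radius; then the increments $x_n(v,y)-x_n(v_0,y)$ and $x_{n+1}(v,y)-x_{n+1}(v_0,y)$ really are small in the relevant algebra for $v\in\mathcal U_{r_0}(v_0)$, and your substitution argument goes through.
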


\begin{proof}
We only prove the real analyticity result. The proof for the smooth case is similar. We show that there exist $r_0,r_1>0$ such that for every $v\in \mathcal{U}_{r_0}(v_0)$ and every $h\in \mathcal{U}_{r_1}(0)$, we have an absolutely converging expansion
\begin{align*}
F(v+h) = \sum\limits_{j=0}^{\infty} \frac{1}{j!} D^j_vF (h^j),
\end{align*}
where $D_v^j F$ denotes the $j$-th order differential of $F$ with respect to $v$.\\

\emph{Step 1:} We claim that for each $v\in \mathcal{U}_{r_0}(v_0)\subset X_{\alpha,\epsilon}(\mathcal{B}_1^+)$ with $r_0=\frac{1}{4}\|v_0\|_{X_{\alpha,\epsilon}}$, for all $h,u, u_1,u_2,u_3 \in X_{\alpha,\epsilon}(\mathcal{B}_1^+)$, for each $k\in \mathbb{N}$ and for $s\in(0,1)$ 
\begin{align*}
&\left\|(y_{n+1}^{2s-1}\p_{n+1}v)^{\frac{1-2s}{s}-k}(y_{n+1}^{2s-1}\p_{n+1}h)^{k}\p_{n+1}(y_n^{2s-1}y_{n+1}^{2s-1}\p_{n+1}u)\right\|_{Y_{\alpha,\epsilon}(\mathcal{B}_1^+)}\\
&\leq C_s\left(\frac{1}{3}\|v_0\|_{X_{\alpha,\epsilon}(\mathcal{B}_1^+)}\right)^{-k}\|h\|_{X_{\alpha,\epsilon}(\mathcal{B}_1^+)}^{k}\|u\|_{X_{\alpha,\epsilon}(\mathcal{B}_1^+)},\\
&\left\|(y_{n+1}^{2s-1}\p_{n+1}v)^{\frac{1-2s}{s}-k}(y_{n+1}^{2s-1}\p_{n+1}h)^{k}\tilde{F}(u_1,u_2,u_3)\right\|_{Y_{\alpha,\epsilon}(\mathcal{B}_1^+)}\\
&\leq C_s \left(\frac{1}{3}\|v_0\|_{X_{\alpha,\epsilon}(\mathcal{B}_1^+)}\right)^{-k}\|h\|_{X_{\alpha,\epsilon}(\mathcal{B}_1^+)}^{k}\prod_{i=1}^{3}\|u_i\|_{X_{\alpha,\epsilon}(\mathcal{B}_1^+)}\\
&\left\|(y_{n+1}^{2s-1}\p_{n+1}v)^{\frac{2-2s}{s}-k}(y_{n+1}^{2s-1}\p_{n+1}h)^{k}\tilde{J}(u_1,u_2)\right\|_{Y_{\alpha,\epsilon}(\mathcal{B}_1^+)}\\
&\leq C_s \left(\frac{1}{3}\|v_0\|_{X_{\alpha,\epsilon}(\mathcal{B}_1^+)}\right)^{-k}\|h\|^k_{X_{\alpha,\epsilon}(\mathcal{B}_1^+)}\prod_{i=1}^{2}\|u_i\|_{X_{\alpha,\epsilon}(\mathcal{B}_1^+)},\\
&\left\|(y_{n+1}^{2s-1}\p_{n+1}u)^{\frac{2-2s}{s}}\tilde{J}(u,u)(y_n^{1-2s}\p_nh)^k\right\|_{Y_{\alpha,\epsilon}(\mathcal{B}_1^+)}\leq C_s \|h\|_{X_{\alpha,\epsilon}(\mathcal{B}_1^+)}^k\| u\|_{X_{\alpha,\epsilon}}^2.
\end{align*}
Indeed, the estimates follow from the decomposition in Proposition~\ref{prop:decomp} and the proof of Proposition~\ref{prop:map_non}. We will only show the first inequality and the arguments for the remaining ones are similar. Indeed, by the local version of Proposition~\ref{prop:decomp}, for $v, u, h\in X_{\alpha, \epsilon}(\mathcal{B}_1^+)$ we may assume that they have the decompositions
\begin{align*}
y_{n+1}^{2s-1}\p_{n+1}v &= \left(2a_1(y'')+r^{2\alpha-\epsilon}C_{n+1}(y)\right)y_n^{2s}y_{n+1}^{2s},\\
y_{n+1}^{2s-1}\p_{n+1}h& = \left(2\tilde{a}_1(y'')+r^{2\alpha-\epsilon}\tilde{C}_{n+1}(y)\right)y_n^{2s}y_{n+1}^{2s},\\
\p_{n+1}(y_n^{2s-1}y_{n+1}^{2s-1}\p_{n+1}u)&=4sy_n^{4s-1}y_{n+1}^{2s-1}\hat a_1(y'')+y_n^{4s-2}y_{n+1}^{2s-1}r^{1+2\alpha-\epsilon}\hat C(y).
\end{align*}
Then,
\begin{align*}
I&:=(y_{n+1}^{2s-1}\p_{n+1}v)^{\frac{1-2s}{s}-k}(y_{n+1}^{2s-1}\p_{n+1}h)^{k}\p_{n+1}(y_n^{2s-1}y_{n+1}^{2s-1}\p_{n+1}u)\\
&=\left(2a_1(y'')+r^{2\alpha-\epsilon}C_{n+1}(y)\right)^{\frac{1-2s}{s}-k}\left(y_n^{2s}y_{n+1}^{2s}\right)^{\frac{1-2s}{s}-k}\\
&\left(2\tilde{a}_1(y'')+r^{2\alpha-\epsilon}\tilde{C}_{n+1}(y)\right)^{k}\left(y_n^{2s}y_{n+1}^{2s}\right)^{k}\left(y_n^{4s-1}y_{n+1}^{2s-1}\hat a_1(y'')+y_n^{4s-2}y_{n+1}^{2s-1}r^{1+2\alpha-\epsilon}\hat C(y)\right).
\end{align*}
Simplifying the above expression leads to
\begin{align*}
I=(2a_1(y''))^{\frac{1-2s}{s}}\left(\frac{\tilde{a}_1(y'')}{a_1(y'')}\right)^k\left(\frac{1+r^{2\alpha-\epsilon}\tilde{D}(y)}{1+r^{2\alpha-\epsilon}D(y)}\right)^k\left(y_{n+1}^{1-2s}y_n\hat a_1(y'')+y_{n+1}^{1-2s}r^{1+2\alpha-\epsilon}\hat C(y)\right),
\end{align*}
where $D(y)=\frac{C_{n+1}(y)}{2a_1(y'')}$ and $\tilde{D}(y)=\frac{\tilde{C}_{n+1}(y)}{2\tilde{a}_1(y'')}$. If $u,v,h\in \mathcal{U}_{r_0}(v_0)\subset X_{\alpha,\epsilon}(\mathcal{B}_1^+)$ with $r_0=\frac{1}{4}\|v_0\|_{X_{\alpha,\epsilon}(\mathcal{B}_1^+)}$, then similarly as in the proof of Proposition~\ref{prop:map_non} we infer that
\begin{align*}
\|a_1\|_{C^{0,\alpha}(\mathcal{B}_1^+\cap P)}, \|\tilde{a}_0\|_{C^{0,\alpha}(\mathcal{B}_1^+\cap P)}&\geq \frac{3}{4}\geq \frac{1}{2}\|v_0\|_{X_{\alpha,\epsilon}(\mathcal{B}_1^+)},\\
\|C_{n+1}\|_{\dot{C}^{0,\epsilon}_\ast(\mathcal{B}_1^+)}, \| \tilde{C}_{n+1}\|_{\dot{C}^{0,\epsilon}_\ast(\mathcal{B}_1^+)}&\leq \frac{1}{4}\|v_0\|_{X_{\alpha,\epsilon}(\mathcal{B}_1^+)}.
\end{align*}
 Using these relations as well as the characterization for space $Y_{\alpha,\epsilon}$ in Proposition~\ref{prop:decomp},results in the claimed estimate for $\|I\|_{Y_{\alpha,\epsilon}(\mathcal{B}_1^+)}$.\\

\emph{Step 2:} 
We first discuss the contributions originating from the expansion of 
\begin{align*}
F_1(v):=(y_{n+1}^{2s-1}\p_{n+1}v)^{\frac{1-2s}{s}}\p_{n+1}(y_n^{2s-1} y_{n+1}^{2s-1}\p_{n+1}v).
\end{align*}
We begin by noting that 
\begin{align*}
D_v F_1(h) &= \frac{1-2s}{s} (y_{n+1}^{2s-1}\p_{n+1}v)^{\frac{1-2s}{s}-1} (y_{n+1} ^{2s-1}\p_{n+1}h)\p_{n+1}(y_n^{2s-1} y_{n+1}^{2s-1}\p_{n+1}v)\\
&\quad +(y_{n+1}^{2s-1}\p_{n+1}v)^{\frac{1-2s}{s}}\p_{n+1}(y_n^{2s-1}y_{n+1}^{2s-1}\p_{n+1}h).
\end{align*}
In general for $k\geq 2$
\begin{align*}
D_v^kF_1(h^k)&=c_{s,k}(y_{n+1}^{2s-1}\p_{n+1}v)^{\frac{1-2s}{s}-k}(y_{n+1}^{2s-1}\p_{n+1}h)^k\p_{n+1}(y_n^{2s-1} y_{n+1}^{2s-1}\p_{n+1}v)\\
&\quad +c_{s,k-1}(y_{n+1}^{2s-1}\p_{n+1}v)^{\frac{1-2s}{s}-(k-1)}(y_{n+1}^{2s-1}\p_{n+1}h)^{k-1}\p_{n+1}(y_n^{2s-1}y_{n+1}^{2s-1}\p_{n+1}h),
\end{align*}
where $c_{s,k}=\prod_{j=1}^{k}\frac{1-2s-(j-1)s}{s^{j}}$.
Thus, by virtue of Step 1 we obtain that $D^k_vF_1$ satisfies
\begin{align*}
\|D^k_vF_1(h^k)\|_{Y_{\alpha,\epsilon}(\mathcal{B}_1^+)}\leq C_s c_{s,k}\left(\frac{1}{3}\|v_0\|_{X_{\alpha,\epsilon}(\mathcal{B}_1^+)}\right)^{-k}\|h\|^k_{X_{\alpha,\epsilon}(\mathcal{B}_1^+)}.
\end{align*}
To show the absolute convergence of the Taylor series, we note that
\begin{align*}
\sum_{k=0}^{\infty}\frac{1}{k!}\|D_vF^k(h^k)\|_{Y_{\alpha,\epsilon}(\mathcal{B}_1^+)}&\leq C_s\sum_{k=0}^{\infty}\frac{1}{k!}c_{s,k}\left(\frac{1}{3}\|v_0\|_{X_{\alpha,\epsilon}(\mathcal{B}_1^+)}\right)^{-k}\|h\|_{X_{\alpha,\epsilon}(\mathcal{B}_1^+)}^k\\
\end{align*}
As $\frac{|c_{s,k}|}{k!}\leq C<\infty$, by majorization by a geometric series, the series hence converges absolutely if $\|h\|_{X_{\alpha,\epsilon}(\mathcal{B}_1^+)}< \frac{1}{3}\|v_0\|_{X_{\alpha,\epsilon}(\mathcal{B}_1^+)}$.\\

\emph{Step 3:} Next we discuss the contribution originating from the expansion of 
\begin{align*}
F_2(v):=(y_{n+1}^{2s-1}\p_{n+1}v)^{\frac{1-2s}{s}}\tilde{F}(v,v,v).
\end{align*}
We observe that
\begin{align*}
D_v\tilde{F}(h)&=\tilde{F}(h,v,v)+\tilde{F}(v,h,v)+\tilde{F}(v,v,h),\\
D^2_v\tilde{F}(h,h)&= 2\tilde{F}(h,h,v)+2\tilde{F}(v,h,h)+2\tilde{F}(h,v,h),\\
D^3_v\tilde{F}(h,h,h)&=6\tilde{F}(h,h,h).
\end{align*}
Thus, estimating this similarly as in Step 2 and using Step 1, we obtain the analyticity of $F_2$.\\

\emph{Step 4:} We finally discuss the contribution coming from the inhomogeneity:
\begin{equation}
\label{eq:inhomo_trafo}
\begin{split}
&J(v)(y_{n+1}^{2s-1}\p_{n+1}v)^{\frac{3-2s}{2s}}f\left(y'', -\frac{1}{2s}y_n^{1-2s}\p_n v, (y_{n+1}^{2s-1}\p_{n+1}v)^{\frac{1}{2s}}\right)\\
&=:\tilde{J}(v,v)(y_{n+1}^{2s-1}\p_{n+1}v)^{\frac{2-2s}{s}}\tilde{\mathcal{F}}(v),
\end{split}
\end{equation}
with $\tilde{\mathcal{F}}(v):=f\left(y'', -\frac{1}{2s}y_n^{1-2s}\p_n v, y_{n+1}^{2s-1}\p_{n+1}v\right)$. Hence, $\tilde{\mathcal{F}}(v)$ is a composition of the analytic function $f$, and the maps 
\begin{align*}
\R^{n+1}_+ \ni z&\mapsto g(z):=(z'',z_n,z_{n+1}^{1/2s}), \\
 \mathcal{U}_{r_0}(v_0) \times \R^{n+1}_+ \ni (v,y)& \mapsto h(v,y):= (y'',-\frac{1}{2s}\p_n v, y_{n+1}^{2s-1}\p_{n+1}v). 
\end{align*}
We note that away from $\{z_{n+1}=0\}$ the function $(f\circ g)(z)$, as a function of $z\in \R^{n+1}_+$, is analytic. However, instead of expanding on this observation to provide the proof of the analyticity of (\ref{eq:inhomo_trafo}), we argue similarly as in Steps 2 and 3. We use the product and chain rules and invoke the last two inequalities in Step 1: More precisely, 
\begin{align*}
&D_v\tilde{\mathcal{F}}(h)=(\p_nf)|_{\left(y'', -\frac{1}{2s}y_n^{1-2s}\p_n v,(y_{n+1}^{2s-1}\p_{n+1}v)^{\frac{1}{2s}}\right)}\left(-\frac{1}{2s}y_n^{1-2s}\p_nh\right)\\
&+(\p_{n+1}f)|_{\left(y'', -\frac{1}{2s}y_n^{1-2s}\p_n v,(y_{n+1}^{2s-1}\p_{n+1}v)^{\frac{1}{2s}}\right)}\frac{1}{2s}(y_{n+1}^{2s-1}\p_{n+1}v)^{\frac{1}{2s}-1}\left(y_{n+1}^{2s-1}\p_{n+1}h\right).
\end{align*}
Using the chain rule, it is possible to explicitly compute the higher derivatives $D_v^k\tilde{\mathcal{F}}(h^k)$. Thus, to estimate $D_v^k\left(\tilde{J}(v,v)(y_{n+1}^{2s-1}\p_{n+1}v)^{\frac{2-2s}{s}}\tilde{\mathcal{F}}(v)\right)$, if the differentiation falls on $\tilde{\mathcal{F}}(v)$, we invoke the last inequality in Step 1. Using the analyticity of $\tilde{\mathcal{F}}$ in its arguments, we obtain the convergence of the power series $\sum_{j=0}^{\infty}\frac{1}{j!}\tilde{J}(v,v)(y_{n+1}^{2s-1}\p_{n+1}v)^{\frac{2-2s}{s}}D_v^j\tilde{\mathcal{F}}(h^j)$ in $Y_{\alpha,\epsilon}$. 
If the differentiation falls on $\tilde{J}(v,v)$ or on $(y_{n+1}^{2s-1}\p_{n+1}v)^{\frac{2-2s}{s}}$, the argument is analogous as in Steps 2 and 3: We use the last two inequalities from Step 1 and the linear dependence of $\tilde{J}(v_1,v_2)$ on $v_1,v_2$.
\end{proof}

\subsection{Mapping properties of the linearized equation}
\label{sec:lin}
Let $L_v=D_vF$ denote the first order differential of the nonlinear functional with respect to $v$. In this section we show that for $v\in \mathcal{U}_{r_0}(v_0)$ with $r_0=\frac{1}{4}\|v_0\|_{X_{\alpha,\epsilon}(\mathcal{B}_1^+)}$, the linear operator $L_v:X_{\alpha,\epsilon}(\mathcal{B}_1^+)\rightarrow Y_{\alpha,\epsilon}(\mathcal{B}_1^+)$ is a perturbation of the constant coefficient fractional Grushin operator $\D_{G,s}$ in \eqref{eq:frac_1tional_Gru}.

\begin{prop}
\label{prop:perturb}
Let $v_0$ be the model solution from \eqref{eq:v0}. Let $f$ be the inhomogeneity in (\ref{eq:fracLa_2}) and suppose that it is $C^{1,\epsilon}(B_1^+)$ regular (in $x$ coordinates). Assume that $\alpha, \epsilon,r_0$ are as in Proposition~\ref{prop:map_non}. 
Then for any $v, h\in \mathcal{U}_{r_0}(v_0)$ with $r_0=\frac{1}{4}\|v_0\|_{X_{\alpha,\epsilon}}$ we have
\begin{align*}
\left\|(L_{v} -\D_{G,s})h\right\|_{Y_{\alpha,\epsilon}(\mathcal{B}_{1}^+)}&\leq C_{n,s}\left(\|v-v_0\|_{X_{\alpha,\epsilon}(\mathcal{B}_1^+)}\|h\|_{X_{\alpha,\epsilon}(\mathcal{B}_1^+)} \right.\\
& \left.\quad + \|v\|_{X_{\alpha,\epsilon}(\mathcal{B}_1^+)}^2\|h\|_{X_{\alpha,\epsilon}(\mathcal{B}_1^+)}\|f\|_{C^{1,\epsilon}(B_{1}^+)}\right).
\end{align*}
\end{prop}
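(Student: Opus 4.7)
The plan is to split the nonlinear operator $F = F_0 + G$ into its $f$-independent ``homogeneous'' part $F_0$ and the inhomogeneity part
\begin{equation*}
G(v) = -(y_{n+1}^{2s-1}\p_{n+1}v)^{\frac{2-2s}{s}}\tilde J(v,v)\,\tilde{\mathcal F}(v), \quad \tilde{\mathcal F}(v) := f\bigl(y'', -\tfrac{1}{2s}y_n^{1-2s}\p_n v, (y_{n+1}^{2s-1}\p_{n+1}v)^{\frac{1}{2s}}\bigr),
\end{equation*}
and then to control $D_v F_0(v) - \Delta_{G,s}$ and $D_v G(v)$ individually. This matches the structure of the claim: the first summand $\|v-v_0\|_{X_{\alpha,\epsilon}}\|h\|_{X_{\alpha,\epsilon}}$ will come from $F_0$ and the second summand $\|v\|_{X_{\alpha,\epsilon}}^2\|h\|_{X_{\alpha,\epsilon}}\|f\|_{C^{1,\epsilon}}$ from $G$.

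For the homogeneous part, recall that $D_v F_0(v_0) = \Delta_{G,s}$ by the computation in Example~\ref{ex:model}. Since $F_0$ is analytic on $\mathcal U_{r_0}(v_0)$ by Proposition~\ref{prop:analyticity}, the multilinear estimates from Step~1 of its proof yield the uniform bound
\begin{equation*}
\|D_v^2 F_0(w)[h_1,h_2]\|_{Y_{\alpha,\epsilon}(\mathcal B_1^+)} \leq C\|h_1\|_{X_{\alpha,\epsilon}(\mathcal B_1^+)}\|h_2\|_{X_{\alpha,\epsilon}(\mathcal B_1^+)}
\end{equation*}
for all $w \in \mathcal U_{r_0}(v_0)$. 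The fundamental theorem of calculus then gives
\begin{equation*}
D_v F_0(v)[h] - \Delta_{G,s} h = \int_0^1 D_v^2 F_0(v_0 + t(v-v_0))[v-v_0,\, h]\, dt,
\end{equation*}
and taking the $Y_{\alpha,\epsilon}$-norm produces the first summand of the estimate.

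For the inhomogeneity, use the product and chain rules to express $D_v G(v)[h]$ as a sum of two families of terms: those where the differentiation falls on the prefactor $(y_{n+1}^{2s-1}\p_{n+1}v)^{\frac{2-2s}{s}}\tilde J(v,v)$ (leaving $\tilde{\mathcal F}(v)$ undifferentiated), and those of the form
\begin{equation*}
(y_{n+1}^{2s-1}\p_{n+1}v)^{\frac{2-2s}{s}}\tilde J(v,v)\Bigl[-\tfrac{1}{2s}(\p_n f)\, y_n^{1-2s}\p_n h + \tfrac{1}{2s}(\p_{n+1}f)(y_{n+1}^{2s-1}\p_{n+1}v)^{\frac{1}{2s}-1}y_{n+1}^{2s-1}\p_{n+1}h\Bigr]
\end{equation*}
coming from differentiating $\tilde{\mathcal F}(v)$. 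Both families are controlled by the multilinear bounds in Step~1 of the proof of Proposition~\ref{prop:analyticity} (applied with $k=0$ and $k=1$), which extract a factor $\|v\|_{X_{\alpha,\epsilon}}^2$ (from the bilinearity of $\tilde J(v,v)$ combined with the normalized factor $(y_{n+1}^{2s-1}\p_{n+1}v)^{(2-2s)/s}$) together with $\|h\|_{X_{\alpha,\epsilon}}$. The scalar multipliers $\tilde{\mathcal F}(v)$ and $(\p_i f)(y'', x_n(v), x_{n+1}(v))$ are controlled in $C^{0,\epsilon}_\ast(\mathcal B_1^+)$ by $\|f\|_{C^{1,\epsilon}(B_1^+)}$ via the composition argument from Step~4 of that proof. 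Summing yields the second summand.

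The main obstacle is the chain-rule contribution, because the composition $(\p_i f)(y'', x_n(v), x_{n+1}(v))$ has to be interpreted in the Baouendi-Grushin-adapted Hölder scale: the map $x_{n+1}(v) = (y_{n+1}^{2s-1}\p_{n+1}v)^{1/(2s)}$ involves a fractional power and is not smooth in the classical Hölder sense. The essential point --- already exploited in the analyticity proof --- is that the weighted spaces from Section~\ref{sec:function_spaces} were built precisely so that such fractional compositions yield $C^{0,\epsilon}_\ast$-regular multipliers, and so that their product with differentiated factors such as $y_{n+1}^{2s-1}\p_{n+1}h$ lies in $Y_{\alpha,\epsilon}$. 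Once this compatibility is used, the desired perturbation estimate reduces to careful bookkeeping of the multilinear bounds already established in the proof of Proposition~\ref{prop:analyticity}.
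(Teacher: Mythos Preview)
Your proof is correct and follows essentially the same strategy as the paper: both reduce the perturbation estimate to the multilinear bounds established in Step~1 (and Step~4 for the inhomogeneity) of the proof of Proposition~\ref{prop:analyticity}. The only cosmetic difference is that where the paper handles the $f$-independent part by an explicit add-and-subtract decomposition of $L_v h - \Delta_{G,s}h$ into differences such as $(D_vW_1 - D_{v_0}W_1)(h)G(v)$ and $D_{v_0}W_1(h)(G(v)-G(v_0))$, you package the same computation via the fundamental theorem of calculus applied to $t\mapsto D_vF_0(v_0+t(v-v_0))$; both routes invoke the same uniform second-differential bound over $\mathcal U_{r_0}(v_0)$.
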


\begin{proof}
The proof follows from the chain rule and Proposition~\ref{prop:analyticity}. To further simplify the notation, we define
\begin{align*}
W_1(v)&:=(y_{n+1}^{2s-1}\p_{n+1}v)^{\frac{1-2s}{s}},\quad W_2(v):=(y_{n+1}^{2s-1}\p_{n+1}v)^{\frac{2-2s}{s}},\\
G(v)&:=\tilde{F}(v,v,v)+\p_{n+1}(y_n^{2s-1}y_{n+1}^{2s-1}\p_{n+1}v),\\
\tilde{J}(v)&:= \tilde{J}(v,v),\\
\tilde{\mathcal{F}}(v)&:=f\left(y'',-\frac{1}{2s}y_n^{1-2s}\p_nv, (y_{n+1}^{2s-1}\p_{n+1}v )^{\frac{1}{2s}}\right),
\end{align*}
where $\tilde{J}(v,v)$ and $\tilde{F}(v,v,v)$ are as in (\ref{eq:Ja_bi}) (note that $\tilde{J}(v)$ differs from $J(v)$ in \eqref{eq:nonlinear} by a factor $(y_{n+1}^{2s-1}\p_{n+1}v)^{\frac{2s-1}{2s}}$).
Then $F$ can be written as 
\begin{align*}
F(v)= W_1(v)G(v)+\p_n(y_n^{1-2s}y_{n+1}^{1-2s}\p_{n}v) + W_2(v)J(v)\mathcal{F}(v).
\end{align*}
By the product rule, 
\begin{align*}
L_v h& = G(v)D_v W_1(h)+ W_1(v)D_vG(h) +\p_n(y_n^{1-2s}y_{n+1}^{1-2s}\p_nh)\\
& \quad  + \tilde{J}(v)\tilde{\mathcal{F}}(v) D_v W_2(h) + W_2(v) D_v \tilde{J}(h)\tilde{\mathcal{F}}(v)+W_2(v)\tilde{J}(v)D_v\tilde{\mathcal{F}}(h) .
\end{align*}
Noting that $\D_{G,s}h=G(v_0)D_{v_0}W_1(h)-W_1(v_0)D_{v_0}G(h)$, we obtain
\begin{align*}
&L_v h - \D_{G,s}h\\
&=G(v)D_vW_1(h)+W_2(v)D_vG(h) -G(v_0)D_{v_0}W_1(h)-W_1(v_0)D_{v_0}G(h)\\
& \quad +\tilde{J}(v)\tilde{\mathcal{F}}(v) D_v W_2(h) + W_2(v) D_v \tilde{J}(h)\tilde{\mathcal{F}}(v)+W_2(v)\tilde{J}(v)D_v\tilde{\mathcal{F}}(h) \\
&=\left(D_vW_1-D_{v_0}W_1\right)(h)G(v)+D_{v_0}W_1(h)\left(G(v)-G(v_0)\right)\\
&\quad + \left(D_vG-D_{v_0}G\right)(h)W_1(v)+D_{v_0}G(h)\left(W_1(v)-W_1(v_0)\right)\\
& \quad +\tilde{J}(v)\tilde{\mathcal{F}}(v) D_v W_2(h) + W_2(v) D_v \tilde{J}(h)\tilde{\mathcal{F}}(v)+W_2(v)\tilde{J}(v)D_v\tilde{\mathcal{F}}(h) \\
&=:  I+II+III.
\end{align*}
Using the estimates in Step 1 of Proposition~\ref{prop:analyticity}, we obtain
\begin{align*}
&\|I\|_{Y_{\alpha,\epsilon}(\mathcal{B}_1^+)}+\|II\|_{Y_{\alpha,\epsilon}(\mathcal{B}_1^+)}\\
\leq &C_s\|v-v_0\|_{X_{\alpha,\epsilon}(\mathcal{B}_1^+)}\left(\|v\|_{X_{\alpha,\epsilon}(\mathcal{B}_1^+)}+\|v_0\|_{X_{\alpha,\epsilon}(\mathcal{B}_1^+)}+1\right)\|h\|_{X_{\alpha,\epsilon}(\mathcal{B}_1^+)}.
\end{align*}
Hence it remains to bound III. To this end, using Step 4 of Proposition~\ref{prop:analyticity} we note that
\begin{align*}
\|III\|_{Y_{\alpha,\epsilon}(\mathcal{B}_1^+)} \leq C_s \|v\|_{X_{\alpha,\epsilon}(\mathcal{B}_1^+)}^2 \|h\|_{X_{\alpha,\epsilon}(\mathcal{B}_1^+)} \| f \|_{C^{1,\epsilon}(B_{1}^+)}.
\end{align*}
Here we used a bound similar as in the proof of Proposition \ref{prop:analyticity}, Step 1.
\end{proof}

\section{Application of the Implicit Function Theorem}
\label{sec:IFT}

In this section we invoke the implicit function theorem to deduce the smoothness and analyticity of the regular free boundary (for smooth and analytic inhomogeneities respectively). To this end, we introduce an auxiliary one-parameter family of diffeomorphisms, which infinitesimally acts as a translation on $P$. Composing our function with this one-parameter family of diffeomorphisms creates a parameter-dependent problem, to which we apply the implicit function theorem (c.f. \cite{An90}, \cite{KL12}, \cite{KRSIII}). This then yields the desired tangential regularity of our solution and hence proves Theorem \ref{thm:ana}.\\
As we rely on the results of the previous sections, we always assume that the conditions (A1)-(A4) are satisfied in the sequel.\\

We begin by defining our family of diffeomorphisms. As this is identical to the set-up in \cite{KRSIII}, we do not present the details of the proof.

\begin{lem}[One-parameter family of diffeomorphisms, \cite{KRSIII}]
\label{lem:diffeo}
Let $y\in \mathcal{B}_1^+$, $a\in \mathcal{B}''_1$.  Consider $\phi_a:[0,1]\rightarrow \R^{n-1}$, which is defined as the solution to the ODE
\begin{align*}
\phi_a'(t) &= a\left((3/4)^2-|\phi(t)|^2\right)^3_+ \eta(y_n, y_{n+1}),\\
\phi_a(0) & = y''.
\end{align*}
Here $\eta:\R^2 \rightarrow \R$ is a smooth, radial cut-off function, which is one for $|(y_n,y_{n+1})|\leq 1/4$ and which vanishes for $|(y_n,y_{n+1})|\geq 1/2$.
Let $\Phi_a(y):=(\phi_{a,y}(1),y_n,y_{n+1})$. Then the mapping $\Phi_a: \mathcal{B}_1^+ \rightarrow \R^{n+1}$ is well-defined and satisfies the following properties:
\begin{itemize}
\item[(i)] For any fixed $y\in \mathcal{B}_{1/2}^+$, the map $\mathcal{B}_{1}''\ni a\mapsto \Phi_a(y) \in \R^{n+1}$ is analytic. 
\item[(ii)] For each $a\in \mathcal{B}''_1$, the map $\mathcal{B}_1^+\ni y\mapsto \Phi_a(y) \in \R^{n+1}$ is $C^{3}$, and moreover $\|\Phi_a(y)-y\|_{C^3(\mathcal{B}_1^+)}\leq Ca$.
\item[(iii)] $\Phi_a (\{y_n=0\})\subset \{y_n=0\}$, $\Phi_a (\{y_{n+1}=0\})\subset \{y_{n+1}=0\}$.
\item[(iv)] $\p_{n+1}\Phi_a|_{\{y_{n+1}=0\}} =e_{n+1}$.
\item[(v)] $\p_{n}\Phi_a^j(y) = 0 = \p_{n+1}\Phi_a^j(y)$ for all $y$ with $|(y_n,y_{n+1})|\leq 1/4$ and $j\in\{1,\dots,n-1\}$.
\item[(vi)] $\Phi_a(y)=y$ for $y\in \{y\in \mathcal{B}_1^+: |y''|\geq \frac{3}{4}\text{ or } |(y_n,y_{n+1})|\geq \frac{1}{2}\}$.
\end{itemize}
\end{lem}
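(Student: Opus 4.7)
The plan is to invoke standard ODE theory for the defining initial value problem and then read off each structural property from the explicit form of the right-hand side $F(t, \phi; a, y_n, y_{n+1}) := a\,((3/4)^2 - |\phi|^2)_+^3 \,\eta(y_n, y_{n+1})$. Since $F$ is smooth in $(t, \phi)$ and vanishes identically whenever $|\phi| \geq 3/4$, the Picard--Lindel\"of theorem together with a priori boundedness (the set $\{|\phi| \leq 3/4\}$ is positively invariant) yields a unique global solution $\phi_{a,y}:[0,1] \to \R^{n-1}$, so that $\Phi_a(y):=(\phi_{a,y}(1), y_n, y_{n+1})$ is well defined on $\mathcal{B}_1^+$. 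For property (i), I would observe that $F$ is polynomial in $(a, \phi)$, so classical analytic dependence of flows on parameters gives the analyticity of $a \mapsto \Phi_a(y)$ for each fixed $y$. For (ii), the same results together with the $C^\infty$-regularity of $\eta$ yield $C^\infty$, in particular $C^3$, regularity of $y \mapsto \Phi_a(y)$ for fixed $a$; and the linearity of $F$ in $a$ combined with Gr\"onwall-type bounds on the variational equations produces the estimate $\|\Phi_a - \mathrm{id}\|_{C^3(\mathcal{B}_1^+)} \leq Ca$.

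For (iii) and (iv), I would proceed by direct inspection. Property (iii) is immediate since $\Phi_a$ acts as the identity in the $(y_n, y_{n+1})$ components. For (iv) the only non-trivial claim is $\p_{n+1}\Phi_a^j(y)|_{y_{n+1}=0} = 0$ for $j \in \{1,\dots,n-1\}$. Here the key input is the radiality of $\eta$ in $(y_n, y_{n+1})$: radiality forces $\eta$ to be even in $y_{n+1}$, hence $\p_{n+1}\eta(y_n, 0) = 0$. Differentiating the ODE with respect to $y_{n+1}$ and specializing to $y_{n+1}=0$, the derived equation for $\p_{n+1}\phi_{a,y}(t)$ becomes a homogeneous linear ODE with zero initial data (since $\phi_{a,y}(0) = y''$ is independent of $y_{n+1}$), which forces $\p_{n+1}\phi_{a,y}(1) \equiv 0$.

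Properties (v) and (vi) are similarly structural. For (v), on $\{|(y_n, y_{n+1})| \leq 1/4\}$ one has $\eta \equiv 1$, so the right-hand side of the ODE does not depend on $(y_n, y_{n+1})$; hence $\phi_{a,y}$ is locally independent of $(y_n, y_{n+1})$, giving $\p_n \Phi_a^j = \p_{n+1}\Phi_a^j = 0$ for $j \leq n-1$. For (vi), if $|(y_n, y_{n+1})| \geq 1/2$ then $\eta(y_n, y_{n+1}) = 0$ and the right-hand side vanishes identically, so $\phi_{a,y}(t) \equiv y''$ and $\Phi_a(y) = y$; alternatively, if $|y''| \geq 3/4$, then $|\phi_{a,y}(0)| \geq 3/4$ makes the factor $((3/4)^2 - |\phi|^2)_+^3$ vanish, and the invariance of $\{|\phi| \geq 3/4\}$ under the flow again forces $\phi_{a,y}(t) \equiv y''$. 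There is no serious obstacle in this argument: it reduces to inspecting the ODE structure, with the only mildly subtle point being property (iv), which exploits the evenness in $y_{n+1}$ encoded in the radial symmetry of the cut-off $\eta$.
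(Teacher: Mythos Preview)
The paper does not actually prove this lemma; it states that the construction is identical to the one in \cite{KRSIII} and omits all details. Your argument therefore supplies what the paper deliberately leaves out, and the overall strategy---standard ODE existence, invariance of $\{|\phi|\le 3/4\}$, then reading off (i)--(vi) from the explicit structure of the right-hand side---is exactly the intended one.

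One small inaccuracy: you write that $F$ is ``polynomial in $(a,\phi)$'' and hence the flow is $C^\infty$ in $y$. Because of the positive part, $((3/4)^2-|\phi|^2)_+^3$ is only $C^2$ in $\phi$ across $|\phi|=3/4$, not real-analytic or $C^\infty$. This does not affect (i): for $y\in\mathcal{B}_{1/2}^+$ one has $|y''|<3/4$, the trajectory never reaches the boundary sphere (which consists of equilibria), and on $\{|\phi|<3/4\}$ the vector field \emph{is} polynomial in $(a,\phi)$, so analytic dependence on $a$ follows. For (ii) the same observation shows that the flow is $C^\infty$ away from $\{|y''|=3/4\}$, which is all that is used later; the stated $C^3$ regularity across that set requires a little more care than ``$C^\infty$-regularity of $\eta$'' alone. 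Your treatment of (iii)--(vi), in particular the use of the evenness of $\eta$ in $y_{n+1}$ for (iv), is correct.
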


We now apply the implicit function theorem to deduce the tangential smoothness or analyticity of our Legendre function $ v$. Recall that given a solution $w$ to the fractional thin obstacle problem (\ref{eq:fracLa_2}) satisfying the assumptions (A1)-(A4), the Hodograph-Legendre transformation was invertible in $B_{\delta_0}^+$ with some small radius $\delta_0=\delta_0(s)$. Hence, it is possible to consider the Legendre function $v$ (c.f. \eqref{eq:Leg0}) in the corresponding image domain. The asymptotics and regularity properties of $v$ were studied in Section~\ref{sec:asymp1}. In particular, by Corollary~\ref{cor:reg_v}, $v\in X_{\alpha,\epsilon}(\mathcal{B}_{\delta_0}^+)$, where $\alpha$ is the Hölder exponent of the regular free boundary $\Gamma_w$ and $\epsilon$ is any number in $(0,\alpha)$ (here $\delta_0$ might be different from above the constant from above, but it also only depends on $s$).\\

To simplify the notation, in the sequel we will assume that 
$$\delta_0=1,\quad \text{ i.e. }v\in X_{\alpha,\epsilon}(\mathcal{B}_1^+).$$
Furthermore, we suppose that $v$ is close to the model solution $v_0$ in $X_{\alpha,\epsilon}(\mathcal{B}_1^+)$, i.e. 
\begin{equation}
\label{eq:nor2}
v\in \mathcal{U}_{\frac{r_0}{2}}(v_0)\subset X_{\alpha,\epsilon}(\mathcal{B}_1^+), \quad \text{where }r_0=\frac{1}{4}\|v_0\|_{X_{\alpha,\epsilon}}.
\end{equation} 
We remark that by Proposition~\ref{prop:asymp_v1} and Proposition~\ref{prop:asymp_v2}, these assumptions are always satisfied by choosing $\epsilon_0$, $\mu_0$ and $[\nabla g]_{\dot{C}^{0,\alpha}}$ in (A2)-(A4) to be sufficiently small, and by a scaling with a factor which only depends on $s$. \\

After these normalizations, given a Legendre function $v$ as above, as in \cite{KRSIII} we now consider a one-parameter family of Legendre functions:
\begin{align}\label{eq:v_a}
\tilde{v}_a(y):= v(\Phi_a(y)).
\end{align}
We note that by Lemma~\ref{lem:diffeo} (vi), the perturbation $\Phi_a$ is only active in $\mathcal{B}_{1/2}^+\Subset \mathcal{B}_1^+$. More precisely, $\tilde{v}_a(y)=v(y)$ in $\mathcal{B}_1^+\setminus\{y: |y''|\leq \frac{3}{4}, \ |(y_n,y_{n+1})|\leq \frac{1}{2}\}$. 
We claim that $\tilde{v}_a$ satisfies the following further properties:

\begin{prop}
\label{prop:comp}
Let $\alpha,\epsilon$ satisfy the same assumptions as in Proposition~\ref{prop:map_non} and let $v, \tilde{v}_a$ be as above. Then, 
\begin{itemize}
\item[(i)] there exists a constant $\eta_0=\eta_0(n,s)\in (0,1/4)$ such that for all $a \in B''_{\eta_0}$ we have, $\tilde{v}_a\in \mathcal{U}_{r_0}(v_0)\subset X_{\alpha,\epsilon}(\mathcal{B}_1^+)$ for $r_0=\frac{1}{4}\|v_0\|_{X_{\alpha,\epsilon}(\mathcal{B}_1^+)}$. In particular, $\tilde{v}_a$  satisfies the mixed Dirichlet-Neumann conditions
\begin{align*}
\tilde{v}_a = 0 \mbox{ on } \{y_n = 0\}, \  \lim\limits_{y_{n+1}\rightarrow 0_+} y_{n}^{-2s}\p_{n+1}\tilde{v}_a= 0 \mbox{ on } \{y_{n+1}=0\}.
\end{align*}
\item[(ii)] $\tilde{v}_a$ is a solution to the fully nonlinear equation
\begin{align*}
0=F_a(u, y) := F(u\circ \Phi_a^{-1}(z),z)|_{z=\Phi_a(y)}.
\end{align*}
\end{itemize}
\end{prop}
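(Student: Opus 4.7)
Statement (ii) is immediate from the definitions: setting $u=\tilde v_a$ gives $u\circ\Phi_a^{-1}(z)=v(z)$, so $F_a(\tilde v_a,y)=F(v,\Phi_a(y))$, which vanishes because $v$ solves $F(v,\cdot)=0$ in $\mathcal B_1^+$ and $\Phi_a(\mathcal B_1^+)\subset \mathcal B_1^+$ by Lemma~\ref{lem:diffeo}(vi). The real work is (i).

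The key simplification for (i) is that by construction $\Phi_a(y)=(\phi_{a,y}(1),y_n,y_{n+1})$ preserves the last two coordinates; since $v_0(y)=-\tfrac{s}{s+1}y_n^{2+2s}+y_n^{2s}y_{n+1}^2$ depends only on $(y_n,y_{n+1})$, we get $v_0\circ\Phi_a=v_0$. Thus
\begin{equation*}
\tilde v_a-v_0=(v-v_0)\circ\Phi_a,
\end{equation*}
and since $\|v-v_0\|_{X_{\alpha,\epsilon}(\mathcal B_1^+)}<r_0/2$, it suffices to establish a composition estimate: the map $u\mapsto u\circ\Phi_a$ is a bounded operator on $X_{\alpha,\epsilon}(\mathcal B_1^+)$ whose norm tends to $1$ as $a\to 0$. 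The boundary pointwise conditions for $\tilde v_a$ follow from the chain rule together with Lemma~\ref{lem:diffeo}: the Dirichlet condition from (iii), and the Neumann condition $\lim_{y_{n+1}\to 0_+}y_n^{-2s}\p_{n+1}\tilde v_a=0$ from the chain rule, property (iv) giving $\p_{n+1}\Phi_a|_{\{y_{n+1}=0\}}=e_{n+1}$, the coordinate-preservation $\Phi_a^n=y_n$, and the corresponding Neumann condition for $v$. The interior pointwise condition $\lim_{y\to P}\p_n(y_n^{1-2s}\p_n\tilde v_a)=0$ follows analogously, using in addition property (v), which states that $\p_n\Phi_a^j=\p_{n+1}\Phi_a^j=0$ for $j\in\{1,\dots,n-1\}$ near $P$, so that $\p_n\tilde v_a=\p_n v\circ\Phi_a$ near $P$.

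For the composition estimate we use the characterization of $X_{\alpha,\epsilon}(\mathcal B_1^+)$ from Proposition~\ref{prop:decomp} (together with the local version in Remark~\ref{rmk:equiv_local}). If $u\in X_{\alpha,\epsilon}(\mathcal B_1^+)$ has coefficient functions $c_0, a_0, a_1\in C^{k,\alpha}(P)$ and remainder functions $C_\bullet\in C^{0,\epsilon}_\ast$, then $u\circ\Phi_a$ admits an analogous decomposition whose new coefficients on $P$ are $c_0\circ\Phi_a|_P,\ a_0\circ\Phi_a|_P,\ a_1\circ\Phi_a|_P$ (using $\Phi_a^n=y_n$, $\Phi_a^{n+1}=y_{n+1}$), and whose remainders are $C_\bullet\circ\Phi_a$ plus explicit error terms of order $|a|$ arising from the $C^{1,\alpha}$ regularity of the tangential perturbation. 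Because $\Phi_a|_P$ is a $C^\infty$ diffeomorphism of $P$ close to the identity, and by Remark~\ref{rmk:quasi} the Baouendi--Grushin quasi-distance satisfies $d_G(\Phi_a(x),\Phi_a(y))\sim d_G(x,y)$ with constants tending to $1$ as $|a|\to 0$, each constituent Hölder seminorm in the decomposition transforms with multiplicative factor $1+O(|a|)$. Combining these bounds and taking $\eta_0=\eta_0(n,s)$ sufficiently small yields $\|(v-v_0)\circ\Phi_a\|_{X_{\alpha,\epsilon}(\mathcal B_1^+)}\le 2\|v-v_0\|_{X_{\alpha,\epsilon}(\mathcal B_1^+)}<r_0$, proving $\tilde v_a\in\mathcal U_{r_0}(v_0)$.

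The main obstacle is not conceptual but bookkeeping: verifying term-by-term that every weighted Hölder seminorm in Definition~\ref{defi:XY} is invariant under composition with $\Phi_a$ up to a $(1+O(|a|))$ factor. This is analogous to the composition arguments carried out in the $s=\tfrac12$ setting of \cite{KRSIII}; here the additional weights $y_n^{-2s},\ y_{n+1}^{-1}$ are preserved exactly because $\Phi_a^n=y_n$, $\Phi_a^{n+1}=y_{n+1}$, so the same bookkeeping strategy applies essentially verbatim.
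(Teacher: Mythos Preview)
Your proposal is correct and follows essentially the same route as the paper: both arguments verify that the decomposition of Proposition~\ref{prop:decomp} is preserved under composition with $\Phi_a$, using Lemma~\ref{lem:diffeo}(v) near $P$ to prevent mixing of tangential and normal derivatives. Your observation that $v_0\circ\Phi_a=v_0$ (since $\Phi_a$ fixes the last two coordinates and $v_0$ depends only on these) is a clean way to reduce directly to a composition-operator bound on $v-v_0$; the paper instead estimates $\|\tilde v_a-v\|_{X_{\alpha,\epsilon}}\lesssim|a|$ and then reaches $\mathcal U_{r_0}(v_0)$ by the triangle inequality through $v$. These are equivalent in substance.

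One point you pass over a bit quickly: in the intermediate annulus $\tfrac14\le|(y_n,y_{n+1})|\le\tfrac12$, property (v) of Lemma~\ref{lem:diffeo} no longer applies, so $\p_n\Phi_a^j$ and $\p_{n+1}\Phi_a^j$ for $j\le n-1$ need not vanish. The chain rule then produces cross terms such as $y_{n+1}^{-1}\,\p_j v|_{\Phi_a}\cdot\p_{n+1}\Phi_a^j$, and your statement that ``the weights $y_n^{-2s},\,y_{n+1}^{-1}$ are preserved exactly because $\Phi_a^n=y_n$, $\Phi_a^{n+1}=y_{n+1}$'' does not by itself control these. The paper handles this region by invoking the \emph{radial} dependence of the cutoff $\eta$ in the ODE defining $\Phi_a$, which yields $|\p_n\Phi_a^j|\le C y_n$ and $|\p_{n+1}\Phi_a^j|\le C y_{n+1}$ for $j\le n-1$; this is precisely what restores the correct boundary asymptotics there. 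Your appeal to ``bookkeeping as in \cite{KRSIII}'' ultimately covers this, but it is the one place where a structural feature of $\Phi_a$ beyond mere coordinate preservation is actually needed, and it would strengthen your write-up to make it explicit.
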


\begin{proof}
To show (i) we compute the derivatives of $\tilde{v}_a$ in terms of the ones of $v$: 
\begin{align*}
\p_i \tilde{v}_a(y) &= \p_j v|_{\Phi_a(y)} \p_i \Phi^{j}_a(y),\\
\p_{ik} \tilde{v}_a(y) & = \p_{j \ell} v|_{\Phi_a(y)} \p_i \Phi^j_a(y) \p_k \Phi^\ell_a (y) + \p_j v(y) \p_{ik}\Phi^j_a(y).
\end{align*}
We first note that in the domain in which $|(y_n,y_{n+1})|\leq 1/4$, the tangential and normal derivatives are not mixed (c.f. Lemma \ref{lem:diffeo} (v)). Thus, as all tangential derivatives are treated homogeneously, around $P$ the function $\tilde{v}_a$ satisfies the decomposition from Proposition~\ref{prop:decomp}, if $|a|<1/4$. Away from $P$ we invoke the radial assumption on $\eta$:
In order to conclude that $\tilde{v}_a \in X_{\alpha,\epsilon}(\mathcal{B}_1^+)$, it remains to discuss the region in which $1/4\leq |(y_n,y_{n+1})|\leq 1/2$. As $r(y) \sim 1$ in this region, it suffices to study the behavior of $\p_i v$ at the boundaries $\{y_n=0\}$ and $\{y_{n+1}=0\}$. Due to the radial dependence of $\eta$ (and by considering the ODE from Lemma \ref{lem:diffeo}), we however have 
\begin{align*}
|\p_n \Phi^j_a(y) | \leq C y_{n} \mbox{ and }  |\p_{n+1} \Phi^j_a(y) | \leq C y_{n+1} \mbox{ for } j\in\{1,\dots,n-1\}.
\end{align*}
Hence, the asymptotics at the boundary (and the Dirichlet-Neumann boundary conditions) follow in this region as well. \\
Additionally, choosing $|a|\leq \eta_0$ for some sufficiently small $\eta_0(n,s)>0$, we further infer that $\tilde{v}_a\in \mathcal{U}_{r_0}(v_0)$ with $r_0=\frac{1}{4}\|v_0\|_{X_{\alpha,\epsilon}(\mathcal{B}_1^+)}$ (this follows from \eqref{eq:nonlinear} and the estimate $\|\tilde{v}_a-v\|_{X_{\alpha,\epsilon}(\mathcal{B}_1^+)}\lesssim |a|$).\\
In order to compute the equation satisfied by $v_a$, we set $\Psi_a := \Phi_a^{-1}$ and observe that $\tilde{v}_a(\Psi_a(y)) = v(y)$. This then yields:
\begin{align*}
\p_i v(y) &= \p_j \tilde{v}_a|_{\Psi_a(y)} \p_i \Psi^j_a(y),\\
\p_{ik} v(y) & = \p_{j \ell} \tilde{v}_a|_{\Psi_a(y)} \p_i \Psi^j_a(y) \p_k \Psi^\ell_a (y) + \p_j \tilde{v}_a|_{\Psi_a(y)} \p_{ik}\Psi^j_a(y),
\end{align*}
from which we infer the equation for $\tilde{v}_a$.
\end{proof}

\begin{rmk}
The linearization $D_v F_a(\cdot, y)$ of the nonlinear function $F_a(\cdot, y)$ still satisfies local a priori estimates in the spaces $X_{\alpha,\epsilon}, Y_{\alpha,\epsilon}$. This is a consequence of the existence of a one-to-one correspondence between the solutions of $F_a$ and $F$ by means of the diffeomorphism $\Phi_a$ and by the discussion in the preceding Proposition \ref{prop:comp}. 
\end{rmk}

We finally prepare the application of the implicit function theorem by extending our problem to a problem on the whole quarter space $Q_+$ and by working with the function $\tilde{w}_a:= \tilde{v}_a-v$ rather than with $\tilde{v}_a$. We point out that
as $\tilde{w}_a$ is compactly supported in $\mathcal{B}_{1/2}^+$, we can avoid dealing with artificially created boundaries $\p \mathcal{B}_1^+\cap \{y_n>0,y_{n+1}>0\}$. 

\begin{prop}
\label{prop:final_nonlin}
Let $v, \tilde{v}_a\in X_{\alpha,\epsilon}(\mathcal{B}_1^+)$ be as \eqref{eq:v_a}. Let $\tilde{w}_a:= \tilde{v}_a-v$ in $\mathcal{B}_1^+$ and set $\tilde{w}_a=0$ in $Q_+\setminus \mathcal{B}_1^+$. Let $\epsilon, \alpha\in (0,1)$ with $\epsilon\leq \alpha$. 
Then, 
\begin{itemize}
\item[(i)] $\supp  (\tilde{w}_a)\subset \mathcal{B}_{3/4}^+$. Moreover, $\tilde{w}_a$ satisfies the equation
\begin{align*}
G_a(\tilde{w}_a,y):= \bar{\eta}(y) \tilde{F}_a(\tilde{w}_a,y) + (1-\bar{\eta}(y)) \D_{G,s}\tilde{w}_a=0 \text{ in } Q_+,
\end{align*}
where $\bar{\eta}$ is a smooth cut-off function that is one on $\mathcal{B}_{3  /4}^+$ and zero outside $\mathcal{B}_1^+$, $\tilde{F}_a(\tilde{w}_a,y)= F_a(v+\tilde{w}_a,y)$, and $\D_{G,s}$ is the fractional Baouendi-Grushin Laplacian in \eqref{eq:frac_1tional_Gru}.
\item[(ii)] For $\eta_0=\eta_0(n,s)>0$ as in Proposition \ref{prop:comp} and for $a\in B''_{\eta_0}$, the map
\begin{align*}
 w \mapsto G_a(w)\in Y_{\alpha,\epsilon}
\end{align*}
is analytic in $\mathcal{U}_{r_0/2}(0)\subset X_{\alpha,\epsilon}$ with $r_0=\frac{1}{4}\|v_0\|_{X_{\alpha,\epsilon}}$.
\item[(iii)] For any $w\in \mathcal{U}_{r_0/2}(0)$, 
\begin{align*}
B''_{\eta_0} \ni a \mapsto G_a(w)\in Y_{\alpha,\epsilon}
\end{align*}
is analytic. 
\item[(iv)] There exists $\mu_0=\mu_0(s,n)>0$ such that if $\|f\|_{C^{1,\epsilon}(B_1^+)}<\mu_0$, then 
$$D_w G|_{(w,a)=(0,0)}: X_{\alpha,\epsilon} \rightarrow Y_{\alpha,\epsilon}$$ 
is an invertible map. 
\end{itemize}
\end{prop}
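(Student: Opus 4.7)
The plan is to verify (i) directly from the defining properties of $\Phi_a$, deduce (ii) and (iii) from Proposition \ref{prop:analyticity} combined with the regularity of the one-parameter family $\Phi_a$, and establish (iv) by viewing $D_w G|_{(0,0)}$ as a small perturbation of $\Delta_{G,s}$.

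For (i), I would observe that by Lemma \ref{lem:diffeo}(vi) we have $\Phi_a(y) = y$ outside $\{|y''| < 3/4, |(y_n,y_{n+1})| < 1/2\}$, hence $\tilde{w}_a = \tilde{v}_a - v$ vanishes there; the quasi-metric equivalence from Remark \ref{rmk:quasi} then yields $\supp(\tilde{w}_a) \subset \mathcal{B}_{3/4}^+$. Since $\tilde{v}_a$ solves $F_a(\tilde{v}_a, y) = 0$ by Proposition \ref{prop:comp}(ii), on $\supp(\bar\eta)$ we have $\tilde{F}_a(\tilde{w}_a, y) = 0$, while on $Q_+ \setminus \mathcal{B}_{3/4}^+$ we have $\bar\eta = 0$ and $\Delta_{G,s} \tilde{w}_a = 0$ trivially, so $G_a(\tilde{w}_a, y) = 0$ globally. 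For (ii), note that for $w \in \mathcal{U}_{r_0/2}(0)$ the shifted function $v+w$ lies in $\mathcal{U}_{r_0}(v_0)$ (triangle inequality together with \eqref{eq:nor2}); the pre- and post-composition with $\Phi_a$ acts as a bounded linear operator on $X_{\alpha,\epsilon}$ (and in the image space) for $|a| \leq \eta_0(n,s)$ because $\Phi_a$ is a $C^3$-diffeomorphism preserving the boundary structure and the Baouendi-Grushin distances up to bounded distortion (using Lemma \ref{lem:diffeo}(ii)-(v)). The analyticity of $w \mapsto \tilde{F}_a(w, \cdot)$ then follows from Proposition \ref{prop:analyticity}, and multiplication by $\bar\eta$ (or $1-\bar\eta$) as well as the bounded linear operator $\Delta_{G,s}: X_{\alpha,\epsilon} \to Y_{\alpha,\epsilon}$ preserve analyticity. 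For (iii), I rely on Lemma \ref{lem:diffeo}(i), which states that $a \mapsto \Phi_a(y)$ is analytic with uniform bounds; differentiating and inverting preserve analyticity, so $a \mapsto w \circ \Phi_a^{-1}$ is analytic as an $X_{\alpha,\epsilon}$-valued map. Composition with $F$, which is itself analytic, completes the argument.

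The main step is (iv). Since $\Phi_0 = \mathrm{id}$, at $(w,a) = (0,0)$ we obtain
\begin{equation*}
D_w G|_{(0,0)} h = \bar\eta\, L_v h + (1-\bar\eta)\, \Delta_{G,s} h = \Delta_{G,s} h + \bar\eta\,(L_v - \Delta_{G,s}) h.
\end{equation*}
By Proposition \ref{prop:perturb}, the operator norm of $\bar\eta(L_v - \Delta_{G,s})$ from $X_{\alpha,\epsilon}$ to $Y_{\alpha,\epsilon}$ is bounded by $C_{n,s}(\|v - v_0\|_{X_{\alpha,\epsilon}(\mathcal{B}_1^+)} + \|v\|_{X_{\alpha,\epsilon}(\mathcal{B}_1^+)}^2 \mu_0)$, which can be made arbitrarily small by choosing $\epsilon_0$, $[\nabla''g]_{\dot C^{0,\alpha}}$ and $\mu_0$ sufficiently small in (A2)-(A4) (and by the scaling from Lemma \ref{lem:sym}). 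Invertibility of $\Delta_{G,s}: X_{\alpha,\epsilon} \to Y_{\alpha,\epsilon}$ then propagates to $D_w G|_{(0,0)}$ via a Neumann series. The hard part is therefore establishing that $\Delta_{G,s}$ itself is an isomorphism between $X_{\alpha,\epsilon}$ and $Y_{\alpha,\epsilon}$: the Schauder-type a priori estimate of Proposition \ref{prop:map} gives injectivity and closed range once combined with the compact support condition built into $X_{\alpha,\epsilon}$, but surjectivity requires a separate solvability result for $\Delta_{G,s} u = f$ with $f \in Y_{\alpha,\epsilon}$ and with the asymptotic behavior at $P$ and at $\{y_n y_{n+1} = 0\}$ prescribed by membership in $X_{\alpha,\epsilon}$. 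I would appeal to the weighted $L^2_\omega$-theory for the Muckenhoupt weight $\omega = (y_n y_{n+1})^{1-2s}$ to construct a weak solution with the mixed Dirichlet-Neumann data, and then upgrade it to $X_{\alpha,\epsilon}$ by combining the a priori estimate of Proposition \ref{prop:map} with the eigenfunction expansion of the fractional Baouendi-Grushin Laplacian alluded to in Section \ref{sec:Eigenf} of the appendix.
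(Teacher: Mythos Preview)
Your proposal is correct and follows essentially the same route as the paper: the paper's proof of (i) is the immediate support observation plus $\tilde v_a=\tilde w_a+v$, (ii) is Proposition~\ref{prop:analyticity} together with the properties (ii), (v) of $\Phi_a$ in Lemma~\ref{lem:diffeo}, (iii) is the analytic $a$-dependence of $\Phi_a$ and $\Psi_a$, and (iv) is the identity $D_wG|_{(0,0)}=\bar\eta L_v+(1-\bar\eta)\Delta_{G,s}$ combined with Proposition~\ref{prop:perturb} and a Neumann series around the invertible $\Delta_{G,s}$. One small slip in your (i): the claim ``on $Q_+\setminus\mathcal{B}_{3/4}^+$ we have $\bar\eta=0$'' is false on the annulus $\mathcal{B}_1^+\setminus\mathcal{B}_{3/4}^+$; what you actually need (and implicitly use) is that $\tilde w_a\equiv 0$ there forces $(1-\bar\eta)\Delta_{G,s}\tilde w_a=0$, while $\bar\eta\,\tilde F_a=0$ holds on all of $\supp(\bar\eta)\subset\mathcal{B}_1^+$ since $F_a(\tilde v_a,\cdot)=0$. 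Note also that the paper does not reprove the invertibility of $\Delta_{G,s}:X_{\alpha,\epsilon}\to Y_{\alpha,\epsilon}$ inside this proposition but simply quotes it from Proposition~\ref{prop:invert_global} in the appendix, whose proof is exactly along the lines you sketch (weighted $L^2_\omega$ existence via a kernel estimate plus the Schauder-type bound of Proposition~\ref{prop:map}).
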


\begin{proof}
The proof of (i) follows immediately by noticing that $\tilde{w}_a=0$ outside $\mathcal{B}_{3/4}^+$ and by rewriting $\tilde{v}_a = \tilde{w}_a + v$. \\
To prove (ii), we first note that $w+v\in \mathcal{U}_{r_0}(v_0)$ for each $w\in \mathcal{U}_{r_0/2}(0)\subset X_{\alpha,\epsilon}$.  
Applying Proposition \ref{prop:analyticity} we obtain that $w\mapsto F(w+v,y)$ is analytic in $\mathcal{U}_{r_0/2}(0)\subset X_{\alpha,\epsilon}(\mathcal{B}_1^+)$. The analyticity of $w\mapsto F_a(w+v,y)$ for fixed $a$ (recall $F_a$ is defined Proposition~\ref{prop:comp} (ii)) follows from the properties (ii), (v) (in Lemma~\ref{lem:diffeo}) of the diffeomorphism $\Phi_a$ and the analyticity for $F$. Thus by the definition of $G_a$, the map $w\mapsto G_a(w)$ is analytic in $\mathcal{U}_{r_0/2}(0)$ as well.\\
 The statement (iii) follows from the analytic dependence of $\Phi_a$ and $\Psi_a$ on $a$ (c.f. Lemma~\ref{lem:diffeo} (i), (ii)). Finally, using Lemma~\ref{lem:diffeo} (ii), we can directly compute that $D_wG\big|_{(w,a)=(0,0)}=\bar \eta L_v + (1-\bar \eta)\D_{G,s}$, where $L_v=D_vF$. Since $\D_{G,s}:X_{\alpha,\epsilon}\rightarrow Y_{\alpha,\epsilon}$ is invertible, Proposition~\ref{prop:perturb} implies that the linearization $D_w G\big|_{(0,0)}:X_{\alpha,\epsilon}\rightarrow Y_{\alpha,\epsilon}$ is also invertible, if $\|f\|_{C^{1,\epsilon}(B_1^+)}$ is sufficiently small (e.g. by rewriting $D_w G|_{(0,0)}= \D_{G,s}(Id + \D_{G,s}^{-1} (\bar{\eta}\mathcal{P}))$ with $\mathcal{P}$ being the operator from Proposition \ref{prop:perturb} and using the norm bounds from Proposition \ref{prop:perturb}). 
\end{proof}

With this at hand, we can finally prove our main theorem:

\begin{thm}[Analyticity]
\label{thm:ana}
Let $v$ be a Legendre function associated to a solution of the fractional thin obstacle problem (\ref{eq:fracLa_2}) with smooth or analytic inhomogeneity $f$. 
Then there exists a constant $\eta_0>0$ such that the mapping
\begin{align*}
B_{\eta_0}'' \ni y''\mapsto -\frac{1}{2s}y_n^{1-2s}\p_{n}v(y)|_{y=(y'',0,0)},
\end{align*}
is smooth if $f$ is smooth and real analytic if $f$ is real analytic. In particular, the regular free boundary $\Gamma_{s+1}(w)$ is locally smooth if $f$ is smooth and locally real analytic if $f$ is real analytic.
\end{thm}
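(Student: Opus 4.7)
The plan is to apply the analytic (respectively smooth) implicit function theorem to the map $G:(w,a)\mapsto G_a(w)$ constructed in Proposition~\ref{prop:final_nonlin}, and then read off the tangential regularity of $v$ from the resulting analytic dependence on the parameter $a$. First, I invoke Proposition~\ref{prop:final_nonlin}: part (ii) provides joint analyticity in $w$, part (iii) provides analyticity in $a$, and part (iv) provides the invertibility of $D_w G_{(0,0)}:X_{\alpha,\epsilon}\to Y_{\alpha,\epsilon}$. Since $G_0(0)=0$ (because $\tilde w_0=0$), the Banach space analytic implicit function theorem (see e.g.\ \cite{KL12}) yields $\eta_1\in(0,\eta_0]$ and a unique real analytic map $B''_{\eta_1}\ni a\mapsto w(a)\in\mathcal U_{r_0/2}(0)\subset X_{\alpha,\epsilon}$ with $G_a(w(a))=0$. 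In the smooth case, the same statement holds with ``analytic'' replaced by ``$C^\infty$''.

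Next, I identify $w(a)$ with the geometric perturbation $\tilde w_a=v\circ\Phi_a-v$ from \eqref{eq:v_a}. By Proposition~\ref{prop:comp}, $\tilde w_a\in \mathcal U_{r_0/2}(0)$ for $a\in B''_{\eta_0}$ (after possibly shrinking $\eta_0$), and by construction $G_a(\tilde w_a)=0$. The uniqueness clause of the implicit function theorem thus forces $w(a)=\tilde w_a$ for $a\in B''_{\eta_1}$. Consequently, the map
\[
B''_{\eta_1}\ni a\mapsto \tilde w_a=v\circ\Phi_a-v \in X_{\alpha,\epsilon}
\]
is real analytic (respectively $C^\infty$) in $a$.

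To pass from analyticity in $a$ to tangential regularity of $v$ on $P$, I differentiate at $a=0$ along the directions $a=te_j$, $j\in\{1,\dots,n-1\}$. By Lemma~\ref{lem:diffeo}(v) and the ODE defining $\phi_a$, on the set $\{|(y_n,y_{n+1})|\le 1/4\}$ we have $\partial_t\Phi_{te_j}(y)|_{t=0}=(3/4)^6 e_j$, so $\partial_t\tilde w_{te_j}|_{t=0}=(3/4)^6\,\partial_j v$ on this set. Hence $\partial_j v\in X_{\alpha,\epsilon}(\mathcal B_{1/4}^+)$ arises from an analytic derivative and the same holds inductively for all tangential multi-derivatives $\partial_{y''}^\beta v$, so each $\partial_{y''}^\beta v$ lies in $X_{\alpha,\epsilon}(\mathcal B_{1/4}^+)$ with bounds that are controlled by the analyticity of $a\mapsto \tilde w_a$. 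Restricting the decomposition of Proposition~\ref{prop:decomp} to $P\cap \mathcal B_{1/4}^+$, the coefficient $c_0(y'')=-\frac{1}{2s}(y_n^{1-2s}\partial_n v)(y'',0,0)$ is precisely the quantity whose regularity we need, and the uniform bounds on $\partial_{y''}^\beta v$ in $X_{\alpha,\epsilon}$ translate (via Remark~\ref{rmk:equiv_local}) into corresponding bounds on $\partial_{y''}^\beta c_0$. In the analytic case, the geometric bounds on $\|\tilde w_a\|_{X_{\alpha,\epsilon}}$ coming from the convergent Taylor expansion in $a$ give Cauchy estimates on $\partial_{y''}^\beta c_0$, which imply that $c_0$ is real analytic on $B''_{\eta_1}$; in the smooth case, arbitrary differentiability in $a$ gives $c_0\in C^\infty$. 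Since by Lemma~\ref{lem:reverse} the regular free boundary $\Gamma_{s+1}(w)$ is exactly the graph $x_n=c_0(x'')$ after returning to the original variables, the conclusion of Theorem~\ref{thm:ana1} (and hence Theorem~\ref{thm:ana2}) follows.

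The main obstacle I anticipate is the last step: carefully extracting pointwise tangential analyticity of the free boundary from the abstract real analyticity of $a\mapsto \tilde w_a\in X_{\alpha,\epsilon}$. Checking that the radius of convergence in $a$ translates into Cauchy-type estimates for $\partial_{y''}^\beta c_0$ (and that the map $(w,a)\mapsto w$ evaluated on $P$ lands in an appropriate H\"older/analytic class for $c_0$) requires combining the equivalence of norms in Remark~\ref{rmk:equiv_local} with the control that Lemma~\ref{lem:diffeo}(i),(ii) provides on the $a$-dependence of $\Phi_a$. Everything else reduces to invoking previously established propositions.
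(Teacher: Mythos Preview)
Your proposal is correct and follows essentially the same approach as the paper: apply the analytic implicit function theorem to $G_a$ using Proposition~\ref{prop:final_nonlin}, identify the resulting solution with $\tilde w_a$ by uniqueness, and then extract tangential regularity of the free boundary parametrization from the analytic $a$-dependence.

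The only notable difference is in the final extraction step. You propose iterated $a$-differentiation to place $\partial_{y''}^\beta v$ in $X_{\alpha,\epsilon}$ and then invoke Cauchy estimates for $\partial_{y''}^\beta c_0$. The paper takes a shorter route: since $\Phi_a$ leaves the $(y_n,y_{n+1})$-components fixed near $P$ (Lemma~\ref{lem:diffeo}(v)), one has $y_n^{1-2s}\partial_n\tilde v_a(0)=y_n^{1-2s}\partial_n v(\Phi_a(0))$, and the map $u\mapsto (y_n^{1-2s}\partial_n u)|_{(0,0,0)}$ is a bounded linear functional on $X_{\alpha,\epsilon}$. Thus $a\mapsto -\tfrac{1}{2s}(y_n^{1-2s}\partial_n v)(\Phi_a(0))$ is analytic directly, and composing with the analytic local inverse of $a\mapsto \Phi_a(0)|_P$ gives analyticity of $y''\mapsto -\tfrac{1}{2s}(y_n^{1-2s}\partial_n v)(y'',0,0)$. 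This avoids the bookkeeping of higher $a$-derivatives of $v\circ\Phi_a$ (which involve products of derivatives of $v$ and of $\Phi_a$) that your inductive step would require. Both arguments are valid; the paper's is just more direct.
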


\begin{proof}
The proof follows by an application of the smooth/analytic implicit function theorem (c.f. \cite{Dei10}). We only show the real analytic case. The arguments for the smooth case are analogous.

By Proposition \ref{prop:final_nonlin} (ii) and (iii) the mapping $G: B''_{\eta_0}\times \mathcal{U}_{r_0}(0)\rightarrow Y_{\alpha,\epsilon}$ is analytic for $r_0=\frac{1}{4}\|v_0\|_{X_{\alpha,\epsilon}}$. By Proposition~\ref{prop:final_nonlin} (iv) the operator $D_w G|_{(0,0)}$ is invertible from $X_{\alpha,\epsilon}$ to $Y_{\alpha,\epsilon}$. Due to the implicit function theorem there exists a neighborhood $(-\tilde{\epsilon}_0,\tilde{\epsilon}_0)^{n-1}\times \mathcal{U}_{\tilde{r}}(0)$ of $(0,0)$, such that for each $a\in (-\tilde{\epsilon}_0,\tilde{\epsilon}_0)^{n-1}$ there exists a unique function $w_a\in \mathcal{U}_{\tilde{r}}(0)\subset X_{\alpha,\epsilon}$ satisfying
\begin{align}
\label{eq:non_G}
G_a(w_a)=0.
\end{align}
Moreover, this solution $w_a$ depends analytically on the parameter $a$.
As the function $\tilde{w}_a = \tilde{v}_a -v\in X_{\alpha,\epsilon}$ (defined in Proposition~\ref{prop:final_nonlin}) also satisfies the nonlinear equation $G_a(\tilde{w}_a)=0$, and as $\|\tilde{w}_a\|_{X_{\alpha,\epsilon}}\lesssim |a|<\tilde{r}$ for a small choice of $|a|$, the local uniqueness result of the implicit function theorem asserts that $w_a = \tilde{w}_a$. Hence, as a function in $X_{\alpha,\epsilon}$, $\tilde{w}_a$ depends analytically on $a$. Thus, by definition of the norm of $X_{\alpha,\epsilon}$ the function $y_n^{1-2s}\p_n \tilde{w}_a$ also depends analytically on $a$. As a consequence, this remains true for $y_n^{1-2s}\p_n \tilde{v}_a$. Recalling that $\Phi_a$ infinitesimally corresponds to a (tangential) translation at $P$, this implies that the function $y_n^{1-2s}\p_n v$ depends analytically on the tangential variables. This yields the desired result.
\end{proof}

\begin{rmk}
\label{rmk:inhom}
We briefly comment on generalizations of Theorem \ref{thm:ana} for inhomogeneities with less regularity.
It is clear that by carefully tracking our arguments, the set-up of analytic and smooth inhomogeneities can also be extended to that of Hölder inhomogeneities.
\end{rmk}

\section{Appendix A}
\label{sec:AppA}

The following three sections contain auxiliary regularity results for the fractional Laplacian with Dirichlet, Neumann and mixed Dirichlet-Neumann data. These are deduced by relying on the compactness method similar as in \cite{Wa03} and build on approximation results in terms of eigenfunctions of the respective operator. On the one hand this is reminiscent of Campanato type arguments of proving regularity \cite{Ca64}, on the other hand it also reminds us of the methods used in obtaining the up to the corner (or edge) asymptotics of the solutions for elliptic equations in conical domains by means of eigenfunction approximations (c.f. \cite{Gr11}, \cite{Ko97}).

The Section is structured as follows: First we formulate and prove the up to the boundary regularity results for solutions of $L_sw=f$ with Dirichlet and Neumann boundary data in Section \ref{sec:reg1}. For these we use the compactness argument in e.g. \cite{Wa03} to approximate the solution $w$ by (a linear combination of) homogeneous global solutions to $L_su=0$ (c.f. Section \ref{sec:approx_poly}). A key step involves characterizing the homogeneous global solutions with Dirichlet, Neumann or mixed Dirichlet-Neumann boundary data. For this we use the eigenfunction approximations which we then describe in Section \ref{sec:Eigenf}.  \\
Although we deal with linear problems, the results in this section might be of independent interest. For instance, we obtain a full classification of all eigenvalues and eigenfunctions of the mixed Dirichlet-Neumann problem in the slit domain. This situation can be regarded as the linearization of the thin obstacle problem.\\
For us the results in this section play an important role in deducing the asymptotic expansions and mapping properties for the fractional Baouendi-Grushin operator (e.g. in Propositions \ref{prop:asymp2} and \ref{prop:map}). In particular in Section \ref{sec:open} we transfer the results from the Laplacian to the Baouendi-Grushin Laplacian by means of the square root mapping.

\subsection{Up to the boundary a priori estimates}
\label{sec:reg1}
The following sections are dedicated to regularity properties of the fractional Laplacian with various (linear) boundary conditions (c.f. in particular Sections \ref{sec:reg1} and Section \ref{sec:approx_poly}). Here a central ingredient is given by the approximation results in terms of ``eigenpolynomials'' (i.e. homogeneous global solutions with symmetry given by the Dirichlet and Neumann boundary condition) (c.f. Section \ref{sec:approx_poly}). To characterize the eigenpolynomials we use polar coordinates and study the form of the eigenfunctions associated with the respective operators on the sphere. These are computed in Section \ref{sec:Eigenf} and might be of independent interest. \\

In the following two sections we first discuss the desired regularity results and then provide the necessary auxiliary results (c.f. Propositions \ref{prop:Dirichlet}, \ref{prop:Neumann}).\\

In this section, we consider the equation
\begin{align*}
L_s u= f \text{ in } B_1^+,\quad u=0  \text{ on } B'_1,
\end{align*}
and the equation
\begin{align*}
L_s u = x_{n+1}^{1-2s} f \text{ in } B_1^+,\quad 
x_{n+1}^{1-2s}\p_{n+1} u=0 \text{ on } B'_1,
\end{align*}
where 
$$L_s:=\nabla \cdot x_{n+1}^{1-2s}\nabla,\quad s\in (0,1).$$
In both cases, we assume that the inhomogeneity $f\in C^{0,\epsilon}(B_1^+)$ for some $\epsilon\in (0,1)$. For solutions of the above equations, we show Schauder a priori estimates which hold up to $B'_1$. In the sequel, for simplicity, we let $\bar \omega$ denote the weight $x_{n+1}^{1-2s}$.

\begin{prop}[Dirichlet data]
\label{prop:Dirichlet}
Let $u\in L^\infty(B_1^+)\cap H^{1}_{\bar{\omega}}(B_1^+)$ be a solution of
\begin{align*}
L_s u = f \mbox{ in } B_1^+,\quad 
u = 0 \mbox{ on } B'_{1} ,
\end{align*}
where $f\in C^{0,\epsilon}(B_1^+)$ for some $\epsilon\in (0,1)$.
Then there exists a constant $C=C(n,s,\epsilon)>0$ such that
\begin{align*}
&\|x_{n+1}^{-2s}u\|_{C^{0,\epsilon}(B_{1/2}^+)}+ \sum_{i=1}^{n}\|x_{n+1}^{-2s}\p_iu\|_{C^{0,\epsilon}(B_{1/2}^+)}+ \|x_{n+1}^{1-2s}\p_{n+1}u\|_{C^{0,\epsilon}(B_{1/2}^+)}\\
&+ \sum_{i,j=1}^{n+1}\|\p_ix_{n+1}^{1-2s}\p_ju\|_{C^{0,\epsilon}(B_{1/2}^+)} \leq C\left(\|f\|_{C^{0,\epsilon}(B_1^+)}+\| u\|_{L^2_{\bar \omega}(B_1^+)}\right).
\end{align*}
\end{prop}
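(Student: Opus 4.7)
The plan is to prove the estimate via a Campanato-type iteration built on a polynomial approximation scheme adapted to the degenerate weight. Interior estimates away from $\{x_{n+1}=0\}$ are standard since $L_s$ is uniformly elliptic there, so the heart of the proof is the up-to-the-boundary analysis near $B_1'$.

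First I would reduce to the model setting: by the Dirichlet condition, the leading asymptotic behavior of $u$ near $B_1'$ is of the form $u \sim x_{n+1}^{2s}$, so the natural object to study is $\tilde{u}(x):= x_{n+1}^{-2s}u(x)$. At each boundary point $x_0 \in B_1'\cap B_{1/2}$ and each dyadic scale $r>0$, the goal is to find a polynomial $P_{x_0,r}$ of the form $x_{n+1}^{2s} q_{x_0,r}(x)$ where $q_{x_0,r}$ lies in the span of the homogeneous solutions of $L_s q = 0$ vanishing on $\{x_{n+1}=0\}$ (the ``Dirichlet eigenpolynomials''), such that
\begin{equation*}
\|u - P_{x_0,r}\|_{L^\infty(B_r^+(x_0))} \le C\, r^{2+2s+\epsilon}\bigl(\|u\|_{L^2_{\bar\omega}(B_1^+)}+\|f\|_{C^{0,\epsilon}(B_1^+)}\bigr).
\end{equation*}
This is the content of the approximation result (c.f.\ Proposition \ref{prop:approx_main} referenced in the Appendix) and is proved by the contradiction/compactness scheme of Wang: if the estimate failed along some sequence of scales, an appropriate normalization together with compactness in the weighted Sobolev space $H^1_{\bar\omega}$ extracts a limit that is a non-trivial global solution to $L_s U = 0$ in the half-space with $U=0$ on $\{x_{n+1}=0\}$, with controlled growth at infinity. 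The classification of such solutions via the eigenfunction expansion from Section \ref{sec:Eigenf} then forces $U$ to be an eigenpolynomial, contradicting the assumed non-approximability (after subtracting it off).

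Once the approximation is established, I would pass from it to the $C^{0,\epsilon}$ estimates in the classical Campanato manner: comparing $P_{x_0,r}$ at consecutive scales gives uniform Hölder control on the coefficients of the approximating polynomials as $x_0$ varies, and differentiating these polynomials yields the corresponding bounds on $x_{n+1}^{-2s}\p_i u$ (for $i\le n$), on $x_{n+1}^{1-2s}\p_{n+1}u$ (which captures the conjugate weighted normal derivative naturally appearing from $\p_{n+1}(x_{n+1}^{2s} q)$), and on the second-order quantities $\p_i(x_{n+1}^{1-2s}\p_j u)$. Combining this boundary behavior with the interior Schauder estimates on dyadic annuli $\{x: r/2 \le x_{n+1} \le r\}$, scaled to unit size, closes the argument.

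The main obstacle is the compactness/contradiction step for the approximation: one must verify that the blow-up limit inherits not only the Dirichlet condition but also the growth control needed to apply the eigenfunction classification — this is where the choice of renormalization (subtracting the best approximating $P_{x_0,r}$ at each scale) and the precise form of the weighted Sobolev compactness for $L_s$ in $B_1^+$ must be handled carefully. The derivation of the classification of Dirichlet eigenpolynomials itself, via separation of variables in polar coordinates and the eigenfunction analysis on the sphere of Section \ref{sec:Eigenf}, is a necessary preliminary but is more routine once the eigenvalues are computed.
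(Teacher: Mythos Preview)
Your approach is essentially the same as the paper's: approximate $u$ at each boundary point by a Dirichlet eigenpolynomial via the compactness scheme of Proposition~\ref{prop:approx_main}, then rescale and apply interior Schauder estimates on non-tangential balls to obtain H\"older regularity of the weighted derivatives, with the coefficients of the approximating polynomials controlled by a triangle-inequality comparison between neighboring boundary points. One small correction: the approximation rate is $r^{1+2s+\epsilon}$, not $r^{2+2s+\epsilon}$, since the polynomial $q_{x_0}$ is degree one in $x$ and the inhomogeneity is only $C^{0,\epsilon}$; this is exactly $\beta_B+\epsilon$ with $\beta_B=1+2s$ in the Dirichlet case of Proposition~\ref{prop:approx_main}, and it suffices for the second-order estimate claimed.
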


\begin{proof}
The result follows from the approximation result at $B'_{1/2}$ which will be shown in Proposition~\ref{prop:approx_main} and  a scaling argument.\\
More precisely, let $x_0=(x_0',0)\in B'_{1/2}$. Without loss of generality we may assume that $f(x_0)=0$, as we can always subtract the function $\frac{1}{1+2s}f(x_0)x_{n+1}^{1+2s}$.
Then, by the polynomial approximation result at $x_0$ (c.f. Proposition \ref{prop:approx_main} in Section \ref{sec:approx_poly}), there exists a polynomial $P_{x_0}(x)$ with the properties that
$$x_{n+1}^{2s}P_{x_0}(x):=x_{n+1}^{2s}(a(x_0)+\sum\limits_{j=1}^{n}b_j(x_0) x_{j}), $$ 
$L_s (x_{n+1}^{2s}P_{x_0})=0$ and $\|P_{x_0}\|_{L^\infty(B_1^+)}\leq C(\|f\|_{L^\infty(B_1^+)}+\|u\|_{L^2_{\bar\omega}(B_1^+)})$, such that
\begin{align}
\label{eq:approx}
\| u - x_{n+1}^{2s}P_{x_0} \|_{\tilde{L}^2_{\bar{\omega}}(B_r^+)} \leq C r^{1+2s+\epsilon}\left([f]_{C^{0,\epsilon}(B_{1}^+)} + \| u\|_{\tilde{L}^2_{\bar{\omega}}(B_{1}^+)}\right) \text{ for all }r\in (0,1/2).
\end{align}
Here $\| u\|_{\tilde{L}^2_{\bar{\omega}}(\Omega)}:= \frac{1}{\bar{\omega}(\Omega)^{1/2}}\|x_{n+1}^{\frac{1-2s}{2}} u\|_{L^2(\Omega)}$ with $\bar{\omega}(\Omega):=\int\limits_{\Omega}x_{n+1}^{1-2s}dx$, and $C=C(n,s,\epsilon)$.
Given $\lambda \in (0,1/2)$ , we consider
\begin{align*}
\tilde{u}_\lambda(z):= \frac{(u-x_{n+1}^{2s}P_{x_0})(x_0+\lambda z)}{\lambda^{1+2s+\epsilon}}.
\end{align*}
By \eqref{eq:approx} we infer that $\|\tilde{u}_\lambda\|_{L^2_{\bar{\omega}}(B_2^+)}\leq C([f]_{C^{0,\epsilon}(B_{1}^+)} + \| u\|_{\tilde{L}^2_{\bar{\omega}}(B_{1}^+)})$. Moreover,
\begin{align*}
L_s \tilde{u}_{\lambda}(z)&= f_{\lambda}(z),\quad f_{\lambda}(z):=\lambda^{-\epsilon}f(x_0+\lambda z).
\end{align*}
Since $f\in C^{0,\epsilon}(B_1^+)$, we have that $f_\lambda\in C^{0,\epsilon}(B_2^+)$ with $\|f_\lambda\|_{C^{0,\epsilon}(B_2^+)}\leq C \|f\|_{C^{0,\epsilon}(B_1^+)}$. 
We notice that in $B_{3/4}(e_{n+1})$ the equation for $\tilde{u}_\lambda$ is uniformly elliptic with a $C^{0,\epsilon}$ inhomogeneity. Hence, by classical elliptic estimates, we have
\begin{align*}
\|\tilde{u}_\lambda\|_{C^{2,\epsilon}(B_{1/2}(e_{n+1}))}&\leq C \left([f_\lambda]_{\dot{C}^{0,\epsilon}(B_2^+)}+ \|\tilde{u}_\lambda\|_{L^2(B_2^+)}\right).
\end{align*}
Scaling back, yields that in the non-tangential balls $B_{\lambda/2}(x_0+\lambda e_{n+1})$ it holds
\begin{equation}\label{eq:scale1}
\begin{split}
&\quad \lambda^{-(1+2s+\epsilon)}\|u-x_{n+1}^{2s}P_{x_0}\|_{L^\infty(B_{\lambda/2}(x_0+\lambda e_{n+1}))}\\
&+\lambda^{-(2s+\epsilon)}\|\p_i (u-x_{n+1}^{2s}P_{x_0})\|_{L^\infty(B_{\lambda/2}(x_0+\lambda e_{n+1}))}\\
&+\lambda^{-(-1+2s+\epsilon)}\|\p_{ij}(u-x_{n+1}^{2s}P_{x_0})\|_{L^\infty(B_{\lambda/2}(x_0+\lambda e_{n+1}))}\\
&+\lambda^{-(-1+2s)}[\p_{ij}(u-x_{n+1}^{2s}P_{x_0})]_{\dot{C}^{0,\epsilon}(B_{\lambda/2}(x_0+\lambda e_{n+1}))}\\
&\leq C([f]_{\dot{C}^{0,\epsilon}(B_{1}^+)} + \| u\|_{\tilde{L}^2_{\bar{\omega}}(B_{1}^+)}).
\end{split}
\end{equation}
Repeating the above procedure at every $x_0\in B'_{1/2}$, leads to \eqref{eq:scale1} for each $x_0\in B'_{1/2}$ and each $\lambda \in (0,1/2)$.\\

Based on this, a triangle inequality argument implies that $x_0\mapsto a(x_0)$ is in $C^{1,\epsilon}(B'_{1/2})$ and $x_0\mapsto b_i(x_0)$, $i\in\{1,\dots,n\}$, is in $C^{0,\epsilon}(B'_{1/2})$ with norm bounded by the right hand side of \eqref{eq:scale1}. More precisely, let $x_0$, $\hat x_0\in B'_{1/2}$. Let $\tilde{x}$ be the mid point of $x_0 $ and $\hat x_0$. We apply \eqref{eq:scale1} with $\lambda =2|x_0-\hat x_0|$ at $x_0$ and $\hat x_0$. By a triangle inequality (note that $B_{\lambda/4}(\tilde{x}+\lambda e_{n+1})\subset B_{\lambda/2}(x_0+\lambda e_{n+1})\cap B_{\lambda/2}(x_1+\lambda e_{n+1})$), 
\begin{align*}
\lambda^{-(1+2s+\epsilon)}\|x_{n+1}^{2s}P_{x_0}-x_{n+1}^{2s}P_{x_1}\|_{L^\infty(B_{\lambda/4}(\tilde{x}+\lambda e_{n+1}))}\\
+\lambda^{-(2s+\epsilon)}\|\p_i(x_{n+1}^{2s}P_{x_0}-x_{n+1}^{2s}P_{x_1})\|_{L^\infty(B_{\lambda/4}(\tilde{x}+\lambda e_{n+1}))}\leq C.
\end{align*}
Using that in $x_{n+1}\sim \lambda$ in $B_{\lambda/4}(\tilde{x}+\lambda e_{n+1})$, we have $|b_i(x_0)-b_i(\hat x_0)|\leq C\lambda^\epsilon$, $|a(x_0)-a(\hat x_0)|\leq C\lambda$, $|\nabla a(x_0)-\nabla a(\hat x_0)|\leq C\lambda^\epsilon$. Recalling the definition of $\lambda$, we obtain the desired estimate.\\
A further triangle inequality argument combined with a covering argument gives the up to $B'_{1/2}$ H\"older regularity of the weighted derivatives:
\begin{align*}
x_{n+1}^{-2s}u, \ x_{n+1}^{1-2s}\p_{n+1}u \in C^{1,\epsilon}(B_{1/2}^+),\\
x_{n+1}^{-2s}\p_iu, \ x_{n+1}^{1-2s}\p_{i,n+1}u, \ \p_{n+1}(x_{n+1}^{1-2s}\p_{n+1}u)\in C^{0,\epsilon}(B_{1/2}^+),
\end{align*}
and for each $x_0\in B'_{1/2}$
\begin{align*}
&x_{n+1}^{-2s}u \big|_{x=x_0}=a(x_0),\\
&x_{n+1}^{-2s}\p_i u \big |_{x=x_0}=\frac{1}{2s}x_{n+1}^{1-2s}\p_{i,n+1}u\big|_{x=x_0}=b_i(x_0), \quad i\in \{1,\dots, n\},\\
&\p_{n+1}(x_{n+1}^{1-2s}\p_{n+1}u)\big|_{x=x_0}=f(x_0).
\end{align*}
This yields the desired estimates. 
\end{proof}

Similarly as the Dirichlet case, we can treat the Neumann case.

\begin{prop}[Neumann data]
\label{prop:Neumann}
Let $u\in L^\infty(B_1^+)\cap H^{1}_{\bar\omega}(B_1^+)$ be a solution of
\begin{align*}
L_s u = x_{n+1}^{1-2s} f \mbox{ in } B_1^+,\quad \lim\limits_{x_{n+1}\rightarrow 0_+} x_{n+1}^{1-2s}\p_{n+1} u = 0 \mbox{ on } B'_1,
\end{align*}
where $f\in C^{0,\epsilon}(B_1^+)$ for some $\epsilon\in (0,1)$.
Then there exists $C=C(n,s,\epsilon)>0$ such that
\begin{align*}
&\|u\|_{C^{0,\epsilon}(B_{1/2}^+)}+\|\p_iu\|_{C^{0,\epsilon}(B_{1/2}^+)}+\|x_{n+1}^{-1}\p_{n+1}u\|_{C^{0,\epsilon}(B_{1/2}^+)}+\|\p_{ij}u\|_{C^{0,\epsilon}(B_{1/2}^+)}  \\
&\leq C\left(\|f\|_{C^{0,\epsilon}(B_1^+)} + \|u\|_{L^{\infty}(B_1^{+})}\right).
\end{align*}
\end{prop}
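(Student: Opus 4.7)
The proof closely mirrors that of Proposition~\ref{prop:Dirichlet}: I combine a pointwise polynomial approximation at each point $x_0\in B'_{1/2}$ (a Neumann version of Proposition~\ref{prop:approx_main}) with a scaling argument and classical interior Schauder estimates in non-tangential balls, then stitch the local estimates together via triangle-inequality comparisons of the approximating polynomials at neighbouring base points.

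The main difference from the Dirichlet case lies in the \emph{form} of the admissible approximating polynomial. Since the Neumann condition is compatible with even reflection across $\{x_{n+1}=0\}$, the relevant homogeneous solutions of $L_s u=0$ are integer-degree polynomials, and to capture $C^{0,\epsilon}$ regularity of the Hessian one must approximate up to homogeneous degree two. Fix $x_0\in B'_{1/2}$. I first subtract $\frac{f(x_0)}{2(2-2s)}x_{n+1}^2$ from $u$ to reduce to the case $f(x_0)=0$: note that $L_s(x_{n+1}^2)=(4-4s)x_{n+1}^{1-2s}$ and that the Neumann boundary condition is preserved. The Neumann analogue of Proposition~\ref{prop:approx_main} then provides a polynomial
\[
P_{x_0}(x)=a(x_0)+\sum_{i=1}^{n}b_i(x_0)(x_i-x_{0,i})+\sum_{i,j=1}^{n}c_{ij}(x_0)\Bigl((x_i-x_{0,i})(x_j-x_{0,j})-\tfrac{\delta_{ij}}{2-2s}\,x_{n+1}^2\Bigr),
\]
satisfying $L_sP_{x_0}=0$, $\lim_{x_{n+1}\to 0_+}x_{n+1}^{1-2s}\p_{n+1}P_{x_0}=0$, together with the approximation bound
\[
\Bigl\|u-\tfrac{f(x_0)}{2(2-2s)}x_{n+1}^2-P_{x_0}\Bigr\|_{\LLw(B_r^+(x_0))}\le Cr^{2+\epsilon}\bigl([f]_{C^{0,\epsilon}(B_1^+)}+\|u\|_{\LLw(B_1^+)}\bigr),\qquad r\in(0,1/2).
\]

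Next, for $\lambda\in(0,1/2)$ I rescale to
\[
\tilde{u}_\lambda(z):=\lambda^{-(2+\epsilon)}\Bigl(u-\tfrac{f(x_0)}{2(2-2s)}x_{n+1}^2-P_{x_0}\Bigr)(x_0+\lambda z),
\]
which satisfies $L_s\tilde{u}_\lambda=z_{n+1}^{1-2s}\tilde{f}_\lambda$ with $\|\tilde{f}_\lambda\|_{C^{0,\epsilon}(B_2^+)}\le C\|f\|_{C^{0,\epsilon}(B_1^+)}$. In the non-tangential ball $B_{3/4}(e_{n+1})$ the equation is uniformly elliptic with a $C^{0,\epsilon}$ right hand side, so classical Schauder estimates give $\|\tilde u_\lambda\|_{C^{2,\epsilon}(B_{1/2}(e_{n+1}))}\le C$. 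Scaling back yields the decay rates $\lambda^{2+\epsilon-k}$ for $k$-th derivatives of $u-P_{x_0}-\tfrac{f(x_0)}{2(2-2s)}x_{n+1}^2$ in $B_{\lambda/2}(x_0+\lambda e_{n+1})$.

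Finally, comparing the polynomials $P_{x_0}$ and $P_{\hat{x}_0}$ at neighbouring boundary points on a common non-tangential ball $B_{\lambda/4}(\tilde x+\lambda e_{n+1})$ with $\lambda\sim|x_0-\hat x_0|$, a triangle inequality forces $x_0\mapsto c_{ij}(x_0)$ and $x_0\mapsto b_i(x_0)$ to be $C^{0,\epsilon}(B'_{1/2})$ and $x_0\mapsto a(x_0)$ to be $C^{2,\epsilon}(B'_{1/2})$, with norms controlled by $\|f\|_{C^{0,\epsilon}(B_1^+)}+\|u\|_{L^\infty(B_1^+)}$. Combined with the non-tangential interior estimates and a covering argument, this produces the claimed Hölder bounds on $u$, $\p_i u$, $\p_{ij}u$, and on $x_{n+1}^{-1}\p_{n+1}u$, the latter being tracked through the boundary value $\tfrac{f(x_0)}{2-2s}-\tfrac{2\,\mathrm{tr}(c(x_0))}{2-2s}$ coming from the $x_{n+1}^2$-correction. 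The main conceptual point, and the only place the Neumann setting genuinely enters, is identifying the correct Neumann-admissible quadratic polynomial $P_{x_0}$ (and verifying the associated approximation estimate via Proposition~\ref{prop:approx_main}); once that is in place, the remainder of the argument is essentially identical to the Dirichlet case.
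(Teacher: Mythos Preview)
Your proof is correct and follows essentially the same approach as the paper: both reduce to $f(x_0)=0$ by subtracting $\frac{f(x_0)}{2(2-2s)}x_{n+1}^2$, invoke the Neumann case of Proposition~\ref{prop:approx_main} to obtain a degree-two approximating polynomial $Q_{x_0}$ with $L_sQ_{x_0}=0$, rescale via $\tilde v_\lambda(z)=\lambda^{-(2+\epsilon)}(u-Q_{x_0})(x_0+\lambda z)$, apply classical Schauder estimates in the non-tangential ball $B_{3/4}(e_{n+1})$, and finish with the triangle-inequality comparison of approximating polynomials at neighbouring boundary points. Your version is in fact a bit more explicit than the paper's sketch in writing out the $x_{n+1}^2$-correction inside the quadratic part of $P_{x_0}$ (which is what guarantees $L_sP_{x_0}=0$) and in tracing the boundary value of $x_{n+1}^{-1}\partial_{n+1}u$, but there is no substantive difference in strategy.
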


\begin{proof}
Again the proof follows by approximation at the set $\{x_{n+1}=0\}$ (where the Muckenhoupt weight $x_{n+1}^{1-2s}$ degenerates) and by rescaling. Let $x_0\in B'_{1/2}$. Without loss of generality we may assume that $f(x_0)=0$ (as we can always subtract the polynomial $\frac{1}{4(2-2s)}f(x_0)x_{n+1}^2$). By  Proposition~\ref{prop:approx_main} we have
\begin{align*}
\|( u-Q_{x_0})\|_{\tilde{L}^2_{\bar{\omega}}(B_{r}(y_0))} \leq Cr^{2+\epsilon }([f]_{C^{0,\epsilon}(B_1^+)} + \|u\|_{\tilde{L}^2_{\bar{\omega}}(B_1^+)}),
\end{align*}
for all $r\in(0,1/2)$. 
Here, 
$$Q_{x_0}(x)= c(x_0) + \sum\limits_{j=1}^{n}a_j(x_0) x_j + \sum\limits_{i,j=1}^{n}d_{ij}(x_0)x_i x_j,$$ 
which satisfies $L_sQ_{x_0}(x)=0$.
With the approximation result at hand, we argue similarly as in the previous proposition and rescale.
Let $\lambda \in (0,1/2)$ and consider
\begin{align*}
\tilde{v}_{\lambda}(z) : = \frac{(u-Q_{x_0})(x_0+\lambda z)}{\lambda^{2+\epsilon}}.
\end{align*} 
This makes the equation uniformly elliptic in $B_{3/4}(e_{n+1})$ and thus yields
\begin{align*}
\|\p_i \p_j \tilde{v}_{\lambda}\|_{C^{0,\epsilon}(B_{1/2}(e_{n+1}))} \leq C\left(\|f_{\lambda}\|_{C^{0,\epsilon}(B_{3/4}(e_{n+1}))} + \|\tilde{v}_{\lambda}\|_{\tilde{L}^2(B_{3/4}(e_{n+1}))}\right).
\end{align*}
Here $f_\lambda(z)= z_{n+1}^{1-2s}\lambda^{-\epsilon}f(x_0+\lambda z)$, with $\|f_\lambda\|_{C^{0,\epsilon}(B_{3/4}(e_{n+1}))}\leq C[f]_{\dot{C}^{0,\epsilon}(B_1^+)}$, and
$
\left\| \tilde{v}_{\lambda} \right\|_{\LL_{\bar{\omega}}(B_{3/4}(e_{n+1}))}\leq C([f]_{C^{0,\epsilon}(B_1^+)} + \|u\|_{\tilde{L}^2_{\bar{\omega}}(B_1^+)}).
$
Undoing the rescaling therefore yields the desired result in a non-tangential neighborhood of $x_0$. Applying this at each $x_0\in B'_{1/2}$ and using a triangle inequality argument as in Proposition~\ref{prop:Dirichlet}, we obtain the Hölder continuity of the coefficients of $Q_{x_0}$ in terms of $x_0$. This then implies the estimate up to the boundary. Since this part of argument is similar as in the proof of Proposition~\ref{prop:Dirichlet}, we do not repeat it here.
\end{proof}

\subsection{Approximation results for the fractional Laplacian}
\label{sec:approx_poly}

In this section, we prove approximation results for the fractional Laplacian in the upper half-space with Dirichlet, Neumann and mixed Dirichlet-Neumann data. The proofs of these results are based on the compactness method as for instance in \cite{Wa03}, \cite{Wang92}. Here a key step is the characterization of the homogeneous global solutions (with corresponding Dirichlet, Neumann boundary data). These rely on the computations of the eigenvalues and eigenfunctions in the following Section \ref{sec:Eigenf}. As shown in Section \ref{sec:open}, these results for the fractional Laplacian also suffice to prove a corresponding approximation result for the fractional Grushin Laplacian $\D_{G,s}$ (c.f. Proposition \ref{prop:gr_approx}).\\

The main result of this section is the following approximation statement:

\begin{prop}[Approximation]
\label{prop:approx_main}
Let $u\in H^1_{\bar\omega}(B_1^+)$ be a solution of
\begin{equation}
\label{eq:fra_DNM}
\begin{split}
L_s u & = g \mbox{ in } B_1^+,\\
B u &= 0 \mbox{ on } B_1',
\end{split}
\end{equation}
where $B \in \{B_D,B_N, B_{DN}\}$ is one of the following operators:
\begin{itemize}
\item (Dirichlet) $B_D u := u $,
\item (Neumann) $B_N u : = \p_{n+1}u $,
\item (mixed Dirichlet-Neumann) $B_{DN} u := u$ on $\{x_n\leq 0\}$ and $B_{DN} u = \p_{n+1}u$ on $\{x_{n}\geq 0\}$.
\end{itemize}
Assume that the inhomogeneity $g$ is of the following form:
\begin{itemize}
\item $g=f$ in the case of Dirichlet data,
\item $g= x_{n+1}^{1-2s} f$ for Neumann data,
\item and 
\begin{align*}
g(x)&=
x_{n+1}^{1-2s} (x_n^2+x_{n+1}^2)^{-1/2}  w_{0,s}(x_n,x_{n+1})f_0(x)+(x_n^2+x_{n+1}^2)^{-1/2} f_1(x)
\end{align*}
in the case of mixed Dirichlet-Neumann data.
\end{itemize}
Further suppose that 
\begin{itemize}
\item
in the Dirichlet and Neumann cases $f$ is $C^{0,\alpha}$ at $0$ in the sense that for all $x\in B_1^+$
\begin{align*}
|f(x)-f(0)|\leq C |x|^{\alpha},
\end{align*}
\item in the case of mixed Dirichlet-Neumann data $f_0$ is $C^{0,\alpha}$ at $0$, and for all $x\in B_1^+$, $f_1$ satisfies
\begin{align*}
|f_1(x)|\leq C |x|^{1+\alpha-s}.
\end{align*}
\end{itemize}
Then, there exist a constant $C=C(n,s,\alpha)>0$ and functions $h_{\beta_B}(x)$ such that for all $r\in(0,1/2)$
\begin{align*}
\|u-h_{\beta_B} \|_{\LLw(B_{r}^+)} \leq C r^{\beta_{B}+\alpha}\left( \left\| f\right\| +\|  u\|_{\LLw(B_1^+)} \right).
\end{align*}
Here $\|f\|$ denotes $\|f\|_{C^{0,\alpha}(0)}$ in the Dirichlet and Neumann cases, or $\|f_0\|_{C^{0,\alpha}(0)}+\sup\limits_{B_1^+}||x|^{-1-\alpha+s}f_1(x)|$ in the mixed Dirichlet-Neumann case, and 
\begin{align*}
\beta_{B}:= \left\{
\begin{array}{ll}
1+2s \mbox{ in the Dirichlet case},\\
2 \mbox{ in the Neumann case},\\
1+s \mbox{ in the mixed Dirichlet-Neumann case},
\end{array} \right.
\end{align*}
and
\begin{equation}
\label{eq:model_poly}
\begin{split}
h_{\beta_B}(x):= 
 \left\{
\begin{array}{ll}
x_{n+1}^{2s}\left( a + \sum\limits_{j=1}^{n} b_j x_{j} \right) + \frac{f(0)}{1+2s}x_{n+1}^{1+2s} \mbox{ in the Dirichlet case},\\
c + \sum\limits_{j=1}^{n} a_j x_j  + \sum\limits_{i,j=1}^{n}d_{ij} x_i x_j + \frac{f(0)}{2(2-2s)}x_{n+1}^{2}   \mbox{ in the Neumann case},\\
w_{0,s}(x_n,x_{n+1})\left(a_0 + a_1(s|x|-x_n) \right) + \frac{f_0(0)}{2(2+2s)}w_{0,s}^{1+1/s}(x_n,x_{n+1}) \\
\quad \quad \quad \mbox{in the mixed Dirichlet-Neumann case}.
\end{array} \right.
\end{split}
\end{equation}
All coefficients of $h_{\beta_B}$ are bounded in terms of $\| u\|_{\LLw(B_1^+)}$ and $\|f\|$.
\end{prop}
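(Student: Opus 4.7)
I would approach Proposition \ref{prop:approx_main} by the standard Caffarelli compactness/iteration scheme (as used e.g. in \cite{Wa03}), with the classification of homogeneous $L_s$-harmonic functions with the appropriate boundary conditions serving as the key building block. The plan unfolds in four steps.

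\textbf{Step 1 (Reduction to $g(0)=0$).} First I would absorb the value of the inhomogeneity at the origin into an explicit particular solution. A direct computation gives $L_s(x_{n+1}^{1+2s}) = (1+2s)x_{n+1}^{2s}$ for Dirichlet boundary data and $L_s(x_{n+1}^{2}) = (2-2s)x_{n+1}^{1-2s}\cdot 2$ for Neumann boundary data, explaining the final explicit terms appearing in $h_{\beta_B}$. In the mixed case, the function $w_{0,s}^{1+1/s}$ plays the analogous role because $L_s(w_{0,s}^{1+1/s})$ reproduces (up to a constant) the prescribed degenerate inhomogeneity $x_{n+1}^{1-2s}(x_n^2+x_{n+1}^2)^{-1/2}w_{0,s}$, as is seen from the calculation (\ref{eq:barrier}) with $\tau=1/s$. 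After subtracting these particular solutions, the problem reduces to $L_s \tilde{u}=\tilde{g}$ with $B\tilde{u}=0$, where $\tilde g$ vanishes to order $\alpha$ at the origin.

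\textbf{Step 2 (Dyadic iteration).} I would prove by induction the existence of a sequence $P_k$ of homogeneous global solutions of $L_s P_k=0$ with $BP_k=0$ of the form dictated by $h_{\beta_B}$ (minus the explicit particular solution), together with constants $\rho\in(0,1/2)$ and $M>0$, so that
\begin{equation*}
\|\tilde u-P_k\|_{\LLw(B_{\rho^k}^+)} \leq M\rho^{k(\beta_B+\alpha)},
\end{equation*}
and so that $P_{k+1}-P_k$ has $\LLw(B_{\rho^k}^+)$ norm controlled by $M\rho^{k(\beta_B+\alpha)}$. Base case $k=0$ is an energy estimate. Telescoping yields the claim.

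\textbf{Step 3 (Compactness dichotomy).} The inductive step is proved by contradiction. Assuming the step fails for every $M$, I would rescale $v_k(x)=(\tilde u-P_k)(\rho^k x)/\|\tilde u - P_k\|_{\LLw(B_{\rho^k}^+)}$; then $\|v_k\|_{\LLw(B_1^+)}=1$, $Bv_k=0$, and $L_sv_k=\eta_k$ with $\|\eta_k\|\to 0$ in the appropriate weighted $L^\infty$ norm by the decay of $\tilde g$ and the smallness hypothesis. Caccioppoli-type energy bounds for degenerate elliptic $A_2$-weighted operators give uniform $H^1_{\bar\omega}$ control, so a subsequence converges weakly, and strongly in $L^2_{\bar\omega}$, to a limit $v_\infty$ that solves $L_s v_\infty=0$ in $B_{1/2}^+$ with the prescribed boundary condition $Bv_\infty=0$.

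\textbf{Step 4 (Classification and decay of the limit).} The key ingredient here, and the main obstacle, is the eigenfunction classification from Section \ref{sec:Eigenf}: solutions of $L_su=0$ on $B_1^+$ with the specified boundary condition admit a spherical-harmonic-type series expansion whose homogeneous modes at order $\beta_B$ are \emph{exactly} the functions $h_{\beta_B}$ appearing in (\ref{eq:model_poly}). In particular there exists $\alpha'>0$ and $h\in\mathrm{span}\{h_{\beta_B}\}$ with $\|v_\infty-h\|_{\LLw(B_\rho^+)}\leq C\rho^{\beta_B+\alpha'}$. Choosing $\rho$ so that $C\rho^{\alpha'-\alpha}\leq \tfrac12$ and using the strong $L^2_{\bar\omega}$ convergence, we contradict the assumed failure of the iterative step, closing the induction.

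The subtle points are: (i) the mixed Dirichlet-Neumann case, where the eigenmodes are genuinely non-polynomial and the first nontrivial mode past the homogeneous degree $s$ is $w_{0,s}(s|x|-x_n)$ of homogeneity $1+s$, which matches $\beta_B$ precisely; (ii) the passage of the degenerate Neumann condition $\lim_{x_{n+1}\to 0_+}x_{n+1}^{1-2s}\partial_{n+1}v_k=0$ to the limit, which follows from the trace theory for $A_2$-weighted Sobolev spaces; (iii) the need to keep track, throughout the iteration, that $P_k$ remains a global $L_s$-harmonic function of the form (\ref{eq:model_poly}), which is automatic from Step 4 since eigenmodes at a fixed homogeneous degree form a finite-dimensional subspace.
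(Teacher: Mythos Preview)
Your proposal is correct and follows essentially the same strategy as the paper: reduction to vanishing inhomogeneity at the origin via explicit particular solutions, a compactness step producing an $L_s$-harmonic limit with the same boundary condition, approximation of that limit by the low-homogeneity eigenmodes classified in Section~\ref{sec:Eigenf}, and then iteration at dyadic scales. The paper organizes this slightly differently---it separates the compactness step (Lemma~\ref{lem:homo}) and the eigenpolynomial truncation (Lemma~\ref{lem:eigen}) into two lemmas, combines them into a single-scale improvement (Lemma~\ref{lem:it}), and then iterates directly rather than by contradiction---but the mathematical content is the same, and your $\alpha'$ is simply the spectral gap $1$ between $\beta_B$ and the next admissible homogeneity.
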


\begin{rmk}[Inhomogeneity for the Dirichlet-Neumann data]
The specific form of the inhomogeneity in the mixed Dirichlet-Neumann case stems from our definition of the spaces ${Y}_{\alpha,\epsilon}$ and the transformation behavior under the opening of the domain transformation described in Section \ref{sec:open}. Carrying out this transformation carefully leads to an inhomogeneity of the form
$$g(x)=x_{n+1}^{1-2s}r^{-1}\left(w_{0,s}(x_n,x_{n+1})f_0(x)+w_{0,s}(x_n,x_{n+1})^{\frac{2s-1}{2s}}r^{1/2+\alpha-\epsilon/2} f_1(x)\right),$$ 
where $r=(x_n^2+x_{n+1}^2)^{1/2}$, $f_0\in C^{0,\alpha}(0)$ and $f_1\in C^{0,\epsilon/2}(0)$ with $f_1(0)=0$.
This inhomogeneity however falls into the class of the inhomogeneities from Proposition \ref{prop:approx_main}. 
\end{rmk}

\begin{rmk}
In the sequel, we assume that $f(0)=0$ in the Dirichlet and Neumann cases and that $f_0(0)=0$ in the mixed Dirichlet-Neumann case. This can be achieved by subtracting the profiles $c_s f(0) x_{n+1}^{1+2s}$, $c_s f(0) x_{n+1}^{2}$ and $c_s f_0(0) w_{0,s}^{1+1/s}$.
\end{rmk}

\begin{rmk}
We point out that in the Dirichlet and Neumann boundary data cases, the approximation result of Proposition \ref{prop:approx_main} holds at all points $x_0\in \{x_{n+1}=0\}$, while in the mixed Dirichlet-Neumann case, it holds at all points $x_0\in P:=\{x_n=0=x_{n+1}\}$ (if the inhomogeneities $f$ satisfy suitable regularity assumptions at these points). This observation follows immediately from translation invariance of the problem in the corresponding directions.
\end{rmk}

In order to infer this result, we first approximate the inhomogeneous problem by the corresponding homogeneous one and then use the fact that homogeneous solutions are well-approximated by ``eigenpolynomials'' (i.e. homogeneous global solutions with corresponding Dirichlet Neumann boundary data).

\begin{lem}
\label{lem:homo}
Let $u$ be a solution of (\ref{eq:fra_DNM}) with 
\begin{align}
\label{eq:small}
\|u \|_{\LLw(B_1^+)} \leq 1, \ \|  x_{n+1}^{\frac{2s-1}{2}} g \|_{\LL(B_1^+)} \leq \delta.
\end{align}
For each $\epsilon>0$ there exists $\delta= \delta(\epsilon, n,s)>0$ such that if (\ref{eq:small}) is satisfied, then there exists a solution $h$ of the homogeneous equation
\begin{equation}
\label{eq:fra_DNM_h}
\begin{split}
L_s h & = 0 \mbox{ in } B_1^+,\\
B h &= 0 \mbox{ on } B_1',
\end{split}
\end{equation}
such that
\begin{align*}
\| u-h\|_{\LLw(B_\frac{1}{2}^+)} \leq \epsilon.
\end{align*}
Here $B\cdot$ denotes the Dirichlet, Neumann or mixed Dirichlet-Neumann operators from Proposition \ref{prop:approx_main}.
\end{lem}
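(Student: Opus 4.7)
The plan is to run a standard compactness/contradiction argument in the spirit of Caffarelli's improvement-of-flatness method, adapted to the degenerate weighted setting. Suppose the statement fails. Then there exist $\epsilon_0>0$ and sequences $u_k\in H^1_{\bar\omega}(B_1^+)$, $g_k$ satisfying \eqref{eq:fra_DNM} with $\|u_k\|_{\LLw(B_1^+)}\leq 1$ and $\|x_{n+1}^{(2s-1)/2}g_k\|_{\LL(B_1^+)}\leq 1/k$, such that for every solution $h$ to the homogeneous problem \eqref{eq:fra_DNM_h},
\[
\|u_k-h\|_{\LLw(B^+_{1/2})} > \epsilon_0.
\]
The goal is to extract a limit of $\{u_k\}$ and show that it is itself a solution of \eqref{eq:fra_DNM_h}, contradicting the above.

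The first key step is to derive a uniform $H^1_{\bar\omega}(B^+_{3/4})$ bound on $u_k$. This follows from a Caccioppoli inequality adapted to the weight $\bar\omega=x_{n+1}^{1-2s}$: testing the equation with $\eta^2 u_k$ for a cut-off $\eta\in C_c^\infty(B_{3/4})$ which respects the boundary conditions (vanishing on $B_1'$ in the Dirichlet case, no constraint in the Neumann case, vanishing near $\{x_n<0,x_{n+1}=0\}$ in the mixed case), and using Cauchy–Schwarz together with the weighted Hardy/Poincaré inequality available for $A_2$ weights, one obtains
\[
\int_{B_{3/4}^+}|\nabla u_k|^2 \bar\omega\,dx \leq C\left(\|u_k\|_{\LLw(B_1^+)}^2 + \|x_{n+1}^{(2s-1)/2}g_k\|_{\LL(B_1^+)}^2\right).
\]
In the mixed Dirichlet-Neumann case one must pair the test function argument with the integrability of $g_k$ near the edge $P$, which is guaranteed by the specific weighted form of $g$.

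The second step is compactness. Because $\bar\omega\in A_2$, the embedding $H^1_{\bar\omega}(B_{3/4}^+)\hookrightarrow L^2_{\bar\omega}(B_{3/4}^+)$ is compact (c.f. the standard Muckenhoupt–Wheeden theory). Hence, along a subsequence, $u_k\rightharpoonup u_\infty$ weakly in $H^1_{\bar\omega}(B_{3/4}^+)$ and strongly in $L^2_{\bar\omega}(B_{3/4}^+)$. Passing to the weak limit in the variational formulation $\int \nabla u_k\cdot\nabla\varphi\,\bar\omega\,dx = -\int g_k\varphi\,dx$ and using that $g_k\to 0$ in the dual norm (by Cauchy–Schwarz applied to the weighted $L^2$ bound on $g_k$), we conclude $L_s u_\infty = 0$ in $B_{3/4}^+$. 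The boundary condition $Bu_\infty=0$ is inherited: in the Dirichlet/mixed case, via the continuity of the trace operator $H^1_{\bar\omega}\to L^2(B_1')$ (where the trace makes sense on the Dirichlet portion), and in the Neumann case via the weak formulation, where the natural Neumann condition is absorbed into the weak form and hence passes to the weak limit.

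The main obstacle is handling the mixed Dirichlet-Neumann boundary condition cleanly, since the boundary type switches across the codimension-two edge $P$ where the model solution $w_{1,s}$ is only Hölder regular. The trace on the Dirichlet side $\{x_n<0\}$ is well-defined in $L^2(B_1',|x_{n+1}|^{1-2s})$ limits of $H^1_{\bar\omega}$; strong $L^2_{\bar\omega}$-convergence of $u_k$ transfers to convergence of the trace on compact subsets of $\{x_n<0,x_{n+1}=0\}$, giving $u_\infty=0$ there. The Neumann side is encoded in the variational formulation and passes by weak convergence of $\nabla u_k$ together with the fact that $\int g_k\varphi\,dx\to 0$ for admissible test functions $\varphi$ (using the prescribed weighted structure of $g_k$ to dominate by the small $\LL$-norm). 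Once $u_\infty$ is identified as an admissible competitor $h$, the strong convergence $\|u_k-u_\infty\|_{\LLw(B^+_{1/2})}\to 0$ contradicts the standing assumption, completing the proof.
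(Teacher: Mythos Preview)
Your proposal is correct and follows essentially the same compactness/contradiction argument as the paper: negate the conclusion, obtain a uniform $H^1_{\bar\omega}$ bound from the energy (Caccioppoli) estimate, use the compact embedding $H^1_{\bar\omega}\hookrightarrow L^2_{\bar\omega}$ for the $A_2$ weight to extract a limit solving the homogeneous problem, and reach a contradiction. The paper's version is terser (it does not spell out the trace/boundary discussion for the mixed case as you do), but the strategy is identical.
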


\begin{proof}
In order to infer this result, we argue by contradiction. Assuming the statement were wrong, there existed $\bar{\epsilon}>0$ and sequences of solutions $u_k$ of (\ref{eq:fra_DNM}) with inhomogeneities $g_k$ such that
\begin{align*}
\| u_k\|_{\LLw(B_1^+)}\leq 1, \ \|x_{n+1}^{\frac{2s-1}{2}} g_k\|_{\LL(B_1^+)} \leq k^{-1},
\end{align*}
but
\begin{align}
\label{eq:contra}
\| u_k - h \|_{\LLw(B_{\frac{1}{2}}^+)} \geq \bar{\epsilon},
\end{align}
for any solution $h$ of the homogeneous problem (\ref{eq:fra_DNM_h}).
However, by energy estimates, all these solutions $u_k$ satisfy
\begin{align*}
\|\nabla u_k\|_{L^2_{\bar\omega}(B_{\frac{1}{2}}^+)}\leq C(\|u_k\|_{L^2_{\bar\omega}(B_1^+)} + \|x_{n+1}^{\frac{2s-1}{2}} g_k\|_{L^2(B_1^+)}) \leq C <\infty.
\end{align*}
Hence, on the one hand $u_k \rightharpoonup \bar{u}$ in $H^1_{\bar\omega}(B_{\frac{1}{2}}^+)$, with $\bar{u}$ being a weak solution of the corresponding homogeneous problem (\ref{eq:fra_DNM_h}). As such it enjoys higher regularity properties and in particular satisfies the boundary conditions in a pointwise sense. On the other hand, by compactness up to a subsequence the functions $u_k$ converge to $\bar{u}$ strongly in $L^2_{\bar \omega}(B_{\frac{1}{2}}^+)$. This contradicts the assumption (\ref{eq:contra}).
\end{proof}

As the next step towards the proof of Proposition \ref{prop:approx_main}, we approximate solutions of (\ref{eq:fra_DNM_h}) by ``eigenpolynomials'':

\begin{lem}[Eigenpolynomial approximation]
\label{lem:eigen}
Let $h$ be a solution of (\ref{eq:fra_DNM_h}) with $\| h\|_{\LLw(B_1^+)}\leq \bar{c}$. Then there exist solutions $h_{\beta_B}$ of (\ref{eq:fra_DNM_h}), which are of the form (\ref{eq:model_poly}) (c.f. Proposition \ref{prop:approx_main}), such that for all $r\in(0,1/2)$
\begin{align*}
\|h-h_{\beta_B}\|_{\LLw(B_r^+)} \leq C(\bar{c})r^{\beta_B + 1}.
\end{align*}
All coefficients of $h_{\beta_B}$ are bounded by $C \bar{c}$ with $C=C(n,s)$.
\end{lem}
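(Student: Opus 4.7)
The plan is a direct eigenfunction expansion. In polar coordinates $x = r\theta$ with $r = |x|$ and $\theta \in \mathbb{S}^n_+$, the equation $L_s h = 0$ together with the homogeneous boundary condition $Bh = 0$ is separable: substituting the ansatz $h = r^\kappa \phi(\theta)$ reduces the problem to a weighted eigenvalue problem on $\mathbb{S}^n_+$ for the angular operator induced by $L_s$, subject to Dirichlet, Neumann, or mixed Dirichlet-Neumann conditions along $\partial \mathbb{S}^n_+ \cap \{\theta_{n+1}=0\}$. Since the weight $\theta_{n+1}^{1-2s}$ lies in the Muckenhoupt class $A_2$, the associated self-adjoint angular operator on $L^2(\mathbb{S}^n_+, \theta_{n+1}^{1-2s}\, d\sigma)$ has discrete spectrum and admits a complete orthonormal basis of eigenfunctions $\{(\kappa_j,\phi_j)\}_{j\geq 0}$. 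I would import the full spectral picture from Section \ref{sec:Eigenf} (c.f. Lemma \ref{lem:2Deigen} and Proposition \ref{prop:nD_mDN}), which identifies the low-end of the spectrum as the arithmetic-progression-type sequences $\{2s, 1+2s, 2+2s,\ldots\}$ (Dirichlet), $\{0,1,2,\ldots\}$ (Neumann), and $\{s,1+s,2+s,\ldots\}$ (mixed Dirichlet-Neumann), and identifies the corresponding homogeneous solutions $r^{\kappa_j}\phi_j(\theta)$ with $\kappa_j \leq \beta_B$ as exactly the functions in \eqref{eq:model_poly}; in particular, any further eigenvalue satisfies $\kappa_j \geq \beta_B + 1$.

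Given this, I would write $h(r\theta) = \sum_{j} c_j r^{\kappa_j}\phi_j(\theta)$ and set
\[
h_{\beta_B}(r\theta) := \sum_{\kappa_j \leq \beta_B} c_j r^{\kappa_j}\phi_j(\theta),
\]
which by construction lies in the admissible class \eqref{eq:model_poly}. A straightforward computation in polar coordinates, using $\bar\omega(B_r^+) \asymp r^{n+2-2s}$ and the orthonormality of $\{\phi_j\}$, yields the Parseval-type identities
\begin{align*}
\|h\|_{L^2_{\bar\omega}(B_1^+)}^2 = \sum_{j}\frac{c_j^2}{n+2-2s+2\kappa_j}, \qquad \|h-h_{\beta_B}\|_{L^2_{\bar\omega}(B_r^+)}^2 = \sum_{\kappa_j > \beta_B}\frac{c_j^2\, r^{n+2-2s+2\kappa_j}}{n+2-2s+2\kappa_j}.
\end{align*}
Dividing the second identity by $\bar\omega(B_r^+)$, invoking the spectral gap $\kappa_j \geq \beta_B+1$ together with $r \leq 1/2$, and comparing with the first identity produces
\[
\|h - h_{\beta_B}\|_{\LLw(B_r^+)}^2 \;\leq\; C\, r^{2(\beta_B+1)}\, \|h\|_{\LLw(B_1^+)}^2 \;\leq\; C\bar{c}^2\, r^{2(\beta_B+1)}.
\]
The estimate $c_j^2 \leq (n+2-2s+2\kappa_j)\,\|h\|_{L^2_{\bar\omega}(B_1^+)}^2$, read off from the same Parseval identity, simultaneously bounds each coefficient of $h_{\beta_B}$ by $C\bar{c}$, as required.

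The main obstacle lies not in the above orthogonality bookkeeping but in the spectral step: identifying the bottom of the spectrum and establishing the unit spectral gap $\kappa_j \geq \beta_B + 1$, particularly in the mixed Dirichlet-Neumann case. There, the ``boundary'' is the slit $\{x_n \leq 0,\, x_{n+1}=0\}$ and the eigenfunctions are no longer polynomials but involve the non-analytic profile $w_{0,s}$. The angular problem then reduces to a two-dimensional fractional Sturm-Liouville problem on the arc cut out in the $(x_n,x_{n+1})$-plane (one Dirichlet and one Neumann endpoint), which must be tensorized across the tangential directions $x''$ to produce the full eigenbasis on $\mathbb{S}^n_+$. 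Precisely this is what is worked out in Lemma \ref{lem:2Deigen} and Proposition \ref{prop:nD_mDN}. An alternative route that avoids the explicit series is a compactness/blow-up argument in the spirit of Lemma \ref{lem:homo}: normalize the approximation error on dyadic balls, extract a limit homogeneous solution whose projection onto every eigenspace with $\kappa_j \leq \beta_B$ vanishes, and derive a contradiction from the spectral gap; the same spectral input is still needed, though only at the qualitative level.
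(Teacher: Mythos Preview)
Your proposal is correct and follows essentially the same route as the paper's proof: both separate variables in polar coordinates, expand $h$ in the orthonormal basis of spherical eigenfunctions, invoke the classification of homogeneous solutions from Section~\ref{sec:Eigenf} (Propositions~\ref{prop:nD_mDN} and \ref{prop:DNnd}) to identify the spectral gap $\kappa_j \geq \beta_B + 1$, and then truncate and estimate via orthogonality. Your Parseval bookkeeping is in fact slightly more explicit than the paper's, which simply normalizes the $h_k$ in $L^2_{\bar\omega}(B_1^+)$ and quotes $\sum_k |\alpha_k|^2 \leq \bar c^2$ directly.
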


\begin{proof}
We first prove that $h$ can be decomposed as
\begin{align}
\label{eq:decomp_eig_a}
h(x)= \sum\limits_{k=0}^{\infty} \alpha_k h_k(x) \mbox{ with } \sum\limits_{k=0}^{\infty}|\alpha_k|^2 \leq \bar{c}^2,
\end{align}
where the functions $h_k(x)$ denote the homogeneous solutions from Section \ref{sec:Eigenf}. Indeed, rewriting the equation (\ref{eq:fra_DNM_h}) in (standard) polar coordinates $(r,\theta)$ with $\theta_n:= \frac{y_{n+1}}{|y|}$ yields
\begin{align*}
\theta_n^{1-2s} r^{-n} \p_r (r^{n+1-2s}\p_ r) h +  r^{-1-2s} \nabla_{S^n} \theta_n^{1-2s} \nabla_{S^n} h & =0 \mbox{ in } S^{n}_+ \times \R_+, \\
B_{S^{n-1}} h & =0 \mbox{ on } S^{n-1}\times \R_+,
\end{align*}
where $B_{S^{n-1}} $ denotes the suitably transformed boundary data operator $B$ from (\ref{eq:fra_DNM_h}). Due to the compact embedding $H^1(\theta_n^{1-2s}d\theta, S^{n}_+)\hookrightarrow L^2(\theta_n^{1-2s}d\theta, S^n_+)$ (and due to the form of the boundary data), there is an orthonormal basis of $L^2(\theta_n^{1-2s}d\theta, S^n_+)$ consisting of eigenfunctions $\{h_m\}_{m\in\N}$ of the spherical operator as well as an associated discrete set of eigenvalues $\lambda_m$, i.e., the functions $h_m$ and the values $\lambda_m$ satisfy
\begin{align*}
\nabla_{S^n} \theta_n^{1-2s} \nabla_{S^n} h_m  & = -\lambda^2_m \theta_n^{1-2s} h_m \text{ in } S^n_+,\\
B_{S^{n-1}} h &= 0 \mbox{ on } S^{n-1}.
\end{align*}
Thus, $h$ can be expanded into these eigenfunctions:
\begin{align}
\label{eq:decomp_eig1}
h(r,\theta) = \sum\limits_{m} \alpha_m(r) h_m(\theta).
\end{align}
By orthogonality the functions $\alpha_m(r)$ satisfy
\begin{align*}
r^{-n} \p_r (r^{n+1-2s}\p_ r) \alpha_m  -\lambda_m^2 r^{-1-2s} \alpha_m = 0,
\end{align*}
and are hence homogeneous. As a consequence the functions $\alpha_m(r)h_m(\theta)$ are homogeneous solutions to (\ref{eq:fra_DNM_h}) which satisfy the boundary conditions (which implies that the homogeneity $\kappa$ is larger than or equal to zero). Homogeneous solutions to (\ref{eq:fra_DNM_h}) are however exactly the ones which are classified in Propositions \ref{prop:nD_mDN} and \ref{prop:DNnd}. Combining this with (\ref{eq:decomp_eig1}) shows the existence of the claimed decomposition (\ref{eq:decomp_eig_a}).\\
Building on this decomposition we prove the claim of the lemma: As the functions $h_k$ are homogeneous, orthogonal with respect to the $L^2_{\bar\omega}(B_1)$ scalar product and normalized on $B_1^+$, the result follows by setting $h_{\beta_B}(x):= \sum\limits_{k=0}^{\beta_B} \alpha_k h_k(x)$:
\begin{align*}
\|h-h_{\beta_B}\|_{L^2_{\bar\omega}(B_r^+)}^2 &= \sum\limits_{k=\beta_B+1}^{\infty} |\alpha_k|^2 \| h_k\|_{L^2_{\bar\omega}(B_r^+)}^2 \\
&\leq r^{2(\beta_B+1) + 2(n+1)+1-2s}  C(\bar{c}).
\end{align*}
This concludes the proof.
\end{proof}

Combining the results of the previous two lemmas, we obtain the following key approximation lemma. An iteration of it yields the proof of Proposition \ref{prop:approx_main}.

\begin{lem}[Iteration]
\label{lem:it}
There exist $\delta>0$ and a radius $r_0\in (0,1)$ such that for any solution $u$ of (\ref{eq:fra_DNM}) with $\| u \|_{\LLw(B_1^+)}\leq 1$ and 
\begin{align*}
\| x_{n+1}^{\frac{2s-1}{2}}g \|_{\LL(B_1^+)} \leq \delta,
\end{align*}
there exists a sum of homogeneous functions $h_{\beta_B}$ as in (\ref{eq:model_poly}) in Proposition \ref{prop:approx_main} with all coefficients bounded by a uniform constant $C=C(n,s,\alpha)$ such that
\begin{align*}
\| u - h_{\beta_B} \|_{\LLw(B_{r_0}^+)} \leq r_0^{\beta_{B}+\alpha}.
\end{align*}
Here $\delta_0$ and $r_0$ are constants depending only on $n,s,\alpha$.
\end{lem}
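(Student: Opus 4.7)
The plan is to deduce the iteration lemma by combining Lemma \ref{lem:homo} and Lemma \ref{lem:eigen} via a triangle inequality, with a careful ordering of the parameter choices. Given $u$ as in the statement, Lemma \ref{lem:homo} produces a homogeneous solution $h$ of \eqref{eq:fra_DNM_h} with $\|u-h\|_{\tilde L^2_{\bar\omega}(B_{1/2}^+)}\leq \epsilon$, provided one imposes $\|x_{n+1}^{(2s-1)/2}g\|_{\tilde L^2(B_1^+)}\leq \delta(\epsilon,n,s)$. Lemma \ref{lem:eigen} then decomposes $h$ as $h_{\beta_B}$ plus a remainder controlled by $C(\bar c)\, r^{\beta_B+1}$ on $B_r^+$, where $h_{\beta_B}$ has the form \eqref{eq:model_poly} and $\bar c$ is a bound on the $\tilde L^2_{\bar\omega}$ norm of $h$; by the triangle inequality and $\|u\|_{\tilde L^2_{\bar\omega}(B_1^+)}\leq 1$ one may take $\bar c=2$ as soon as $\epsilon\leq 1$.

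The delicate point is that the estimate from Lemma \ref{lem:homo} lives on the unit-scale ball $B_{1/2}^+$, whereas the target is an averaged estimate on the small ball $B_{r_0}^+$. Using $\bar\omega(B_r^+)\sim r^{n+2-2s}$ one has the comparison
\begin{align*}
\|u-h\|_{\tilde L^2_{\bar\omega}(B_{r_0}^+)}\leq C\, r_0^{-(n+2-2s)/2}\,\|u-h\|_{\tilde L^2_{\bar\omega}(B_{1/2}^+)},
\end{align*}
so the compactness error gets amplified by a negative power of $r_0$. I would therefore first fix $r_0=r_0(n,s,\alpha)\in(0,1/2)$ so small that the eigenpolynomial tail satisfies $C_0\, r_0^{\beta_B+1}\leq \tfrac{1}{2} r_0^{\beta_B+\alpha}$, which is possible since $\alpha<1$; only then would I pick $\epsilon=\epsilon(n,s,\alpha)>0$ so that $C\, r_0^{-(n+2-2s)/2}\,\epsilon\leq \tfrac{1}{2} r_0^{\beta_B+\alpha}$, and set $\delta:=\delta(\epsilon,n,s)$ as given by Lemma \ref{lem:homo}.

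With these choices, a triangle inequality immediately yields
\begin{align*}
\|u-h_{\beta_B}\|_{\tilde L^2_{\bar\omega}(B_{r_0}^+)} \leq \|u-h\|_{\tilde L^2_{\bar\omega}(B_{r_0}^+)} + \|h-h_{\beta_B}\|_{\tilde L^2_{\bar\omega}(B_{r_0}^+)} \leq r_0^{\beta_B+\alpha},
\end{align*}
while the uniform bound on the coefficients of $h_{\beta_B}$ follows directly from Lemma \ref{lem:eigen} applied with $\bar c\leq 2$. The main obstacle is precisely the ordering of the parameter choices forced by the scale mismatch: $r_0$ must be fixed first, to absorb the polynomial tail of Lemma \ref{lem:eigen}, and only then can $\epsilon$ (hence $\delta$) be selected small enough to overcome the amplification factor $r_0^{-(n+2-2s)/2}$ coming from passing to averages on the small ball.
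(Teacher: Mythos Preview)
Your proof is correct and follows essentially the same strategy as the paper: combine Lemma \ref{lem:homo} and Lemma \ref{lem:eigen} via the triangle inequality, fixing $r_0$ first to absorb the eigenpolynomial tail and then choosing $\epsilon$ (hence $\delta$) accordingly. In fact you are slightly more careful than the paper: the paper writes $\|u-h\|_{\LLw(B_r^+)}\leq \epsilon$ directly for all $r\in(0,1/2)$, glossing over the amplification factor $r_0^{-(n+2-2s)/2}$ that arises when passing from the averaged norm on $B_{1/2}^+$ to the averaged norm on the smaller ball $B_{r_0}^+$; your explicit treatment of this scale mismatch closes that small gap, and the same ordering of parameter choices (first $r_0$, then $\epsilon$) makes both arguments go through.
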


\begin{proof}
For a sufficiently small $\epsilon>0$ which will be determined later, Lemma \ref{lem:homo} implies the existence of $\delta=\delta(\epsilon,n,s)>0$ and a homogeneous solution $h$ of (\ref{eq:fra_DNM_h}) such that if $\|x_{n+1}^{\frac{2s-1}{2}}g\|_{\LL(B_1^+)}\leq \delta$, we have
\begin{align*}
\|u-h\|_{\LLw(B_{\frac{1}{2}}^+)} \leq \epsilon.
\end{align*}
Moreover, Lemma \ref{lem:eigen} then yields a sum of homogeneous solutions of the form (\ref{eq:model_poly}) from Proposition \ref{prop:approx_main} such that for all $r\in(0,1/2)$
\begin{align*}
\| h- h_{\beta_B}\|_{\LLw(B_{r}^+)} \leq C r^{\beta_B + 1 },
\end{align*}
where $C$ is an absolute constant (since by the triangle inequality $\|h\|_{L^2_{\bar{\omega}}(B_{1/2}^+)}\leq 2$). Thus, the triangle inequality leads to
\begin{align*}
\|  u-h_{\beta_B}\|_{\LLw(B_r^+)} &\leq \| u-h\|_{\LLw(B_r^+)} + \|h-h_{\beta_B}\|_{\LLw(B_r^+)}\\
& \leq \epsilon + C r^{\beta_B+1 },\quad r\in (0,1/2).
\end{align*}
Choosing first $r_0\in(0,1/2)$ such that $C r_0^{\beta_B+1 } \leq \frac{1}{2} r_0^{\beta_B + \alpha }$ and then $\epsilon$ such that $ \epsilon \leq \frac{1}{2}r_0^{\beta_B + \alpha }$ gives the desired estimate. Here the constants $\delta$ and $r_0$ depend on $n,s$ and $\alpha$.
\end{proof}

Iterating this result and exploiting the structure of the right hand side, then yields the proof of Proposition \ref{prop:approx_main}:

\begin{proof}[Proof of Proposition \ref{prop:approx_main}]
It suffices to prove the following iteration statement: If $\| u\|_{\LLw(B_1^+)}\leq 1 $ and if for some sufficiently small $\delta>0$ (which can be chosen as in Lemma~\ref{lem:it})
\begin{align}
\label{eq:assume}
\|x_{n+1}^{\frac{2s-1}{2}}g\|_{\LL(B_{r_0^k})} \leq \delta r_0^{k\left( \beta_B+ \alpha - \frac{3}{2}-s \right)},
\end{align}
where $\beta_B$ is the exponent from Proposition \ref{prop:approx_main},
then there exist solutions $h_{\beta_B,k}$ which satisfy the corresponding boundary data and which are as in (\ref{eq:model_poly}) in Proposition \ref{prop:approx_main} such that
\begin{align}
\label{eq:ind_assu}
\|u-h_{\beta_B,k} \|_{\LLw(B_{r_0^k})} \leq r_0^{k(\beta_B + \alpha)}.
\end{align}
Once this is shown the remainder of the proof is similar as in \cite{Wa03} or \cite{KRSIII}. In order to derive this claim, we argue by induction. As the base case corresponds to the statement of Lemma \ref{lem:it}, it suffices to prove the step from $k$ to $k+1$. To this end, let $h_{\beta_B,k}$ be the approximating solution at step $k$. We consider
\begin{align*}
u_{k}(x):= \frac{(u-h_{\beta_B,k})(r_0^k x)}{r_0^{k(\beta_B+\alpha)}}.
\end{align*}
By the inductive assumption assumption (\ref{eq:ind_assu}) we have that $\| u_k\|_{\LLw(B_1^+)}\leq 1$. Moreover, 
\begin{align*}
\nabla \cdot x_{n+1}^{1-2s} \nabla u_k = r_0^{-k(\beta_B+\alpha)} r_0^{(1+2s)k} g(r_0^k x)=: g_{r_0}(x).
\end{align*}
By (\ref{eq:assume})
\begin{align*}
\|x_{n+1}^{\frac{2s-1}{2}}g_{r_0}\|_{\LL(B_1^+)} = r_0^{-k(\beta_B+ \alpha)} r_0^{\left(\frac{3}{2}+s\right)k}\|x_{n+1}^{\frac{2s-1}{2}} g\|_{\LL(B_{r_0^k})} \leq \delta.
\end{align*}
Hence, Lemma \ref{lem:it} is applicable and yields a solution $\tilde{h}_{\beta_B,k+1}$ which is of the form of (\ref{eq:model_poly}) such that
\begin{align*}
\|u_k - \tilde{h}_{\beta_B,k+1}\|_{\LLw(B_1^+)} \leq r^{\beta_B + \alpha }.
\end{align*}
Rescaling and setting 
\begin{align*}
h_{\beta_B,k+1}(x):= h_k(x) + r^{\beta_B+\alpha}_0 \tilde{h}_{\beta_B,k+1}\left( \frac{x}{r_0^k}\right),
\end{align*}
yields the claim. Using the geometric decay of the coefficients of $\tilde{h}_{\beta_B,k+1}$ hence allows us to find a limiting function $h_{\beta_B,\infty}(x):= \lim\limits_{k\rightarrow \infty}h_{\beta_B,k}(x)$ which is still a solution and of the desired form (\ref{eq:model_poly}) and satisfies the right boundary conditions. We remark that this iteration procedure is applicable in the setting of Proposition \ref{prop:approx_main} as scaling allows us to assume that the inductive hypotheses are satisfied.
\end{proof}

\subsection{Eigenfunctions and eigenvalues}
\label{sec:Eigenf}
 In the following two sections we discuss the form of the global homogeneous solutions to $L_sw=0$ in the upper half-plane with Dirichlet, Neumann and mixed Dirichlet-Neumann data on the boundary. We recall that these results played a crucial role for our approximation arguments in the regularity statements of Section \ref{sec:reg1} and also of Section \ref{sec:open}.\\

\subsubsection{Mixed Dirichlet-Neumann data}
\label{sec:2Deigen}
In this section we compute homogeneous solutions to the mixed Dirichlet-Neumann problem. In the following section we then deal with the Dirichlet and Neumann problems, respectively. In deducing the approximation result for the mixed Dirichlet-Neumann problem, we argue in two steps: We first compute the solutions in the two-dimensional set-up (c.f. Section \ref{sec:2Deigen}) and then exploit the translation invariance in tangential directions of our problem to infer an analogous $(n+1)$-dimensional result from that.\\

We begin by considering the mixed Dirichlet-Neumann problem for the fractional Laplacian with $s\in(0,1)$ in the two-dimensional upper half plane:
\begin{equation}
\label{eq:fracLa_lapl}
\begin{split}
(\p_1 x_{2}^{1-2s} \p_1 + \p_2 x_{2}^{1-2s} \p_2) u &= 0 \mbox{ in } \R^{2}_+,\\
u(x_1,0)&=0 \mbox{ on } \{x_1 \leq 0\} \cap (\R\times\{0\}),\\
\lim\limits_{x_2 \rightarrow 0_+} x_2^{1-2s} \p_{2} u(x_1,x_2) &= 0 \mbox{ on } \{x_1 > 0\}\cap (\R\times \{0\}). 
\end{split}
\end{equation}

In the polar coordinates $(x_1,x_2)=(r\cos\va, r\sin \va)$, $\va\in [0,\pi]$, and with a separation of variables ansatz $u(r,\va)=u_1(r)u_2(\va)$ the bulk equation in spherical variables reads
\begin{equation}
\label{eq:sep}
\begin{split}
(\p_r^2 + (2-2s)r^{-1}\p_r) u_1(r) &= \lambda^2 r^{-2} u_1(r),\\
\sin(\va)^{2s-1} \dv (\sin(\va)^{1-2s}\dv)u_2(\va) & = - \lambda^2 u_2(\va),
\end{split}
\end{equation}
(we stress that the separation ansatz is justified here, as the spherical operator forms a basis of $L^2(S^1_+, \sin(\va)^{1-2s})$ and as the separation ansatz essentially corresponds to an expansion into these eigenfunctions). The Neumann condition becomes
\begin{align*}
\lim\limits_{\va \rightarrow 0} (\sin(\va))^{1-2s} \dv u_2(\va) = 0.
\end{align*}
In the sequel we focus on the spherical part of the problem and determine the corresponding spherical eigenfunctions:

\begin{lem}[$2D$ spherical eigenfunctions]
\label{lem:2Deigen}
Let $u_2(\va)$ be a solution of 
\begin{equation*}
\begin{split}
\sin(\va)^{2s-1} \dv (\sin(\va)^{1-2s}\dv)u_2(\va) & = - \lambda^2 u_2(\va) \mbox{ for } \va \in (0,\pi),\\
u_2(\pi)&=0,\\
\lim\limits_{\va \rightarrow 0} (\sin(\va))^{1-2s} \dv u_2(\va) &= 0.
\end{split}
\end{equation*}
Then, the eigenvalue $\lambda^2$ has the form
\begin{align*}
\lambda^2 = k(k+1)-s(s-1) \mbox{ for some } k \in \N.
\end{align*}
The associated spherical eigenfunction is given as
\begin{align*}
 u_{2}(\va)=C \left(\frac{1+\cos(\va)}{2}\right)^s F\left(1-s-k,k+s,1+s;\frac{\cos(\va)+1}{2}\right),
\end{align*}
where $F$ is a hypergeometric function. Moreover, the hypergeometric function $F(1-s-k,k+s,1+s;z)$ is a polynomial of degree $k$ in $z$.
\end{lem}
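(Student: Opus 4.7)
The plan is to transform the eigenvalue problem into a Gauss hypergeometric equation and to identify the eigenvalues and eigenfunctions by matching the behaviour at the two endpoints.

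First, I would introduce the new variable $z := \frac{1+\cos\va}{2}$, which maps $\va \in (0,\pi)$ diffeomorphically onto $z \in (0,1)$, with $\va = \pi$ corresponding to $z = 0$ and $\va = 0$ to $z = 1$. Using $\sin\va = 2\sqrt{z(1-z)}$ and $dz/d\va = -\sin\va/2$, a direct chain-rule computation yields
\begin{equation*}
\sin^{2s-1}\va\, \dv\bigl(\sin^{1-2s}\va\, \dv u\bigr) = z(1-z)u''(z) + (1-s)(1-2z)u'(z),
\end{equation*}
so that the eigenvalue equation becomes the hypergeometric-type ODE
\begin{equation*}
z(1-z)u''(z) + (1-s)(1-2z)u'(z) + \lambda^2 u(z) = 0,
\end{equation*}
with regular singular points at $z = 0$ and $z = 1$.

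Next, I would analyse the indicial exponents. Near $z = 0$ the indicial equation reads $\alpha(\alpha - s) = 0$, so the two local exponents are $0$ and $s$; the Dirichlet condition $u_2(\pi) = 0$ selects the $\alpha = s$ branch. I would therefore substitute $u(z) = z^s v(z)$, and a short computation shows that $v$ solves the standard Gauss equation
\begin{equation*}
z(1-z)v''(z) + \bigl[(s+1) - 2z\bigr]v'(z) + \bigl[s(s-1) + \lambda^2\bigr] v(z) = 0,
\end{equation*}
which is of hypergeometric form with $c = s+1$, $a+b = 1$ and $ab = -[s(s-1) + \lambda^2]$.

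Then I would impose the weighted Neumann condition at $z = 1$, i.e.\ $\va = 0$. The indicial exponents at $z = 1$ are again $0$ and $s$, so the two local solutions of the Gauss equation behave like $1$ and $(1-z)^s$; using $\sin^{1-2s}\va \sim \va^{1-2s}$ and $1-z \sim \va^2/4$ as $\va \to 0_+$, one checks that only the branch analytic at $z = 1$ satisfies $\lim_{\va \to 0_+} \sin^{1-2s}\va\, \dv u = 0$. Simultaneous analyticity of $v$ at both $z = 0$ and $z = 1$ forces the hypergeometric series to terminate, which happens precisely when one of $a$, $b$ is a non-positive integer $-k$. Combined with $a + b = 1$ this gives $\{a,b\} = \{-k, k+1\}$ and $ab = -k(k+1)$; matching with $ab = -s(s-1) - \lambda^2$ produces the quantization
\begin{equation*}
\lambda^2 = k(k+1) - s(s-1), \qquad k\in \mathbb{N},
\end{equation*}
and the eigenfunctions take the form $u_2(\va) = C z^s v_k(z)$ with $v_k$ a polynomial of degree $k$ in $z$; the specific representation stated in the lemma is then recovered via a standard Euler/Pfaff transformation of the hypergeometric function, which simultaneously exhibits the polynomial degree.

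The main technical obstacle is the rigorous matching of the weighted Neumann condition at the degenerate endpoint $\va = 0$ with the indicial analysis at $z = 1$: the weight $\sin^{1-2s}\va$ vanishes for $s < 1/2$ but blows up for $s > 1/2$, and the correct local branch at $z = 1$ must be identified via Kummer's connection formulas in order to exclude the singular $(1-z)^s$ branch from the eigenfunction.
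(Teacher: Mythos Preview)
Your approach is correct and essentially parallels the paper's proof. Both arguments perform the change of variable $z=\tfrac{1+\cos\va}{2}$, reduce to a hypergeometric equation, use the Dirichlet condition at $z=0$ to select the $z^s$-branch, and then impose the weighted Neumann condition at $z=1$ to force a quantization of $\lambda^2$. The only organisational difference is that you factor out $z^s$ first via indicial analysis and obtain the Gauss equation for $v$ with the clean parameters $c=1+s$, $a+b=1$, whereas the paper writes the general two-parameter solution of the hypergeometric equation (with $c=1-s$), eliminates one branch via the Dirichlet condition, and then analyses the Neumann condition by explicitly differentiating and invoking Kummer's connection formula to identify the Gamma-factor that must vanish. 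Your indicial argument at $z=1$ is equivalent to, and somewhat more streamlined than, the paper's explicit computation. The paper additionally supplies a short recursion argument to exhibit the polynomial degree; your appeal to termination of the series is the same observation.

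One small remark: your closing appeal to an ``Euler/Pfaff transformation'' to match the precise hypergeometric parameters displayed in the lemma is not actually needed (and does not directly produce the stated form). Your argument already yields $u_2=C z^s F(-k,k+1;1+s;z)$, which is the unique (up to scalar) polynomial eigenfunction; the parameters $(1-s-k,\,k+s)$ printed in the statement do not satisfy $ab=-k(k+1)$ and appear to be a typographical slip in the paper. Your output $F(-k,k+1;1+s;z)$ is the correct representation and matches what the paper's own proof in fact derives.
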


\begin{proof}
In order to prove the lemma, we consider the following change of variables: We set $x=\cos(\va)$ and define $u_2(\va)=:v(\cos(\va))$. In these coordinates the equations become
\begin{align*}
(1-x^2)v''(x) + (2s-2)xv'(x) + \lambda^2 v(x)&=0 \mbox{ for } x\in[-1,1],\\
v(-1)&=0,\\
\lim\limits_{x\rightarrow 1_-}(1-x^2)^{1-s}v'(x)&=0.
\end{align*}
This equation has three regular singular points at $x=\pm 1, \infty$.
By further defining $z=\frac{x+1}{2}$ and $v(x)=:w(z)$, we transform this into a standard hypergeometric equation with the three regular singular points $z=0,1,\infty$:
\begin{equation}
\label{eq:eqtrafo}
\begin{split}
z(1-z)w''(z) + ((1-s)+2(s-1)z)w'(z) + \lambda^2 w(z) & = 0 \mbox{ for } z\in [0,1],\\
w(0) &= 0,\\
\lim\limits_{z\rightarrow 1_-}(z(1-z))^{1-s}\p_z w(z) & = 0.
\end{split}
\end{equation}
The general solution of the bulk equation is given as
\begin{align}
\label{eq:sol}
w(z) & = A F(a,b,c;z) + B z^{1-c} F(a+1-c,b+1-c,2-c;z), \ A,B \in \R,
\end{align}
where $F$ denotes the hypergeometric function and where
\begin{align*}
a&= \frac{1}{2}(1-2s + \sqrt{4\lambda^2 + 4s^2 - 4s+1}),\\
b&= \frac{1}{2}(1-2s - \sqrt{4\lambda^2 + 4s^2 - 4s+1}),\\
c&=1-s.
\end{align*}
As $F(a,b,c;0)=1$, the Dirichlet boundary conditions immediately imply that $A=0$ and thus, for some $B\in \R$,
\begin{align*}
w(z) & = B z^{1-c} F(a+1-c,b+1-c,2-c;z) = B z^{s}F(a+s,b+s,1+s;z).
\end{align*}
In order to determine the possible values of $\lambda$, we now use the Neumann condition. To this end, we recall the following relations for hypergeometric functions:
\begin{equation}
\label{eq:hyp}
\begin{split}
\p_z F(a,b,c;z)&= \frac{ab}{c}F(a+1,b+1,c+1;z),\\
F(a,b,c;z) &= \frac{\Gamma(c)\Gamma(c-a-b)}{\Gamma(c-a)\Gamma(c-b)} F(a,b,a+b-c+1;1-z) \\
& \quad + (1-z)^{c-a-b} \frac{\Gamma(c)\Gamma(a+b-c)}{\Gamma(a)\Gamma(b)} F(c-a,c-b,c-a-b+1;1-z).
\end{split}
\end{equation}
Thus, $\p_z w(z)$ turns into
\begin{align}
\label{eq:Neu}
\p_z w(z) = s z^{s-1} F(a+s,b+s,1+s;z) + z^{s}\frac{ab}{1+s}F(a+s+1,b+s+1,2+s;z).
\end{align}
We consider the two terms separately. For the first contribution we note that
\begin{align*}
F(a+s,b+s,1+s;z) & = \frac{\Gamma(1+s)\Gamma(s)}{\Gamma(1-a)\Gamma(1-b)} F(a+s,b+s,1-s;1-z)\\
& \quad  + (1-z)^{s}\frac{\Gamma(1+s)\Gamma(1-s)}{\Gamma(a+s)\Gamma(b+s)}F(1-a,1-b,1+s;1-z).
\end{align*}
In the relevant Neumann derivative (coming from the equation) the previous expression is weighted with the vanishing factor $(1-z)^{1-s}$ (we recall that $s\in(0,1)$). Since the prefactors in the expression for $F(a+s,b+s,1+s;z)$ are all finite and as $F(a,b,c;0)=1$, the first part in (\ref{eq:Neu}) always satisfies the boundary conditions. As a consequence, we turn to the second contribution from (\ref{eq:Neu}). We have:
\begin{align*}
& F(a+s+1,b+s+1,2+s;z) = \frac{\Gamma(2+s)\Gamma(1+s)}{\Gamma(1-a)\Gamma(1-b)} F(1+a+s,1+b+s,2-s;1-z)\\
& \quad  + (1-z)^{s-1}\frac{\Gamma(2+s)\Gamma(2-s)}{\Gamma(1+a+s)\Gamma(1+b+s)}F(2-a,2-b,2+s;1-z).
\end{align*}
Similarly as above, the first summand vanishes in the limit $z\rightarrow 0$ if it is multiplied with the weight $(1-z)^{1-s}$. Therefore, it suffices to consider the second term which does not vanish in the limit $z\rightarrow 0$ unless the prefactor
\begin{align*}
\frac{\Gamma(2+s)\Gamma(2-s)}{\Gamma(1+a+s)\Gamma(1+b+s)}
\end{align*}
vanishes. This is the case iff at least one of the $\Gamma$-functions in the denominator explodes (i.e. iff at least one of the arguments of the $\Gamma$-functions in the denominator is a negative integer). Plugging in the definition of $a,b$, this is the case iff 
\begin{align*}
\frac{1}{2} \pm \frac{1}{2} \sqrt{1+4 \lambda^2 + 4s^2 - 4s} = - k, \ k\in \N.
\end{align*}
This however is equivalent to 
\begin{align*}
\lambda^2 = k^2 + k +s-s^2.
\end{align*}
The result on the eigenfunction representation in terms of the corresponding hypergeometric functions therefore follows from inserting these values of $\lambda$ into the expressions for $a,b$.\\
We prove that $F(1-s-k,k+s,1+s,z)$ is a polynomial of degree $k$: To this end, we first set $w(z)=z^{s}h(z)$ (where $w(z)$ is the solution of the transformed equation (\ref{eq:eqtrafo})). Inserting this into the equation for $w$, we deduce that $h(z)$ satisfies
\begin{align*}
z(1-z)h''(z) +(1+s-2z)h'(z)+k(k+1)h(z)=0.
\end{align*}
Making a series ansatz, $h(z)=\sum\limits_{m=0}^{\infty} a_m z^m$ thus yields
\begin{align*}
0&=\sum\limits_{m=0}^{\infty}[\chi_{\{m\geq 1\}}m(m+1)a_{m+1} - \chi_{\{m\geq 2\}}m(m-1)a_m+(1+s)(m+1)a_{m+1} \\
& \quad - 2\chi_{\{m\geq 1\}}m a_m + k(k+1)a_m]z^m.
\end{align*}
For a prescribed non-zero value of $a_0$ this corresponds to the following system of equations for the coefficients $a_m$:
\begin{align*}
(1+s)a_1 + k(k+1)a_0 & = 0,\\
(4+2s)a_2+(k(k+1)-2)a_1 & = 0,\\
(m^2+2m+ms+1+s)a_{m+1} - (m(m+1)-k(k+1))a_m & =0. 
\end{align*}
As $m^2+2m+ms+1+s\neq 0$ for $m\in \N$, $s\in (0,1)$, this system is up to order $m=k$ uniquely solvable for given $a_0$. Moreover, we note that it is possible to choose $a_m=0$ for all $m\geq k+1$. This yields the claimed polynomial form of the hypergeometric function $F(1-s-k,k+s,1+s,z)$.
\end{proof}

As a corollary of Lemma \ref{lem:2Deigen} we obtain the following result on the structure of 2D homogeneous solutions to (\ref{eq:fracLa_lapl}):

\begin{cor}[$2D$ homogeneous solutions]
\label{cor:2D_eigen}
Let $u:\R^{2}_+ \rightarrow \R$ be a $\kappa$-homogeneous solution of (\ref{eq:fracLa_lapl}) with $\kappa\geq 0$. Then,
\begin{align*}
\kappa = k+s, \mbox{ for some } k\in \N,
\end{align*}
and $u$ has the form
\begin{align*}
u(x) = C |x|^{k+s} \left(\frac{1+\frac{x_1}{|x|}}{2}\right)^s F\left(1-s-k,k+s,1+s;\frac{\frac{x_1}{|x|}+1}{2}\right),
\end{align*}
where $F$ denotes the hypergeometric function from \ref{lem:2Deigen}.
By the observation on the polynomial structure of the relevant hypergeometric function $F$, this can also be rewritten as
\begin{align*}
u(x)= w_{0,s}(x_1,x_2)P_k(x_1,|x|),
\end{align*}
where $P_k$ is a polynomial and $w_{0,s}$ is the function from Section \ref{sec:asymp}.
\end{cor}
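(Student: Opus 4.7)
The plan is to reduce the corollary to Lemma \ref{lem:2Deigen} via the standard separation-of-variables argument already employed in the proof of Lemma \ref{lem:eigen}. I would first write $u$ in polar coordinates as $u(r,\varphi)=u_1(r)u_2(\varphi)$, which, as in the decomposition \eqref{eq:decomp_eig1}, is justified because the spherical part of $L_s$ admits an orthonormal basis of $L^2(\sin(\varphi)^{1-2s}d\varphi, S^1_+)$ consisting of the eigenfunctions characterized in Lemma \ref{lem:2Deigen}. Given the prescribed boundary conditions, $u_2$ must be a spherical eigenfunction associated to some eigenvalue $\lambda^2$, and by Lemma \ref{lem:2Deigen} we have $\lambda^2 = k(k+1) - s(s-1)$ for some $k\in\N$, with the claimed hypergeometric form.

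Next, I would identify $\kappa$ by inserting the $\kappa$-homogeneous ansatz $u_1(r)=r^{\kappa}$ into the radial equation in \eqref{eq:sep}, which reduces to the algebraic relation
\begin{equation*}
\kappa^2 + (1-2s)\kappa = \lambda^2 = k(k+1) - s(s-1).
\end{equation*}
A direct computation shows that the discriminant of this quadratic simplifies to $(2k+1)^2$, so its two roots are $\kappa = k+s$ and $\kappa = s-k-1$. Since $s\in(0,1)$ and $k\in\N$, only the first root is compatible with the hypothesis $\kappa\geq 0$, forcing $\kappa = k+s$.

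Combining $u_1(r)=r^{k+s}$ with the explicit expression for $u_2(\varphi)$ from Lemma \ref{lem:2Deigen}, and writing $\cos(\varphi)=x_1/|x|$, yields the first displayed formula for $u$. Finally, to obtain the polynomial reformulation, I would use the identity
\begin{equation*}
|x|^{k+s}\left(\tfrac{1+x_1/|x|}{2}\right)^{s} = 2^{-s}\,|x|^{k}\,(|x|+x_1)^{s} = 2^{-s}\,w_{0,s}(x_1,x_2)\,|x|^{k},
\end{equation*}
together with the fact, already established in Lemma \ref{lem:2Deigen}, that $F(1-s-k,k+s,1+s;\,z)$ is a polynomial of degree $k$ in $z$. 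Setting $z=(|x|+x_1)/(2|x|)$ and multiplying through by $|x|^k$ clears all denominators and produces a polynomial $P_k(x_1,|x|)$ of degree $k$, so that $u = c_{s,k}\,w_{0,s}(x_1,x_2)\,P_k(x_1,|x|)$. The only mildly subtle step is checking the discriminant identity and ruling out the negative-homogeneity root; everything else is bookkeeping from Lemma \ref{lem:2Deigen}.
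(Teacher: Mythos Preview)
Your proof is correct and follows essentially the same approach as the paper: the paper's proof is a one-liner invoking Lemma \ref{lem:2Deigen}, the identity $x_1=r\cos(\varphi)$, and the radial equation, and you have simply spelled out the details of each of these steps (the discriminant computation, the exclusion of the root $\kappa=s-k-1$, and the polynomial rewriting via $w_{0,s}$). There is no substantive difference in strategy.
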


\begin{rmk}
\label{rmk:DNm}
For later reference, we note that  for instance for $k=0,1$ we have
\begin{align*}
u_0(x) &= c_0 w_{0,s}(x_1,x_2), \\
u_1(x) & = c_1 w_{0,s}(x_1,x_2)\left(s|x|-x_n\right).
\end{align*}
Here we used the series approach from the proof of Lemma \ref{lem:2Deigen} to compute the coefficients of $u_1$. We note that these functions correspond to the ones from the asymptotic expansion in Proposition \ref{prop:asymp1}.
\end{rmk}

\begin{rmk}[Orthogonality]
We note that the spherical eigenfunctions are pairwise orthogonal with respect to the $L^2((\sin(\varphi))^{1-2s} d \varphi, [0,\pi])$ scalar product. This entails that the homogeneous solutions from Corollary \ref{cor:2D_eigen} are orthogonal with respect to the $L^2(x_{2}^{1-2s}dx)$ scalar product on $B_1^+\subset \R^{2}_+$.
\end{rmk}

\begin{proof}
The corollary is an immediate consequence of the form of the solutions in Lemma \ref{lem:2Deigen}, the fact that $x_1=r\cos(\varphi)$ and of the equation for the radial component of $u_k$.
\end{proof}

Relying on the two-dimensional result from above, we now proceed to determining the full set of $(n+1)$-dimensional homogeneous solutions for the mixed Dirichlet-Neumann problem.

\begin{prop}[Homogeneous solutions in $\R^{n+1}_+$]
\label{prop:nD_mDN}
Let $u: \R^{n+1}_+ \rightarrow \R$ be a $\kappa$-homogeneous solution of
\begin{equation}
\label{eq:fr_eig}
\begin{split}
\nabla \cdot x_{n+1}^{1-2s} \nabla u & = 0 \mbox{ in } \R^{n+1}_+,\\
u & = 0 \mbox{ on } \{x_{n}\leq 0\}\cap (\R^{n}\times \{0\}),\\
\lim\limits_{x_{n+1}\rightarrow 0} x_{n+1}^{1-2s} \p_{n+1} u & = 0 \mbox{ on } \{x_{n}\geq 0\}\cap (\R^{n}\times \{0\}),
\end{split}
\end{equation} 
with $\kappa \geq 0$.
Then, the possible homogeneities are of the form
\begin{align*}
\kappa= s+ \ell \mbox{ with } \ell \in \N.
\end{align*}
The corresponding $\kappa$ homogeneous solutions are
\begin{align*}
u_{\kappa}(x)= \sum\limits_{\substack{\kappa=m+d+s\\ d-2k \geq 0}} |(x_n,x_{n+1})|^{2k }P_{d-2k}(x'')u_{m}(x),
\end{align*}
where $d,k\in \N\cup\{0\}$,
\begin{align*}
u_m(x)=|(x_n,x_{n+1})|^{m}\left(\frac{|(x_n,x_{n+1})|+x_n}{2}\right)^s F\left(1-s-m,m+s,1+s;\frac{\frac{x_n}{|x|}+1}{2}\right),
\end{align*} 
and $P_{l}(x'')$ denotes a $l$-homogeneous polynomial.
In particular, a general $(n+1)$-dimensional eigenfunction can also be represented as
\begin{align*}
\tilde{u}_{\kappa}(x) = \sum\limits_{k+d=\kappa} w_{s,0}(x_n,x_{n+1}) P_k(x_n,\sqrt{x_n^2+x_{n+1}^2})P_d(x),
\end{align*}
where $P_k$ is one of the $k$-homogeneous polynomials from Corollary \ref{cor:2D_eigen} and $P_d(x'')$ denotes a polynomial of degree $d$.
\end{prop}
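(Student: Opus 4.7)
The strategy is to reduce the $(n+1)$-dimensional problem to the two-dimensional classification in Corollary \ref{cor:2D_eigen}, exploiting the invariance of both $L_s$ and the mixed Dirichlet--Neumann boundary conditions under tangential translations in $x''$. The key observation is that for any $\kappa$-homogeneous solution $u$ of \eqref{eq:fr_eig}, each tangential derivative $\p_j u$ with $j\in\{1,\dots,n-1\}$ is again a $(\kappa-1)$-homogeneous solution of the same problem, so an induction on $\kappa$ becomes available once a base case is established.

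For the base case I would first establish that no nontrivial $\kappa$-homogeneous solution exists for $\kappa<s$, and that every $s$-homogeneous one is a multiple of $w_{0,s}(x_n,x_{n+1})$. This follows from the spherical eigenfunction argument underlying Lemma \ref{lem:eigen}: on $S^n_+$ with the weight $\theta_n^{1-2s}$ and mixed Dirichlet--Neumann data, the weighted Laplace--Beltrami operator admits a discrete spectrum whose smallest eigenvalue corresponds (by restriction to the two-dimensional slice $\{x''=0\}$ together with Lemma \ref{lem:2Deigen}) precisely to the homogeneity $\kappa=s$. Applying this iteratively to tangential derivatives of a general $\kappa$-homogeneous $u$ forces its tangential Taylor expansion
\[
u(x)=\sum_{|\alpha|\leq\kappa-s}\frac{x''^{\,\alpha}}{\alpha!}\,v_\alpha(x_n,x_{n+1}),\qquad v_\alpha:=\p_{x''}^{\alpha}u\big|_{x''=0},
\]
to terminate, with each $v_\alpha$ a $(\kappa-|\alpha|)$-homogeneous function of $(x_n,x_{n+1})$ satisfying the mixed Dirichlet--Neumann conditions, and hence, by Corollary \ref{cor:2D_eigen}, a linear combination of the 2D eigenfunctions $u_m$.

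Inserting this ansatz into the equation and using $L_s u = x_{n+1}^{1-2s}\Delta'' u + L_s^{(2)} u$ yields a recursion
\[
L_s^{(2)} v_\alpha = -x_{n+1}^{1-2s}\!\!\sum_{|\beta|=2,\,\beta\leq \alpha}\tfrac{\alpha!}{(\alpha-\beta)!\,\beta!}\,v_{\alpha-\beta}
\]
relating Taylor coefficients whose orders differ by two. Both its solvability and the explicit polynomial-in-$x''$ form of the solution rely on the direct computation
\[
L_s^{(2)}\!\bigl(r^{2k}u_m\bigr) = c_{k,m,s}\,x_{n+1}^{1-2s}\,r^{2(k-1)}u_m,\qquad c_{k,m,s}\neq 0\ \text{for}\ k\geq 1,
\]
with $r^2=x_n^2+x_{n+1}^2$, obtained by combining $L_s^{(2)} u_m=0$ with the Euler identity $(x_n\p_n+x_{n+1}\p_{n+1})u_m=(m+s)u_m$. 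Matching the right-hand side of the recursion against this identity iteratively produces the announced basis $|(x_n,x_{n+1})|^{2k}P_{d-2k}(x'')u_m$, and Corollary \ref{cor:2D_eigen} applied to the leading coefficient forces the admissible homogeneities to be exactly $\kappa=s+\ell$ with $\ell\in\N$.

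The main obstacle is the rigorous treatment of the base case in the $(n+1)$-dimensional weighted setting---verifying that the mixed Dirichlet--Neumann spherical operator on $S^n_+$ admits no eigenvalues strictly below that of $w_{0,s}$---and tracking the consistency of the recursion so that the coefficients $v_\alpha$ produced genuinely assemble into a single $u$ of the claimed form, without the appearance of extraneous non-polynomial $x''$-dependence.
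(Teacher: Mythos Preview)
Your approach is genuinely different from the paper's and, modulo the gaps you flag, is a viable route. The paper does \emph{not} argue by tangential differentiation and induction on $\kappa$. Instead it writes the equation in coordinates $(x'',r,\varphi)$ with $(x_n,x_{n+1})=(r\cos\varphi,r\sin\varphi)$, expands $u=\sum_m c_m(x'',r)\,u_m(\varphi)$ in the two-dimensional spherical eigenfunctions, and then takes a \emph{tangential Fourier transform} $c_m(x'',r)\mapsto \hat c_m(\xi'',r)$. The resulting ODE in $r$ is a modified Bessel equation; the growth/decay of $I_\nu$ and $K_\nu$, combined with the homogeneity bound $|\hat c_m(\xi'',r)|\lesssim \max\{r,|\xi''|^{-1}\}^{\kappa+n-1}$, forces $\hat c_m$ to be supported at $\xi''=0$. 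Hence $c_m$ is a polynomial in $x''$, and a short recursion among the coefficients of $\delta^{(\alpha)}_{\{\xi''=0\}}$ yields the stated basis and the quantisation $\kappa=s+\ell$. The virtue of this route is that it completely bypasses your base case: polynomial dependence on $x''$ is obtained directly from the Bessel asymptotics, with no need to identify the ground state of the $S^n_+$ operator.

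Two concrete issues in your write-up. First, your recursion has the wrong orientation: with $v_\alpha=\partial_{x''}^\alpha u|_{x''=0}$ and $u=\sum_\alpha \tfrac{(x'')^\alpha}{\alpha!}v_\alpha$, matching coefficients in $L_s u=0$ gives
\[
L_s^{(2)} v_\alpha \;=\; -\,x_{n+1}^{1-2s}\sum_{i=1}^{n-1} v_{\alpha+2e_i},
\]
so the right-hand side involves \emph{higher}-order coefficients, not $v_{\alpha-\beta}$. This matters because it is precisely the top-order $v_\alpha$ (those with $|\alpha|$ maximal) that satisfy $L_s^{(2)}v_\alpha=0$ and are therefore classified by Corollary~\ref{cor:2D_eigen}; the lower-order ones do \emph{not} solve the homogeneous two-dimensional equation, contrary to your sentence ``and hence, by Corollary~\ref{cor:2D_eigen}, a linear combination of the 2D eigenfunctions $u_m$''. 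They are determined inductively from the top down via your identity $L_s^{(2)}(r^{2k}u_m)=2k(2k+2m+1)\,x_{n+1}^{1-2s}r^{2(k-1)}u_m$. Second, for the base case you need slightly more than ``no eigenvalues below that of $w_{0,s}$'': to conclude that sufficiently many tangential derivatives vanish you must also exclude the negative-homogeneity branch, i.e.\ show that for $n\ge 2$ the admissible homogeneities lie in $(-\infty,s-n]\cup[s,\infty)$, so that some $\kappa-|\alpha|$ lands in the gap $(s-n,s)$. This follows once you know $w_{0,s}|_{S^n_+}$ is the ground state (positive eigenfunction argument) together with the radial relation $\kappa(\kappa+n-2s)=\lambda^2$, but it should be stated explicitly.
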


\begin{proof}
We begin by introducing new coordinates $(x'',r,\va)$ which are defined as
\begin{align*}
(x'',x_n,x_{n+1}) = (x'',r\cos(\va), r\sin(\va)).
\end{align*}
Dividing our equation (\ref{eq:fr_eig}) by $x_{n+1}^{1-2s}$ and rewriting it in the new variables leads to
\begin{align}
\label{eq:fr_eig1}
(\Delta''+  (\p_r^2 + (2-2s)r^{-1}\p_r)  +  r^{-2}(\sin(\varphi)^{2s-1} \partial_{\va}(\sin(\varphi)^{1-2s})\p_{\varphi})) u=0.
\end{align}
Let $u_m(\va)$ denote the functions from Lemma \ref{lem:2Deigen}. As they form an orthogonal basis of $L^2(S^1_+, \sin(\va)^{1-2s}d\va)$ (as they are eigenfunctions of an associated Sturm-Liouville operator), we obtain an expansion
\begin{align}
\label{eq:series_m}
u(x'',r,\va) = \sum\limits_{m=1}^{\infty} c_m(x'',r)u_m(\va).
\end{align}
By orthogonality of the functions $u_m(\va)$, each of the functions $c_m(x'',r)$ solves
\begin{align}
\label{eq:cm0}
(-\D'' + r^2(\p_r^2 + (2-2s) r^{-1}\p_r ) - \lambda_m^2) c_m(x'',r) = 0,
\end{align}
with $\lambda_m^2 = m^2 +m-s^2+s$. Moreover, again by orthogonality and by the homogeneity of $u$ each of the functions $c_m$ is $\kappa$ homogeneous.\\
Since the problem is translation invariant in tangential variables $x''$, we carry out a Fourier transform of $c_m(x'',r)$ (interpreted as a Fourier transform on tempered distributions) in the tangential variables $x''$ and denote the partial Fourier transform of $c_m$ by 
\begin{align*}
\hat{c}_m(\xi'',r).
\end{align*}
We note that by the $\kappa$-homogeneity the functions $c_m$, we obtain that
\begin{align}
\label{eq:hom5}
\hat{c}_m(\lambda^{-1}\xi'',\lambda r) = \lambda^{\kappa+n-1} \hat{c}_m(\xi'', r).
\end{align}
Therefore,
\begin{align}
\label{eq:hom4}
|\hat{c}_{m}(\xi'',r)| \leq C \max\{r^{\kappa+n-1},|\xi''|^{-\kappa-n+1}\}.
\end{align}
After the Fourier transform the equation (\ref{eq:cm0}) for $c_m$ reads
\begin{align}
\label{eq:cm}
(-r^2|\xi''|^2 + r^2(\p_r^2 + (2-2s) r^{-1}\p_r ) - \lambda_m^2) \hat{c}_m(\xi'',r) = 0,
\end{align}
with $\lambda_m^2 = m^2 +m-s^2+s$.  \\
Considering the ansatz, $\hat{c}_m(\xi'',r)= r^{\frac{2s-1}{2}} f_m(\xi'',r)$, we deduce that $f_m(\xi'',r)$ satisfies a modified Bessel equation:
\begin{align*}
r^2 f''_m(\xi'',r) + r f'_m(\xi'',r) - f_m(\xi'',r) \left(\lambda_m^2 + \frac{(1-2s)^2}{4} +r^2 |\xi''|^2 \right)  = 0.
\end{align*}
A fundamental system of this ODE is given by the modified Bessel functions 
\begin{align*}
f_m(\xi'',r) = d_1(\xi'') I_{\frac{1+2m}{2}}(|\xi''|r) + d_2(\xi'') K_{\frac{1+2m}{2}}(|\xi''|r) .
\end{align*}
These functions satisfy the following asymptotics:
\begin{align*}
I_{\nu}(x) &\sim (\Gamma(\nu+1))^{-1} \left( \frac{x}{2} \right)^{\nu}, \ K_{\nu}(x) \sim \frac{1}{2} \Gamma(\nu)\left( \frac{x}{2} \right)^{-\nu} \mbox{ for } x \rightarrow 0 \mbox{ and } \nu\geq 0,\\
I_{\nu}(x) &\sim \frac{e^{x}}{\sqrt{2\pi x}}, \ K_{\nu}(x) \sim \frac{e^{-x}\sqrt{\pi}}{\sqrt{2x}} \mbox{ for } |x| \rightarrow \infty. 
\end{align*}
Combining this asymptotic behavior with the bounds from (\ref{eq:hom4}), we infer that $f_m(\xi'',r)$ is only supported in $|\xi''|=0$ (this follows by considering the limits $r\rightarrow 0$, and $r \rightarrow \infty$ for fixed $\xi''\neq 0$). 
Thus, 
\begin{align}
\label{eq:sumDN}
\hat c_{m}(\xi'',r) = \sum\limits_{k=0}^{\infty}\sum\limits_{|\alpha|=k}  c_{m,\alpha}(r)\delta^{(\alpha)}_{\{\xi''=0\}}(\xi'').
\end{align}
Here $\alpha= (\alpha_1,\dots,\alpha_{n-1})$ is a multi-index with $\alpha_j\in \N\cup\{0\}$ for all $j\in\{1,\dots,n-1\}$, and $\delta^{(\alpha)}_{\{\xi''=0\}}(\xi'')$ denotes the distribution which is obtained by taking $\alpha$ distributional derivatives of the delta distribution $\delta_{\{\xi''=0\}}$. Using the homogeneity of $\hat{c}_m$ (c.f. (\ref{eq:hom5})), we deduce that $c_{m,\alpha}( r) $ is $\kappa+n-1-k$ homogeneous. However the Dirichlet data (which hold on part of the domain) require that $k\leq \kappa+n-1$, whence we observe that the series in (\ref{eq:sumDN}) is a finite sum. We use $K_0(m)$ to denote the largest positive integer less than $\kappa+n-1$ with $c_{m,K_0(m)}(r)\neq 0$. Then,
\begin{align}
\label{eq:sumDN}
\hat{c}_{m}(\xi'',r) = \sum\limits_{k=0}^{K_0(m)}\sum\limits_{|\alpha|=k} c_{m,\alpha}(r) \delta^{(\alpha)}_{\{\xi''=0\}}(\xi'').
\end{align}
Here $[\cdot]$ denotes the floor function.\\
We plug this expression back into (\ref{eq:cm}) and test it with a smooth, compactly supported function $\va$. This yields
\begin{equation}
\label{eq:recur_DN}
\begin{split}
&\sum\limits_{k=0}^{K_0(m)}\sum\limits_{|\alpha|=k} [- [(D^{\alpha}_{\xi''}(|\xi''|^2 \va))|_{\xi''=0}] r^2  c_{m,\alpha}(r) \\
& \quad +[(D^{\alpha}_{\xi''} \va)(0)] (r^2 \p_r^2 + (2-2s)r^{-1}\p_r - \lambda_m^2)c_{m,\alpha}] =0.
\end{split}
\end{equation}
Successively inserting the test functions $\va_j(\xi'')=|\xi''|^{j}$ for $j\in\{1,\dots,K_0(m)\}$ with $\varphi_j$ being extended away from zero to have compact support in $\xi''$ (and beginning the testing with large $j$ first and then decreasing the power in each step), we obtain the following equations for $c_{m,\alpha}(r)$: For $\tilde{\alpha}$ with $|\tilde{\alpha}|\in\{K_0(m),K_0(m)-1\}$ we have
\begin{align*}
(r^2 \p_r^2 + (2-2s)r^{-1}\p_r - \lambda_m^2)c_{m,\tilde{\alpha}}(r) =0,
\end{align*}
from which we obtain $c_{m,\tilde{\alpha}}(r) = \tilde{c}_{m,\tilde{\alpha}} r^{m+s}$ with an absolute constant $\tilde{c}_{m,\tilde{\alpha}}$ in both cases. As $c_{m}$ is however homogeneous (in the sense of (\ref{eq:hom5})), this implies that either $c_{m,\tilde{\alpha}}(r)=0$ for all $\tilde{\alpha}$ with $|\tilde{\alpha}|=K_0(m)$ or $c_{m,\bar{\alpha}}(r)=0$ for all $\bar{\alpha}$ with $|\bar{\alpha}|=K_0(m)-1$ (as the functions $c_{m,\tilde{\alpha}}\delta^{(\tilde{\alpha})}_{\{\xi''=0\}}$ with $|\tilde{\alpha}|=K_0(m)$ and $c_{m,\bar{\alpha}}\delta^{\bar{\alpha}}_{\{\xi''=0\}}$ with $|\bar{\alpha}| =K_0(m)-1$ can else not have the same homogeneity). We assume that the second case holds (the other one is analogous). \\
Again invoking (\ref{eq:hom5}), we obtain a condition for $K_0(m)$ depending on $m,\kappa,n$:  
\begin{align*}
K_0(m)=\kappa+n-1-s-m\geq 0.
\end{align*}
This in particular entails that $\kappa+s\in \N$. Further we note that with increasing $m$ the value of $K_0(m)$ decreases. Hence $\hat{c}_m\neq 0$ only for finitely many values of $m$ (depending on $\kappa$). In particular the sum in (\ref{eq:series_m}) is finite.\\
We return to the condition on the coefficients $c_{m,\alpha}(r)$: Evaluating (\ref{eq:recur_DN}) for $\va_k$ with $k\in\{1,\dots,K_0(m)-2\}$, we then have
\begin{align*}
(r^2 \p_r^2 + (2-2s)r^{-1}\p_r - \lambda_m^2)c_{m,\alpha}(r) = d_{m,\Tilde{\alpha}} c_{m,\tilde{\alpha}}(r),
\end{align*}
where $|\alpha|=k$ and $\tilde{\alpha}=k+2$ and $d_{m,\alpha}$ is an absolute constant. Integrating this iteratively and recalling that $c_{m}$ obeys the homogeneity condition (\ref{eq:hom5}), we thus deduce that for $|\alpha|=K_0(m)-l$  we have $c_{m,\alpha}(r)= d_{m,l} r^{m+s+2(l-1+[l/2])}$. By homogeneity of $c_{m}$, we conclude that $d_{m,2j+1}=0$ for all $j\in\{1,\dots,[(K_0(m)-2)/2]\}$. 
Therefore, (\ref{eq:sumDN}) turns into
\begin{align*}
\hat{c}_m(\xi'',r) = \sum\limits_{\substack{k\in 2\N \cup\{0\},\\ k\leq K_0(m)/2}}\sum\limits_{|\alpha|=K_0(m)-2k} r^{m+s+2k} \delta^{(\alpha)}_{\{\xi''=0\}}(\xi'').
\end{align*}
Inserting this information, using that the inverse Fourier transform of $\delta^{(\alpha)}_{\{\xi''=0\}}$ is a homogeneous polynomial of degree $|\alpha|$, and transforming back into $(x'',r,\va)$ coordinates then yields
\begin{align*}
c_m(x'',r) =   \sum\limits_{\substack{k\in 2\N \cup\{0\},\\ k\leq (K_0(m)-n+1)/2}} r^{m+s+2k }P_{K_0(m)-2k-n+1}(x'')u_{m}(\va).
\end{align*}
Here $P_{K_0(m)-2k-n+1}$ is a polynomial of degree $K_0(m)-2k-n+1\geq 0$.
As 
\begin{align*}
u(x'',r,\va) = \sum\limits_{\substack{m,\\ K_0(m)>0}} \sum\limits_{\substack{k\in 2\N \cup\{0\},\\ k\leq (K_0(m)-n+1)/2}} r^{m+s+2k }P_{K_0(m)-2k-n+1}(x'')u_{m}(\va),
\end{align*}
where $P_{[\kappa+n+1]+2k}(x'')$ is a polynomial of degree $2k$ and $\kappa-[\kappa+n+1]-s\in \N$, this concludes the proof of the proposition.
\end{proof}

\subsubsection{Dirichlet and Neumann data}
In this section we determine all homogeneous solutions to the $(n+1)$-dimensional Dirichlet and Neumann problems for the fractional Laplacian. This is slightly less involved than the argument for the mixed Dirichlet-Neumann problem, as the problem has only one ``broken symmetry'' originating from the operator (which is inhomogeneously weighted in the $x'$ and $x_{n+1}$ directions). In contrast in the mixed Dirichlet-Neumann case we had to deal with two directions of ``symmetry loss'' as the boundary data caused an additional direction with loss of symmetry.\\

For the Dirichlet and Neumann problems our main result is:

\begin{prop}[Dirichlet, Neumann homogeneous solutions in $\R^{n+1}_+$] 
\label{prop:DNnd}
Let $u_{D,N}:\R^{n+1}_+ \rightarrow \R$ be a $\kappa$-homogeneous solution of
\begin{align*}
\nabla \cdot x_2^{1-2s} \nabla u_{D,N} &= 0 \mbox{ in } \R^{2}_+,\\
B_{D,N} u & = 0 \mbox{ in } \R \times \{0\}.
\end{align*}
with $\kappa\geq 0$.
Here $B_{D}u_D = u_D $ and $B_N u_N = \lim\limits_{x_{n+1} \rightarrow 0} x_{n+1}^{1-2s} \p_{n+1} u_N$, respectively.
Then,
\begin{align*}
u_D(x) &=  x_{n+1}^{2s}\sum\limits_{k=0}^{[m/2]} x_{n+1}^{2k} P_{m-2k}(x'),\\
u_N(x) & =  \sum\limits_{k=0}^{[m/2]}x_{n+1}^{2k} P_{m-2k}(x'),
\end{align*}
with $m\in \N\cup\{0\}$ and $P_{m-2k}$ being a polynomial of degree $m-2k$.
\end{prop}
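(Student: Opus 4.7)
The plan is to follow the strategy of the proof of Proposition \ref{prop:nD_mDN}, adapting the spherical eigenvalue analysis and the Fourier reduction to account for the Dirichlet or Neumann condition holding uniformly on all of $\R^n\times\{0\}$ (rather than being split, as in the mixed case). The argument separates into a two-dimensional spectral step and a tangential Fourier step, as follows.

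First, I would use polar coordinates $(x_n,x_{n+1})=(r\cos\va,r\sin\va)$ with $\va\in(0,\pi)$, together with the separation ansatz from the proof of Lemma \ref{lem:2Deigen}, reducing the spherical part of the equation to
\[
\sin(\va)^{2s-1}\dv\bigl(\sin(\va)^{1-2s}\dv\bigr)u_2(\va)=-\lambda^{2}u_{2}(\va).
\]
For $B_{D}$ this is supplemented by $u_2(0)=u_2(\pi)=0$, and for $B_N$ by $\lim_{\va\to0,\pi}\sin(\va)^{1-2s}\dv u_2=0$. The change of variables $z=(1+\cos\va)/2$ reduces this to the hypergeometric ODE of \eqref{eq:eqtrafo}, and the connection formulas \eqref{eq:hyp} applied at \emph{both} endpoints $z=0$ and $z=1$ quantise the eigenvalues. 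For Dirichlet data one finds $\lambda_m^2=(m+1)(m+2s)$, $m\in\N\cup\{0\}$, with eigenfunctions of the form $\sin(\va)^{2s}Q_m(\cos\va)$ for a polynomial $Q_m$ of degree $m$; for Neumann data one finds $\lambda_m^2=m(m+1-2s)$ with eigenfunctions $\tilde Q_m(\cos\va)$ for a polynomial $\tilde Q_m$ of degree $m$. The associated radial ODE then dictates the admissible nonnegative homogeneities $\kappa=m+2s$ (Dirichlet) and $\kappa=m$ (Neumann).

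Next, expanding $u_{D,N}(x'',r,\va)=\sum_{m}c_m(x'',r)u_{2,m}(\va)$ in the orthogonal basis of spherical eigenfunctions from the previous step reduces the full PDE to the decoupled family $(-\Delta''+r^{2}(\p_r^{2}+(2-2s)r^{-1}\p_r)-\lambda_m^{2})c_m=0$, each coefficient being $\kappa$-homogeneous. Applying the partial Fourier transform in $x''$, as in the proof of Proposition \ref{prop:nD_mDN}, turns this into a modified Bessel equation whose only solutions compatible with the polynomial growth coming from homogeneity are distributions supported at $\xi''=0$. The homogeneity constraint then forces $\hat c_m(\xi'',r)$ to be a finite sum of derivatives of $\delta_{\{\xi''=0\}}$ with explicit radial factors, so that each $c_m(x'',r)$ is a polynomial in $x''$ times a nonnegative integer power of $r^{2}$ (only even powers survive by the homogeneity parity, exactly as in the mixed-data case). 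Re-assembling, and using $\sin^{2s}(\va)\cdot r^{2s}=x_{n+1}^{2s}$ and $\cos(\va)=x_n/\rho$ with $\rho=\sqrt{x_n^{2}+x_{n+1}^{2}}$, the resulting combinations of $Q_m(\cos\va)$ and of the polynomials in $x''$ collapse to honest polynomials in $(x',x_{n+1})$ containing only even powers of $x_{n+1}$, yielding the claimed normal forms for $u_D$ and $u_N$.

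The main obstacle will be the two-sided spectral analysis of Step 1: unlike in Lemma \ref{lem:2Deigen}, where the Dirichlet condition at $\va=\pi$ automatically killed the non-$z^{s}$ branch of the hypergeometric solution and only the condition at $\va=0$ quantised $\lambda$, here \emph{both} endpoints enforce the same type of condition and the quantisation must come from a simultaneous cancellation in the asymptotics of $F(a,b,c;z)$ at $z=0$ and at $z=1$. Verifying that the surviving eigenfunctions in each case are precisely $\sin(\va)^{2s}Q_m(\cos\va)$ or $\tilde Q_m(\cos\va)$ with $Q_m,\tilde Q_m$ polynomials of degree exactly $m$ is the delicate part, and it is what guarantees that Step 3 produces genuine polynomials (and not spurious $\rho$-dependent factors) in the final Cartesian expression.
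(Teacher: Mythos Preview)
Your approach is correct in outline but takes a different and more circuitous route than the paper. You mimic the mixed Dirichlet--Neumann argument of Proposition~\ref{prop:nD_mDN} by first isolating the $(x_n,x_{n+1})$ plane via polar coordinates, running a two-endpoint spectral analysis there, and only then Fourier-transforming in the remaining $x''$ variables. The paper instead observes that, unlike the mixed case, the Dirichlet (respectively Neumann) condition is imposed uniformly on all of $\R^n\times\{0\}$, so $x_n$ plays no distinguished role among the tangential directions. Accordingly, the paper Fourier-transforms in the full tangential variable $x'=(x_1,\dots,x_n)$ at once, obtaining directly a modified Bessel ODE in $x_{n+1}$ alone. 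The growth bound from homogeneity and the Bessel asymptotics then force $\hat u$ to be supported at $\xi'=0$, and a short recursion on the coefficients of $\sum c_\alpha(x_{n+1})\delta^{(\alpha)}_{\{\xi'=0\}}$ produces the stated polynomial form.

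What each buys: the paper's route completely bypasses the two-sided hypergeometric quantisation you flag as the main obstacle, and it yields the polynomial structure in $x'$ and the evenness in $x_{n+1}$ simultaneously from a single ODE recursion, with no need to verify that $\rho^m Q_m(x_n/\rho)$ collapses to a polynomial. Your route would work too, but you are effectively re-deriving a Gegenbauer/Jacobi eigenbasis in the $(x_n,x_{n+1})$ plane only to undo that decomposition at the end; the symmetry of the problem makes this unnecessary.
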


\begin{proof}
We begin with the Dirichlet case. Carrying out a Fourier transform (interpreted as a tempered distribution) and dividing by $x_{n+1}^{1-2s}$ yields an ODE for $\hat{u}(\xi',x_{n+1})$:
\begin{equation}
\label{eq:ODE1}
\begin{split}
\left(\p_{n+1}^2 + \frac{1-2s}{x_{n+1}}\p_{n+1} - |\xi'|^2\right) \hat{u} &= 0 \mbox{ for } x_{n+1}\in(0,\infty),\\
\hat{u}(\xi',0) & = 0.
\end{split}
\end{equation}
Similarly as in the proof of Proposition \ref{prop:nD_mDN}, homogeneity further implies that
\begin{align}
\label{eq:bHom}
|\hat{u}(\xi',x_{n+1})| \leq C \max\{|\xi'|^{-1},x_{n+1}\}^{\kappa+n}.
\end{align}
Making the ansatz $\hat{u}(\xi',x_{n+1}) = x_{n+1}^{s}v(\xi',x_{n+1})$, the ODE from (\ref{eq:ODE1}) is transformed into a modified Bessel equation
\begin{align*}
x_{n+1}^2 v'' + x_{n+1}v' - (|\xi'|^2x_{n+1}^2 +s^2)v = 0,
\end{align*}
where differentiation with respect to $x_{n+1}$ has been abbreviated with the dashes and $\xi'$ plays the role of a parameter.
The general solution of this ODE is of the form
\begin{align*}
v(\xi',x_{n+1}) = C_1(\xi')I_s(|\xi'|x_{n+1}) + C_2(\xi')K_s(|\xi'|x_{n+1}).
\end{align*}
The asymptotics for $I_s, K_s$ (c.f. the proof of Proposition \ref{prop:nD_mDN}) and the Dirichlet data (i.e. the limit $x_{n+1}\rightarrow 0$) imply that for $|\xi'|\neq 0$, $C_2=0$. Considering the asymptotics $x_{n+1}\rightarrow \infty$ in combination with the bound (\ref{eq:bHom}) and the exponential growth of $I_s$ then also results in $C_1 = 0$. Hence, $v$ is supported in $\xi'=0$. Thus $\hat{u}$ can be written as
\begin{align*}
\hat{u}(\xi',x_{n+1}) = \sum\limits_{k=0}^{\infty}\sum\limits_{|\alpha|=k} c_{\alpha}(x_{n+1}) \delta^{(\alpha)}_{\{\xi'=0\}}(\xi'),
\end{align*}
where $\alpha=(\alpha_1,\dots,\alpha_n)\in (\N\cup \{0\})^n$ and $\delta^{(\alpha)}_{\{\xi'=0\}}(\xi')$ denotes an $\alpha$-fold distributional derivative of the delta-distribution $\delta_{\{\xi'=0\}}(\xi')$.
Using the homogeneity of $\hat{u}$, we further obtain that $c_{\alpha}(x_{n+1})$ is $\kappa-|\alpha|-n$ homogeneous. As $\hat{u}$ has to satisfy Dirichlet boundary conditions, we thus have that $|\alpha|\leq \kappa-n$, which yields that the series is a finite sum:
\begin{align}
\label{eq:decomp_D}
\hat{u}(\xi',x_{n+1}) = \sum\limits_{k=0}^{[\kappa-n]}\sum\limits_{|\alpha|=k} c_{\alpha}(x_{n+1}) \delta^{(\alpha)}_{\{\xi'=0\}}(\xi').
\end{align}
As in the proof of Proposition \ref{prop:nD_mDN}, we can further compute the functions $c_{\alpha}$ iteratively, by plugging it into (\ref{eq:ODE1}) and by testing with test functions which vanish of sufficiently high order. More precisely, we obtain that for $\alpha$ with $|\alpha|=\kappa-n$
\begin{align*}
c_{\alpha}'' + \frac{1-2s}{x_{n+1}}c_{\alpha}' = 0,
\end{align*}
i.e. $c_{\alpha}(x_{n+1})=\tilde{c}_{\alpha} x_{n+1}^{2s}$ with $\tilde{c}_{\alpha}\in \R$ (where we used the Dirichlet data). The functions $c_{\beta}(x_{n+1})$ with $\beta=[\kappa-n]-1$ also satisfy this equation and are hence of the same form. Inductively, for $l\in\{2,\dots,[\kappa-n]-1\}$, $\tilde{\alpha}$ with $|\tilde{\alpha}|=[\kappa-n]-l$ and $\tilde{\beta}$ with $|\tilde{\beta}|=[\kappa-n]-l+2$ we have
\begin{align*}
c_{\tilde{\alpha}}'' + \frac{1-2s}{x_{n+1}}c_{\tilde{\alpha}}'= d_l c_{\tilde{\beta}}.
\end{align*}
Inductively and by invoking the Dirichlet data, we thus infer that
\begin{align*}
c_{\tilde{\alpha}}(x_{n+1}) = x_{n+1}^{2s} \sum\limits_{j=1}^{l-1+[l/2]} \tilde{c}_{l,j,\tilde{\alpha}} x_{n+1}^{2j},
\end{align*}
for some constants $\tilde{c}_{l,j,\tilde{\alpha}}\in \R$. Due to the homogeneity condition on $c_{\tilde{\alpha}}$ from above, this further simplifies to
\begin{align*}
c_{\tilde{\alpha}}(x_{n+1}) = \tilde{c}_{\tilde{\alpha}} x_{n+1}^{2s+2(l-1+[l/2])} .
\end{align*}
for $\tilde{\alpha}$ with $|\tilde{\alpha}|=[\kappa-n]-l$ and $\tilde{c}_{\tilde{\alpha}}\in \R$.
Therefore transforming (\ref{eq:decomp_D}) back into $x$-coordinates, leads to
\begin{align*}
u(x',x_{n+1}) = x_{n+1}^{2s}\sum\limits_{k=0}^{[m/2]} x_{n+1}^{2k} P_{m-2k}(x'),
\end{align*}
where $m\in \N$ and $P_{m-2d}(x')$ denotes a polynomial of degree $m-2k$ depending on the $x'$ variables and $[\cdot]$ denotes the floor function. This concludes the proof for the case with Dirichlet data.\\
For the Neumann problem we argue analogously. As in the Dirichlet case, the Neumann boundary condition implies that the transformed function $\hat{u}(\xi',x_{n+1})$ has the form
\begin{align*}
\hat{u}(\xi',x_{n+1}) = \sum\limits_{k=0}^{[\kappa-n]} \sum\limits_{|\alpha|=k} c_{\alpha}(x_{n+1}) \delta^{(\alpha)}_{\{\xi'=0\}}(\xi').
\end{align*}
As before the finiteness of the sum is a consequence of homogeneity. We obtain the same recurrence relation as above for the coefficient functions $c_{[\kappa-n]-l}$. However, as these now satisfy Neumann data, we have that $c_{[\kappa-n]}(x_{n+1})=\tilde{c}\in \R$. Thus,
\begin{align*}
u(x',x_{n+1}) = \sum\limits_{k=0}^{[m/2]}x_{n+1}^{2k} P_{m-2k}(x'),
\end{align*}
which concludes the proof.
\end{proof}

\section{Appendix B}
\label{sec:AppB}

In this second appendix, we study the fractional Baouendi-Grushin Laplacian $\Delta_{G,s}$, which is related to the operator $L_s$ by a square root transformation (c.f. Section \ref{sec:open}). As the main result in Section \ref{sec:reg2} we provide the argument for Proposition \ref{prop:map} and show that $\Delta_{G,s}$ is invertible as a map from $X_{\alpha,\epsilon}$ to $Y_{\alpha,\epsilon}$. Here $X_{\alpha,\epsilon}$ and $Y_{\alpha,\epsilon}$ denote the function spaces which were introduced in Section~\ref{sec:function_spaces}. To this end, we prove a Schauder type apriori estimate (in our function spaces) by using a similar compactness argument as in Appendix A (c.f. Section \ref{sec:Schauder}) and a Schwartz kernel estimate (c.f. Section \ref{sec:kernel}).  Last but not least, in Sections \ref{sec:prove} and \ref{sec:Banach} we also deduce the characterization results of Proposition \ref{prop:decomp} for our function spaces $X_{\alpha,\epsilon}$ and $Y_{\alpha,\epsilon}$ and give the argument that they form Banach spaces which had been claimed in Proposition \ref{prop:Banach}.

\subsection{Fractional Baouendi-Grushin Laplacian, open up the domain}
\label{sec:open}
We first recall the definition of the fractional Baouendi-Grushin Laplacian 
$$\Delta_{G,s}=\sum_{i=1}^{2n}Y_i\omega(y)Y_i,$$ 
where $\{Y_i\}$ are the Baouendi-Grushin vector fields from Defnition~\ref{defi:BGgeo}, and $\omega(y)=|y_ny_{n+1}|^{1-2s}$ is the associated Muckenhoupt weight. As seen in the Example~\ref{ex:model} $\Delta_{G,s}$ is the push-forward operator of the operator $L_s=\nabla\cdot x_{n+1}^{1-2s}\nabla$ by means of the square root mapping.\\

Next we introduce some notation and recall some known results which are related to our set-up. We define the weighted $L^2$ Sobolev space associated with the Baouendi-Grushin vector fields $\{Y_i\}$: For $\Omega\subset \R^{n+1}$, we set
\begin{align*}
M^1_\omega(\Omega):=\{u: u\in L^2_\omega(\Omega), \quad Y_i u \in L^2_\omega(\Omega)\}.
\end{align*}
Here $L^2_w(\Omega):=L^2(\Omega, \omega(y)dy)$.  If $\Omega=\R^{n+1}$ we omit the domain dependence and write $M^1_\omega$ and $L^2_\omega$ for simplicity.  
Given $f$ such that $\omega^{-1}f\in L^2_\omega(\Omega)$, we say that $v\in M^1_\omega(\Omega)$ is a weak solution to $\Delta_{G,s}v=f$ iff for all $\phi\in C^\infty_c(\Omega)$
\begin{align*}
\int_{\Omega} Y_ivY_i\phi\ \omega (y)dy = \int _{\Omega}f\phi\ dy.
\end{align*}
From this the energy estimate is immediate: Suppose that $w\in M^1_\omega(\Omega)$ is a weak solution to $\Delta_{G,s}v=f$, then for any $\Omega'\Subset \Omega$, 
\begin{align*}
\sum_{i=1}^{2n}\|Y_i v\|_{L^2_\omega(\Omega')}\leq C \left(\|\omega^{-1}f\|_{L^2_\omega(\Omega)}+\|v\|_{L^2_\omega(\Omega)}\right),
\end{align*}
where $C=C(n,s,\Omega',\Omega)$. 
\\

In the sequel we will consider weak solutions to 
$$\Delta_{G,s}v=f \text{ in } \inte(\mathcal{B}_1^+), \quad\mathcal{B}_1^+=\mathcal{B}_1\cap \{y_n\geq 0, y_{n+1}\geq 0\},$$
with mixed Dirichlet and Neumann boundary conditions: 
$$w=0 \text{ on } \mathcal{B}_1\cap \{y_n=0\},\quad \lim_{y_{n+1}\rightarrow 0_+}\omega(y)\p_{n+1}w(y)=0\text{ on } \mathcal{B}_1\cap \{y_{n+1}=0\}.$$ 

Similarly as in the setting for the Laplacian it is possible to switch between the fractional Laplacian $L_s$ and the fractional Baouendi-Grushin Laplacian $\Delta_{G,s}$ by ``opening up the domain''. This allows us to transfer the approximation results for $L_s$ in Section~\ref{sec:approx_poly} to the corresponding approximation result for $\Delta_{G,s}$ along the set $\{y_n=y_{n+1}=0\}$ where the operator degenerates.

\begin{prop}[Opening up the domain]
 \label{prop:gr_approx}
Let $\Delta_{G,s}$ be the fractional Baouendi-Grushin Laplacian. Assume that $v\in  M^1_\omega(\mathcal{B}_1^+)$ is a weak solution to
\begin{equation}
\label{eq:fracLa_Gr}
\begin{split}
\Delta_{G,s}v&=f\mbox{ in } \inte(\mathcal{B}_1^+),\\
 v&=0 \mbox{ on } \mathcal{B}_1^+\cap\{y_{n}=0\},\\
 \p_{n+1} v  &= 0 \mbox{ on } \mathcal{B}_1^+\cap \{y_{n+1}=0\},
\end{split}
\end{equation}
where for all $r\in(0,1/2)$ the function $f$ satisfies 
\begin{align*}
\|\omega^{-1}f\|_{\tilde{L}_\omega^2(\mathcal{B}_r^+)}:=\left(\frac{1}{\omega(\mathcal{B}_r^+)}\int_{\mathcal{B}_r^+}|\omega^{-1}f|^2\omega(y) dy\right)^{\frac{1}{2}}\leq C_0 r^{2s+2\alpha}.
\end{align*}
Here $\omega(\mathcal{B}_r^+):=\int_{\mathcal{B}_r^+} \omega(y) dy$. Then, there exist a constant $C=C(n,s,\alpha)>0$ and a function 
\begin{align*}
h_{2+2s}(y):= y_{n}^{2s}\left( a_0 + \sum\limits_{i=1}^{n-1} a_i y_i + a_n y_{n}^2 + a_{n+1}y_{n+1}^2 \right)
\end{align*}
with coefficients
\begin{align*}
\sum\limits_{i=1}^{n+1}|a_i| \leq C\left( C_0+\| v\|_{\LL_{\omega}(B_1^+)} \right),
\end{align*}
such that 
\begin{align*}
&\|v-h_{2+2s}\|_{\LL_{\omega}(\mathcal{B}_r^+)} \leq C r^{2+2s +2\alpha} \left( C_0 + \| v\|_{\LL_{\omega}(\mathcal{B}_1^+)} \right)
\end{align*}
for all $r\in(0,1/2)$.
\end{prop}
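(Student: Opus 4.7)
The plan is to transfer the approximation result to the Baouendi--Grushin setting via the square root mapping $T: Q_+ \to \R^{n+1}_+$ defined by
\begin{align*}
T(y) := \bigl(y'',\ \tfrac{1}{2}(y_n^2 - y_{n+1}^2),\ y_n y_{n+1}\bigr),
\end{align*}
and then to invoke Proposition~\ref{prop:approx_main} in the mixed Dirichlet--Neumann case. First I would verify the intertwining relation: setting $u(x) := v(T^{-1}(x))$ and computing derivatives (noting $|T(y)| = \frac{1}{2}(y_n^2+y_{n+1}^2)$, $\omega(y) = (y_n y_{n+1})^{1-2s} = x_{n+1}^{1-2s}$, and $|\det DT(y)| = y_n^2 + y_{n+1}^2 = 2|x|$), one obtains $\Delta_{G,s} v = 2|T(y)| \cdot (L_s u)(T(y))$. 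Thus $\Delta_{G,s} v = f$ becomes $L_s u = g$ with $g(x) := f(T^{-1}(x))/(2|x|)$. The Dirichlet trace $v|_{\{y_n=0\}}=0$ transfers to $u|_{\{x_{n+1}=0,\,x_n\leq 0\}}=0$, while the Neumann trace on $\{y_{n+1}=0\}$ corresponds, after accounting for weights, to $\lim_{x_{n+1}\to 0_+} x_{n+1}^{1-2s}\partial_{n+1}u = 0$ on $\{x_{n+1}=0,\,x_n\geq 0\}$. Hence $u$ solves the mixed Dirichlet--Neumann problem for $L_s$ treated in Proposition~\ref{prop:approx_main}.

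Next I would translate the size hypothesis. Using $\omega(\mathcal{B}_r^+) \sim r^{2n+2-4s}$, $\bar\omega(B_{r^2}^+) \sim r^{2n+4-4s}$, and the Jacobian, a direct change of variables gives
\begin{align*}
\|\omega^{-1} f\|_{\LLw(\mathcal{B}_r^+)}^2 \sim r^4 \,\|x_{n+1}^{(2s-1)/2} g\|_{\LL(B_{r^2/2}^+)}^2,
\end{align*}
so that the hypothesis $\|\omega^{-1}f\|_{\LLw(\mathcal{B}_r^+)} \leq C_0 r^{2s+2\alpha}$ translates, at scale $\rho := r^2/2$, into an $L^2$-type smallness condition on $g$ of the form $\|x_{n+1}^{(2s-1)/2}g\|_{\LL(B_\rho^+)} \lesssim C_0 \rho^{s+\alpha-1}$. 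This is exactly the scaling that the proof of Proposition~\ref{prop:approx_main} (with $\beta_B = 1+s$) digests through its iteration (Lemma~\ref{lem:it}): indeed, the argument there only uses the $L^2$-smallness at each dyadic scale, and the compactness/approximation step via Lemmas~\ref{lem:homo}--\ref{lem:eigen} requires no more than this.

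Applying Proposition~\ref{prop:approx_main} at the origin produces a function
\begin{align*}
h_{1+s}(x) = w_{0,s}(x_n,x_{n+1})\Bigl(a_0 + \sum_{j=1}^{n-1} a_j x_j + a_1(s|x|-x_n)\Bigr) + \tfrac{g_0(0)}{2(2+2s)}\, w_{0,s}^{1+1/s}(x_n,x_{n+1})
\end{align*}
with coefficients bounded by $C(C_0 + \|u\|_{\LL_{\bar\omega}(B_1^+)})$ and $\|u - h_{1+s}\|_{\LL_{\bar\omega}(B_\rho^+)} \leq C \rho^{1+s+\alpha}(C_0 + \|u\|_{\LL_{\bar\omega}(B_1^+)})$ for all $\rho \in (0,1/2)$. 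The key identities under $T$ are $w_{0,s}(x_n,x_{n+1}) = y_n^{2s}$, $x_j = y_j$, $s|x|-x_n = \tfrac{s-1}{2}y_n^2 + \tfrac{s+1}{2}y_{n+1}^2$, and $w_{0,s}^{1+1/s} = y_n^{2+2s}$, so $h_{1+s}\circ T$ is \emph{precisely} of the form
\begin{align*}
h_{2+2s}(y) = y_n^{2s}\Bigl(a_0 + \sum_{j=1}^{n-1} a_j y_j + a_n y_n^2 + a_{n+1}y_{n+1}^2\Bigr),
\end{align*}
with coefficients still controlled by $C(C_0 + \|v\|_{\LLw(\mathcal{B}_1^+)})$ (using once more the Jacobian computation to compare norms). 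Finally, the same volume-ratio computation as above converts the $\LL_{\bar\omega}$-estimate on $u - h_{1+s}$ in $B_{r^2/2}^+$ into the claimed $\LLw$-estimate on $v - h_{2+2s}$ in $\mathcal{B}_r^+$ with the exponent $(r^2)^{1+s+\alpha} = r^{2+2s+2\alpha}$.

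The main obstacle is bookkeeping in Step 2: the inhomogeneity hypothesis in Proposition~\ref{prop:approx_main} is stated in terms of a specific pointwise decomposition $g = x_{n+1}^{1-2s}|x|^{-1}w_{0,s}f_0 + |x|^{-1}f_1$, whereas our Grushin hypothesis is $L^2$-averaged. One must therefore either re-derive the decomposition for the specific $g$ arising from $f/(2|x|)$, or (more cleanly) run the compactness + eigenpolynomial iteration of Section~\ref{sec:approx_poly} directly for the pushed-forward problem, using only the $L^2$-smallness condition at each scale; the classification of homogeneous solutions needed in Lemma~\ref{lem:eigen} is then simply the push-forward under $T$ of the classification in Proposition~\ref{prop:nD_mDN}.
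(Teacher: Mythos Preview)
Your proposal is correct and you have identified all the key ingredients. The paper's proof is precisely your fallback option: it uses the square mapping $\mathcal{T}$ only to classify homogeneous solutions of $\Delta_{G,s}h=0$ with mixed boundary data (by pushing forward to $L_s\tilde h=0$ and invoking Proposition~\ref{prop:nD_mDN}), and then runs the compactness--eigenpolynomial iteration of Lemmas~\ref{lem:homo}--\ref{lem:it} \emph{directly in the Grushin variables}, using only the scale-by-scale $L^2$ smallness $\|\omega^{-1}f\|_{\tilde L^2_\omega(\mathcal{B}_r^+)}\leq C_0 r^{2s+2\alpha}$ as input.

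Your primary route---transforming the full inhomogeneous problem and applying Proposition~\ref{prop:approx_main} as stated---is morally fine but, as you note, runs into the mismatch between the pointwise decomposition hypothesis on $g$ there and the averaged $L^2$ hypothesis here. One further wrinkle in that route: your change-of-variables computation picks up an extra factor of $|x|$ (from $(y_n^2+y_{n+1}^2)^2/|\det DT|=y_n^2+y_{n+1}^2=2|x|$), so the transformed norm is $\int x_{n+1}^{2s-1}|x|\,|g|^2\,dx$ rather than the clean $\|x_{n+1}^{(2s-1)/2}g\|_{L^2}^2$; this is harmless in the iteration since $|x|\sim\rho$ on each annulus, but it is another reason the paper avoids transplanting the inhomogeneity and simply re-runs the compactness argument in $y$-coordinates. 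Your identification of the form of $h_{2+2s}$ via $w_{0,s}\circ T=y_n^{2s}$ and $s|x|-x_n=\tfrac{s-1}{2}y_n^2+\tfrac{s+1}{2}y_{n+1}^2$ is exactly what underlies the paper's classification step.
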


\begin{proof}
\emph{Step 1: Square root transformation.}
First we observe the following relation between $\Delta_{G,s}$ and $L_s$ which is established by means of the square mapping: Suppose that $h$ is a solution to 
\begin{align*}
\Delta_{G,s}h&=0 \mbox{ in } Q_+, \\
h=0 \mbox{ on } \{y_n=0\}, \lim_{y_{n+1}\rightarrow 0_+} \omega(y)\p_{n+1}h&=0 \mbox{ on } \{y_{n+1}=0\}. 
\end{align*}
Let
\begin{align*}
\mathcal{T}:Q_+&\rightarrow \R^{n+1},\quad x=\mathcal{T}(y),\\
x_i &= y_i \mbox{ for } i\in \{1,...,n-1\}, \ x_{n} = \frac{1}{2}(y_n^2- y_{n+1}^2), \ x_{n+1}= y_n y_{n+1}.
\end{align*}
We define $\tilde{h}(x):= h(\mathcal{T}^{-1}(x))$. Then $\tilde{h}$ solves 
$$
L_s \tilde{h}=0 \mbox{ in } \R^{n+1}_+,
$$
with the mixed Dirichlet-Neumann boundary condition
\begin{align*}
\tilde{h}  = 0 \mbox{ on } \R^n\times\{0\}\cap \{x_{n}\leq 0\},\ \lim_{x_{n+1}\rightarrow 0_+}x_{n+1}^{1-2s}\p_{n+1}\tilde{h}=0 \text{ on } \R^n\times\{0\}\cap \{x_n>0\}.
\end{align*}
Thus, global homogeneous solutions to $\Delta_{G,s}h=0$ with Dirichlet-Neumann boundary data are characterized by invoking the characterization result for $L_s\tilde{h}=0$ from Proposition~\ref{prop:nD_mDN}. \\

\emph{Step 2: Approximation.}
With this at hand, we argue by compactness as in the proof for Proposition~\ref{prop:approx_main}. Since the proofs are very similar we only sketch the argument here. First one can normalize such that $\|v\|_{\tilde{L}^2_\omega(\mathcal{B}_1^+)}\leq 1$ and 
$$\sup_{r}r^{-(2s+2\alpha)}\|\omega^{-1}f\|_{\tilde{L}^2_\omega(\mathcal{B}_r^+)}\leq \delta$$ 
with $\delta$ small. Then by using similar arguments as in Lemma~\ref{lem:homo}--\ref{lem:it} we show that there exists a radius $r_0>0$ and an ``eigenpolynomial'' 
$$h_{2+2s,0}(y)=y_n^{2s}(a_0+\sum_{i=1}^{n-1}a_iy_i+a_ny_{n}^2+a_{n+1}y_{n+1}^2),$$ 
which solves $\Delta_{G,s}h=0$ in $Q_+$ with Dirichlet and Neumann boundary condition, such that $\|v-h_{2+2s,0}\|_{\tilde{L}^2_\omega(\mathcal{B}_{r_0}^+)}\leq C r_0^{2+2s+2\alpha}$. In the end, an inductive argument as in the proof for Proposition~\ref{prop:approx_main} gives the desired result.
\end{proof}

\begin{rmk}
By translation invariance of the operator, this approximation result holds at every point $y\in \mathcal{B}_1\cap\{y_n=y_{n+1}=0\}$.
\end{rmk}

\subsection{Invertibility of $\Delta_{G,s}$ from $X_{\alpha,\epsilon}$ to $Y_{\alpha,\epsilon}$}
\label{sec:reg2}
In this section we show that the fractional Baouendi-Grushin operator $\Delta_{G,s}$ is invertible as a map from the generalized H\"older space $X_{\alpha,\epsilon}$ to $Y_{\alpha,\epsilon}$ (c.f. Definition~\ref{defi:XY}). In Section \ref{sec:Schauder} we begin with the proof for the apriori estimate stated in Proposition~\ref{prop:map}. Then in Section \ref{sec:kernel} we provide the argument for the invertibility properties based on kernel estimates.

\subsubsection{Apriori Schauder estimate (Proof of Proposition \ref{prop:map})} 
\label{sec:Schauder}

As our main result in this subsection we prove the apriori estimate from Proposition \ref{prop:map}.

\begin{prop}
\label{prop:map2}
Suppose that $v\in L^\infty(\mathcal{B}_1^+)\cap M^1_\omega(\mathcal{B}_1)$ is a weak solution to $\Delta_{G,s}v=f$ in $\mathcal{B}_1^+$ with $f\in Y_{\alpha,\epsilon}$, and that it satisfies the following mixed Dirichlet and Neumann boundary condition: $v=0$ on $\mathcal{B}_1\cap \{y_n=0\}$ and $\lim_{y_{n+1}\rightarrow 0_+}\omega(y)\p_{n+1}v(y)=0$ on $\mathcal{B}_1\cap \{y_{n+1}=0\}$. Then, $v\in X_{\alpha,\epsilon}(\mathcal{B}_{1/2}^+)$ and it satisfies
\begin{align*}
\|v\|_{X_{\alpha, \epsilon}(\mathcal{B}_{1/2}^+)}\leq C\left(\|f\|_{Y_{\alpha, \epsilon}(\mathcal{B}_1^+)}+\|v\|_{L^\infty(\mathcal{B}_1^+)}\right).
\end{align*}
\end{prop}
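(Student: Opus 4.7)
The plan is to mimic the strategy used for Propositions \ref{prop:Dirichlet} and \ref{prop:Neumann} in Appendix A, but carried out with respect to the Baouendi-Grushin geometry. The norm of $X_{\alpha,\epsilon}$ measures the deviation of $v$ and its weighted derivatives from a distinguished approximating polynomial $y_n^{2s}P_{\bar y,2}^s$ at each point $\bar y \in P$, so I would produce such a polynomial pointwise and control the error in the corresponding weighted H\"older norms. First, for each fixed $\bar y \in P \cap \mathcal{B}_{1/2}^+$, I would translate and apply Proposition \ref{prop:gr_approx} to obtain an ``eigenpolynomial''
\begin{equation*}
h_{\bar y}(y) = y_n^{2s}\Big(a_0(\bar y) + \sum_{i=1}^{n-1}a_i(\bar y)(y_i-\bar y_i) + a_n(\bar y)y_n^2 + a_{n+1}(\bar y)y_{n+1}^2\Big),
\end{equation*}
with the $\tilde L^2_\omega$-approximation bound $\|v-h_{\bar y}\|_{\tilde L^2_\omega(\mathcal B_r^+(\bar y))} \le C r^{2+2s+2\alpha}(\|f\|_{Y_{\alpha,\epsilon}(\mathcal{B}_1^+)} + \|v\|_{L^\infty(\mathcal{B}_1^+)})$ valid for every $r\in (0,1/2)$. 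The characterization of $Y_{\alpha,\epsilon}$ in Proposition \ref{prop:decomp}(ii) guarantees the scaling hypothesis $\|\omega^{-1}f\|_{\tilde L^2_\omega(\mathcal B_r^+(\bar y))}\lesssim r^{2s+2\alpha}\|f\|_{Y_{\alpha,\epsilon}}$ needed as input for Proposition \ref{prop:gr_approx}.

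Next, I would upgrade this $L^2_\omega$ bound to pointwise $C^{0,\epsilon}_\ast$ control for each of the quantities appearing in $\|\cdot\|_{X_{\alpha,\epsilon}}$. This is done by a rescaling argument adapted to the Baouendi-Grushin dilation: setting
\begin{equation*}
\tilde v_\lambda(z) := \lambda^{-(2+2s+2\alpha)}\big(v - h_{\bar y}\big)(\bar y + \delta_\lambda z),
\end{equation*}
on a non-tangential Grushin annulus around $\bar y$ (where $y_n, y_{n+1}$ are bounded away from $0$ and the weight $\omega$ is comparable to a constant), the rescaled equation for $\tilde v_\lambda$ becomes uniformly elliptic with inhomogeneity controlled in $C^{0,\epsilon}$. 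Standard interior Schauder estimates on a fixed non-tangential Grushin ball give $C^{2,\epsilon}$ bounds on $\tilde v_\lambda$, and unrescaling produces the desired weighted pointwise estimates in the annular region $d_G(y,\bar y)\sim \lambda$ with the precise homogeneity demanded by $\|\cdot\|_{X_{\alpha,\epsilon}}$.

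The technically most delicate step, and the main obstacle, is treating the two coordinate faces $\{y_n=0\}$ and $\{y_{n+1}=0\}$ (away from $P$), where the Muckenhoupt weight $\omega=(y_n y_{n+1})^{1-2s}$ itself degenerates. Here I would invoke the square-root (opening-up) mapping $\mathcal T$ from Section \ref{sec:open}: under $\mathcal T$ the face $\{y_n=0\}$ becomes the Dirichlet portion and $\{y_{n+1}=0\}$ the Neumann portion of the slit boundary for the $L_s$ problem. In non-tangential Grushin cones centered on these faces but bounded away from $P$, I can then directly apply Propositions \ref{prop:Dirichlet} and \ref{prop:Neumann} to the transformed function; pulling back through $\mathcal T$ gives the boundary H\"older estimates for $y_n^{-2s}v$, $y_n^{1-2s}\partial_n v$ etc., with exactly the weights appearing in $\|\cdot\|_{X_{\alpha,\epsilon}}$.

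Finally, I would run a triangle-inequality and covering argument in the spirit of the end of the proof of Proposition \ref{prop:Dirichlet}: comparing the approximating polynomials $h_{\bar y_1}$ and $h_{\bar y_2}$ at two nearby points $\bar y_1,\bar y_2\in P$ on a Grushin annulus of scale comparable to $d_G(\bar y_1,\bar y_2)$ forces $\bar y\mapsto a_0(\bar y),a_n(\bar y),a_{n+1}(\bar y)\in C^{0,\alpha}(P)$ and $\bar y\mapsto a_i(\bar y)\in C^{1,\alpha}(P)$, with norms bounded by the right-hand side. Combining this coefficient regularity with the annular pointwise estimates from the previous steps and invoking the decomposition characterization of $X_{\alpha,\epsilon}$ in Proposition \ref{prop:decomp}(i) (and Remark \ref{rmk:equiv_local} for the local variant) yields the claimed bound $\|v\|_{X_{\alpha,\epsilon}(\mathcal B_{1/2}^+)}\le C(\|f\|_{Y_{\alpha,\epsilon}(\mathcal B_1^+)}+\|v\|_{L^\infty(\mathcal B_1^+)})$. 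The bookkeeping needed to simultaneously accommodate the edge $P$, the two degeneracy faces, and the correct matching of weights through the square-root transformation is the source of most of the work.
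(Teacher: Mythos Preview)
Your proposal is correct and follows the same overall architecture as the paper: approximate by eigenpolynomials at each $\bar y\in P$ via Proposition~\ref{prop:gr_approx}, rescale with the Grushin dilation, and obtain weighted $C^{0,\epsilon}$ control on the non-tangential annulus.

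The one place where you take a detour is in treating the degeneracy faces $\{y_n=0\}$ and $\{y_{n+1}=0\}$. You propose to push the rescaled problem through the square-root map $\mathcal{T}$ back to the $L_s$ picture and then invoke Propositions~\ref{prop:Dirichlet} and~\ref{prop:Neumann} there. The paper bypasses this: after rescaling, on the annulus $\mathcal{C}_2^+=\{1/16<y_n^2+y_{n+1}^2<4\}$ one simply observes that near $\{y_n=0\}$ one has $y_{n+1}\gtrsim 1$, so the factor $y_{n+1}^{1-2s}$ is smooth and the operator $\Delta_{G,s}$ is, up to a smooth conformal factor, just $\nabla\cdot y_n^{1-2s}\nabla$ with Dirichlet data; Proposition~\ref{prop:Dirichlet} applies directly in the $y$-coordinates. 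Symmetrically, near $\{y_{n+1}=0\}$ one has $y_n\gtrsim 1$ and Proposition~\ref{prop:Neumann} applies directly. Away from both faces the operator is uniformly elliptic. So no change of variables is needed, and the weights in the $X_{\alpha,\epsilon}$ norm fall out immediately from the conclusions of those two propositions. Your route through $\mathcal{T}$ would also work, but it adds a layer of bookkeeping (tracking how the weights transform) that the paper's direct observation avoids. Your explicit triangle-inequality step for the coefficient regularity is in fact more detailed than what the paper writes; the paper simply asserts that repeating the annular estimate over all $\bar y$ and $\lambda$ gives the $X_{\alpha,\epsilon}(\mathcal{B}_{1/2}^+)$ bound.
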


\begin{proof}
\emph{Step 1: ``Eigenpolynomial'' approximation at $P$.}
Given $f\in Y_{\alpha,\epsilon}$, Proposition~\ref{prop:decomp} implies that $$f(y)=y_ny_{n+1}^{1-2s}f_0(y'')+y_{n+1}^{1-2s}r^{1+2\alpha-\epsilon}f_1(y)$$ 
with $\supp(f_0), \supp(f_1)\subset B_1'' \times \R^{2}$ and with $\|f_0\|_{ C^{0,\alpha}(\mathcal{B}_1\cap P)}+\|f_1\|_{ C^{0,\epsilon}_\ast(\mathcal{B}_1^+)}\leq C\|f\|_{Y_{\alpha,\epsilon}}$.  Let $\bar{y}\in \mathcal{B}_{1/2}\cap P$ be an arbitrary point.
Without loss of generality we may assume that $f_0(\bar{y}'')=0$ (as else we can subtract the correction $\frac{1}{1+2s}y_{n+1}^{1-2s}f_0(\bar{y}'')$ from $v$). Note that $f$ satisfies $\sup_{r\in (0,1/2)}r^{-(2s+2\alpha)}\|\omega^{-1}f\|_{\tilde{L}_\omega(\mathcal{B}_r^+)}\leq C\|f\|_{Y_{\alpha,\epsilon}}$. 
By virtue of Proposition \ref{prop:gr_approx}, there exists a function 
\begin{align*}
y_n^{2s}P_{\bar y}(y)= y_n^{2s}\left(a_0+ \sum_{i=1}^{n-1}a_iy_i+a_ny_n^{2} + a_{n+1}y_{n+1}^2\right),
\end{align*}
with coefficients $a_k$ depending on $\bar y$ and $\sum\limits_{k}|a_k|\leq C\|f\|_{Y_{\alpha, \epsilon}(\mathcal{B}_1^+)}$, such that
\begin{align*}
\|v-y_n^{2s}P_{\bar y}\|_{\tilde{L}^2_{\omega}(\mathcal{B}_r^+(\bar y))} \leq Cr^{2+2s+2\alpha}\left( \|f\|_{Y_{\alpha, \epsilon}(\mathcal{B}_1^+)}+\| v\|_{\tilde{L}^2_{\omega}(\mathcal{B}_1^+)}\right).
\end{align*}
for all $r\in (0,1/4)$.  \\

\emph{Step 2: Interpolation.} We consider
$$\tilde{v}_\lambda(y):=\frac{(v-y_n^{2s}P_{\bar y})(\bar y+ \delta_\lambda(y))}{\lambda^{2+2s+2\alpha}}, \quad \lambda \in (0,1/4),$$
where $\delta_\lambda (y)= (\lambda ^2 y'',\lambda y_n, \lambda y_{n+1})$. By Step 1, 
$$\|\tilde{v}_\lambda\|_{\tilde{L}^2_{\omega}(\mathcal{B}_4^+)} \leq C\left( \|f\|_{Y_{\alpha, \epsilon}(\mathcal{B}_1^+)}+\|v\|_{\LL_{\omega}(\mathcal{B}_1^+)}\right).$$ 
Moreover, $\tilde{v}_\lambda$ solves
\begin{align*}
\D_{G,s}\tilde{v}_\lambda(y)&=\lambda^{-(2-2s+2\alpha)} f(\bar y+\delta_\lambda y)\\
&= y_{n+1}^{1-2s} \lambda^{-2\alpha} f_0 + y_{n+1}^{1-2s}r(y)^{1+2\alpha-\epsilon}\lambda^{-\epsilon}f_1(\bar y+\delta_\lambda y)=:f_\lambda(y).
\end{align*}
In the region $y\in \mathcal{C}^+_2:=\{y: |y''|< 1, \frac{1}{16}< y_n^2+y_{n+1}^2< 4, y_n>0\}$, the operator $\D_{G,s}$ can be viewed as
\begin{align*} 
\mathcal{L}_s:= (y_n y_{n+1})^{1-2s}\Delta'' + \p_n ((y_n y_{n+1}) ^{1-2s}\p_n) +\p_{n+1}((y_n y_{n+1})^{1-2s}\p_{n+1}),
\end{align*}
i.e. the weight $(y_n^2 + y_{n+1}^2)$ in front of the tangential Laplacian can be ignored. 
Moreover, since $f_0\in C^{0,\alpha}$, $f_1\in C^{0,\epsilon}_\ast$ and since both vanish at $P$, the function $f_\lambda$ satisfies $y_{n+1}^{2s-1}f_\lambda(y)\in C^{0,\epsilon}(\mathcal{C}^+_2)$. \\
We now distinguish between three regions in which the operator $\mathcal{L}_s$ behaves differently:
\begin{align*}
C_{D}&:= \mathcal{C}^+_2\cap \{0\leq y_n<1/8\},\\
C_{N}&:=\mathcal{C}^+_2\cap \{|y_{n+1}|<1/8\}, \\
C_{E}&:= \mathcal{C}^+_2 \setminus (C_N \cup C_{D}).
\end{align*}
These correspond to the regions in which the equation is governed by a fractional Laplacian with Dirichlet ($C_D$) or Neumann data ($C_N$) or where the equation becomes uniformly elliptic ($C_E$). We discuss these cases separately:
\begin{itemize}
\item[(i)] We observe that in the region $C_D$ we have $y_{n+1}>1/8$. Hence, $y_{n+1}^{1-2s}$ is smooth in this region. Thus, the operator $\mathcal{L}_s$ can be viewed as a simple variation of $\nabla \cdot y_{n}^{1-2s}\nabla$. Furthermore, we note that $\tilde{v}_\lambda=0$ vanishes continuously on $\mathcal{C}_2^+\cap \{y_n=0\}$. Therefore, the up to the boundary apriori estimate from Proposition~\ref{prop:Dirichlet} applies. 
\item[(ii)] In the region $C_N$ we have $y_n>1/8$. Thus, we can invoke the apriori estimate with the Neumann data from Proposition~\ref{prop:Neumann}. 
\item[(iii)] In the remaining region $C_E$ the operator $\mathcal{L}_s$ is uniformly elliptic, therefore classical Schauder estimates hold. 
\end{itemize}
Combining the three cases from above, therefore leads to the following apriori estimate for $\tilde{v}_\lambda$:
\begin{equation}
\label{eq:apriori2}
\begin{split}
&\|y_n^{1-2s}\p_n \tilde{v}_\lambda\|_{C^{0,\epsilon}(\mathcal{C}^+_1)}+ \sum_{i=1}^{n-1}\|y_n^{-2s}\p_i\tilde{v}_\lambda\|_{C^{0,\epsilon}(\mathcal{C}^+_1)}+\|y_n^{-2s}y_{n+1}^{-1}\p_{n+1}\tilde{v}_\lambda\|_{C^{0,\epsilon}(\mathcal{C}_1^+)}\\
&+ \|y_n^{-2s} \tilde{v}_\lambda \|_{C^{0,\epsilon}(\mathcal{C}^+_1)} +\sum\limits_{i,j=1}^{n+1}\|\p_{i}y_n^{1-2s}\p_j\tilde{v}_\lambda\|_{C^{0,\epsilon}(\mathcal{C}^+_1)}\\
&\leq C\left( \|y_{n+1}^{2s-1}f_\lambda\|_{C^{0,\epsilon}(\mathcal{C}^+_2)}+ \| \tilde{v}_\lambda\|_{L^2_{\omega}(\mathcal{C}^+_2)}\right).
\end{split}
\end{equation}
Scaling \eqref{eq:apriori2} back results in
\begin{align*}
\left[ Y_i y_n^{1-2s}Y_j (v-y_n^{2s}P_{\bar y})\right]_{\dot{C}^{0,\epsilon}_{\ast}(\mathcal{C}^+_\lambda(\bar y))}\leq \lambda ^{1+2\alpha-\epsilon}\left( \|f\|_{Y_{\alpha, \epsilon}(\mathcal{B}_1^+)}+\|v\|_{\LL_{\omega}(\mathcal{B}_1^+)}\right),
\end{align*}
where $\mathcal{C}_\lambda^+(\bar y)=\{y:|y-\bar y|<\lambda^2, \lambda^2/4 \leq y_n^2+y_{n+1}^2\leq \lambda^2, y_n>0\}$.\\

Applying this to every $\bar y\in P\cap \mathcal{B}_{1/2}$ and every $\lambda \in (0,1/4)$, yields the boundedness of $\|v\|_{X_{\alpha,\epsilon}(\mathcal{B}_{1/2}^+)}$, i.e. a local version of the estimate from Proposition \ref{prop:map}. We note that from the specific expression of the approximation eigenpolynomials $y_n^{2s}P_{\bar y}(y)$, the boundary condition of $v$ on $\mathcal{B}_{1/2}\cap P$ are satisfied.  
\end{proof}

\subsubsection{Invertibility} 
\label{sec:kernel}

The main result of this section is the proof of the inverbility result:

\begin{prop}[Invertibility]
\label{prop:invert_global}
Let $\D_{G,s}$ be the fractional Baouendi-Grushin Laplacian and let $X_{\alpha,\epsilon}, Y_{\alpha,\epsilon}$ be the function spaces from Definition \ref{defi:XY} with $\epsilon\leq \alpha$. Then the operator $\Delta_{G,s}:X_{\alpha,\epsilon}\rightarrow Y_{\alpha,\epsilon}$ is invertible. Moreover, 
$$\|v\|_{X_{\alpha,\epsilon}}\leq C\|\Delta_{G,s}v\|_{Y_{\alpha,\epsilon}}.$$
\end{prop}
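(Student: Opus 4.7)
My plan is to prove Proposition~\ref{prop:invert_global} in two steps: first, establish the a priori estimate $\|v\|_{X_{\alpha,\epsilon}} \le C\|\Delta_{G,s}v\|_{Y_{\alpha,\epsilon}}$ for every $v\in X_{\alpha,\epsilon}$, and then deduce surjectivity by constructing a solution through a Green's-function representation. Injectivity is then immediate from the a priori estimate.

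The core of the a priori estimate is to absorb the $\|v\|_{L^\infty}$ term appearing in the local Schauder estimate of Proposition~\ref{prop:map2}, which reads $\|v\|_{X_{\alpha,\epsilon}(\mathcal{B}_{1/2}^+)} \le C(\|f\|_{Y_{\alpha,\epsilon}(\mathcal{B}_1^+)} + \|v\|_{L^\infty(\mathcal{B}_1^+)})$ with $f := \Delta_{G,s}v$. To this end I would exploit the compact support condition $\supp(f) \subset \mathcal{B}_1^+$ built into $Y_{\alpha,\epsilon}$: applying the square-root map $\mathcal{T}$ of Section~\ref{sec:open}, the pushforward $\tilde{v} := v\circ\mathcal{T}^{-1}$ solves $L_s\tilde v = \tilde f$ on $\R^{n+1}_+$ with mixed Dirichlet--Neumann data on the slit (Dirichlet on $\{x_n\le 0\}$, Neumann on $\{x_n>0\}$) and with $\tilde f$ of compact support. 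A Green's-function representation $\tilde v = G_s \ast \tilde f$ together with the decay bound $|G_s(x,z)| \le C|x-z|^{-(n-2s)}$ then delivers $\|v\|_{L^\infty(Q_+)} \le C\|f\|_{Y_{\alpha,\epsilon}}$. Combining this with the local Schauder estimate of Proposition~\ref{prop:map2}, applied at every base point $\bar y\in P$ (using the translation invariance of $\Delta_{G,s}$ in $y''$ and the anisotropic rescaling $\delta_\lambda$ from Lemma~\ref{lem:rescaling}), recovers every seminorm entering the definition of $\|v\|_{X_{\alpha,\epsilon}}$; for $\bar y$ far from $\mathcal{B}_1^+$ the equation is homogeneous there and the required decay of $v$ and its derivatives is again controlled by $G_s$.

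For surjectivity, given $f \in Y_{\alpha,\epsilon}$, I would define $v$ directly through the Green's-function formula, transferred back to $Q_+$ through $\mathcal{T}$. The pointwise constraints in the definition of $X_{\alpha,\epsilon}$ (the Dirichlet condition on $\{y_n=0\}$, the strengthened Neumann condition on $\{y_{n+1}=0\}$, the normalization $\p_n(y_n^{1-2s}\p_n v) \to 0$ at $P$, and the support condition on $\Delta_{G,s} v$) follow from the construction of $G_s$ and the compact support of $\tilde f$; the $C^{2,\alpha}_\ast$-approximation at each $\bar y \in P$ and the weighted H\"older seminorm bounds then follow by applying Proposition~\ref{prop:map2} to $v$ locally, starting from the $L^\infty$ bound already established in the first part. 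The main obstacle in this program is constructing and obtaining sharp pointwise bounds for the Green's function $G_s$ for $L_s$ with mixed Dirichlet--Neumann data on the slit, uniformly across the transition set $\{x_n=x_{n+1}=0\}$; this can be carried out either by combining the classical Poisson formula for $L_s$ on $\R^{n+1}_+$ with a reflection argument adapted to the slit geometry and to the characterisation of homogeneous solutions from Proposition~\ref{prop:nD_mDN}, or, alternatively, by bypassing the explicit kernel through a Moser-type $L^2_\omega \to L^\infty$ iteration for the Muckenhoupt-weighted operator $L_s$ together with a Kelvin-type transform to capture the decay at infinity.
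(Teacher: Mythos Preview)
Your overall strategy---combine the local Schauder estimate of Proposition~\ref{prop:map2} with a global $L^\infty$ bound obtained from a representation formula---is exactly the paper's strategy, and the proposal is correct. There is, however, a meaningful difference in execution.

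The paper never transforms back to the $L_s$ picture via the square-root map $\mathcal{T}$. Instead it stays in Grushin coordinates throughout: given $f\in Y_{\alpha,\epsilon}$, it extends $f$ to all of $\R^{n+1}$ by odd reflection in $y_n$ and even reflection in $y_{n+1}$ (which encodes the mixed Dirichlet--Neumann data automatically), and then works with $\Delta_{G,s}$ as a divergence-form operator with $A_2$ weight $\omega=|y_ny_{n+1}|^{1-2s}$. Existence of a weak solution in $\dot M^1_\omega\cap L^p$ follows from Lax--Milgram and the weighted Sobolev embedding; the Schwartz kernel theorem gives an abstract kernel $k(z,y)$; and a Moser-type $L^\infty$ bound for the homogeneous equation, combined with scaling, yields $\|k(z,\cdot)\|_{L^2_\omega(\mathcal B_\lambda(y))}\le C\lambda^2\omega(\mathcal B_\lambda(y))^{-1/2}$ whenever $d_G(y,z)\ge 3\lambda$. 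A dyadic decomposition in annuli then gives $\|v\|_{L^\infty}\le C\|f\|_{Y_{\alpha,\epsilon}}$ directly, without ever writing down an explicit Green's function or handling the slit geometry by hand. The passage from the local Schauder estimate to the global one is done by scaling: at radius $R$ the $L^\infty$ term enters with a factor $R^{-1-2s}$ (by homogeneity of the $X_{\alpha,\epsilon}$ norm), so the $L^\infty$ bound suffices; no Kelvin transform is needed.

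Your route through $\mathcal{T}$ and an explicit $G_s$ on the slit domain would also work, and your option~(b) is essentially the paper's argument in disguise. But the reflection trick in Grushin coordinates neatly sidesteps what you correctly identify as ``the main obstacle'': you never have to build $G_s$ across the mixed-boundary transition, because reflection converts the boundary conditions into symmetry conditions on a problem posed on all of $\R^{n+1}$, where the abstract $A_2$-weighted machinery applies uniformly.
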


To show the above result, we first prove a global $L^\infty $ estimate for the solution by using a kernel estimate (c.f. Lemma~\ref{lem:ker}).\\

We recall several auxiliary results which we will use in the sequel.
As a central tool, we will Sobolev embedding for the space $M^1_\omega$, which we briefly recall here: We note that the Baouendi-Grushin vector fields $\{Y_i\}$ satisfy the H\"ormander vector field condition. Furthermore, for all $s\in (0,1)$ the weight $\omega(y)=|y_ny_{n+1}|^{1-2s}$ satisfies the $A_2$ condition:
\begin{align*}
\left(\int_{\mathcal{B}_R} \omega dy\right)\left(\int_{\mathcal{B}_R} \omega^{-1} dy\right)\leq C_s \left(\int_{\mathcal{B_R}} dy\right)^2
\end{align*}
for all Grushin balls $\mathcal{B}_R\subset \R^{n+1}$. Under these conditions the following Sobolev inequality holds true (c.f. \cite{Lu92}): 
Let $u\in M^1_\omega$, then $u\in L^p_\omega$ with $\frac{1}{p}+\frac{1}{Q}=\frac{1}{2}$, where $Q=2(n+1-2s)$, and moreover
\begin{align}
\label{eq:sob}
\|u\|_{L^p_\omega}\leq C_{n,s} \|Y_i u\|_{L^2_\omega}.
\end{align}

We also recall the definition of the homogeneous Sobolev spaces: Let $\dot{M}^1_\omega$ denote the homogeneous Sobolev space, which is the completion of $C^{\infty}_{0}$ with respect to the homogeneous norm $\|v\|_{\dot{M}^{1}_\omega}:= \sum_i\|Y_i v\|_{L^2_\omega}$.
Let $\dot{M}^{-1}_\omega:=(\dot{M}^1_\omega)^\ast$ denote the dual space of $\dot{M}^1_\omega$. By the Riesz representation theorem, for each $F\in \dot{M}^{-1}_\omega$, there exists $F^i\in L^2_\omega$, $i=1,\dots, 2n$, such that $(F,v)=\sum_i\int F^i Y_i v\ \omega $ for any $v\in \dot{M}^1_\omega$. Moreover, $\|F\|_{\dot{M}^{-1}_\omega}=\inf_{F^i} \sum_i \|F^i\|_{L^2_\omega} $.\\

Now we show a global $L^\infty$ estimate for the weak solution to $\Delta_{G,s}u=f \mbox{ in } \R^{n+1}$ with $f$ compactly supported and $\omega^{-1}f$ in some Morrey type space.

\begin{lem}
\label{lem:ker}
Given $f$ with $\omega^{-1}f\in L^2_\omega$ and $(supp f)\subset \mathcal{B}_1$. Suppose that $$\sup _{\mathcal{B}_r(y)}\left(\frac{1}{\omega(\mathcal{B}_r(y))}\int_{\mathcal{B}_r(y)} (\omega^{-1}f)^2 \ \omega dy\right)^{\frac{1}{2}} \leq C_0 r^{-\gamma}\text{ for some }\gamma\in [0,2).$$ 
Then there exists a unique weak solution $u\in \dot{M}^1_\omega\cap L^p$ which solves $\D_{G,s}u= f$ in $\R^{n+1}_+$.
Moreover, $u\in L^\infty$ and it satisfies $\|u\|_{L^\infty(\R^{n+1})}\leq c_\gamma C_0$ for some $c_\gamma>0$.
\end{lem}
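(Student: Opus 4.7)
The plan has two essentially independent components: (a) production of a weak solution with the correct weak regularity, and (b) a quantitative pointwise bound obtained from a Green's function representation.

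First I would address existence and uniqueness by a straightforward Lax--Milgram argument on the homogeneous space $\dot{M}^1_\omega$. The bilinear form $a(u,\varphi) := \sum_i \int Y_i u \, Y_i \varphi \, \omega\, dy$ is coercive by the definition of the norm, and the linear functional $\varphi \mapsto -\int f \varphi \, dy = -\int (\omega^{-1}f)\varphi\,\omega\,dy$ is continuous on $\dot{M}^1_\omega$: writing $\frac{1}{p} + \frac{1}{Q} = \frac{1}{2}$, $Q = 2(n+1-2s)$, the Sobolev embedding \eqref{eq:sob} yields $\|\varphi\|_{L^p_\omega} \lesssim \|\varphi\|_{\dot{M}^1_\omega}$, and by Hölder
\[
\Bigl|\int (\omega^{-1}f)\varphi\,\omega\,dy\Bigr| \leq \|\omega^{-1}f\|_{L^{p'}_\omega(\mathcal{B}_1)} \|\varphi\|_{L^p_\omega} \leq \omega(\mathcal{B}_1)^{\frac{1}{p'}-\frac{1}{2}} \|\omega^{-1}f\|_{L^2_\omega}\|\varphi\|_{L^p_\omega},
\]
using compact support of $f$ and $p' \leq 2$. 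The resulting $u \in \dot{M}^1_\omega$ lies automatically in $L^p_\omega$ by \eqref{eq:sob}, giving the claimed integrability.

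For the $L^\infty$ bound, the main step is to invoke the Green's function representation $u(y) = \int G(y,\eta) f(\eta)\, d\eta$, where $G$ is the fundamental solution of $\Delta_{G,s}$. Since $\omega = (y_n y_{n+1})^{1-2s}$ is $A_2$ and the Baouendi--Grushin vector fields satisfy Hörmander's condition with associated doubling control-distance $d_G$, the weighted Green's function satisfies the Nash--Moser/Fabes--Kenig--Serapioni type bound
\[
0 \leq G(y,\eta) \leq C\, \frac{d_G(y,\eta)^2}{\omega(\mathcal{B}_{d_G(y,\eta)}(y))}.
\]
With this in hand, fix $y$ and split the $\eta$-integral into Grushin dyadic annuli $A_k = \{2^{-k-1} < d_G(y,\eta) \leq 2^{-k}\}$. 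On each annulus Cauchy--Schwarz combined with the Morrey hypothesis gives
\[
\int_{A_k}|f|\,d\eta \leq \omega(\mathcal{B}_{2^{-k}}(y))^{1/2}\Bigl(\int_{\mathcal{B}_{2^{-k}}(y)}(\omega^{-1}f)^2\omega\,d\eta\Bigr)^{1/2} \leq C_0\, 2^{k\gamma}\,\omega(\mathcal{B}_{2^{-k}}(y)),
\]
so that
\[
\int_{A_k} G(y,\eta)|f(\eta)|\,d\eta \leq C\,\frac{2^{-2k}}{\omega(\mathcal{B}_{2^{-k}}(y))}\cdot C_0\,2^{k\gamma}\omega(\mathcal{B}_{2^{-k}}(y)) = C\,C_0\,2^{k(\gamma-2)}.
\]
Summing over $k$ (only finitely many negative $k$ contribute, since $\mathrm{supp}\,f \subset \mathcal{B}_1$), the geometric series converges because $\gamma < 2$, yielding $\|u\|_{L^\infty} \leq c_\gamma C_0$ with $c_\gamma \sim (2-\gamma)^{-1}$.

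The main obstacle will be justifying the Green's function bound; one cannot cite it from $L_s$-theory directly since $\Delta_{G,s}$ is a genuinely subelliptic operator with an additional $A_2$ weight. However, the combined framework of Hörmander vector fields plus $A_2$-weighted divergence-form operators has been developed (Fabes--Kenig--Serapioni for the weighted theory, Nagel--Stein--Wainger and Sanchez-Calle for the sub-Riemannian heat kernel), and the estimate follows by applying those standard results in the intrinsic Grushin geometry; the doubling of $\omega$ with respect to Grushin balls is verified directly from the explicit form of the weight and the equivalent quasi-metric in Remark \ref{rmk:quasi}. Uniqueness in the class $\dot{M}^1_\omega \cap L^p_\omega$ follows from the coercivity of $a(\cdot,\cdot)$ by testing the difference against itself.
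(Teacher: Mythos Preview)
Your approach is correct and follows the same overall architecture as the paper's proof: Lax--Milgram for existence, a kernel/Green's function representation, and a dyadic annulus decomposition combined with the Morrey hypothesis to sum the geometric series $\sum 2^{k(\gamma-2)}$.

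The one substantive difference lies in how the kernel bound is obtained. You invoke a pointwise Green's function estimate $G(y,\eta)\lesssim d_G(y,\eta)^2/\omega(\mathcal{B}_{d_G(y,\eta)}(y))$ from the general weighted subelliptic theory (Fabes--Kenig--Serapioni combined with the Carnot--Carath\'eodory framework). The paper instead derives, in a self-contained way, only the weaker $L^2$ bound $\|k(z,\cdot)\|_{L^2_\omega(\mathcal{B}_\lambda(y))}\leq C\lambda^2\omega(\mathcal{B}_\lambda(y))^{-1/2}$ for $d_G(y,z)\geq 3\lambda$: it observes that if $g$ is supported in $\mathcal{B}_\lambda(y)$ then $u=\Delta_{G,s}^{-1}g$ solves the homogeneous equation outside that ball, applies Moser's inequality (available in this setting via \cite{Lu92}) together with the global $L^p_\omega$ bound \eqref{eq:emb}, and then scales. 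This $L^2$ kernel bound is exactly what is needed for the Cauchy--Schwarz step in the dyadic sum, so the paper avoids having to assemble the full pointwise Green's function machinery in the combined $A_2$/H\"ormander setting. Your route is shorter to write down but leans on heavier external input; the paper's route is more elementary and stays within tools already cited in the text.
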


\begin{proof}
\emph{Step 1: Existence and representation.}
Consider the Hilbert space $(\dot{M}^1_\omega\cap L^p, \|\cdot\|_{\dot{M}^1_\omega})$ for $p$ satisfying $\frac{1}{p}+\frac{1}{Q}=\frac{1}{2}$. 
By the Lax-Milgram theorem, for any $F \in \dot{M}^{-1}_w$, there exists a unique function $u\in \dot{M}^1_w\cap L^p$ such that $(F,\phi)=\int F^i Y_i\phi \omega dy =\int Y_iu Y_i\phi \omega dy $ for any $\phi\in C^\infty_c$. 
Moreover, by energy estimates, 
$
\|Y_iu\|_{L^2_\omega}\leq  \|F\|_{\dot{M}^{-1}_\omega}.
$ We will write $u:=\Delta_{G,s}^{-1}F$.\\

By the Sobolev embedding (\ref{eq:sob}) and by duality, we have the embedding $L^{p'}_\omega\hookrightarrow \dot{M}^{-1}_\omega$, where $p'$ is the H\"older conjugate of $p$. Thus, for any $g\in L^{p'}_\omega$, there exists a unique function $u:=\Delta_{G,s}^{-1}g\in \dot{M}^1_\omega\cap L^p$ such that $\int g\phi\omega dy=\int Y_i u Y_i\phi \omega dy$ for any $\phi \in C^\infty_c$, with $\|Y_iu\|_{L^2_\omega}\leq C\|g\|_{\dot{M}^{-1}_\omega}$. By Sobolev embedding \eqref{eq:sob},
\begin{align}\label{eq:emb}
\|\Delta_{G,s}^{-1}g\|_{L^p_\omega}\leq C\|Y_i\Delta_{G,s}^{-1}g\|_{L^2_\omega}\leq C \|g\|_{\dot{M}^{-1}_\omega}\leq C\|g\|_{L^{p'}_\omega}.
\end{align}
Thus, by the Schwartz kernel theorem there exists a distribution kernel $k(z,y)\in \mathcal{D}'(\R^{n+1}\times\R^{n+1})$ such that the right inverse of $\D_{G,s}$ is represented as
\begin{align*}
u(z)=(\Delta_{G,s}^{-1}g)(z)=\int_{\R^{n+1}} k(z,y) g(y)\omega(y) dy.
\end{align*}

\emph{Step 2: Kernel estimates.}
We estimate the distribution kernel.
Suppose that $\supp (g) \subset \mathcal{B}_1(y)$ for some $y$, then outside $\mathcal{B}_1(y)$, $u$ is a solution to the homogeneous equation $\Delta_{G,s}u=0$. By Moser's inequality (c.f. equation (2.2) in \cite{Lu92}), for any $z\in \R^{n+1}\setminus \mathcal{B}_3(y)$ 
\begin{align*}
\sup_{\tilde{z}\in \mathcal{B}_1(z)}|u(\tilde{z})|\leq C \|u\|_{L^2(\mathcal{B}_2(z))}.
\end{align*}
By H\"older's inequality and \eqref{eq:emb} and the fact that $g$ is compactly supported, we infer that
\begin{align*}
\sup_{\tilde{z}\in \mathcal{B}_1(z)}|u(\tilde{z})|\leq C \|g\|_{L^{2}_w(\mathcal{B}_1(y))}.
\end{align*}
 Hence, for fixed $z$, the mapping $g\mapsto u(z)=\Delta_{G,s}^{-1}g$ is a continuous linear functional from $L^2_w(\mathcal{B}_1(y))$ to $\R$, which is represented by $k(z,\cdot)\in L^2_w(\mathcal{B}_1(y))$ with $\|k(z,\cdot)\|_{L^2_w(\mathcal{B}_1(y))}\leq C$. 
Scaling thus yields that 
$$\|k(z,\cdot)\|_{L^2_w(\mathcal{B}_\lambda (y))}\leq C \lambda^{2}\omega(\mathcal{B}_\lambda(y))^{-\frac{1}{2}}$$ for any $y$ and $z$ with $d_G(y,z)\geq 3\lambda$. Here $\lambda>0$ and $C$ is independent of $\lambda$.\\

\emph{Step 3: $L^\infty$ estimate.} With the previous considerations at hand we proceed to the claimed $L^\infty $ estimate. Let $f$ satisfy the assumptions of the lemma, and let $u=\Delta_{G,s}^{-1}(\omega^{-1}f)=\int_{\R^{n+1}}k(z,y)(\omega(y)^{-1}f(y))\omega(y)dy \in \dot{M}^1_{\omega}\cap L^p$. For any $z\in \R^{n+1}$, by H\"older's inequality,
\begin{align*}
|u(z)|&\leq \sum_{j=0}^{\infty}\|k(z,\cdot)\|_{L^2_\omega(A_j)}\|\omega^{-1}f\|_{L^2_\omega(A_j)},
\end{align*}
where $A_j:=\mathcal{B}_{2^{-j}}(z)\setminus \mathcal{B}_{2^{-j-1}}(z)$ are dyadic annuli centered at $z$. 
Using the estimate on the kernel $k(z,\cdot)$ from step 2, as well as our growth assumption on $f$, we further obtain 
\begin{align*}
|u(z)|&\leq C\sum_j (2^{-j})^{2}\omega(\mathcal{B}_{2^{-j}}(z))^{-\frac{1}{2}}C_0(2^{-j})^{-\gamma}\omega(\mathcal{B}_{2^{-j}}(z))^{\frac{1}{2}}\\
&\leq C C_0\sum_j(2^{-j})^{2-\gamma}= c_\gamma C_0.
\end{align*}
This completes the proof. 
\end{proof}

In the end, we combine Proposition~\ref{prop:map2} and Lemma~\ref{lem:ker} to conclude the statement of Proposition~\ref{prop:invert_global}:

\begin{proof}[Proof of Proposition~\ref{prop:invert_global}]
Given $f\in Y_{\alpha,\epsilon}$, we extend $f$ to the whole space by reflection about $y_n$ and $y_{n+1}$. By Proposition~\ref{prop:decomp} it is not hard to see that $\omega^{-1} f\in L^2_\omega$, $\supp (\omega^{-1}f)\subset \mathcal{B}_1$, and moreover,
\begin{align*}
\sup _{\mathcal{B}_r(y)}\left(\frac{1}{\omega(\mathcal{B}_r(y))}\int_{\mathcal{B}_r(y)} (\omega^{-1}f)^2 \ \omega dy\right)^{\frac{1}{2}} \leq \|f\|_{Y_{\alpha,\epsilon}} r^{ \alpha-(1-2s)}.
\end{align*}
Thus $\omega^{-1}f$ satisfies the assumption of Lemma~\ref{lem:ker}. 
Then, by Lemma~\ref{lem:ker}, there exists a unique weak solution $v\in \dot{M}^1_\omega\cap L^p$ to $\D_{G,s}v=f$ in the sense that
\begin{align*}
\int\limits_{\R^{n+1}} Y_i v  Y_i \varphi\ \omega(y) dy = \int \limits_{\R^{n+1}} (\omega^{-1}f) \varphi\ \omega(y)dy, \quad \forall \varphi \in C_0^\infty.
\end{align*}
Moreover, 
\begin{align}
\label{eq:Linfty}
\|v\|_{L^\infty(\R^{n+1})}\leq C\|f\|_{Y_{\alpha,\epsilon}}.
\end{align}
By Proposition~\ref{prop:map2}, $v\in X_{\alpha,\epsilon}(\mathcal{B}_R^+)$ for each $R>0$. Moreover, it satisfies $\|v\|_{X_{\alpha,\epsilon}(\mathcal{B}_R^+)}\leq C(\|f\|_{Y_{\alpha,\epsilon}}+R^{-1-2s}\|v\|_{L^\infty(\mathcal{B}_{2R}^+)})$ with a constant $C>0$ which is independent of $R$ (here we used that the norm of $X_{\alpha,\epsilon}$ is a homogeneous norm). Combining this with the $L^\infty$ estimate in \eqref{eq:Linfty}, we infer that $v\in X_{\alpha,\epsilon}$ and that it satisfies $\|v\|_{X_{\alpha,\epsilon}}\leq C\|f\|_{Y_{\alpha,\epsilon}}$. Note that this estimate also implies the uniqueness of the solution.
\end{proof}

\subsection{Characterization and Banach property of the function spaces}
In this section we show the characterization of the functions spaces $X_{\alpha,\epsilon}$ and $Y_{\alpha,\epsilon}$ stated in Proposition~\ref{prop:decomp}, and the Banach property of the function spaces stated in Proposition~\ref{prop:Banach}. As these follow from ideas which are similar to those presented in \cite{KRSIII} and as they would have obscured the structure of the main argument, we decided to present them separately in this appendix.

\subsubsection{Proof of Proposition \ref{prop:decomp}}
\label{sec:prove}
In this section we provide the proof of Proposition \ref{prop:decomp}

We begin with the discussion of the decomposition and regularity of $f$. We show that if $f\in X_{\alpha,\epsilon}$, the decomposition and the estimates from the Proposition hold.\\ 
We define $f_0(y''):= \frac{Q_{y,1}(y)}{y_n}$, where $Q_{y,1}(y)$ denotes the first order approximating polynomial of $\tilde{f}(y):= y_{n+1}^{2s-1} f(y)$. As $\tilde{f}$ was assumed to be $C^{1,\alpha}_{\ast}$ at $P$, these limits exist. We further define
\begin{align*}
f_1(y):= y_{n+1}^{2s-1} r(y)^{-1-2\alpha+\epsilon}(f(y)-f_0(y'')y_n y_{n+1}^{1-2s}).
\end{align*}
By Remark \ref{rmk:supp} and the definition of the norm on $Y_{\alpha,\epsilon}$ this quantity is finite. Hence, it remains to prove the claimed regularity properties for these two functions. We begin with the estimate for $f_0$: Let $y_1,y_2 \in P$ be given. Define $\bar{y}\in P$ such that $y_1, y_2 \in \mathcal{B}_1(\bar{y})$. Further let $y\in \mathcal{B}_1(\bar{y})$ be another point with the property that $y_n=d_G(\bar{y},y)=|y_1-y_2|^{1/2}$, $y_{n+1}=0$ and with $d_G(y,y_1) \sim d_G(y,y_2) \sim d_G(y,\bar{y})$.
By to Remark \ref{rmk:supp} we infer that
\begin{align*}
|d_G(y,y_i)^{-1-2\alpha}y_{n+1}^{2s-1}(f(y) - y_{n+1}^{1-2s}Q_{y_i,1})| \leq C \mbox{ for } i\in\{1,2\}.
\end{align*}
Thus, the triangle inequality and the choice of $y,\bar{y}$ yield
\begin{align*}
|Q_{y_1,1}(y)-Q_{y_2,1}(y)| \leq C (d_G(y,y_1)^{1+2\alpha} + d_G(y,y_2)^{1+2\alpha} ) \leq C d_G(y, \bar{y})^{1+2\alpha} .
\end{align*}
Using the form of $Q_{\cdot,1}$ and the choice of $y$ again by dividing by $y_n$, we obtain that
\begin{align*}
| f_0(y_1) - f_0(y_2)| \leq C d_G(y,\bar{y})^{2\alpha} = C |y_1-y_2|^{\alpha}.
\end{align*} 
This proves the claimed regularity of $f_0$. We proceed by discussing the regularity of $f_1(y)$. We first note that we can always bound 
\begin{align*}
|f_1(y)| \leq C r(y)^{\epsilon}.
\end{align*}
Therefore, for points $y_1,y_2\in Q_+$ with $\max\{r(y_1),r(y_2)\}\leq 10 d_G(y_1,y_2)$ we infer
\begin{align*}
|f_1(y_1)-f_1(y_2)| \leq |f_1(y_1)| + |f_1(y_2)| \leq C(r(y_1)^{\epsilon} + r(y_2)^{\epsilon}) \leq Cd_G(y_1,y_2)^{\epsilon}.
\end{align*}
In the case that $y_1,y_2$ are such that $\max\{r(y_1),r(y_2)\}\geq 10 d_G(y_1,y_2)$, there always exists a point $\bar{y}\in P$ with $y_1,y_2\in C_1^+(\bar{y})$. In this case the estimate follows by an application of the triangle inequality and the result for $f_0$.  Indeed, setting $r_i:=r(y_i)$, for $i=1,2$, we note that in this case $d_G(y_2,y''_1)\sim r_1\sim r_2$.  By the triangle inequality,
\begin{align*}
&|f_1(y_1)-f_1(y_2)| \\
& = \left|r_1^{-1-2\alpha+\epsilon}[(y_1)_{n+1}^{2s-1}f(y_1)-f_0(y''_1)(y_1)_n] - r_2^{-1-2\alpha+\epsilon}[(y_2)_{n+1}^{2s-1}f(y_2)-f_0(y''_2)(y_2)_n] \right|\\
& \leq \left|r_1^{-1-2\alpha+\epsilon}[(y_1)_{n+1}^{2s-1}f(y_1)-f_0(y_1'')(y_1)_n]\right. \\
&\quad  \left.- d_G(y_2,y''_1)^{-1-2\alpha+\epsilon}[(y_2)_{n+1}^{2s-1}f(y_2)- f_0(y_1'')(y_2)_n]\right|\\
& \quad + \left|(d_G(y_2,y''_1)^{-1-2\alpha+\epsilon}-r_2^{-1-2\alpha+\epsilon})[(y_2)_{n+1}^{2s-1}f(y_2)-f_0(y_1'')(y_2)_n]\right|\\
& \quad + \left|r_2^{-1-2\alpha+\epsilon}(f_0(y_1'')-f_0(y_2''))(y_2)_n\right|.
\end{align*}
Using the bound on the norm of $Y_{\alpha,\epsilon}$, the estimate for $f_0$ and the Hölder continuity of $x\mapsto x^{\epsilon}$ in combination with the form of the $Y_{\alpha,\epsilon}$ norm, then bounds the three terms.
Conversely, we show that, if $f$ is of the form stated in Proposition \ref{prop:decomp}, then $f\in Y_{\alpha,\epsilon}$. Indeed, for any $\bar y\in P$, let $P_{\bar y, 1}(y):=f_0(\bar y)y_n$. Therefore,
\begin{align*}
|y_{n+1}^{2s-1}f(y)-P_{\bar y,1}(y)|&\leq |f_0(\bar y)-f_0(y'')|y_n+r^{1+2\alpha-\epsilon}|f_1(y)|\\
&\leq [f_0]_{\dot{C}^{0,\alpha}}|y''-\bar y''|^{\alpha}y_n+r(y)^{1+2\alpha-\epsilon}|f_1(y)|\\
&\leq C[f_0]_{\dot{C}^{0,\alpha}}d_G(y,\bar y)^{1+2\alpha}.
\end{align*}
Thus, $y_{n+1}^{2s-1}f(y)$ is $C^{1,\alpha}_\ast$ at $\bar y\in P$. It hence remains to discuss the boundedness of $\|f\|_{Y_{\alpha,\epsilon}}$. This however follows from the regularity of $f_0$ and $f_1$.\\
Conversely, if $f$ is of the form stated in Proposition \ref{prop:decomp}, then $y_{n+1}^{2s-1}f$ is $C^{1,\alpha}_{\ast}$ at $P$ and $\|f\|_{Y_{\alpha,\epsilon}}\leq C([f_0]_{\dot{C}^{0,\alpha}} + [f_1]_{\dot{C}^{0,\epsilon}})$. The remaining properties satisfied by functions in the space $Y_{\alpha,\epsilon}$ follow by assumption.\\

We proceed with the characterization of the space $X_{\alpha,\epsilon}$. First we note that if $v\in X_{\alpha,\epsilon}$, then the boundary and pointwise conditions which are imposed in the definition of $X_{\alpha,\epsilon}$ imply that the (homogeneous) approximating polynomial $P_{\bar{y},2}^s(y)$ of $y_{n}^{-2s}v$ at $\bar{y}\in P$ is of the form stated in the decomposition in (a). The regularity results for the functions $c_0, a_0, a_1$ and $C_{i}, C_{ij}$ follow as in the proof for the space $Y_{\alpha,\epsilon}$.
\\
In the end, we show that if a function $v$ satisfies the conditions (a)-(d) in Proposition~\ref{prop:decomp}, then $v\in X_{\alpha,\epsilon}$. It is not hard to see that the boundary conditions are satisfied. We claim that $v\in C^{2,2\alpha}_{\ast}$: For each $\bar y\in P$, we set 
\begin{align*}
P_{\bar y, 2}(y):=c_0(\bar y)+\sum_{i=1}^{n-1}\p_ic_0(\bar y)(y_i-\bar y_i)+a_0(\bar y)y_n^2+a_1(\bar y)y_{n+1}^2.
\end{align*}
Then, 
\begin{align*}
&|y_n^{-2s}v(y)-P_{\bar y, 2}(y)|\\
&\leq |c_0(y'')-c_0(\bar y)-\sum_{i=1}^{n-1}\p_ic_0(\bar y)(y_i-\bar y_i)|+|a_0(y'')-a_0(\bar y)|y_n^2+|a_1(y'')-a_1(\bar y)|y_{n+1}^2\\
& \quad +r^{2+2\alpha-\epsilon}|C_0(y)|\\
&\leq [\nabla c_0]_{\dot{C}^{0,\alpha}}|y''-\bar y|^{1+\alpha}+[a_0]_{\dot{C}^{0,\alpha}}|y''-\bar y|^{\alpha}y_n^2+[a_1]_{\dot{C}^{0,\alpha}}|y''-\bar y|^{\alpha}|y_{n+1}|^2\\
& \quad+[C_0]_{\dot{C}^{0,\epsilon}_\ast}r^{2+2\alpha},\\
&\leq C\left([\nabla c_0]_{\dot{C}^{0,\alpha}}+[a_0]_{\dot{C}^{0,\alpha}}+[a_1]_{\dot{C}^{0,\alpha}}+[C_0]_{\dot{C}^{0,\epsilon}_\ast}\right)d_G(y,\bar y)^{2+2\alpha},
\end{align*}
with $C$ independent of $\bar y$. Hence, $y_n^{-2s}v$ is $C^{2,2\alpha}_\ast$ at each $\bar y\in P$. To show the boundedness of the remaining terms in the norm $\|v\|_{X_{\alpha,\epsilon}}$, we argue similarly as for the space $Y_{\alpha,\epsilon}$.

\subsubsection{Proof of Proposition \ref{prop:Banach}}
\label{sec:Banach}
In this section we show that $X_{\alpha,\epsilon}$ and $Y_{\alpha,\epsilon}$ are Banach spaces.

\begin{proof}[Proof of Proposition \ref{prop:Banach}]
For the space $Y_{\alpha,\epsilon}$ the Banach property follows from the compact support assumption: Using the characterization from Proposition \ref{prop:decomp}, we have that for a given function $f\in Y_{\alpha,\epsilon}$ the functions $f_0, f_1$ are supported only in $B''_1 \times \R^2$. Thus, the homogeneous Hölder norms control the lower order $L^{\infty}$ norms. This yields the Banach property for $Y_{\alpha,\epsilon}$.\\

Next we show that $X_{\alpha, \epsilon}$ is complete under the homogeneous norm. Indeed, for any $v\in X_{\alpha,\epsilon}$,  it is not hard to check that $\D_{G,s}v\in Y_{\alpha,\epsilon}$ and $\|\D_{G,s}v\|_{Y_{\alpha,\epsilon}}\leq \|v\|_{X_{\alpha,\epsilon}}$. By Lemma~\ref{lem:ker} we have $\|v\|_{L^\infty}\leq C\|\D_{G,s}v\|_{Y_{\alpha,\epsilon}}\leq C\|v\|_{X_{\alpha,\epsilon}}$. This then also implies $L^{\infty}$ bounds for the functions $c_0, a_0, a_1, C_i, C_{ij}$ in terms of $\|v\|_{X_{\alpha,\epsilon}}$: Indeed, recalling that $C_0(\bar{y})=0$ for $\bar{y}\in P$ and recalling the Hölder bound for $C_0$ in terms of $\|v\|_{X_{\alpha,\epsilon}}$, implies that
\begin{align*}
\bar{v}(y):= c_0(y'')y_n^{2s} + a_0(y'')y_n^{2+2s} + a_1(y'') y_n^{2s}y_{n+1}^2 \in L^{\infty}(\{y: \dist(y,P)\leq 2\})
\end{align*}
and $\|\bar{v}\|_{L^{\infty}(\{y: \dist(y,P)\leq 2\})} \leq C \|v\|_{X_{\alpha,\epsilon}}$. Now varying the values of $y_n, y_{n+1} \in \{y: \dist(y,P)\leq 2\}$ yields the desired bounds 
\begin{align*}
\|c_0\|_{L^{\infty}(\R^{n-1})} + \|a_0\|_{L^{\infty}(\R^{n-1})}+ \| a_1\|_{L^{\infty}(\R^{n-1})} \leq C \|v\|_{X_{\alpha,\epsilon}}.
\end{align*}
This then also implies the global $L^{\infty}$ bound for $C_0$ in terms of $\|v\|_{X_{\alpha,\epsilon}}$. We note that the $L^{\infty}$ bounds for $C_i, C_{ij}$, $i,j\in\{1,\dots,n+1\}$ follow from the a priori estimates for $\D_{G,s}$, e.g. a slight modification of the arguments in Section \ref{sec:reg1} also yields
\begin{align*}
\|d_G(\cdot,\bar{y})^{-(1+2\alpha)}Y_i y_n^{1-2s} Y_j (v- y_n^{2s}P_{\bar{y},2}^s)\|_{L^{\infty}(Q_+)} \leq C\left(\|\D_{G,s}v\|_{Y_{\alpha,\epsilon}}+\|v\|_{L^\infty}\right).
\end{align*}
Similar estimates hold for the properly weighted first derivatives.
\end{proof}

\bibliography{citations1}
\bibliographystyle{alpha}
\end{document}